\documentclass[11pt]{amsart}

\usepackage{amssymb}
\usepackage{amsmath}
  \usepackage{paralist}
  \usepackage{graphics} 
  \usepackage{epsfig} 
\usepackage{graphicx}  \usepackage{epstopdf}
\usepackage{enumitem}
\usepackage{comment}
\excludecomment{tohide}

 \usepackage[colorlinks=true]{hyperref}
\hypersetup{urlcolor=blue, citecolor=red}

  \textheight=8.2 true in
   \textwidth=5.0 true in
    \topmargin 30pt
     \setcounter{page}{1}



\newtheorem{thmm}{Theorem}
\newtheorem*{TheoremD'}{Theorem D'}

\newtheorem*{TheoremE'}{Theorem E'}

\newtheorem*{CorollaryE1}{Corollary E.1}

\newtheorem{theorem}{Theorem}[section]
\newtheorem{corollary}[theorem]{Corollary}

\newtheorem*{main*}{Main Theorem}
\newtheorem{lemma}[theorem]{Lemma}
\newtheorem{proposition}[theorem]{Proposition}

\theoremstyle{definition}
\newtheorem{definition}[theorem]{Definition}
\newtheorem{remark}[theorem]{Remark}

\newcommand{\CCC}{\mathcal{C}}
\newcommand{\DDD}{\mathcal{D}}
\newcommand{\PPP}{\mathcal{P}}
\newcommand{\SSS}{\mathcal{S}}
\newcommand{\BBB}{\mathcal{B}}
\newcommand{\UUU}{\mathcal{U}}
\newcommand{\GGG}{\mathcal{G}}

\newcommand{\CC}{\mathbb{C}}

\def\P{{\mathcal P}}

\def\CC{{\mathcal C}}

\def\p{{\partial}}
\def\M{{\mathcal M}}
\newcommand{\F}{\mathcal{F}}
\def\CG{{\mathcal G}}
\newcommand{\UR}{\mathcal{UR}}
\newcommand{\URR}{\mathcal{URR}}
\newcommand{\R}{\mathcal{R}}

\def\pX{{\partial X}}
\def\RR{{\mathbb R}}
\def\NN{{\mathbb N}}

\def\ZZ{{\mathbb Z}}

\def\inj{{\text{inj}}}
\def\diam{\mathop{\hbox{{\rm diam}}}}

\def\diam{\mathop{\hbox{{\rm diam}}}}

\def\supp{\mathop{\hbox{{\rm supp}}}}

\def\loc{{\mathop{\hbox{\footnotesize  \rm loc}}}}
\def\supp{\mathop{\hbox{{\rm supp}}}}

\def\lv{{\underline v}}
\def\lc{{\underline c}}

\def\lB{{\underline B}}
\def\lS{{\underline S}}

\def\a{\alpha}
\def\b{\beta}
\def\c{\gamma}   \def\C{\Gamma}
\def\d{\delta}   
\def\l{\lambda}   \def\L{\Lambda}
 \def\e{\epsilon}
   
\def\t{\theta}
\def\lv{{\underline v}}
\def\ae{\text{-a.e.}\ }
\def\bP{\textbf{P}}
\def\bF{\textbf{F}}

\newcommand{\Per}{\mathrm{Per}}

\title[Running heading with forty characters or less]
      {Patterson-Sullivan construction of equilibrium states and weighted counting in nonpositive curvature}

\author[first-name1 last-name1 and first-name2 last-name2]{Weisheng Wu}

\subjclass{}
 \keywords{}


\address{School of Mathematical Sciences, Xiamen University, Xiamen, 361005, P.R. China}
\email{wuweisheng@xmu.edu.cn}


\begin{document}
\begin{abstract}
Consider the geodesic flow on a closed rank one manifold of nonpositive curvature. For certain H\"{o}lder continuous potential, there exists a unique equilibrium state by \cite{BCFT}. In this paper, we introduce the notions of core limit set, regular radial limit set and uniformly recurrent and regular vectors, and then construct a family of Patterson-Sullivan measures on the boundary at infinity in two separate settings. Then we give an explicit construction of the above unique equilibrium state using Patterson-Sullivan measures. This enables us to prove the Bernoulli property of the equilibrium states. Using the Patterson-Sullivan construction and mixing properties of equilibrium states, we count the number of free homotopy classes with weights in nonpositive curvature.  
\end{abstract}

\maketitle
\markboth{Patterson-Sullivan construction}
{W. Wu}
\renewcommand{\sectionmark}[1]{}

\tableofcontents

\section{Introduction}
The study of geodesic flow lies in the crossing field of dynamical systems and differential geometry. The geodesic flow on closed manifolds of negative (sectional) curvature everywhere is a prime example of uniformly hyperbolic dynamical systems. Hopf \cite{Ho0, Ho1}, Anosov and Sinai \cite{An, AnS} proved the ergodicity of the geodesic flow with respect to the Liouville measure. On the other hand, the ergodic theory and thermodynamical formalism have deep applications in rigidity and counting problems in geometry. For instance, Margulis in his thesis \cite{Mar2} gave an explicit construction of measure of maximal entropy (MME for short) and Bowen \cite{Bo2} proved the uniqueness of MME for the geodesic flow in negative curvature. Nowdays the unique MME is called Bowen-Margulis measure, which exhibits nice mixing properties and local product structure with respect to the stable/unstable manifolds. Margulis \cite{Mar2} then derived an asymptotic formula for the growth of the number of closed geodesics:
\begin{equation*}
 \lim_{t\to \infty}\#P(t)/\frac{e^{ht}}{ht}=1
\end{equation*}
where $P(t)$ is the set of closed geodesics with length no more than $t$, $h$ is the topological entropy of the geodesic flow. Later, Parry and Pollicott \cite{PP} proved this formula using symbolic coding for the geodesic flow (and more generally for Axiom A flows). This formula is also called prime geodesic theorem since it resembles the formula in the prime number theorem.     

The study of geodesic flow in nonpositive curvature is more delicate due to the existence of singular geodesics and flat strips.  The well known higher rank rigidity theorem \cite{Bal, BS1, BS2} says that a simply connected irreducible manifold of nonpositive curvature and rank greater than one must be isometric to a locally symmetric space of noncompact type. On the other hand, it is believed that the geodesic flow on a closed rank one manifold inherits most properties in negative curvature. Pesin established his theory of nonuniform hyperbolicity during the study of such geodesic flow  (cf. \cite{BP}).  Nevertheless, some problems in rank one case are much more difficult. For instance, the ergodicity of the geodesic flow with respect to the Liouville measure is widely open (cf. \cite{BP, Wu,WLW}).

It had been a long-standing problem to extending the uniqueness of MME and prime geodesic theorem from negative curvature to nonpositive curvature. The uniqueness of MME was conjectured by Burns and Katok \cite{BuKa} and finally proved by Knieper \cite{Kn}. Knieper \cite{Kn} built the MME via Patterson-Sullivan measures (cf. \cite{Pat, Sul}) on the boundary at infinity. Knieper \cite{Kn1} also obtained lower and upper bounds on the number of free homotopy classes. Katok launched a program to prove the Margulis type asymptotics for the number of free homotopy classes (cf. \cite{We}). Ricks \cite{Ri} made the breakthrough and proved the Margulis type asymptotic formula for the number of free homotopy classes, for compact CAT(0) spaces which include closed rank one manifolds of nonpositive curvature. Further, Climenhaga, Knieper and War \cite{CKW1, CKW2} established the uniqueness of MME and Margulis type asymptotic formula for certain closed manifolds without conjugate points. Wu \cite{Wu2} established the formula for closed rank one manifolds without focal points.

As a far reaching generalization of entropy, Ruelle \cite{Ru} and Walters \cite{Wa} introduced the notion of topological pressure to dynamical systems and established a variational principle for it. Pressure and equilibrium states constitute the main components of the thermodynamical formalism and play important roles in the study of the geodesic flow (cf. \cite{PP1}). In negative curvature, the uniqueness of equilibrium states are again proved by Bowen \cite{Bo3,Bo4}. In a monograph \cite{PPS}, Paulin, Pollicott and Schapira obtained the uniqueness of equilibrium states, weighted prime geodesic theorem and many other results for certain noncompact manifolds of negative curvature. These results are based on the Patterson-Sullivan construction for equilibrium states.

In nonpositive curvature case, the first major development is the following theorem by Burns, Climenhaga, Fisher and Thompson.
\begin{theorem}(\cite[Theorem A]{BCFT})\label{bcft}
Let $\CG=(g^t)_{t\in \RR}$ be the geodesic flow over a closed rank one manifold $M$ and let $F: SM\to \RR$ be a H\"{o}lder continuous potential. If $P(\text{Sing}, F)< P(F)$, then $F$ has
a unique equilibrium state.  This equilibrium state is hyperbolic, fully supported, and is the weak$^*$ limit of weighted regular closed geodesics.
\end{theorem}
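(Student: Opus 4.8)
The plan is to deduce uniqueness from the general criterion of Climenhaga and Thompson for equilibrium states of flows, which produces a unique equilibrium state once one exhibits a decomposition of the space of orbit segments $SM\times[0,\infty)$ into three collections $\PPP,\GGG,\SSS$ (prefix, good core, suffix) such that: (i) the good core $\GGG$ has the specification property at every scale; (ii) $F$ has the Bowen (bounded distortion) property on $\GGG$; (iii) the obstruction carried by $\PPP$ and $\SSS$ has pressure strictly below $P(F)$; and (iv) a weak expansivity hypothesis holds, namely that the pressure of the set of points which are not expansive at small scale is $<P(F)$. The first step is to record that for the geodesic flow the non-expansive set is contained in $\text{Sing}$ (two regular geodesics that stay uniformly close for all time bound a flat strip, hence coincide), so the hypothesis $P(\text{Sing},F)<P(F)$ immediately yields (iv).

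Next I would construct the decomposition using a geometric gauge of hyperbolicity. Away from $\text{Sing}$ the geodesic flow is quantitatively hyperbolic: the contraction and expansion rates of stable and unstable Jacobi fields are controlled in terms of how far the base geodesic stays from the higher-rank locus, which is the substance of Knieper's analysis of rank one manifolds together with the flat strip theorem. Fix $\rho>0$ small and $\tau>0$ large, and call an orbit segment $(v,t)$ \emph{good} if both its initial and its terminal sub-segments of length $\tau$ remain in the $\rho$-interior of the uniformly hyperbolic locus; let $\PPP$ and $\SSS$ peel off the maximal initial and terminal bad pieces. By construction the elements of $\PPP$ and $\SSS$ are orbit segments that remain close to $\text{Sing}$, so a pressure estimate comparing their Bowen balls with those of $\text{Sing}$ (using upper semicontinuity of pressure along neighbourhoods shrinking to $\text{Sing}$ and the hypothesis) gives $P(\PPP\cup\SSS,F)\le P(\text{Sing},F)+\ve<P(F)$ for appropriate $\rho,\tau,\ve$, which is (iii).

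It then remains to verify (i) and (ii) on $\GGG$. The Bowen property (ii) follows from Hölder continuity of $F$ together with the quantitative hyperbolicity along good segments, which forces two nearby good orbit segments to remain uniformly close and hence makes their $F$-integrals differ by a bounded amount. The specification property (i) is the core difficulty: given finitely many good segments one must concatenate them using bounded-length transition orbits so that the resulting pseudo-orbit is genuinely shadowed by a true orbit. This is where the work lies, because nonpositive curvature destroys any global product structure and admits flat strips, so the usual hyperbolic specification argument does not apply verbatim. The resolution is to note that the endpoints of good segments live in a fixed compact block of the uniformly hyperbolic locus, where one does have uniform local product structure and a uniform closing lemma; confining all shadowing and closing operations to that block lets the standard argument go through with transition times bounded independently of the chosen segments. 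I expect this specification step to be the main obstacle.

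Finally, the additional conclusions. For any invariant $\mu$ supported on $\text{Sing}$ one has $h_\mu(\CG)+\int F\,d\mu\le P(\text{Sing},F)<P(F)$, so the unique equilibrium state $\mu_F$ satisfies $\mu_F(\text{Sing})=0$; combined with the standard relation between vanishing transverse Lyapunov exponents and the higher-rank condition, this forces $\mu_F$ to be hyperbolic. Full support follows from the specification of $\GGG$ and the pressure gap: every nonempty open set meets a good orbit segment and can be inserted into arbitrary good configurations, so the pressure it carries equals $P(F)$, whence $\mu_F$ charges it. Lastly, the identification of $\mu_F$ with the weak$^*$ limit of the uniform measures on regular closed geodesics of length $\le t$, weighted by $e^{\int_\gamma F}$, is built into the Climenhaga--Thompson construction, in which the equilibrium state arises as exactly such a limit once one checks, again via (i)--(iii), that the weighted count of good periodic orbits grows at the rate $P(F)$.
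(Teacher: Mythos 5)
Your overall framework is the same one the paper records: Theorem \ref{bcft} is not proved here but is obtained in \cite{BCFT} by verifying the hypotheses of the Climenhaga--Thompson criterion (\cite[Theorem A]{CT16}) for a decomposition $(\PPP,\GGG,\SSS)$ of $SM\times\RR^+$, and your items (i)--(iv), including the observation that non-expansive points sit over flat strips so that $P^{\perp}_{\exp}(F)\le P(\text{Sing},F)$, are exactly the right checklist. The gap is in the decomposition itself. You declare $(v,t)$ good when its initial and terminal sub-segments of \emph{fixed} length $\tau$ stay $\rho$-deep inside the uniformly hyperbolic locus. That pointwise condition at the two ends gives no control in the middle of the segment, and the Bowen property on $\GGG$ genuinely needs control along the whole segment: for H\"older $F$ one must bound $\int_0^t d_K(g^sv,g^sw)^\a\,ds$ for $w\in B_t(v,\e)$, which requires exponential contraction toward the interior of the segment from both ends. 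Under your definition a ``good'' segment may spend almost all of its interior arbitrarily close to $\text{Sing}$, where no such contraction is available, so (ii) fails. The missing idea is the integrated (Pliss-type) gauge used in \cite{BCFT} and recalled in Definition \ref{decom}: $\GGG(\eta)$ consists of segments with $\int_0^\tau\l(g^sv)\,ds\ge\eta\tau$ and $\int_0^\tau\l(g^{-s}g^tv)\,ds\ge\eta\tau$ for \emph{all} $\tau\in[0,t]$, with $\BBB(\eta)$ the segments of small average $\l$; it is this averaged condition, fed into Lemma \ref{stablegeo}, that yields the contraction estimates behind the Bowen property.

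The same confusion undermines your pressure estimate (iii). With the correct decomposition the bad collection is $\BBB(\eta)$, i.e.\ segments whose \emph{average} of $\l$ is small; such segments need not remain near $\text{Sing}$ pointwise, so the bound $P([\PPP]\cup[\SSS],F)<P(F)$ cannot be obtained by ``upper semicontinuity along neighbourhoods shrinking to $\text{Sing}$.'' In \cite{BCFT} this is the most delicate step: one passes from small average $\l$ to proximity to $\text{Sing}$ only for a definite fraction of the time (via the analogue of Lemma \ref{nhdsing}) and then runs a counting argument to compare with $P(\text{Sing},F)$. Your treatment of specification is closer in spirit to the truth (everything is confined to a compact regular block), though the actual argument in \cite{BCFT} is geometric, connecting endpoints at infinity via Lemma \ref{key} rather than invoking a closing lemma. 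As written, then, the proposal identifies the correct abstract machinery but substitutes a decomposition for which hypotheses (ii) and (iii) would fail; repairing it amounts to replacing your endpoint condition by the integrated $\l$-condition of $\GGG(\eta)$.
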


Call and Thompson \cite{CT} proved the Kolmogorov property (and hence mixing property) of the above equilibrium states. In \cite[Subsection 7.3]{CT}, the authors discussed the power of the local product structure for equilibrium states. The local product structure of equilibrium states at the symbolic level is obtained by Araujo, Lima and Poletti \cite{ALP}. In fact, Lima and Poletti \cite{LP} gave a new proof of Theorem \ref{bcft} using symbolic dynamics. Recently, using Gibbs property of equilibrium states, Call, Constantine, Erchenko, Sawyer and Work \cite[Theorem D]{CCESW} obtained the local product structure at the dynamical level for certain equilibrium states.  As commented at the end of \cite{CT}, the geometric local product structure, that is the Patterson-Sullivan construction of equilibriums states, is not known. 

We resolve the above problem in this paper. More precisely, Patterson-Sullivan construction of equilibrium states is given in nonpositive curvature in two separate settings. Then Bernoulli property of equilibrium states is obtained. As a highly nontrivial application, we establish equidistribution of closed geodesics and asymptotic inequalities for the number of weighted closed geodesics. We believe that more results in \cite{PPS} could be extended to nonpositive curvature setting using our results in this paper.

\subsection{Statement of main results}
Let $\CG=(g^t)_{t\in \RR}$ be the geodesic flow over a closed rank one manifold $M$ and $F: SM\to \RR$ a H\"{o}lder continuous potential. If $P(\text{Sing}, F)< P(F)$, then $F$ has
a unique equilibrium state by Theorem \ref{bcft}. Let $\C$ be the fundamental group of $M$,  $X$ the universal cover of $M$ and $\pX$ the boundary at infinity of $X$.

To construct Patterson-Sullivan measures on $\pX$, we start with Poincar\'{e} series. From a dynamical point of view, since the unit sphere at a reference point is a submanifold in the unit tangent bundle, we are expecting that separated sets lying on this submanifold give the topological pressure of the geodesic flow. In negative curvature, this is guaranteed by bounded distortion (or so-called Bowen property)  of a H\"{o}lder potential coming from uniform hyperbolicity. Furthermore, the definition of Gibbs cocycle relies on the bounded distortion. To bypass this obstruction, we establish bounded distortion along ``uniformly regular'' orbits. To do so, we introduce the following three types of ``uniform regularity'':

\begin{itemize}
\item Core limit set $\L_c(\C)$: the set of limit points in $\pX$ accumulated by good orbit segments, i.e., the core part $\GGG$ in the $(\P,\GGG,\SSS)$-decomposition defined in \cite{BCFT}.
\item Regular radial limit set $\L_r(\C)$: the set of limit points in $\pX$ pointed by a geodesic ray starting at the reference point, and entering regular compact regions in $X$ infinitely many times.
\item The set $\URR$ of uniformly recurrent and regular vectors. 
\end{itemize}
The core limit set originates from the $(\P,\GGG,\SSS)$-decomposition in \cite{BCFT}. Pliss time is essentially used to obtain bounded distortion in this case. The regular radial limit set is motivated by the notion of strongly positive recurrence (SPR for short) studied for example in \cite{GST}. Here we consider geodesic rays entering \textit{regular} compact regions infinitely many times. The idea behind uniformly recurrent and regular vectors is from \cite{BBE}. It plays some roles of Pesin sets, with the main difference that we have exponential decay of distance under the geodesic flow of any two points in the \textit{global} stable manifolds, not just Pesin local stable manifolds. In some sense, these three notions characterize ``uniformly regularity'' in dynamical, geometric, and ergodic ways respectively.

The following proposition shows the relation between the critical exponent $\d_F$ of $\Gamma$ relative to $F$, and the topological pressure $P(F)$ of the geodesic flow on $M=X/\Gamma$. The use of uniformly recurrent and regular vectors is crucial in the proof. 
\begin{proposition}\label{pro9}
Let $M$ be a closed rank one manifold of nonpositive curvature and $X$ its universal cover. Assume that $P(\text{Sing},F)<P(F)$. Then $P(F)=\delta_F$.
\end{proposition}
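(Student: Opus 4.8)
The plan is to prove the two inequalities $\delta_F \le P(F)$ and $\delta_F \ge P(F)$ separately, where $\delta_F$ is the critical exponent of the Poincaré series $\sum_{\gamma \in \Gamma} \exp\!\bigl(\int_{o}^{\gamma o} (\widetilde F - s)\bigr)$ (with $o \in X$ a reference point and $\widetilde F$ the $\Gamma$-invariant lift of $F$), and $P(F)$ is the topological pressure of the geodesic flow on $SM$. The upper bound $\delta_F \le P(F)$ should be the more routine direction: the orbit points $\gamma o$ ($\gamma \in \Gamma$) with $d(o,\gamma o) \in [n-1,n]$ project, via their initial velocity vectors, to a set of vectors in $S_oX$ whose $\Gamma$-images are $(n,\epsilon)$-separated for suitable $\epsilon$ (using discreteness of $\Gamma$ and compactness of $M$); comparing $\int_{o}^{\gamma o}\widetilde F$ with the Birkhoff sum $\int_0^n F(g^t v)\,dt$ up to the H\"older/Bowen-type error that is uniformly bounded \emph{along these geodesic segments} (this is exactly where one cannot invoke the global Bowen property and must instead restrict attention, but for the upper bound a crude bound on $\|\widetilde F\|_\infty$ already suffices since we only need $\le$), one gets that the exponential growth rate of the Poincaré series is at most the growth rate of $\sum_v e^{\int_0^n F(g^t v)\,dt}$ over $(n,\epsilon)$-separated sets, which is $\le P(F)$ by the variational/definition of pressure via separated sets.

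For the lower bound $\delta_F \ge P(F)$, the strategy is to produce, for each $n$, enough orbit points $\gamma o$ with controlled $\int_{o}^{\gamma o}(\widetilde F - \delta_F) \ge -C$ to force divergence whenever $s < P(F)$. Here the hypothesis $P(\mathrm{Sing},F) < P(F)$ is essential: it guarantees that the pressure is carried by the regular part, so there is a compact $\Gamma$-invariant ``uniformly regular'' region and, by the theory of uniformly recurrent and regular vectors $\URR$ introduced above, an $(n,\epsilon)$-separated set of \emph{regular} vectors $v \in SM$ with $\frac1n \sum \to P(F)$ along which bounded distortion holds, i.e. $\bigl|\int_0^{d} \widetilde F(g^t \widetilde v)\,dt - \int_{\widetilde v(0)}^{\widetilde v(d)} \widetilde F\bigr| \le C$ uniformly. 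Each such vector, being uniformly recurrent, can be closed up (shadowing / the Anosov closing-type argument available along regular orbits, cf. the constructions in \cite{BCFT}, \cite{CT}) to yield a group element $\gamma$ with $d(o,\gamma o)$ comparable to the segment length and $\int_{o}^{\gamma o}\widetilde F$ comparable to the Birkhoff integral, with multiplicity of such $\gamma$ controlled. Summing over these $\gamma$'s at scale $n$ then gives a lower bound for the Poincaré series at $s$ of the form $e^{(P(F) - s - o(1))n}$ up to subexponential factors, which diverges for $s < P(F)$, hence $\delta_F \ge P(F)$.

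I expect the main obstacle to be the lower bound, and within it, two linked points: first, establishing the \emph{uniform} bounded distortion (Bowen property) along the selected regular orbits — this is precisely the technical heart flagged in the introduction, and it relies on Pliss-time arguments and the geometry of $\URR$ (exponential contraction on global stable manifolds) rather than on uniform hyperbolicity; and second, the ``closing'' step that converts a long recurrent regular geodesic segment in $SM$ into a genuine group element $\gamma \in \Gamma$ with matched length and potential integral and with controlled counting multiplicity, since in nonpositive curvature flat strips and the failure of local uniqueness of geodesics make this subtler than the negatively curved case. Everything else — the separation estimates, the comparison of $\int \widetilde F$ with Birkhoff sums modulo the distortion constant, and the elementary divergence/convergence bookkeeping of the Poincaré series — is bookkeeping once these two ingredients are in place, and both ingredients are exactly what the paper has set up via $\L_c(\Gamma)$, $\L_r(\Gamma)$ and $\URR$, so it is natural to assume the relevant lemmas proved earlier in the paper and cite them here.
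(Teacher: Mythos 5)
Your overall architecture (two inequalities; the easy one via $(n,\epsilon)$-separation of the vectors $\dot c_{o,\gamma o}(0)$ and a crude $\|F\|_\infty$ bound; the hard one via separated sets of uniformly regular vectors plus bounded distortion plus a boundedly finite-to-one assignment to group elements) matches the paper's proof, and you correctly identify the pressure gap and $\URR$ as the essential inputs. But the mechanism you propose for the hard direction --- ``each such vector, being uniformly recurrent, can be closed up (shadowing / Anosov closing) to yield a group element $\gamma$'' --- is the wrong step, and it is where your argument would break down. The Poincar\'e series is a sum over orbit points $\gamma o$, not over periodic orbits, so no closing is needed or wanted: the paper simply takes, for each $v$ in a maximal $(T,\epsilon)$-separated subset of $\L_{k,\frac1i,N}$ with $\pi v\in\F$, the deck transformation $\gamma$ determined by $\pi g^Tv\in\gamma\F$. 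A genuine closing argument in nonpositive curvature would force you to relate the translation length $|\gamma|$ to $T$ and the period integral $\Per_F(\gamma)$ to the Birkhoff integral, in the presence of flat strips; the paper only does that much later (Section 6), with flow boxes and mixing, and it is not needed here.

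Two further points you leave unaddressed. First, the bounded distortion you state, $\bigl|\int_0^{d}F(g^t v)\,dt-\int_{\pi v}^{\pi g^d v}F\bigr|\le C$, is vacuous --- those are the same integral along the same geodesic. What is actually required is to compare $\int_{\pi v}^{\pi g^Tv}F$ with $\int_o^{\gamma o}F$, i.e.\ integrals along \emph{two different} geodesic segments whose endpoints agree only up to $\diam\F$; this is estimate \eqref{e:Bowen1}, and it needs the contraction of Lemma \ref{contracting1} applied in \emph{both} time directions, which is exactly why the sets $\L_{k,\lambda,N}$ are defined with recurrence into both $\UR(\lambda)$ and $\iota(\UR(\lambda))$, and why the error carries a term $\frac{6}{k}T\|F\|$ that only disappears after letting $k\to\infty$. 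Second, you assert but do not justify that the pressure is attained on these uniformly regular sets; this is Lemma \ref{pesin}, proved via the Carath\'eodory--Pesin pressure and the variational principle, using the pressure gap to restrict to $\mathrm{Reg}$ and Lemma \ref{uniformrec} to get $\mu(\tilde\L_k)=1$ for every ergodic $\mu$ carried by $\mathrm{Reg}$. With the closing step replaced by the fundamental-domain assignment and these two ingredients supplied, your outline becomes the paper's proof.
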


In order to construct a family of Patterson-Sullivan measures on $\pX$, our strategy is to show that the complement of core limit set or regular radial limit set is null with respect to a measure constructed using reference point. At this step, we need impose one of the following two conditions:
\begin{itemize}
    \item Condition (A): There exists a constant $C>0$ such that 
    $$\sum_{\c\in \C, n-1< d(o, \c o)\le n}e^{\int_o^{\c o}F}\ge Ce^{n\d_F}, \quad \forall n\in \NN.$$ 
        \item Condition (B): $F$ is constant in a neighborhood of the singular set.
\end{itemize}

Condition (A) is fulfilled for $F\equiv 0$ in nonpositive curvature \cite[Theorem A]{Kn1} and for any H\"{o}lder potential $F$ in negative curvature by \cite[Corollary 9.10]{PPS}. It is similar to the condition of purely exponential volume growth in \cite{Zi} in the case that $F\equiv 0$ for noncompact manifolds. Corollary E.1 below shows that it is also a necessary condition to Patterson-Sullivan construction. 

Condition (B) is introduced and extensively studied in \cite{BCFT}. It implies the pressure gap  $P(F,\text{Sing})<P(F)$ by \cite[Theorem B]{BCFT}.  In \cite[Theorem D]{CCESW}, it is proved that equilibrium states under Condition (B) have local product structure at the dynamical level. 

We show that under Condition (A) the complement of core limit set has null measure, and under Condition (B) bounded distortion of the potential also holds with respect to the complement of regular radial limit set. This allows us to construct a family of measures on $\pX$, which forms a $\d_F$-dimensional Busemann density.
\begin{thmm}\label{ps}
Let $M$ be a closed rank one manifold of nonpositive curvature and $X$ its universal cover. Suppose that $F:SM\to \RR$ is a H\"{o}lder continuous potential satisfying $P(\text{Sing},F)<P(F)$, and either Condition (A) or Condition (B) holds. Then there exists a family of $\d_F$-dimensional Busemann density $\{\mu_{F,q}\}_{q\in X}$ on $\pX$, that is,
\begin{enumerate}
  \item $\mu_{F,\gamma q}(\gamma A)=\mu_{F,q}(A)$ for any $\c\in \Gamma$ and any Borel set $A\subset \pX$;
  \item $\frac{d\mu_{F,q}}{d\mu_{F,p}}(\xi)=e^{-C_{F-\d_F,\xi}(q,p)}$ for almost every $\xi\in \pX$.
\end{enumerate}    
\end{thmm}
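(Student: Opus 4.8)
The plan is to follow the classical Patterson--Sullivan recipe, but carefully tracking where the lack of uniform hyperbolicity forces us to use one of Conditions (A) or (B). First I would form the weighted Poincar\'e series
\[
\PPP_{F,s}(q) \;=\; \sum_{\c\in\C} e^{\int_q^{\c q}(F-s)},
\]
which by Proposition~\ref{pro9} (together with the identification $P(F)=\d_F$) diverges for $s<\d_F$ and converges for $s>\d_F$. If the series diverges at $s=\d_F$, one takes weak$^*$ limits as $s\downarrow\d_F$ of the normalized atomic measures $\mu_{F,q}^s=\frac{1}{\PPP_{F,s}(q)}\sum_\c e^{\int_q^{\c q}(F-s)}\,\mathrm{Dirac}_{\c q}$, supported on $\overline{\C q}$; if it converges one applies Patterson's standard trick, multiplying the weights by a slowly growing function $h(d(q,\c q))$ to force divergence while not affecting the limit measure. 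In either case the limit measures $\mu_{F,q}$ are supported on $\pX$. The $\C$-equivariance in item (1) is immediate from the change of variables $\c\mapsto\c_0^{-1}\c$ in the Poincar\'e sum, since $d(\c_0 q,\c q)=d(q,\c_0^{-1}\c q)$ and the potential integral transforms accordingly; this passes to the weak$^*$ limit.

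The substantive point is item (2), the quasi-conformality with the Gibbs cocycle $C_{F-\d_F,\xi}$. For two basepoints $p,q$, one compares $e^{\int_q^{\c q}(F-s)}$ with $e^{\int_p^{\c p}(F-s)}$: the difference of the two path integrals, as $\c q\to\xi$, should converge to $C_{F-s,\xi}(q,p) = \lim_{z\to\xi}\bigl(\int_q^z F - \int_p^z F\bigr) - s\,\b_\xi(q,p)$, where $\b_\xi$ is the Busemann function. This is exactly where bounded distortion of the potential is needed: one must know that $\int_q^{\c q}F$ is comparable, up to a uniform additive constant, to $\int_p^{\c p}F$ plus the integral of $F$ along the geodesic from $p$ (resp.\ $q$) toward $\xi$, with the error controlled along the relevant orbit segments. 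In negative curvature this is the usual Bowen/H\"older estimate; here it only holds along ``uniformly regular'' orbits. So the heart of the argument is: \emph{the limit measure $\mu_{F,q}$ gives full mass to the core limit set $\L_c(\C)$ under Condition (A), or to the regular radial limit set $\L_r(\C)$ under Condition (B)}, and on those sets the bounded distortion estimates established earlier in the paper (via Pliss times in the first case, via constancy of $F$ near $\mathrm{Sing}$ in the second) apply. Concretely, under Condition (A) one shows that the mass of $\mu_{F,q}^s$ carried by group elements $\c q$ whose geodesic to $q$ fails to be ``good'' (i.e.\ does not stay in the core) is a vanishing fraction as $s\downarrow\d_F$, using the lower bound $\sum_{n-1<d(o,\c o)\le n}e^{\int_o^{\c o}F}\ge Ce^{n\d_F}$ to dominate the contribution of the bad part; under Condition (B) one runs the analogous shadowing/recurrence argument for the regular radial set. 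Once the measure is concentrated on a set where bounded distortion holds, the cocycle identity (2) for a.e.\ $\xi$ follows by passing to the limit in the explicit ratio of Poincar\'e weights, exactly as in \cite[Ch.~3--4]{PPS}.

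I would organize the write-up as: (i) convergence/divergence of $\PPP_{F,s}$ and construction of $\mu_{F,q}$ by Patterson's method; (ii) equivariance (1); (iii) the key lemma that $\mu_{F,q}$ is supported on $\L_c(\C)$ (Condition (A)) resp.\ on $\L_r(\C)$ (Condition (B)); (iv) bounded distortion on that support, yielding (2). Steps (i), (ii) are routine. The main obstacle is step (iii): showing the ``bad'' part of the Poincar\'e series is asymptotically negligible. Under Condition (A) this is driven by the matching upper bound on the full series versus the explicit lower bound in the hypothesis, combined with an upper estimate on the weighted count of group elements whose geodesics spend a definite fraction of time outside regular regions --- this last estimate is where $P(\mathrm{Sing},F)<P(F)$ enters decisively, since orbits that linger near $\mathrm{Sing}$ contribute with exponential rate at most $P(\mathrm{Sing},F)<\d_F$. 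Under Condition (B) the obstacle is instead to propagate the constancy of $F$ near the singular set into a genuine shadowing/bounded-distortion statement along regular radial geodesics; this requires the earlier structural results on regular compact regions and the behavior of $F$-integrals there. I expect the Condition (A) argument to be cleaner, essentially a counting dichotomy, while Condition (B) needs the more delicate geometric input already developed in the preceding sections.
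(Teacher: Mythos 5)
Your Condition (A) branch follows essentially the same route as the paper: construct $\mu_{F,o}$ by Patterson's method, show $\mu_{F,o}((\L_c(\C))^c)=0$ by bounding the weighted count of orbit segments with long prefix/suffix in $[\P]\cup[\SSS]$ (this is where $P(\text{Sing},F)<P(F)$ enters, via $P([\P]\cup[\SSS],F)<P(F)$) against the normalizing denominator, whose required lower bound is exactly Condition (A); then prove the cocycle identity on $\L_c(\C)$ and extend to all basepoints. One organizational caveat: for $q\notin\C o$ the paper does not take a weak$^*$ limit of a Poincar\'e series based at $q$ (your $\PPP_{F,s}(q)$, whose atoms sit on $\C q$), but defines $\mu_{F,q}$ directly by $d\mu_{F,q}(\xi)=e^{-C_{F-\d_F,\xi}(q,o)}d\mu_{F,o}(\xi)$. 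Comparing independently constructed limits at different basepoints would itself require the distortion estimates you are in the middle of proving, so the order of steps is not interchangeable.

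The genuine gap is in your Condition (B) branch. You propose to show that the limit measure gives full mass to the regular radial limit set $\L_r(\C)$ and then invoke bounded distortion on that set. The paper neither proves nor needs such a full-measure statement under Condition (B), and your sketch supplies no engine for it: the negligibility argument under Condition (A) leans on the explicit lower bound $\sum_{n-1<d(o,\c o)\le n}e^{\int_o^{\c o}F}\ge Ce^{n\d_F}$ for the denominator, which is simply unavailable under Condition (B). What Condition (B) actually buys is the opposite move: since $F$ is constant near $\text{Sing}$, a geodesic ray that eventually avoids all translates of the regular compact set $K_\l$ must, by Lemma \ref{nhdsing}, eventually stay in that neighborhood, so $F$ is eventually constant along it and the Gibbs cocycle converges trivially there; combining this with the contraction estimates valid on $\L_r^\l(\C)$ (and a subdivision of the stable leaf into short pieces in the mixed case) shows that $C_{F-\d_F,\xi}(q,p)$ is well defined and the ratio of Poincar\'e weights converges for \emph{every} $\xi\in\pX$ (Lemma \ref{cocyle2} and Proposition \ref{Busemann3}). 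In short, under Condition (B) you must extend the distortion control to all of $\pX$ rather than shrink the support to the good set; as written, your step (iii) for Condition (B) cannot be carried out.
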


Since the Busemann cocycle $C_{F-\d_F,\xi}(q,p)$ is defined almost everywhere, we can define a measure $\bar\mu_F$ on $\p^2X$, the set of pairs $(\xi,\eta)\in\pX\times \pX$ which can be connected by a geodesic $c_v$, as follows.
$$d \bar \mu_{F}(\xi,\eta) = e^{C_{F\circ \iota-\d_F,\xi}(o, \pi(v))+C_{F-\d_F,\eta}(o, \pi(v))}d\mu_{F\circ \iota,o}(\xi) d\mu_{F,o}(\eta).$$
Here $\iota: SM\to SM, v\mapsto -v$ is the flip map. It is not evident that the density above is bounded, so $\bar\mu_F$ might not be a Radon measure. Our strategy is to restrict $\bar\mu_F$ first to a subset with ``uniformly bounded'' density and then to a ``uniformly regular'' subset of $\p^2X$ to get Radon measures. Such a $\C$-invariant Radon measure induces a $g^t$-invariant probability measure $\mu_F$ on $SM$.
\begin{thmm}\label{es}
Let $M$ be a closed rank one manifold of nonpositive curvature, $X$ its universal cover and $F:SM\to \RR$ a H\"{o}lder continuous potential satisfying $P(\text{Sing},F)<P(F)$. Assume that there exists a family of $\d_F$-dimensional Busemann density $\{\mu_{F,q}\}_{q\in X}$ on $\pX$. Then $\mu_F$ is the unique equilibrium state for $F$.
\end{thmm}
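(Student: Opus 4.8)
The plan is to show that the probability measure $\mu_F$ on $SM$ obtained from the Patterson--Sullivan measures satisfies the two defining properties of the unique equilibrium state: it is $g^t$-invariant (this is built into the construction, since $\bar\mu_F$ on $\p^2 X$ is $\C$-invariant and the flow direction contributes a factor $dt$ which is flow-invariant), and it realizes the variational principle, i.e. $h_{\mu_F}(g^1) + \int F\, d\mu_F = P(F)$. By Theorem~\ref{bcft}, once we know $\mu_F$ is an equilibrium state it is automatically \emph{the} unique one, so the entire burden is to verify this entropy-plus-integral identity, or equivalently to establish the Gibbs-type estimates for $\mu_F$ that force it to be the equilibrium state. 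First I would fix a small flow box / Bowen ball picture: for $v\in SM$ lifted to $X$ with endpoints $(\xi,\eta)=(v^-,v^+)\in\p^2X$, the local product coordinates $(\xi,\eta,t)$ identify a neighborhood of $v$ with (piece of $\pX$)\,$\times$\,(piece of $\pX$)\,$\times\,\RR$, and in these coordinates $\mu_F$ is (up to the Busemann cocycle factors appearing in the definition of $\bar\mu_F$) the product $\mu_{F\circ\iota,o}\times\mu_{F,o}\times\mathrm{Leb}$. This is precisely the geometric local product structure that the paper advertises as the new ingredient.

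The core computation is then to estimate the $\mu_F$-measure of a dynamical (Bowen) ball $B(v,r,T)=\{w : d(g^t w, g^t v)<r,\ 0\le t\le T\}$. Using the product coordinates and the $\d_F$-dimensional Busemann density property (property (2) of Theorem~\ref{ps}), the shadow of such a ball on the forward boundary has $\mu_{F,o}$-measure comparable to $\exp\big(\int_o^{g^T\tilde v\text{-footpoint}} (F-\d_F)\big)$, i.e. to $\exp\big(\int_0^T F(g^t v)\,dt - \d_F T\big)$, up to a multiplicative constant depending only on $r$; similarly the backward shadow contributes an $O(1)$ factor (it does not grow with $T$ because only the forward orbit is being tracked in the relevant coordinate, while the flow-time coordinate contributes the length $\approx 2r$). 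Combining and invoking Proposition~\ref{pro9} to replace $\d_F$ by $P(F)$, one gets
\begin{equation*}
  C^{-1}\, e^{\int_0^T F(g^t v)\,dt - P(F)\,T} \ \le\ \mu_F\big(B(v,r,T)\big)\ \le\ C\, e^{\int_0^T F(g^t v)\,dt - P(F)\,T}
\end{equation*}
for all $v$ ranging over the ``uniformly regular'' set carrying $\mu_F$ and all $T>0$, with $C=C(r)$. This is exactly the Gibbs property with respect to the normalized potential $F-P(F)$. From a two-sided Gibbs estimate of this form, a standard argument (a covering/entropy-distribution argument in the spirit of the Brin--Katok formula, or the classical derivation that a Gibbs measure for an expansive flow with the specification-type decomposition of \cite{BCFT} is an equilibrium state) yields $h_{\mu_F}(g^1)+\int F\,d\mu_F=P(F)$, and hence $\mu_F$ is an equilibrium state, thus \emph{the} equilibrium state by Theorem~\ref{bcft}. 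Uniqueness also lets us conclude ergodicity and the other properties for free.

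The main obstacle, and where real care is needed, is that the Bowen-ball estimate above is clean only on the ``uniformly regular'' part of $SM$ — along orbit segments that spend enough time in the regular compact regions so that the bounded-distortion (Bowen) property of $F$ is available and the shadow lemma for the Busemann densities applies with uniform constants. Near the singular set the distortion control degrades, the local product coordinates can fail to be bi-Lipschitz with uniform constants, and the shadows of Bowen balls need not have the clean exponential measure above. So the argument must be set up to only ever integrate over the uniformly regular subset on which $\mu_F$ is supported (by construction $\mu_F$ gives full mass to such a set, coming from the restriction of $\bar\mu_F$ to a ``uniformly bounded density'' and then ``uniformly regular'' subset of $\p^2X$ described before the statement), and then to run the variational-principle verification using the Katok-style entropy estimates only with these restricted covers. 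Handling the boundary effects — controlling the measure of the ``bad'' orbit segments and showing they do not affect the entropy computation — relies on $P(\mathrm{Sing},F)<P(F)$ together with the decomposition of \cite{BCFT}, and I expect this bookkeeping (rather than any single slick inequality) to be the technical heart of the proof. A secondary subtlety is proving that $\mu_F$ is not the zero measure and is a genuine probability measure, i.e. that the restrictions used to make $\bar\mu_F$ Radon still leave positive total mass; this should follow from Condition~(A) or~(B) via the same regular-recurrence estimates used in Theorem~\ref{ps}.
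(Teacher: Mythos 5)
Your overall architecture (local product coordinates, measure of Bowen balls, variational principle, then uniqueness from Theorem~\ref{bcft}) is the right frame, but the central estimate you rely on is precisely the one that is not available here, and the paper is explicit about this. You assert a \emph{two-sided} Gibbs bound
$C^{-1}e^{\int_0^T F(g^tv)dt-P(F)T}\le \mu_F(B(v,r,T))\le C e^{\int_0^T F(g^tv)dt-P(F)T}$
with $C=C(r)$. The lower bound would require a genuine shadow lemma for the Busemann density, i.e.\ a lower bound on $\mu_{F,o}$ of shadows of balls, uniformly in $T$. In this nonuniformly hyperbolic setting no such lower bound is proved (the cocycle $C_{F-\d_F,\xi}$ is only defined and only boundedly distorted along uniformly regular directions, and the measure of a shadow can a priori be much smaller than the Gibbs value when the orbit segment spends time near $\mathrm{Sing}$). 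The paper proves only the \emph{upper} half of the shadow lemma (Proposition~\ref{alph}), and even that only for $\xi,\eta$ restricted to the sets $\L_{k,\l,N}$ of uniformly recurrent and regular vectors, with a multiplicative error $e^{\frac{8k-2}{k(k-1)}d(p,x)\|F\|}$ in the exponent that is \emph{not} a constant in $T$ and is only removed by letting $k\to\infty$ at the very end. So your constant $C(r)$ and your two-sidedness are both unobtainable as stated, and the "standard argument that a Gibbs measure is an equilibrium state" has nothing to run on.

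The fix — and the route the paper actually takes — is to observe that the upper bound alone suffices: an upper bound on $\mu_F(B_n(v,\e)\cap\L_{k,\l,N})$ gives a lower bound on the number of Bowen balls needed to cover a set of large measure, hence a lower bound on $h_{\mu_F}(g^1)$ via Katok's entropy formula, and combined with Birkhoff averaging of $F$ and Proposition~\ref{pro9} ($\d_F=P(F)$) this yields $h_{\mu_F}(g^1)+\int F\,d\mu_F\ge P(F)$; the reverse inequality is free from the variational principle. But this route requires the \emph{ergodicity} of $\mu_F$ as an input (Katok's formula and the Birkhoff limit $\frac1n\int_0^nF(g^tw)dt\to\int F\,d\mu_F$ both use it), and ergodicity must therefore be established beforehand by a Hopf-type argument using the product structure of $\bar\mu_F$ — it cannot be deduced "for free" from uniqueness at the end, as you propose, without making the argument circular. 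Your secondary remarks (nontriviality of $\mu_F$, restriction to the uniformly regular set, role of $P(\mathrm{Sing},F)<P(F)$) are consistent with the paper, but the missing lower Gibbs bound and the misplaced ergodicity step are genuine gaps.
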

The main difficulty in the proof of Theorem \ref{es} is the absence of classical shadow lemma for Busemann density $\{\mu_{F,q}\}_{q\in X}$. Nevertheless, we obtain one inequality in the shadow lemma over uniformly recurrent and regular vectors, which is enough for us to estimate the measure of Bowen balls from above. The proof of Theorem \ref{es} is completed by Katok entropy formula and the ergodicity of $\mu_F$.

As a first application of Patterson-Sullivan construction of equilibrium states, we prove that the equilibrium states are Bernoulli. The result has been proved recently using symbolic coding \cite[Corollary 1.3]{ALP}. We provide a more direct and geometric proof using Ornstein-Weiss argument \cite{OW}.
\begin{thmm}\label{bernoulli}
Let $M$ be a closed rank one manifold of nonpositive curvature, $X$ its universal cover and $F:SM\to \RR$ a H\"{o}lder continuous potential satisfying $P(\text{Sing},F)<P(F)$. Assume that there exists a family of $\d_F$-dimensional Busemann density $\{\mu_{F,q}\}_{q\in X}$ on $\pX$. Then the unique equilibrium state $\mu_F$ is Bernoulli.
\end{thmm}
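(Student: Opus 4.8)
The plan is to upgrade the Kolmogorov property of $\mu_F$ — known from Call–Thompson \cite{CT}, and in any case recoverable from the Patterson–Sullivan construction together with mixing — to the Bernoulli property through the Ornstein–Weiss machinery \cite{OW}, the essential new input being the geometric local product structure of $\mu_F$ provided by Theorems \ref{ps} and \ref{es}. It suffices to prove that the time-$t_0$ map $g^{t_0}$ is a Bernoulli automorphism for some $t_0\neq 0$: since $\mu_F$ is ergodic and $K$, this makes $\CG$ a Bernoulli flow. So I would fix $t_0$ and pick a finite generating partition $\mathcal{Q}$ for $(g^{t_0},\mu_F)$ with $\mu_F$-null atom boundaries, subordinate to a flow-box cover adapted to the product structure below; such $\mathcal{Q}$ exists because $(g^{t_0},\mu_F)$ has finite entropy, $\mu_F$ is nonatomic, and $\mu_F$ assigns no mass to the ($\mu_F$-null) irregular set. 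By the Ornstein–Weiss criterion (very weak Bernoulli $\Rightarrow$ Bernoulli) it then remains to verify that $\mathcal{Q}$ is very weak Bernoulli.

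First I would make the product structure explicit. The product-type formula for $\bar\mu_F$ on $\p^2X$ in terms of $d\mu_{F\circ\iota,o}(\xi)\,d\mu_{F,o}(\eta)$, pushed to $SM$ and combined with the flow coordinate, shows that on any flow box consisting of uniformly recurrent and regular vectors $\mu_F$ disintegrates as a product: a measure along the strong stable plaques $W^{ss}(v)$ (driven by the $\xi$-factor $\mu_{F\circ\iota,o}$), a measure along the strong unstable plaques $W^{su}(v)$ (driven by the $\eta$-factor $\mu_{F,o}$), and Lebesgue measure in the flow direction, all with continuous positive density. The point to extract is that the conditional measures of $\mu_F$ on strong stable (resp.\ unstable) manifolds are, up to a uniformly bounded Radon–Nikodym factor, independent of the transverse coordinate — this transverse independence is exactly what the Ornstein–Weiss argument consumes. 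I would also record that $\mu_F$ is hyperbolic (Theorem \ref{bcft}) and concentrated on $\URR$, where by construction the contraction along the \emph{global} stable manifolds is uniformly exponential on Pesin-type sublevel sets $\URR_\ell$; this replaces the uniform hyperbolicity that Ornstein–Weiss exploit in negative curvature.

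The verification of very weak Bernoulli is the core, run in the nonuniformly hyperbolic setting. Conditioning on a long past $\bigvee_{j\ge 1}g^{jt_0}\mathcal{Q}$ of a typical vector $v$ localizes $v$, up to an error decaying in the past length, to a single strong unstable plaque through $v$ together with a flow coordinate; symmetrically, the $\mathcal{Q}$-name $(\mathcal{Q}(g^{it_0}v))_{i=0}^{n-1}$ of the next $n$ symbols depends, up to an error exponentially small in the hyperbolicity exponents and the distances measured along the global stable manifold, only on the strong stable plaque and flow coordinate of $v$. By the transverse independence from the previous step, on a flow box the conditional measure along strong stable plaques is — up to bounded density — the same over any strong unstable plaque, so the conditional distribution of the next $n$ symbols given the past is $\bar d$-close (in Ornstein's $\bar d$-metric) to its average over pasts, i.e.\ to the unconditional distribution. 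The residual errors get absorbed using the $K$-property (to make the localization of the past valid on a set of pasts of measure close to $1$) and the $\mu_F$-null boundary of $\mathcal{Q}$ (so that nearby vectors share most of their $\mathcal{Q}$-names); uniformity is obtained by first restricting to Pesin sets $\URR_\ell$ of measure close to $1$ on which contraction rates and plaque sizes are uniform, choosing the conditioning-past length and $\ell$ suitably, and only then sending $n\to\infty$. This gives the very weak Bernoulli inequality, hence $(g^{t_0},\mu_F)$ is Bernoulli and $\CG$ is Bernoulli for $\mu_F$.

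The hard part is precisely this last step in the absence of uniform hyperbolicity: one must run the Ornstein–Weiss matching argument over $\URR$, crucially using that the contraction is exponential along the \emph{global} stable manifolds rather than only Pesin-local ones, and that the geometric local product structure genuinely delivers the transverse independence of the strong stable and unstable conditional measures. Handling the singular set and flat strips — which carry no $\mu_F$-mass yet are accumulated by regular orbits — while keeping all constants uniform on large Pesin sets is where the technical effort concentrates.
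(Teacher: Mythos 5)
Your overall strategy coincides with the paper's: both upgrade the Kolmogorov property of $\mu_F$ (from \cite{CT}) to Bernoulli via the Ornstein--Weiss/Chernov--Haskell machinery, with the geometric local product structure of $\mu_F$ from the Patterson--Sullivan construction supplying the missing ingredient. The difference is in where the work is placed, and as written your version has a gap at exactly the decisive step. The paper does \emph{not} rerun the very weak Bernoulli matching argument over $\URR$; it invokes the Chernov--Haskell theorem, which applies to hyperbolic measures with local product structure and reduces everything to a purely measure-theoretic condition: the existence, for every $\e>0$, of an $\e$-regular covering by rectangles $R$ (built from Pesin sets) on which the product measure $\mu^u_z\times\tilde\mu^{cs}_z$ agrees with $\mu_F$ up to $\e$ and the conditionals are non-atomic. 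Your plan instead proposes to carry out the $\bar d$-matching of conditional name distributions by hand in the nonuniformly hyperbolic setting, and you yourself flag this as ``the hard part''; that step is left as a sketch, and it is precisely the content that Chernov--Haskell already packages. Without either citing that reduction or actually executing the matching argument, the proof is incomplete.

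Moreover, the one substantive verification that remains after the reduction --- and which constitutes the actual content of the paper's Lemma on $\e$-regular coverings --- is asserted rather than proved in your proposal. You claim the stable/unstable conditionals are transversally independent ``up to a uniformly bounded Radon--Nikodym factor,'' but bounded is not enough: Definition of an $\e$-regular covering requires the weak-stable holonomy Jacobian
$\bigl|\tfrac{d(\pi_{yx}^{cs})_*\mu^u_y}{d\mu^u_x}(w)-1\bigr|<\e$
on most of $R$, i.e.\ the ratio $\b_F(y^-,\eta_w)/\b_F(x^-,\eta_w)$ must tend to $1$ as $\diam R\to 0$. This is where the quantitative input of the paper enters: the Gibbs cocycles $C_{F\circ\iota-\d_F,\cdot}$ and $C_{F-\d_F,\cdot}$ are shown to vary continuously on $\BBB^{\l_0}\times\BBB^{\l_0}$ using the bounded-distortion estimates along uniformly recurrent and regular vectors (Lemma \ref{Bowen}), which is only available because the density $\b_F$ was restricted to such a set in the construction of $\mu_F$. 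You should also justify non-atomicity of the conditional measures $\mu^u_x$ (the paper does this via $h_{\mu_F}(g^1)>0$ and the increasing-partition entropy formula); non-atomicity of $\mu_F$ itself does not suffice.
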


As another application, we obtain the following equidistribution result extending the original one by Bowen \cite{Bo33}. Since one free-homotopic class may contain uncountably many closed geodesics, we will pick one geodesic from each class.
\begin{thmm}\label{equi0}
Let $M$ be a closed rank one manifold of nonpositive curvature and $F:SM\to \RR$ a H\"{o}lder continuous potential satisfying $P(\text{Sing},F)<P(F)$. Assume that there exists a family of $\d_F$-dimensional Busemann density $\{\mu_{F,q}\}_{q\in X}$ on $\pX$. Suppose that $\e\in (0, \inj (M)/2)$ is fixed where $\inj (M)$ is the injectivity radius of $M$. For $t > 0$, let $C(t)$ be any maximal set of pairwise non-free-homotopic
closed geodesics with lengths in $(t-\e, t]$, and define the measure
$$\nu_{F,t}:=\frac{1}{\sum_{c\in C(t)}e^{\Per_F(c)}}\sum_{c\in C(t)}e^{\Per_F(c)}\frac{Leb_c}{t}$$
where $Leb_c$ is the Lebesgue measure along the curve $\dot c$ in the unit
tangent bundle $SM$, and $\Per_F(c):=\int_0^{T}F(\dot c(s))ds$ with $T$ being the length of $c$.

Then the measures $\nu_{F,t}$ converge in the
weak$^*$ topology to the unique equilibrium state $\mu_F$ as $t\to \infty$.
\end{thmm}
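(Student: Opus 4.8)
We outline a proof strategy following the classical route from mixing to equidistribution of closed orbits (Bowen \cite{Bo33}, Roblin, Paulin--Pollicott--Schapira \cite{PPS}), with all counting and measure estimates performed over uniformly regular vectors.

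\textbf{Reduction to flow boxes.} First I would reduce the assertion to test functions: weak$^*$ convergence $\nu_{F,t}\to\mu_F$ is equivalent to $\nu_{F,t}(\phi)\to\mu_F(\phi)$ for $\phi$ ranging over a family dense in $C(SM)$, and a routine approximation argument lets one take $\phi=\mathbf 1_B$ for small flow boxes $B\subset SM$ with $\mu_F(\partial B)=0$ (such boxes exist since $\mu_F$ is a fixed Radon measure). Writing $\ell(c)$ for the length of $c$, the constraint $\ell(c)\in(t-\e,t]$ gives $\tfrac1t\int_0^{\ell(c)}\mathbf 1_B(\dot c(s))\,ds=(1+o(1))\cdot\tfrac1{\ell(c)}\int_0^{\ell(c)}\mathbf 1_B(\dot c(s))\,ds$, so the claim becomes
\[
\frac{\sum_{c\in C(t)}e^{\Per_F(c)}\,\tfrac1{\ell(c)}\int_0^{\ell(c)}\mathbf 1_B(\dot c(s))\,ds}{\sum_{c\in C(t)}e^{\Per_F(c)}}\ \longrightarrow\ \mu_F(B).
\]
Independence of the choice of the maximal set $C(t)$ is visible here: a free homotopy class containing more than one closed geodesic carries a flat strip, hence consists of singular geodesics, so by $P(\text{Sing},F)<P(F)$ and Proposition \ref{pro9} the total $e^{\Per_F}$-weight of such classes is $O(e^{(P(\text{Sing},F)+o(1))t})=o(e^{\delta_F t})$ and does not affect the normalized limit; the same bound handles classes that are singular for any other reason.

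\textbf{Weighted orbital counting from mixing.} The heart of the argument is a weighted orbital asymptotic extracted from the mixing of $\mu_F$. Lifting to the universal cover $X$, free homotopy classes correspond to conjugacy classes of axial elements $\gamma\in\Gamma$, the length of the associated closed geodesic being the translation length of $\gamma$; lifting $B$ to $\tilde B\subset SX$, the integral $\int_0^{\ell(c)}\mathbf 1_B(\dot c(s))\,ds$ equals, up to a factor comparable to $\diam B$, the number of $\Gamma$-translates $h\tilde B$ met by the oriented axis of $\gamma$ within one period. The classical device is to encode ``the axis meets $h\tilde B$'' by the condition that the endpoints $\gamma^{-},\gamma^{+}$ lie in the shadow-boxes $h\tilde B^{\mp}\subset\pX$ attached to $\tilde B$, to split the resulting sum over $\Gamma\times\Gamma$, and --- using the $\Gamma$-invariance of $\bar\mu_F$ and the product formula $d\bar\mu_F(\xi,\eta)=e^{C_{F\circ\iota-\d_F,\xi}(o,\pi(v))+C_{F-\d_F,\eta}(o,\pi(v))}d\mu_{F\circ\iota,o}(\xi)\,d\mu_{F,o}(\eta)$ --- to recognize it as a Riemann-type sum approximating the correlations $\int_{SM}\mathbf 1_B\,(\mathbf 1_B\circ g^{s})\,d\mu_F$ over $s$ in the length window $(t-\e,t]$. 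The Kolmogorov, hence mixing, property of $\mu_F$ proved by Call--Thompson \cite{CT}, in the form $\int_{SM}\psi\,(\varphi\circ g^{s})\,d\mu_F\to\mu_F(\psi)\mu_F(\varphi)$, then yields that the numerator above, with $\mathbf 1_B$, is asymptotic to $\mu_F(B)$ times the denominator, Proposition \ref{pro9} identifying the common exponential rate as $e^{\delta_F t}$. Together with the first step this proves the theorem.

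\textbf{Localization and the main obstacle.} The delicate point --- and the principal obstacle --- is that in nonpositive curvature neither the shadow lemma nor the bounded distortion of $F$ holds globally, so the unfolding above cannot be run over all of $\Gamma$. The remedy, consistent with the rest of the paper, is to carry out the counting only over the core limit set (equivalently over the uniformly recurrent and regular vectors $\URR$), where bounded distortion of $\int F$ and the one-sided shadow-type estimate established for Theorem \ref{es} are available, and to absorb the discarded contributions --- closed geodesics spending a definite proportion of their length near $\text{Sing}$, orbit segments in the non-regular complement, and the flat classes above --- into an error term that is $o(e^{\delta_F t})$, again by the pressure gap $P(\text{Sing},F)<P(F)$ with Proposition \ref{pro9} and the hyperbolicity and ergodicity of $\mu_F$ from Theorem \ref{es}. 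Producing matching upper and lower bounds for the weighted orbital count restricted to the regular part, and in particular handling the narrow window $(t-\e,t]$ and the multiplicities from flat strips with only a one-sided shadow inequality in hand, is where the real work lies; once that asymptotic is in place, the passage back through flow boxes and the density argument in $C(SM)$ are routine, and as a consistency check the limit agrees with the equidistribution of weighted regular closed geodesics in Theorem \ref{bcft}.
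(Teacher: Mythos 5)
Your proposal has a genuine gap at its central step. You propose to prove $\nu_{F,t}(B)\to\mu_F(B)$ directly, by unfolding the weighted count of closed geodesics through a flow box $B$ and matching it against the correlation integrals supplied by mixing. In this nonpositively curved setting that two-sided localized asymptotic is precisely what is \emph{not} available: the shadow lemma is only one-sided, the counting can only be carried out over the uniformly regular part $\L$ of the box, and the resulting estimates (Propositions \ref{upper} and \ref{lower1}) inevitably carry a factor $\nu_{F,t}(\underline B)$ on the right-hand side --- so trying to extract $\nu_{F,t}(B)\to\mu_F(B)$ from them is circular. Indeed, the paper's Proposition \ref{sumup} and Remark \ref{CB1} show that the direct route yields only two-sided bounds with a constant $C_B=\limsup_t \mu_F(\tilde{\underline B}^\e_\L)/\nu_{F,t}(\tilde{\underline B}^\e_\L)$ that may be $+\infty$; this is exactly why Theorem D is stated as asymptotic inequalities rather than a Margulis asymptotic. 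You acknowledge that "producing matching upper and lower bounds \dots is where the real work lies," but that work is the whole theorem, and it is doubtful it can be completed along the route you describe.

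The paper's actual proof is much softer and bypasses this entirely. It uses only three ingredients: (i) the initial vectors $\{\dot{\underline c}(0):c\in C(t)\}$ form a $(t,\e)$-separated set; (ii) a \emph{lower} bound $\liminf_{t\to\infty}\frac1t\log\sum_{c\in C(t)}e^{\Per_F(c)}\ge\d_F=P(F)$, obtained from Propositions \ref{lower1} and \ref{asymptotic} together with the trivial bound $\nu_{F,t}(\underline B_\theta^\a)\le 1$; and (iii) Lemma \ref{equilemma} (Bowen's half of the variational principle), which says that any weak$^*$ limit of measures equidistributed over a $(t,\e)$-separated set whose weighted cardinality grows at rate $\ge P(F)$ must be an equilibrium state --- whence, by uniqueness, the limit is $\mu_F$. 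No two-sided localized count, no upper bound on $\nu_{F,t}(B)$, and no explicit matching against $\mu_F(B)$ is needed. Your proposal does not mention this mechanism, and without it the argument does not close.
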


Last but not the least, we obtain the following asymptotic inequalities on weighted closed geodesics.
\begin{thmm}\label{margulis}
Let $M$ be a closed rank one manifold of nonpositive curvature, and $F:SM\to \RR$ a H\"{o}lder continuous potential satisfying $P(\text{Sing},F)<P(F)$ and $P(F)>0$. Assume that there exists a family of $\d_F$-dimensional Busemann density $\{\mu_{F,q}\}_{q\in X}$ on $\pX$. Let $P(t)$ denote the set of free-homotopy classes containing a closed geodesic with length at most $t$. Then\footnote{The notation $f\lesssim g$ (resp. $f\gtrsim g$) means $\limsup_{t\to \infty}\frac{f(t)}{g(t)}\le 1$ (rsp. $\liminf_{t\to \infty}\frac{f(t)}{g(t)}\ge 1$).}
\begin{equation*}
\begin{aligned}
\sum_{c\in P(t)}e^{\Per_F(c)}&\lesssim C\frac{e^{\d_Ft}}{\d_Ft},\\
\sum_{c\in P(t)}e^{\Per_F(c)}&\gtrsim \frac{e^{\d_Ft}}{\d_Ft}
\end{aligned}
\end{equation*}
where $1\le C\le +\infty$ (see Remark \ref{CB1}).
\end{thmm}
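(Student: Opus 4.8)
The plan is to run the Margulis--Roblin dynamical counting argument in its weighted form, as developed in negative curvature by Paulin, Pollicott and Schapira \cite{PPS}, together with the nonpositive-curvature adaptations from Ricks \cite{Ri} and \cite{Wu2}. Throughout one uses that $P(F)=\d_F$ (Proposition \ref{pro9}); that the standing hypothesis supplies, via Theorem \ref{es}, the unique equilibrium state $\mu_F$, a mixing probability measure on $SM$ (mixing following for instance from the Bernoulli property, Theorem \ref{bernoulli}); and one works with the $\Gamma$-invariant lift $\widetilde\mu_F$ of $\mu_F$ to $SX$, together with the Hopf parametrization $SX\cong\p^2X\times\RR$, under which $\widetilde\mu_F$ disintegrates over the measure $\bar\mu_F$ on $\p^2X$ that couples $\mu_{F,o}$ and $\mu_{F\circ\iota,o}$.

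First I would carry out the standard reductions. A free-homotopy class is a conjugacy class $[\c]\subset\Gamma$; the minimal length of a closed geodesic in the class is the translation length $\ell(\c)=\inf_{x\in X}d(x,\c x)$, and, choosing the representative $c$ on an axis of $\c$, one has $\Per_F(c)=\int_x^{\c x}\widetilde F$ for $x$ on that axis, which is compared to $\int_o^{\c o}\widetilde F$ up to a bounded error when the axis is regular. Since $P(\text{Sing},F)<\d_F$, the conjugacy classes with singular axes (contained in flat strips) contribute a sum of exponential growth rate strictly below $\d_F$, hence are negligible for both asymptotics; the non-primitive classes $[\c]=[g^k]$, $k\ge2$, contribute a lower-order term by the usual reduction based on $P(kF)<kP(F)$ for $k\ge2$ (which here follows from uniqueness and positive entropy of $\mu_F$: $P(kF)\le kP(F)-(k-1)h_{\mu^*}$ for a maximizing measure $\mu^*$, and $h_{\mu^*}>0$, else $\mu^*$ would be a zero-entropy equilibrium state for $F$). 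Hence it suffices to control $\theta_F(t):=\sum_{[\c]\text{ prim., reg.},\ \ell(\c)\le t}\ell(\c)\,e^{\Per_F(c)}$; since $\d_F=P(F)>0$, an Abel summation converts a bound $\theta_F(t)\sim c\,e^{\d_F t}/\d_F$ into $\sum_{c\in P(t)}e^{\Per_F(c)}\sim c\,e^{\d_F t}/(\d_F t)$ (the error integral $\int^t e^{\d_F s}s^{-2}\,ds$ being of smaller order than $e^{\d_F t}/t$).

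For $\theta_F$ I would run the Roblin mechanism. Fix a uniformly recurrent and regular vector $v_0$ with Hopf coordinates $(\xi_0,\eta_0,0)$, and small neighborhoods $U\ni\xi_0$, $V\ni\eta_0$ in $\pX$. The primitive conjugacy classes whose axis has endpoints in $(U,V)$ and translation length in a window $(t-\ep,t]$ are, after unfolding over $\Gamma$ and passing to $SX$, enumerated by an integral against $\widetilde\mu_F$ over the $\c$-translates of $\{(\xi,\eta,r):\xi\in U,\ \eta\in V,\ r\in(t-\ep,t]\}$; summing a correlation $s\mapsto\int_{SM}\psi\,(\psi\circ g^s)\,d\mu_F$ over $s\in(t-\ep,t]$ and unfolding on the universal cover expresses $\theta_F(t)-\theta_F(t-\ep)$ in these terms. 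Mixing of $\mu_F$ identifies the limit of each correlation, the number of contributing $\c$ has exponential growth rate $\d_F$, and the $\d_F$-conformal transformation rule of the densities under $g^s$ supplies the factor $e^{\d_F s}$; together these give $\theta_F(t)-\theta_F(t-\ep)\sim(1-e^{-\d_F\ep})e^{\d_F t}/\d_F$, up to the one-sided losses discussed next, and telescoping yields $\theta_F(t)\sim e^{\d_F t}/\d_F$. (The equidistribution content of this computation is Theorem \ref{equi0}; here one additionally tracks the un-normalized total mass.)

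Two one-sided losses, peculiar to nonpositive curvature, force inequalities rather than an equivalence: only one inequality in the shadow lemma over $\URR$ is available (as noted after Theorem \ref{es}), and only one-sided control of the Patterson--Sullivan mass of the singular directions. For the lower bound these point the right way, so the regular/uniformly recurrent closed geodesics alone give $\theta_F(t)\gtrsim e^{\d_F t}/\d_F$ with constant $1$, whence $\sum_{c\in P(t)}e^{\Per_F(c)}\gtrsim e^{\d_F t}/(\d_F t)$ (consistent with Theorems \ref{bcft} and \ref{equi0}). For the upper bound the loss is absorbed into the constant $C\in[1,+\infty]$, which is shown to be finite, and equal to $1$, under natural additional hypotheses; see Remark \ref{CB1}. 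The hard part is exactly making the previous paragraph rigorous in the absence of a two-sided shadow lemma and in the presence of flat strips: the singular contribution must be controlled purely through the pressure gap $P(\text{Sing},F)<\d_F$; the over-counting of a closed geodesic of length $\ell$ by an $\ep$-flow box, which is by a factor $\approx\ell/\ep$, must be organized on the regular set where $\mu_F$ has a local product structure; and the weights $e^{\Per_F(c)}$ must be matched to the Busemann cocycle using bounded distortion along regular orbits --- the same device underlying Theorems \ref{ps}--\ref{es}.
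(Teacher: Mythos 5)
Your overall strategy coincides with the paper's: flow boxes in Hopf coordinates, unfolding over $\Gamma$, mixing of $\mu_F$ to count intersection components, the scaling rule for the Busemann density to produce the factor $e^{\Per_F(\gamma)-\delta_F t}$, a separate treatment of non-primitive elements, and a final Riemann/Abel summation converting the window count $C(t)$ into the cumulative count $P(t)$ (the paper does the non-primitive reduction differently, via a no-atoms argument and the counting asymptotics of Proposition \ref{asymptotic} rather than via $P(kF)<kP(F)$, but that is a minor variation). However, two of the points you leave as "the hard part" are exactly where the paper has to do genuinely new work, and your sketch either omits or misattributes them. First, all the multiplicative errors $e^{\pm L(\epsilon,\lambda)}$, $(1+\rho_0)^{\pm 2}$ in Propositions \ref{asymptotic} and \ref{sumup} can only be sent to $1$ if, for a \emph{fixed} regularity level $\lambda$, one can find flow boxes of arbitrarily small size $\epsilon$ in which the uniformly regular rectangle $\Lambda$ occupies a proportion at least $1/(1+\rho_0)$ of the $\mu_F$-mass (condition \eqref{density}). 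Neither the Lebesgue density theorem nor Besicovitch covering applies directly to flow boxes; the paper needs Lemma \ref{partition}, a partition of $\Lambda$ into generalized flow boxes, to produce such a sequence. Your proposal gives no mechanism for this, and without it the argument only yields bounds with constants bounded away from $1$.

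Second, you misidentify the source of the constant $C$. It does not come from the one-sided shadow lemma or from one-sided control of the singular directions (the singular classes never need to be excised in the upper bound: the map $\Theta_\Lambda$ handles all of $C(t)$ at once). It comes from Proposition \ref{upper}: the upper bound divides by $\nu_{F,t}(\tilde{\underline{B}}^{\epsilon}_{\Lambda})$, the mass the periodic-orbit measures give to the saturated rectangle $\tilde{\underline{B}}^{\epsilon}_{\Lambda}$, and this set is closed but not open and need not be a continuity set of $\mu_F$. Weak$^*$ convergence (Theorem \ref{equi0}) therefore only gives $\limsup_t\nu_{F,t}(\tilde{\underline{B}}^{\epsilon}_{\Lambda})\le\mu_F(\tilde{\underline{B}}^{\epsilon}_{\Lambda})$, which is the wrong direction; the ratio $C_B=\limsup_t\mu_F(\tilde{\underline{B}}^{\epsilon}_{\Lambda})/\nu_{F,t}(\tilde{\underline{B}}^{\epsilon}_{\Lambda})$ is exactly the constant $C$, and it is not known to be finite. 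Your claim that $C$ "is shown to be finite" contradicts the statement being proved ($1\le C\le+\infty$) and Remark \ref{CB1}. Relatedly, your intermediate assertion that telescoping yields $\theta_F(t)\sim e^{\delta_F t}/\delta_F$ is an asymptotic equivalence that the paper precisely cannot establish; only the two-sided inequalities of Proposition \ref{sumup} survive.
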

We are expecting $C=1$ so that the Margulis type asymptotic formula holds. Nevertheless, our method provides an effective upper bound which illuminates that the rate of $\nu_{F,t}$ converging to $\mu_F$ on ``uniformly regular'' sets is relevant. By \cite[Proposition 6.4]{BCFT} and using pressure gap condition, there exists a constant $\b>0$ such that 
$$\frac{\b e^{\d_Ft}}{t}\le \sum_{c\in C(t)}e^{\Per_F(c)}\le \b^{-1}e^{\d_Ft}.$$
New bounds are obtained in Proposition \ref{sumup}. The above upper bound also indicates the possibility $C=+\infty$.

In the proof of Theorem \ref{margulis}, we encounter difficulties caused by ``genuine''  nonuniform hyperbolicity, not only from the dynamical structure, but also from distortion of the potential. We consider the rectangles formed by stable/unstable manifolds of ``uniformly regular'' points inside a geometric flow box.  Using local product structure of the rectangle and mixing properties of equilibrium states, we calculate the number of intersections of the rectangle and its images under the geodesic flow. We need that these estimates are uniform with respect to the size of flow box, which can be achieved if the rectangle has a large percentage of measure inside the flow box. As Besicovitch covering lemma or Lebesgue density theorem does not apply directly to flow boxes, instead we construct a partition of ``uniformly regular'' set by flow boxes. In this way, we obtain a sequence of boxes with size shrinking to zero, inside which the rectangle accounts for a large proportion in measure. 

Our weighted counting result also leads to the following consequence, so Condition (A) is a necessary condition for the existence of $\d_F$-dimensional Busemann density.
\begin{CorollaryE1}\label{necessary}
Let $M$ be a closed rank one manifold of nonpositive curvature and $X$ its universal cover. Suppose that $F:SM\to \RR$ is a H\"{o}lder continuous potential satisfying $P(\text{Sing},F)<P(F)$. Then there exists a family of $\d_F$-dimensional Busemann density $\{\mu_{F,q}\}_{q\in X}$ on $\pX$ if and only if Condition (A) holds, that is,
$$\sum_{\c\in \C, n-1< d(o, \c o)\le n}e^{\int_o^{\c o}F}\ge Ce^{n\d_F}, \quad \forall n\in \NN$$
for some constant $C>0$.
\end{CorollaryE1}

The paper is organized as follows. In Section 2, we present some preliminaries on equilibrium states and geodesic flow in nonpositive curvature, and then introduce three types of ``uniformly regular'' points. We establish bounded distortion of the potential and define Busemann cocycles. In Section 3, Patterson-Sullivan measures on the boundary at infinity are constructed under Condition (A) and Condition (B) separately. This proves Theorem \ref{ps}. Some properties such as a half shadow lemma are proved for Patterson-Sullivan measures. In Section 4, we construct a measure using Patterson-Sullivan measures and show that it coincides with the equilibrium state, proving Theorem \ref{es}. In Section 5, Theorem \ref{bernoulli}, that is, the Bernoulli property of equilibrium states is proved. In Section 6, we prove the equidistibution of closed geodesics Theorem \ref{equi0}, weighted counting Theorem \ref{margulis} and Corollary E.1. In the Appendix, we provide proofs of some technical lemmas on uniformly recurrent and regular vectors.

\section{Preliminaries}

\subsection{Equilibrium states in nonpositive curvature}
Suppose that $(M,g)$ is a $C^{\infty}$ closed Riemannian manifold,
where $g$ is a Riemannian metric of nonpositive curvature. Let $\pi: SM\to M$ be the unit tangent bundle over $M$. For each $v\in SM$,
we always denote by $c_{v}: \RR\to M$ the unique geodesic on $M$ satisfying the initial conditions $c_v(0)=\pi(v)$ and $\dot c_v(0)=v$. The geodesic flow $\CG=(g^{t})_{t\in\mathbb{R}}$ is defined as:
\[
g^{t}: SM \rightarrow SM, \qquad v \mapsto \dot c_{v}(t),\ \ \ \ \forall\ t\in \RR .
\]

\subsubsection{Pressure and equilibrium states}
Let $d$ denote the distance function on $M$ induced by Riemannian metric. The Knieper metric $d_K$ on $SM$ is defined by 
$d_K(v,w):=\max\{d(c_v(t),c_w(t)): 0\le t\le 1\}$. Then we define a dynamical metric on $SM$ as
 $$d_t(v,w):=\max_{0\le s\le t}d_K(g^sv, g^sw), \quad \forall v,w\in SM.$$
Given $\e>0$ and $t>0$, the $t$-th \emph{Bowen ball} centered at $v\in SM$ is defined as
 $$B_t(v,\e):=\{w\in SM: d_t(v,w)<\e\}.$$
Let $\mathcal{C}\subset SM\times \mathbb{R}^+$ and $\CC_t:=\{v\in SM: (v,t)\in \CC\}$. $E\subset \mathcal{C}_t$ is \emph{$(t,\e)$-separated}, if for any $(v,t), (w,t)\in E$ with $v\neq w$, one has $d_t(v,w)>\e$. 

A potential $F\in C(SM, \mathbb{R})$ is a continuous real function on $SM$. Given a scale $\e>0$ and time $t>0$, the representative information of $F$ over a $t$-th Bowen ball centered at $v$ is given by
$$
\F_{\e}(v,t):=\sup_{w\in B_t(v,\e)}\int_0^t F(g^sw)ds.
$$
We also write $\F(v,t):=\F_0(v,t)$. The notation with parameter $\e$ follows from \cite{CT16} and will be used in Subsection 3.2.

\begin{definition}
Given $\mathcal{C}\subset SM\times \mathbb{R}^+$ and $t,\delta,\e>0$, the (separated)\textit{ partition function} of $F$ is defined by
$$
\Lambda_t(\mathcal{C},F,\delta,\e):=\sup\Big\{\sum_{(v,t)\in E}e^{\F_{\e}(v,t)}: E\subset \mathcal{C}_t \text{ is } (t,\d)\text{-separated}\Big\}.
$$
Then the \textit{pressure} of $F$ over $\mathcal{C}$ at scale $(\delta,\e)$ is defined by
$$
P(\mathcal{C},F,\delta,\e)=\limsup_{t\to \infty}\frac{1}{t}\log \Lambda_t(\mathcal{C},F,\delta,\e).
$$
When $\e=0$, we simply write $\Lambda_t(\mathcal{C},F,\delta):=\Lambda_t(\mathcal{C},F,\delta, 0)$ and $P(\mathcal{C},F,\delta):=P(\mathcal{C},F,\delta,0)$. Then the \textit{pressure} of $F$ over $\mathcal{C}$ is defined as 
$$P(\mathcal{C},F)=\lim_{\delta\to 0}P(\mathcal{C}, F,\delta).$$ 
Denote $P(Z, F):=P(Z\times \RR^+, F)$ for any $Z\subset SM$. Write $P(F,\delta):=P(SM\times \mathbb{R}^+,F,\delta)$, and the \textit{topological pressure} of $F$ is defined as  
$$P(F):=\lim_{\delta\to 0}P(F,\delta).$$
\end{definition}

\begin{theorem}(Variational principle for Pressure, \cite[Theorem 4.3.7]{FH})
Let $F \in C(SM,\RR)$. Then
\begin{equation*}
\begin{aligned}
P(F)&=\sup\Big\{h_{\mu}(g^1)+\int F d{\mu}: \mu\in \mathcal{M}_{\CG}(SM)\Big\}\\
&=\sup\Big\{h_{\mu}(g^1)+\int F d{\mu}: \mu\in \mathcal{M}_{\CG}^e(SM)\Big\}
\end{aligned}
\end{equation*}
where $\mathcal{M}_{\CG}(SM)$ (resp. $\mathcal{M}_{\CG}^e(SM))$ denotes the set of all probability measures on $SM$ invariant (resp. ergodic) under the geodesic flow, and $h_{\mu}(g^1)$ is the metric entropy of $\mu$ under the geodesic flow.
\end{theorem}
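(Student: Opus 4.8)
The statement is the (Ruelle--Walters) variational principle for the topological pressure of a continuous flow on a compact metric space; the nonpositive-curvature hypotheses play no role, and the plan is to reduce it to the classical discrete-time version for the time-one map $g^1$ and then invoke the standard argument of \cite{FH}. First I would introduce $F_1(v):=\int_0^1 F(g^s v)\,ds\in C(SM,\RR)$ and check that the flow pressure $P(F)$ equals the topological pressure $P(g^1,F_1)$ of the homeomorphism $g^1\colon SM\to SM$. Compactness of $SM$ makes the flow uniformly continuous on $[0,1]\times SM$: for each $\eta>0$ there is $\eta'>0$ with $d_K(v,w)<\eta'\Rightarrow d(g^sv,g^sw)<\eta$ for all $s\in[0,1]$. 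Hence the flow Bowen ball $B_t(v,\e)$ and the $\lfloor t\rfloor$-step Bowen ball of $g^1$ of radius comparable to $\e$ are nested up to a uniform distortion of the scale, and $\F_\e(v,t)$ differs from $\sum_{k=0}^{\lfloor t\rfloor-1}F_1(g^kv)$ by a constant depending only on $\|F\|_\infty$ and the modulus of continuity of $F$. Feeding these two comparisons into the definitions of $\Lambda_t$ and $P(\cdot,F,\d,\e)$ and letting $\d,\e\to0$ yields $P(F)=P(g^1,F_1)$; that the auxiliary scale $\e$ is irrelevant is exactly the argument of \cite{CT16}. This metric/potential comparison is the one genuinely non-formal ingredient, and it is the main obstacle here — everything afterwards is textbook.

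For the discrete principle $P(g^1,F_1)=\sup_{\mu\in\M_{g^1}}\bigl(h_\mu(g^1)+\int F_1\,d\mu\bigr)$ I would run the two classical inequalities. For the lower bound on $P$: fix an invariant $\mu$ and a finite partition $\xi$ with $\mu(\partial\xi)=0$ and diameter less than a prescribed $\d$; apply the Gibbs inequality $\sum_i p_i(-\log p_i+a_i)\le\log\sum_i e^{a_i}$ to $\xi^n:=\bigvee_{k=0}^{n-1}(g^1)^{-k}\xi$ with $a_i$ the supremum of $\sum_{k=0}^{n-1}F_1\circ g^k$ on the $i$-th cell; since each cell of $\xi^n$ sits inside an $(n,\d)$-Bowen ball, the right-hand side is comparable to $\Lambda_n(SM\times\RR^+,F,\d)$, so dividing by $n$, letting $n\to\infty$, refining $\xi$, and then $\d\to0$ gives $h_\mu(g^1)+\int F_1\,d\mu\le P(g^1,F_1)$. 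For the upper bound on $P$: for each $n$ choose a nearly optimal $(n,\d)$-separated set $E_n$, form the measure $\sigma_n$ assigning to $v\in E_n$ mass proportional to $e^{\sum_{k<n}F_1(g^kv)}$, Cesàro-average it along $g^1$ to obtain $\mu_n$, extract a weak$^*$ limit $\mu$ along a subsequence (automatically $g^1$-invariant), and apply the converse Gibbs-type estimate along a partition with $\mu(\partial\xi)=0$ to get $h_\mu(g^1)+\int F_1\,d\mu\ge\limsup_n\tfrac1n\log\Lambda_n(SM\times\RR^+,F,\d)$; then $\d\to0$. Both steps are carried out in detail in \cite{FH}.

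Finally I would pass from $g^1$-invariant to $\CG$-invariant measures and then to ergodic ones. Every $\CG$-invariant measure is $g^1$-invariant, so $\sup_{\M_{\CG}}\le\sup_{\M_{g^1}}$; conversely, for $\mu\in\M_{g^1}$ put $\bar\mu:=\int_0^1(g^s)_*\mu\,ds\in\M_{\CG}$. Since $g^s$ commutes with $g^1$, it is an isomorphism of $(SM,g^1,\mu)$ onto $(SM,g^1,(g^s)_*\mu)$, so every $(g^s)_*\mu$ has entropy $h_\mu(g^1)$ and affinity of the entropy map gives $h_{\bar\mu}(g^1)=h_\mu(g^1)$; moreover $\int F_1\,d\bar\mu=\int_0^1\!\int F_1\circ g^s\,d\mu\,ds=\int F_1\,d\mu$, since $s\mapsto\int F(g^s\cdot)\,d\mu$ is $1$-periodic by $g^1$-invariance of $\mu$. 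Hence the two suprema coincide, and using $\int F_1\,d\mu=\int F\,d\mu$ for $\mu\in\M_{\CG}$ gives the first equality of the theorem. For the second, the ergodic decomposition $\mu=\int\mu_x\,d\tau(x)$ together with affinity of $h_\bullet(g^1)$ and of $\mu\mapsto\int F\,d\mu$ shows $h_\mu(g^1)+\int F\,d\mu$ is the $\tau$-average of $h_{\mu_x}(g^1)+\int F\,d\mu_x$, so $\sup_{\M_{\CG}}\le\sup_{\M_{\CG}^e}$; the reverse inequality is trivial.
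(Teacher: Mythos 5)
The paper does not prove this statement; it is quoted verbatim from \cite[Theorem 4.3.7]{FH}, and your sketch is a correct reconstruction of exactly the standard route that reference takes (reduce to the time-one map via uniform continuity of the flow on $[0,1]\times SM$, run the two Misiurewicz-style inequalities, average over $\int_0^1(g^s)_*\mu\,ds$ to pass to flow-invariant measures, then ergodically decompose). The only point worth tightening is the entropy identity $h_{\bar\mu}(g^1)=h_\mu(g^1)$ for the continuum average $\bar\mu=\int_0^1(g^s)_*\mu\,ds$: finite affinity of the entropy map is not quite enough, and one should invoke Jacobs' theorem (entropy is the integral of the entropies of the ergodic components) applied through the ergodic decompositions of the measures $(g^s)_*\mu$.
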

A measure $\mu\in \M_{\CG}(SM)$ is called an \emph{equilibrium state} of $F$, if
\begin{equation*}
  h_{\mu}(g^1)+\int F d\mu=P(F).
\end{equation*}
An equilibrium state of $F\equiv 0$ is called a \emph{measure of maximal entropy} (MME for short) of the geodesic flow.

\subsubsection{Rank one manifolds}
A vector field $J(t)$ along a geodesic $c:\RR\to M$ is called a \emph{Jacobi field} if it satisfies the \emph{Jacobi equation} 
$$J''+R(J, \dot c)\dot c=0$$
where $R$ is the Riemannian curvature tensor and\ $'$\ denotes the covariant derivative along $c$.

A Jacobi field $J(t)$ is called \emph{parallel} if $J'(t)=0$ for all $t\in \RR$. $J(t)$ is called \emph{stable} (resp. \emph{unstable}) if there exists $C>0$ such that $\|J(t)\|\le C$ for all $t\ge 0$ (resp. $t\le 0$). The notion of \emph{rank} is defined as follows.
\begin{definition}
For each $v \in SM$, we define \text{rank}($v$) to be the dimension of the vector space of parallel Jacobi fields along the geodesic $c_{v}$, and 
\[\text{rank}(M):=\min\{\text{rank}(v): v \in SM\}.\] 
For a geodesic $c$ we define $\text{rank}(c):=\text{rank}(\dot c(t)), \forall\ t\in \mathbb{R}.$
\end{definition}
Let $M$ be a closed rank one manifold of nonpositive curvature. Then $SM$ splits into two disjoint subsets invariant under the geodesic flow: the regular set $\text{Reg}:= \{v\in SM: \text{rank}(v)=1\}$, and the singular set $\text{Sing}:= SM \setminus \text{Reg}$. 

Let $p: X\to M$ be the universal cover of $M$ and $\C \simeq \pi_1(M)$ the group of deck transformations on $X$. So each $\c\in \C$ acts isometrically on $X$. Since $M=X/\Gamma$ is compact, each $\c\in \Gamma$ is \emph{axial} (cf. \cite[Lemma 2.1]{CS}), that is, there exists a geodesic $c$ and $t_{0}>0$ such that $\c(c(t)) = c(t+t_{0})$ for every $t\in \mathbb{R}$. Correspondingly $c$ is called an \emph{axis} of $\c$ and we denote $|\c|:=t_0$ where $t_0$ is minimal with the above property.  

We call two geodesics $c_{1}$ and $c_{2}$ on $X$ \emph{positively asymptotic or asymptotic} if there exists $C > 0$ such that $d(c_{1}(t),c_{2}(t)) \leq C, ~~\forall~ t \geq 0.$ Note that $d(c_{1}(t),c_{2}(t))$ is convex in $t$ due to nonpositive curvature. Asymptoticity is an equivalence relation between geodesics on $X$. The class of geodesics that are asymptotic to a given geodesic $c_v/c_{-v}$ is denoted by $c_v(+\infty)$/$c_v(-\infty)$ or $v^+/v^-$ respectively. We call them \emph{points at infinity}. Obviously, $c_{v}(-\infty)=c_{-v}(+\infty)$. We call the set $\partial X$ of all points at infinity the \emph{boundary at infinity}.
Denote $\overline X:=X\cup \pX$. If $n=\dim X$, $\overline{X}$ is homeomorphic to the closed unit ball in $\mathbb{R}^n$, and $\pX$ is homeomorphic to the unit sphere $\mathbb{S}^{n-1}$ under the cone topology, see \cite{EO}.

For any $p\in X$ and $\xi\in \overline{X}$, there exists a unique geodesic connecting $p$ and $\xi$, denoted by $c_{p,\xi}$, parametrized with $c_{p,\xi}(0)=p$. For $p,q\in X, \xi\in \pX$, we write $[p,q]$ for the geodesic segment from $p$ to $q$, and $[p,\xi)$ the geodesic ray from $p$ pointing to $\xi$. If $\xi,\eta\in \pX$, there may be more than one geodesics connecting $\xi$ and $\eta$, which form  a \emph{flat strip}, i.e., an isometric embedding of a strip $E\times \RR$ in Euclidean space into $X$.

For each pair of points $(p,q)\in X \times X$ and each $\xi \in \pX$, the \emph{Busemann function based at $\xi$ and normalized by $p$} is
$$b_{\xi}(q,p):=\lim_{t\rightarrow +\infty}\big(d(q,c_{p,\xi}(t))-t\big).$$
The limit exists since the function $t \mapsto d(q,c_{p,\xi}(t))-t$ is bounded from above by $d(p,q)$, and decreasing in $t$. If $v\in S_pX$ points at $\xi\in \pX$, we also write $b_v(q):=b_{\xi}(q,p).$

Let $\F$ be a fundamental domain with respect to $\C$ and $D=\diam \F$.  One can lift a potential $F:SM\to \RR$  to a potential on $SX$, which still denoted by $F$. If there is no confusion, we do not distinguish the notations of objects on $M$ and its lift to $X$. For $p,q\in SX$, we also write
$$\int_p^qF:=\int_0^{d(p,q)}F(g^sv)ds$$
where $v:=\dot c_{p,q}(0)$.

\subsection{Orbit decomposition}
For basic notions on the geometry of geodesic flows, we refer to \cite[Section 2.4]{BCFT}. 
\subsubsection{$\l$ function}
There exist $g^t$-invariant subbundles $E^s$ and $E^u$ of $TSM$, which are integrable into $g^t$-invariant foliations $W^s$ and $W^u$ respectively. For $v\in SM$, we call $W^{s/u}(v)$ the \textit{(global) stable/unstable manifolds} of the geodesic flow through $v$. We denote by $W^{s/u}(v,\delta)$ the ball of radius $\delta>0$ centered at $v$ with respect to the intrinsic metrics $d^{s/u}$ on $W^{s/u}(v)$.  
The weak stable/unstable foliations $W^{cs/cu}$ are defined in a similar way. The lifted foliations into $SX$ are denoted by the same notation.

For $v\in SX$, let $H^{s/u}(v)=\pi W^{s/u}(v)$ be the stable/unstable horospheres in $X$ associated to $v$. We also write $H^{s/u}(v)$ as $H^{s/u}(\pi(v),v^+)$. In fact, the stable horospheres are the level sets of Busemann functions. Recall that $\UUU_v^s: T_{\pi v}H^s\to  T_{\pi v}H^s$ is the symmetric linear operator associated to the stable horosphere $H^s$, and similarly for $\UUU_v^u$. Let $\lambda^u(v)$ be the minimal eigenvalue of $\UUU_v^u$ and let $\lambda^s(v)=\lambda^u(-v)$. Then we define $\lambda(v)=\min \{\lambda^u(v), \lambda^s(v)\}.$ $\l: SX\to \RR$ is a continuous function, which descends to a continuous function on $SM$.

By compactness of $SM$, given $\eta>0$, there exists $\delta=\delta(\eta)>0$ small enough such that for any $v,w\in SX$, 
\begin{equation}\label{smalldelta}
d_K(v,w)< \delta e^\Lambda \Rightarrow |\lambda(v)-\lambda(w)|\le \eta/3.    
\end{equation}
Here $\Lambda$ is the maximal eigenvalue of $\UUU^u(v)$ taken over all $v\in SM$. 

The following lemma will be useful to treat Condition (B).
\begin{lemma}(\cite[Proposition 3.4]{BCFT})\label{nhdsing}
For any $\rho>0$, there are $\eta>0$ and $T>0$ such that if $\l^s(g^tv)\le \eta$ for all $t\in [-T,T]$, then $d_K(v,\text{Sing})<\rho$. 
\end{lemma}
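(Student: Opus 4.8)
The plan is to prove this by contradiction, combining compactness of $SM$ with the facts that $\l^s$ is a continuous and nonnegative function on $SM$ (nonnegativity reflecting positive semidefiniteness of the shape operators $\UUU^{s}$ of stable horospheres in nonpositive curvature), that $\text{Sing}$ is a closed $g^t$-invariant set, and a Jacobi-field rigidity statement. Suppose the lemma fails for some $\rho>0$. Then for every $n\in\NN$ there is $v_n\in SM$ with $\l^s(g^tv_n)\le 1/n$ for all $t\in[-n,n]$, yet $d_K(v_n,\text{Sing})\ge\rho$. Passing to a subsequence, $v_n\to v$ in $SM$. Fix $t\in\RR$; for $n\ge|t|$ we have $\l^s(g^tv_n)\le 1/n$, and letting $n\to\infty$, continuity of the geodesic flow and of $\l^s$ gives $\l^s(g^tv)\le 0$, hence $\l^s(g^tv)=0$ since $\l^s\ge 0$. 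As $t$ was arbitrary, $\l^s(g^tv)=0$ for all $t\in\RR$.

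The heart of the argument is to deduce from this that $v\in\text{Sing}$, i.e.\ that $c_v$ carries a nonzero parallel perpendicular Jacobi field. For $t\le 0$ let
$$W_t:=\{\,J(0):\ J\text{ a perpendicular Jacobi field along }c_v\text{ with }J'(s)=0\text{ for all }s\ge t\,\}\subseteq\dot c_v(0)^{\perp},$$
a linear subspace. Fix $t\le 0$. Since $\l^s(g^tv)$ is the least eigenvalue of the positive semidefinite operator $\UUU^{s}_{g^tv}$, its vanishing gives a nonzero perpendicular stable Jacobi field $J$ along $c_v$ with $J'(t)=\UUU^{s}_{g^tv}J(t)=0$. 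From $K\le 0$,
$$\tfrac{d^{2}}{ds^{2}}\|J(s)\|^{2}=2\|J'(s)\|^{2}-2\langle R(J,\dot c_v)\dot c_v,J\rangle(s)\ge 2\|J'(s)\|^{2}\ge 0,$$
so $s\mapsto\|J(s)\|^{2}$ is convex and bounded on $[t,\infty)$, hence nonincreasing there; together with $\tfrac{d}{ds}\|J(s)\|^{2}\big|_{s=t}=2\langle J'(t),J(t)\rangle=0$ and convexity, $\|J\|^{2}$ is constant on $[t,\infty)$, whence $J'\equiv 0$ there, i.e.\ $J$ is parallel on $[t,\infty)$. Thus $W_t\ne\{0\}$. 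For $t'\le t\le 0$ one has $W_{t'}\subseteq W_t$, so $\{W_t\}_{t\le 0}$ is a nested family of nonzero subspaces of the finite-dimensional space $\dot c_v(0)^{\perp}$; its dimensions stabilize, so $W_{*}:=\bigcap_{t\le 0}W_t\ne\{0\}$. Pick $0\ne w\in W_{*}$. For each $t\le 0$ some perpendicular Jacobi field parallel on $[t,\infty)$ takes value $w$ at $0$; such a field has vanishing covariant derivative at $0\in[t,\infty)$, so by uniqueness of Jacobi fields it is the single field $J_w$ determined by $J_w(0)=w$, $J_w'(0)=0$, independently of $t$. Hence $J_w$ is parallel on $[t,\infty)$ for every $t\le 0$, thus on all of $\RR$; being nonzero and perpendicular, it witnesses $\text{rank}(v)\ge 2$, i.e.\ $v\in\text{Sing}$.

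Finally, $\text{Sing}$ is closed: if $w_n\to w$ with $w_n\in\text{Sing}$, choose unit perpendicular parallel Jacobi fields $J_n$ along $c_{w_n}$; after a subsequence the initial data $(J_n(0),J_n'(0))$ converge, and the limit is a unit perpendicular parallel Jacobi field along $c_w$, so $w\in\text{Sing}$. Hence $w\mapsto d_K(w,\text{Sing})$ is continuous, and $d_K(v,\text{Sing})=\lim_n d_K(v_n,\text{Sing})\ge\rho>0$ contradicts $v\in\text{Sing}$, proving the lemma. I expect the only genuinely nontrivial step to be the implication ``$\l^s\equiv 0$ along the orbit of $v$ $\Rightarrow$ $v\in\text{Sing}$''; the delicate point there is that the degenerate direction of $\UUU^{s}_{g^tv}$ depends a priori on $t$, which forces one to pass to the nested subspaces $W_t$ and extract a single Jacobi field parallel on the whole real line, rather than arguing at a single time.
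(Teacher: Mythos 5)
Your proof is correct. Note that the paper does not actually prove this lemma --- it is quoted directly from \cite[Proposition 3.4]{BCFT} --- and your argument (compactness plus contradiction to produce a limit vector with $\l^s\equiv 0$ along its whole orbit, then the convexity/Riccati argument upgrading the time-dependent degenerate directions of $\UUU^s_{g^tv}$ to a single perpendicular Jacobi field parallel on all of $\RR$, forcing $v\in\text{Sing}$) is essentially the proof given in that reference.
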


\subsubsection{$(\mathcal{P},\mathcal{G},\mathcal{S})$-decomposition}
We recall the $(\mathcal{P},\mathcal{G},\mathcal{S})$-decomposition defined in \cite{BCFT} for geodesic flows on rank one manifolds.
\begin{definition} \label{def:decomp}
A decomposition $(\mathcal{P},\mathcal{G},\mathcal{S})$ for $\mathcal{D}\subset SM\times \mathbb{R}^+$ consists of three collections $\mathcal{P},\mathcal{G},\mathcal{S} \subset SM\times \mathbb{R}^+$ and functions $p,h,s:\mathcal{D}\to \mathbb{R}^+\cup\{0\}$ such that for every $(v,t)\in \mathcal{D}$, writing $p(v,t), h(v,t), s(v,t)$ as $p,h,s$ respectively, we have $t=p+h+s$, and
$$(v,p)\in \mathcal{P}, \qquad (g^p(v),h)\in \mathcal{G}, \qquad (g^{p+h}(v),s)\in \mathcal{S}.$$
Given a decomposition $(\mathcal{P},\mathcal{G},\mathcal{S})$ and constant $L\geq 0$, we denote by $\mathcal{G}^L$ the collection of $(v,t)\in \mathcal{D}$ satisfying $\max\{p(v,t),s(v,t)\}\leq L$. 
\end{definition}
Let $\eta>0$. Define 
\begin{equation*}
    \begin{aligned}
\GGG(\eta)&:=\Big\{(v,t): \int_0^\tau \lambda(g^sv)ds \ge \eta\tau,\ \int_0^\tau \lambda(g^{-s}g^tv)ds \ge \eta\tau,\ \forall \tau \in [0,t]\Big\},\\
\BBB(\eta)&:=\Big\{(v,t): \int_0^t \lambda(g^sv)ds < \eta t\Big\}.
    \end{aligned}
\end{equation*}
\begin{definition}(\cite[p. 1221]{BCFT})\label{decom}
Given $(v,t)\in SM\times\RR^+$, take $p=p(v,t)$ to be the largest time such that $(v,p)\in \BBB(\eta)$, and $s=s(v,t)$ be the largest time in $[0, t-p]$ such that $(g^{t-s}v, s)\in \BBB(\eta)$. Then it follows that $(g^pv, h)\in \GGG(\eta)$ where $h=t-p-s$. Thus the triple $(\BBB(\eta), \GGG(\eta), \BBB(\eta))$ equipped with the functions $(p,h,s)$ determines a decomposition for $SM\times \RR^+$.
\end{definition}

As verified in \cite{BCFT}, the above $(\P, \GGG,\SSS)$-decomposition satisfies the conditions of the following criterion for uniqueness of equilibrium states. Then Theorem \ref{bcft} follows.
\begin{theorem}(\cite[Theorem A]{CT16})
 Let $(Y, (f^t)_{t\in \RR})$ be a continuous flow on a compact metric space $Y$, and $F: Y\to \RR$ a continuous potential function. Suppose that $P^{\perp}_{\exp}(F) <P(F)$ and that $Y\times \RR^+$ admits 
a decomposition $(\PPP, \GGG, \SSS)$ with the following properties:
\begin{enumerate}
    \item $\GGG$ has the weak specification property;
    \item $F$ has the Bowen property on $\GGG$; 
    \item  $P([\PPP]\cup [\SSS], F)<P(F).$
\end{enumerate}
  Then $F$ has a unique equilibrium state.   
\end{theorem}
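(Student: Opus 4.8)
The final statement is the abstract uniqueness criterion of Climenhaga--Thompson, and the plan is to reproduce their strategy: upgrade Bowen's classical argument (specification $+$ Bowen property $\Rightarrow$ unique equilibrium state) to the non-uniform situation in which these properties hold only along the core collection $\GGG$ of the decomposition. \textbf{Step 1: the core carries the pressure.} First I would prove $P(\GGG,F)=P(F)$. Using that $(\PPP,\GGG,\SSS)$ is a decomposition, each orbit segment $(v,t)$ factors as $t=p+h+s$ with a prefix in $\PPP$, a core in $\GGG$, a suffix in $\SSS$; grouping an $(n,\d)$-separated set of orbit segments by this factorization and using the Bowen property of $F$ on $\GGG$ to control the distortion when one recombines the three pieces, one estimates $\Lambda_t(SM\times\RR^+,F,\d,\e)$ by a sum over admissible $(p,h,s)$ of products of the partition functions over $[\PPP]$, over $\GGG$, and over $[\SSS]$. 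Hypothesis (3), $P([\PPP]\cup[\SSS],F)<P(F)$, makes the prefix/suffix factors exponentially negligible, so the $\GGG$-factor must realize the full growth rate: $P(\GGG,F)\ge P(F)$, and the reverse inequality is immediate.

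\textbf{Step 2: existence of an adapted equilibrium state.} For each $n$ pick an $(n,\d)$-separated $E_n\subset\GGG_n$ nearly attaining $\Lambda_n(\GGG,F,\d,\e)$, set $\nu_n=\big(\sum_{v\in E_n}e^{\F_\e(v,n)}\big)^{-1}\sum_{v\in E_n}e^{\F_\e(v,n)}\d_v$, average its flow-translates to obtain $\mu_n$, and let $\mu$ be a weak$^*$ limit of a subsequence. A Misiurewicz-type computation against a finite partition of diameter $<\d$ with $\mu$-null boundary gives $h_\mu(g^1)+\int F\,d\mu\ge P(\GGG,F,\d,\e)$; letting $\d,\e\to 0$ and using upper semicontinuity of $\nu\mapsto h_\nu(g^1)$ --- which is precisely where the expansivity hypothesis $P^\perp_{\exp}(F)<P(F)$ enters --- yields $h_\mu(g^1)+\int F\,d\mu=P(F)$, so $\mu$ is an equilibrium state.

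\textbf{Step 3: uniqueness, and the main obstacle.} Here the classical Gibbs property, which would settle matters at once, fails, and one only has Gibbs-type estimates along orbit segments in $\GGG$. The plan is: (a) for any ergodic equilibrium state $\nu$, the collection of orbit segments whose prefix or suffix exceeds a threshold $M$ has pressure $<P(F)$ by hypothesis (3), and since any invariant set of positive $\nu$-measure has pressure $\ge h_\nu+\int F=P(F)$, it follows that $\nu$ is carried by the good set $\bigcup_M\{v:\ (v,n)\in\GGG^{M}\text{ for all large }n\}$; (b) on the good set, weak specification on $\GGG$ glues a reference segment to exponentially many pairwise $(n,\d)$-separated orbits with comparable $\mathcal{F}$-weight, producing a \emph{lower} Gibbs bound $\mu(B_n(v,\e))\gtrsim e^{-C}e^{\F_\e(v,n)-nP(F)}$ for the constructed $\mu$, while the equilibrium property of $\nu$ together with Brin--Katok and Birkhoff gives an \emph{upper} bound $\nu(B_n(v,\e))\lesssim e^{\F_\e(v,n)-nP(F)}$ for $\nu$-a.e.\ $v$; (c) these two-sided bounds first force the constructed $\mu$ to be ergodic (indeed to carry a local product / Gibbs structure), and then, via a Vitali-type covering of a $\mu$-null set by Bowen balls centred on the good set, force $\nu\ll\mu$ for every other ergodic equilibrium state; an invariant measure absolutely continuous with respect to the ergodic measure $\mu$ equals $\mu$, and since every equilibrium state is a barycentre of ergodic ones, $\mu$ is the unique equilibrium state.

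I expect the real work to lie in Step 3(a)--(b): making ``most orbit segments are good'' quantitative and uniform, and then carrying the Bowen-property distortion constant and the specification gluing gap through the construction while juggling the auxiliary scales $M$, $\d$, $\e$ --- this bookkeeping, together with extracting ergodicity from the two-sided Gibbs bounds, is the technical heart of the Climenhaga--Thompson method.
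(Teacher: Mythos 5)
This statement is not proved in the paper at all: it is quoted verbatim as \cite[Theorem A]{CT16} and used as a black box (the paper's contribution is to verify its hypotheses for geodesic flows, not to reprove it), so there is no in-paper argument to compare yours against. Measured instead against the published Climenhaga--Thompson proof, your outline captures the correct strategy and all the essential ingredients: Step 1 (the core $\GGG$ carries the full pressure, via the factorization $t=p+h+s$ and the pressure gap on $[\PPP]\cup[\SSS]$) and Step 2 (Misiurewicz-type construction of $\mu$ from weighted separated sets in $\GGG$, with $P^{\perp}_{\exp}(F)<P(F)$ controlling the passage between scales and the entropy map) are exactly how [CT16] proceeds. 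Your Step 3 differs in organization from theirs in two respects worth flagging. First, [CT16] does not deduce ergodicity of $\mu$ directly from the two-sided Gibbs bounds and then prove $\nu\ll\mu$; rather, the logic runs the other way: one shows via the upper Gibbs bound for $\mu$ and a Bowen-style covering/counting argument that any ergodic measure \emph{mutually singular} to $\mu$ has free energy strictly below $P(F)$, hence there is at most one ergodic equilibrium state, and ergodicity of $\mu$ then follows because almost every ergodic component of $\mu$ is itself an equilibrium state. Extracting ergodicity of $\mu$ directly from Gibbs bounds, as you propose, is genuinely harder in the non-uniform setting and is not how the argument is closed. Second, your item (a) --- that an ergodic equilibrium state concentrates on points with uniformly bounded prefix and suffix --- needs the Katok/pressure inequality for invariant sets at a fixed scale, which is again where the expansivity gap is consumed; as written it reads as if it were automatic. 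Neither point is a fatal gap in a blind sketch, but they are where the ``technical heart'' you correctly anticipate actually lives.
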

The following lemma is crucial. For $v\in SM$, denote 
$$\tilde{\l}^{s/u}(v):=\max\{\l^{s/u}(v)-\frac{\eta}{3}, 0\}.$$
\begin{lemma}(\cite[Lemma 3.10]{BCFT})\label{stablegeo}
Given $\eta>0$, let $\delta=\delta(\eta)$ be as in \eqref{smalldelta}, $v\in SM$ and $w,w'\in W^s(v,\delta)$. Then for any $t\ge 0$ we have
$$d^s(g^tw, g^tw')\le d^s(w,w')e^{-\int_0^t\tilde\l^s(g^\tau v)d\tau}.$$
Similarly, for every $w,w'\in W^u(v,\delta)$ and $t\ge 0$, we have
$$d^u(g^{-t}w, g^{-t}w')\le d^u(w,w')e^{-\int_0^t\tilde\l^u(g^{-\tau} v)d\tau}.$$
\end{lemma}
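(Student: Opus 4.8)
The plan is to deduce the lemma from an infinitesimal contraction estimate for $dg^t$ along the stable subbundle, and then integrate it along a path inside a stable manifold. It suffices to prove the stable inequality: since $\iota\circ g^{-t}=g^{t}\circ\iota$, $\iota\bigl(W^u(v,\delta)\bigr)=W^s(-v,\delta)$ with $d^u=d^s\circ(\iota\times\iota)$, and $\tilde\l^{\,s}(g^{\tau}(-v))=\tilde\l^{\,u}(g^{-\tau}v)$, the unstable inequality for $(v,g^{-t})$ is exactly the stable inequality for $(-v,g^{t})$. For the pointwise bound I would identify $W^s(u)$ with the horosphere $H^s(u)$ via $\pi$, so that $d^s$ is the induced Riemannian distance on $H^s(u)$, the restriction $g^t\colon W^s(u)\to W^s(g^tu)$ corresponds to $q\mapsto c_{q,u^+}(t)$ on horospheres, and its differential sends $\xi\in E^s(u)$ — equivalently the perpendicular stable Jacobi field $J$ along $c_u$ normalized by $J(0)=d\pi(\xi)$ — to $J(t)$. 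Since $J$ satisfies the Riccati-type identity $J'(\tau)=-\UUU^{\,s}_{g^{\tau}u}J(\tau)$ and $\l^s(g^{\tau}u)$ is the minimal eigenvalue of the positive semidefinite operator $\UUU^{\,s}_{g^{\tau}u}$, one computes
$$\frac{d}{d\tau}\|J(\tau)\|^{2}=-2\bigl\langle \UUU^{\,s}_{g^{\tau}u}J(\tau),J(\tau)\bigr\rangle\le -2\,\l^s(g^{\tau}u)\,\|J(\tau)\|^{2},$$
and integration gives $\|dg^t\xi\|=\|J(t)\|\le \|\xi\|\,e^{-\int_0^{t}\l^s(g^{\tau}u)\,d\tau}$ for all $u\in SX$, $\xi\in E^s(u)$ and $t\ge 0$.

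Next I would upgrade the rate from $u$ to $v$. If $u\in W^s(v,\delta)$ then $g^{\tau}u\in W^s(g^{\tau}v)$ for all $\tau\ge0$, and by convexity of $\tau'\mapsto d\bigl(c_u(\tau'),c_v(\tau')\bigr)$ in nonpositive curvature one has $d_K(g^{\tau}u,g^{\tau}v)\le d_K(u,v)\le d^s(u,v)<\delta\le\delta e^{\Lambda}$; hence \eqref{smalldelta} (which holds with $\l^s$ in place of $\l$, by the same compactness argument) yields $\l^s(g^{\tau}u)\ge \l^s(g^{\tau}v)-\eta/3$, and since $\l^s\ge0$ in nonpositive curvature this gives $\l^s(g^{\tau}u)\ge\tilde\l^{\,s}(g^{\tau}v)$. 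Combining with the pointwise bound, $\|dg^t\xi\|\le\|\xi\|\,e^{-\int_0^{t}\tilde\l^{\,s}(g^{\tau}v)\,d\tau}$ for every $\xi\in E^s(u)$ with $u\in W^s(v,\delta)$. Finally, given $w,w'\in W^s(v,\delta)$ and $\ep'>0$, I would pick a $C^1$ curve $\sigma\colon[0,1]\to W^s(v)$ from $w$ to $w'$ of length at most $d^s(w,w')+\ep'$ that stays within $O(\delta)$ of the orbit of $v$ in $d_K$ (so the previous paragraph applies at every $\sigma(r)$). Then $g^t\circ\sigma$ joins $g^tw$ to $g^tw'$ inside $W^s(g^tv)$, so
$$d^s(g^tw,g^tw')\le\int_0^{1}\|dg^t\sigma'(r)\|\,dr\le e^{-\int_0^{t}\tilde\l^{\,s}(g^{\tau}v)\,d\tau}\int_0^{1}\|\sigma'(r)\|\,dr\le e^{-\int_0^{t}\tilde\l^{\,s}(g^{\tau}v)\,d\tau}\bigl(d^s(w,w')+\ep'\bigr),$$
and $\ep'\to0$ finishes the stable case.

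The main obstacle is the pointwise step together with the metric identification it rests on: one needs that the intrinsic metric $d^s$ on $W^s(v)$ used in \cite{BCFT} is genuinely isometric (not merely bi-Lipschitz) to the horosphere metric, so that $\|dg^t|_{E^s}\|$ equals exactly the ratio of Jacobi-field norms and no spurious constant creeps into the inequality; this is why the Riccati identity $J'=-\UUU^{\,s}J$ and the choice of perpendicular stable Jacobi fields as the ones parametrizing $E^s$ must be handled carefully. A secondary delicate point is the uniformity behind the last two steps — ensuring that every vector met along $\sigma$, and all its forward $g^{\tau}$-images, stays inside the $\delta e^{\Lambda}$-Knieper-ball about the orbit of $v$ so that \eqref{smalldelta} can be invoked; this is exactly where nonpositive curvature (convexity of the distance between asymptotic geodesics, hence non-expansion on stable manifolds) is essential, while the remaining constant bookkeeping for the path is routine.
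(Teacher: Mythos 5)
The paper does not actually prove this lemma---it is quoted directly from \cite[Lemma 3.10]{BCFT}---and your argument is a correct reconstruction of the proof given there: the Riccati/Jacobi-field comparison $\frac{d}{d\tau}\|J(\tau)\|\le-\l^s(g^{\tau}u)\|J(\tau)\|$ under the horospherical identification of $d^s$, the continuity estimate \eqref{smalldelta} to replace $\l^s(g^{\tau}u)$ by $\tilde\l^{\,s}(g^{\tau}v)$ (using $\l^s\ge 0$ and forward non-expansion on stable leaves), and integration of $\|dg^t|_{E^s}\|$ along a near-shortest curve in the leaf, with the unstable case obtained by the flip. The one point to watch is the one you already flag: a near-shortest curve joining $w,w'\in W^s(v,\delta)$ is only guaranteed to stay within $3\delta$ of $v$ in $d^s$ (hence in $d_K$), so the radius $\delta e^{\Lambda}$ appearing in \eqref{smalldelta} must be large enough to absorb that factor---as it is in the original formulation in \cite{BCFT}---and in any case this is harmless since \eqref{smalldelta} persists under shrinking $\delta$.
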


\subsection{Core limit set}
Given $p,q\in X$, we also write $(\dot c_{p,q}(0), d(p,q))\in SX\times \RR^+$ as the geodesic segment $[p,q]$ for simplicity. 
\begin{definition}\label{corelimit}
Let $\CCC$ denote the set of geodesic segments $[o,\c o]$ from $o$ to $\c o$ where $\c\in \C$. Given $L>0$, let $\L_c^L(\C)$ be the set of $\xi\in \p X$ such that there exists $\c_n\in \C$ such that $[o,\c_no]\in \GGG^L$ and $\xi=\lim_{n\to \infty}\c_no.$
Then we define the \emph{core limit set} as
$$\L_c(\C):=\bigcup_{L>0}\L_c^L(\C).$$
\end{definition}

\begin{lemma}\label{core}
Suppose that $v=\dot c_{o,\xi}(0)$ for some $\xi\in \L_c^L(\C)$. Then there exists $T_0\in [0,L]$, such that $\int_{T_0}^t\lambda(g^sv)ds \ge \eta(t-T_0)$ for every $t\ge L$. 
\end{lemma}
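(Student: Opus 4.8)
The plan is to unwind the definition of $\GGG^L$, realize the ray $[o,\xi)$ as a limit of the ``good core'' segments $[o,\c_n o]$, and push the lower bound on $\int\l$ along a core forward to the limiting ray. Concretely, choose $\c_n\in\C$ with $[o,\c_n o]\in\GGG^L$ and $\c_n o\to\xi$ in the cone topology of $\overline X$, and write $v_n:=\dot c_{o,\c_n o}(0)\in S_oX$ and $t_n:=d(o,\c_n o)$. Since $\xi\in\pX$ we have $t_n\to\infty$, and since $\c_n o\to\xi$ the initial directions converge, $v_n\to v$ (this is precisely the cone topology on $\overline X$; cf. \cite{EO}); continuity of the geodesic flow $g^t$ on $SX$ then yields $g^sv_n\to g^sv$ uniformly for $s$ in any fixed compact interval. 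As $\l$ descends to a bounded, uniformly continuous function on the compact space $SM$, for each fixed $t>0$ we obtain $\l(g^sv_n)\to\l(g^sv)$ uniformly for $s\in[0,t]$.

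Next I unpack $\GGG^L$. By Definition \ref{def:decomp} together with Definition \ref{decom}, writing $p_n:=p(v_n,t_n)$ and $s_n:=s(v_n,t_n)$, one has $p_n,s_n\in[0,L]$ and $(g^{p_n}v_n,h_n)\in\GGG(\eta)$ with $h_n=t_n-p_n-s_n$. The definition of $\GGG(\eta)$ gives $\int_0^\tau\l(g^sg^{p_n}v_n)\,ds\ge\eta\tau$ for all $\tau\in[0,h_n]$, which after the substitution $s\mapsto s+p_n$ reads
\[
\int_{p_n}^{t'}\l(g^sv_n)\,ds\ \ge\ \eta\,(t'-p_n)\qquad\text{for all }t'\in[p_n,\,t_n-s_n].
\]

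Now I take the limit. Since $\{p_n\}\subset[0,L]$ is bounded, pass to a subsequence (not relabeled) with $p_n\to T_0$ for some $T_0\in[0,L]$; this $T_0$ is the constant in the statement, and it is essential that it be fixed here, before $t$ is chosen. Fix any $t\ge L$. As $s_n\le L$ and $t_n\to\infty$, for all large $n$ we have $t_n-s_n\ge t\ge L\ge p_n$, so $t$ is an admissible value of $t'$ above, giving $\int_{p_n}^t\l(g^sv_n)\,ds\ge\eta(t-p_n)$. Letting $n\to\infty$, the right-hand side tends to $\eta(t-T_0)$, while
\[
\Big|\int_{p_n}^t\l(g^sv_n)\,ds-\int_{T_0}^t\l(g^sv)\,ds\Big|\ \le\ \|\l\|_\infty\,|p_n-T_0|+t\sup_{s\in[0,t]}|\l(g^sv_n)-\l(g^sv)|\ \longrightarrow\ 0,
\]
so the left-hand side tends to $\int_{T_0}^t\l(g^sv)\,ds$; hence $\int_{T_0}^t\l(g^sv)\,ds\ge\eta(t-T_0)$ for every $t\ge L$, as desired.

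The argument is short, and I expect no genuine analytic obstacle: the one point that requires care is producing a single cutoff time $T_0$ valid for all $t\ge L$ simultaneously, which is handled by extracting the convergent subsequence of $(p_n)$ once, up front; the convergence of the integrals then uses nothing beyond boundedness and uniform continuity of $\l$ on $SM$, the convergence $v_n\to v$, and continuity of the geodesic flow. The real content is simply tracking the decomposition data $p_n,s_n$ through the limit.
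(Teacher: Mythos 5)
Your proof is correct and follows essentially the same route as the paper's: realize $v$ as the limit of the directions $\dot c_{o,\c_n o}(0)$, use the $\GGG(\eta)$ lower bound on $[p_n, t_n-s_n]$ with $p_n\in[0,L]$, and pass to the limit using uniform convergence of $\l$ on compact time intervals. Your one refinement — extracting the convergent subsequence of $(p_n)$ once, before fixing $t$, so that $T_0$ is manifestly independent of $t$ — is a point the paper's write-up glosses over (it fixes $t$ first), and is worth keeping.
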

\begin{proof}
Since $\xi\in \L_c^L(\C)$, there exist $\c_n\in \C$ such that $[o,\c o]\in \GGG^L$ and $\xi=\lim_{n\to \infty}\c_n o$. Fix $t\ge L$. 
Note that $\dot c_{o, \c_no}(0)\to v$ as $n\to \infty$. Since $t$ is fixed, we see that $\lambda(\dot c_{o, \c_no}(s))\to \lambda(g^sv)$ uniformly for $0\le s\le t$. 

On the other hand, since $[o,\c o]\in \GGG^L$, there exist $t_n\in [0, L]$ such that
$$\int_{t_n}^t\lambda(\dot c_{o, \c_no}(s))ds\ge \eta(t-t_n)$$ 
for $n$ large enough. By passing to a subsequence, we may assume that $\lim_{n\to \infty}t_n=T_0\in [0,L]$. Setting $n\to \infty$ in the above, we have 
$$\int_{T_0}^t\lambda(g^sv)ds \ge \eta(t-T_0).$$
The lemma follows.
\end{proof}

The following Pliss lemma \cite{Pliss} is crucial to our proof.
\begin{lemma}(\cite[Lemma 3.5]{LVY})\label{pli}
Given $a_*\le c_2<c_1$ there exists $\theta=\frac{c_1-c_2}{c_1-a_*}$ such that, given any real numbers $a_1, \cdots, a_N$ with
\[\sum_{i=1}^Na_i\le c_2N \ \text{and}\ a_i\ge a_*\ \text{for every\ } i,\]
there exist $l>N\theta$ and $1\le n_1 <\cdots <n_l\le N$ such that
 \[\sum_{i=n+1}^{n_j}a_i\le c_1(n_j-n) \ \text{for all}\ 0\le n<n_j \ \text{and\ }j=1, \cdots ,l.\]
\end{lemma}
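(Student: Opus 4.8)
The plan is to translate the one-sided average condition into a statement about running maxima of a single auxiliary sequence, and then count. First I would set $S_0=0$, $S_k=\sum_{i=1}^k a_i$ for $1\le k\le N$, and define
\[g(k):=c_1k-S_k=\sum_{i=1}^k(c_1-a_i),\qquad 0\le k\le N,\]
so $g(0)=0$. The elementary equivalence
\[\sum_{i=n+1}^{m}a_i\le c_1(m-n)\iff g(m)\ge g(n)\qquad(0\le n<m\le N)\]
shows that the indices $m\in\{1,\dots,N\}$ eligible to appear as some $n_j$ in the conclusion are exactly the \emph{left-records} of $g$, i.e.\ those $m$ with $g(m)\ge g(k)$ for all $0\le k<m$. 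I would take $n_1<\cdots<n_l$ to be the complete list of these records; they then satisfy the required inequalities by construction, so the entire content of the lemma is the lower bound $l>N\theta$.

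For that bound I would use only two features of $g$: its increments are bounded above, $g(k)-g(k-1)=c_1-a_k\le c_1-a_*$; and $g(N)=c_1N-S_N\ge(c_1-c_2)N$. Introduce the running maximum $\bar g(n):=\max_{0\le k\le n}g(k)$, a nondecreasing function with $\bar g(0)=0$ and $\bar g(N)\ge(c_1-c_2)N$. The crucial observation I would establish is that $\bar g$ can increase \emph{only at records}, and by at most $c_1-a_*$ at a step: if $\bar g(n)>\bar g(n-1)$ then $g(n)=\bar g(n)>g(k)$ for every $k<n$, so $n$ is a record, and at such $n$,
\[\bar g(n)-\bar g(n-1)=g(n)-\bar g(n-1)\le g(n)-g(n-1)=c_1-a_n\le c_1-a_*,\]
where we used $\bar g(n-1)\ge g(n-1)$. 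Telescoping the increments of $\bar g$ over $n=1,\dots,N$ and discarding the (vanishing) contributions of non-record indices then yields
\[(c_1-c_2)N\le\bar g(N)=\sum_{j=1}^{l}\bigl(\bar g(n_j)-\bar g(n_j-1)\bigr)\le l\,(c_1-a_*),\]
hence $l\ge N\theta$. The strict inequality I would extract afterwards by a short case analysis ruling out equality (assuming $c_2>a_*$, so that $\theta<1$; the case $c_2=a_*$ is degenerate, forcing $a_i\equiv a_*$): if $l=N$ then $l=N>N\theta$ immediately, and if $l<N$ one traces the consequences of equality in the chain — every jump of $\bar g$ equal to $c_1-a_*$, each occurring at a record preceded by a record, together with the fact that index $N$ is itself a record in the equality case — to reach a contradiction.

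\textbf{Where the difficulty lies.} Conceptually there is a single idea: a Pliss time is precisely a left-record of the deficit sequence $g$, and the running maximum of $g$ can only move at records and only by $c_1-a_*$ at a time. Once this is seen the main estimate is a one-line telescoping argument. The only mildly delicate step is promoting the non-strict bound $l\ge N\theta$ to the strict one, which is routine bookkeeping rather than a genuine obstacle.
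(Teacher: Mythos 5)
Your proof is correct. The paper itself gives no proof of this lemma (it only cites Pliss and \cite[Lemma 3.5]{LVY}), and your argument is exactly the standard one: identify the admissible indices with the left-records of the deficit sequence $g(k)=c_1k-S_k$, observe that the running maximum $\bar g$ increases only at records and by at most $c_1-a_*$ per step, and telescope against the lower bound $\bar g(N)\ge (c_1-c_2)N$. Your sketch of the strictness step also checks out: under equality $l=N\theta$ one forces $\bar g(N)=g(N)$, so $N$ is a record, and the condition $\bar g(n_j-1)=g(n_j-1)$ propagates recordhood downward to give $l=N$, contradicting $l<N$; while $l=N$ gives $l>N\theta$ outright when $\theta<1$. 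One remark worth retaining: in the boundary case $a_*=c_2$ (which the statement formally allows) one has $\theta=1$, all $a_i=a_*$, and $l=N=N\theta$, so the strict inequality genuinely fails; this is an imprecision inherited from the cited statement, and is harmless here since the paper applies the lemma with $a_*=-\|\lambda\|<-\eta=c_2$.
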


\begin{lemma}\label{limitzero}
Let $v=\dot{c}_{o,\xi}(0)$ where $\xi\in \L_c^L(\C)$. If $p\in H^s(v)$, then $\lim_{t\to +\infty}d^s(\dot c_{p,\xi}(t), \dot c_{o,\xi}(t))=0$.
\end{lemma}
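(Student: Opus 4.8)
Write $w_p:=\dot{c}_{p,\xi}(0)$. Since $p\in H^s(v)=H^s(o,\xi)$ lies on the stable horosphere of $v$, we have $w_p\in W^s(v)$, so $g^tw_p$ and $g^tv=\dot{c}_{o,\xi}(t)$ remain in the common stable manifold $W^s(g^tv)$ for all $t\ge 0$, and the assertion is exactly that $d^s(g^tw_p,g^tv)\to 0$. The plan is to connect $v$ to $w_p$ inside $(W^s(v),d^s)$ by a finite chain $v=v_0,v_1,\dots,v_N=w_p$ with $v_i\in W^s(v)$ and $d^s(v_{i-1},v_i)<\delta$, where $\delta=\delta(\eta)$ is the constant from \eqref{smalldelta}, and to show by induction on $i$ that $d^s(g^tv_{i-1},g^tv_i)\to 0$; then $d^s(g^tw_p,g^tv)\le\sum_{i=1}^N d^s(g^tv_{i-1},g^tv_i)\to 0$ by the triangle inequality for $d^s$ on the leaf $W^s(g^tv)$.

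The engine is Lemma \ref{stablegeo}: for consecutive vectors it gives $d^s(g^tv_{i-1},g^tv_i)\le d^s(v_{i-1},v_i)\,e^{-\int_0^t\tilde\lambda^s(g^\tau v_{i-1})\,d\tau}$, so everything reduces to proving $\int_0^t\tilde\lambda^s(g^\tau v_{i-1})\,d\tau\to\infty$. Here $\tilde\lambda^s(w)=\max\{\lambda^s(w)-\eta/3,0\}\ge\lambda^s(w)-\eta/3\ge\lambda(w)-\eta/3$, and $\lambda\ge 0$ since the curvature is nonpositive. By Lemma \ref{core} there is $T_0\in[0,L]$ with $\int_{T_0}^t\lambda(g^sv)\,ds\ge\eta(t-T_0)$ for $t\ge L$, hence $\int_0^t\lambda(g^sv)\,ds\ge\eta(t-T_0)$ and $\liminf_{t\to\infty}\tfrac1t\int_0^t\lambda(g^sv)\,ds\ge\eta$. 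For the base case $i=1$ (so $v_0=v$) this yields $\int_0^t\tilde\lambda^s(g^\tau v)\,d\tau\ge\tfrac{2\eta}{3}t-\eta T_0\to\infty$, hence $d^s(g^tv_0,g^tv_1)\to 0$.

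For the inductive step, assume $d^s(g^tv_{i-1},g^tv_i)\to 0$ for all $i\le k$. Chaining together these estimates and using the local comparability of the Knieper metric $d_K$ with the intrinsic metric $d^s$ on a single stable leaf (together with the fact, from Lemma \ref{stablegeo}, that such $d^s$-distances do not grow under the forward flow), we get $d_K(g^tv_k,g^tv)\to 0$. Since $\lambda$ is uniformly continuous, $\lambda(g^tv_k)-\lambda(g^tv)\to 0$, so the Cesàro average gives $\liminf_{t\to\infty}\tfrac1t\int_0^t\lambda(g^\tau v_k)\,d\tau\ge\eta$, whence $\liminf_{t\to\infty}\tfrac1t\int_0^t\tilde\lambda^s(g^\tau v_k)\,d\tau\ge\tfrac{2\eta}{3}>0$ and therefore $\int_0^t\tilde\lambda^s(g^\tau v_k)\,d\tau\to\infty$. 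As $v_{k+1}\in W^s(v_k,\delta)$, Lemma \ref{stablegeo} with center $v_k$ gives $d^s(g^tv_k,g^tv_{k+1})\le d^s(v_k,v_{k+1})\,e^{-\int_0^t\tilde\lambda^s(g^\tau v_k)\,d\tau}\to 0$, closing the induction.

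The main obstacle is precisely this inductive step. Lemma \ref{core} supplies the required positive integral growth of $\lambda$ only along the distinguished ray $[o,\xi)$, i.e.\ along the orbit of $v$ itself; after subdividing, the intermediate vectors $v_k\in W^s(v)$ also point to $\xi$ but their geodesics $c_{v_k}$ are different rays, for which no core-limit information is available a priori. The resolution is the bootstrap: once the earlier links of the chain are known to contract, the orbit of $v_k$ is forced to shadow that of $v$ in the $d_K$-metric, and continuity of $\lambda$ transfers the integral estimate. The one genuinely technical point to verify carefully is the comparability of $d_K$ and $d^s$ for nearby vectors on a common stable manifold, used to pass from $d^s$-contraction of the chain to $d_K$-shadowing; this is routine from the geometry of horospheres in nonpositive curvature and from Lemma \ref{stablegeo}.
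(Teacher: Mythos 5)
Your proof is correct, but it takes a genuinely different route from the paper's. The paper argues by contradiction: assuming $l^s(\beta([0,1],t))\searrow c>0$, it applies the Pliss lemma to $-\lambda$ along the orbit of $v$ (whose integral is controlled by Lemma \ref{core}) to produce a window $[T_1,T_2]$ with large average $\lambda$, cuts off an initial sub-arc of $d^s$-length $\rho\ll\delta$ at time $T_1$, and applies Lemma \ref{stablegeo} centered at $g^{T_1}v$ to force that sub-arc to shrink by a definite factor while the remainder of the arc does not grow (by monotonicity of $l^s$); the total length then drops below $c$, a contradiction. Note that the paper thereby only ever invokes Lemma \ref{stablegeo} with center on the orbit of $v$ itself, so it never needs any information about $\lambda$ along the orbits of intermediate points of the arc. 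You instead attack that difficulty head-on: you subdivide the arc into $\delta$-links and bootstrap the Ces\`aro lower bound $\liminf_{t\to\infty}\frac1t\int_0^t\lambda(g^\tau v_k)\,d\tau\ge\eta$ from $v$ to each intermediate $v_k$, using the contraction already established for the earlier links together with uniform continuity of $\lambda$ on the compact $SM$. This transfer is valid, and the only metric comparison you actually need is the one-sided inequality $d_K\le d^s$ (the intrinsic leaf metric dominates the ambient one), which the paper itself uses without comment in Lemma \ref{decay}; so your induction closes and there is no circularity. Your argument is direct rather than by contradiction and avoids the Pliss lemma entirely, at the cost of the chain decomposition and the transfer step; the paper's is shorter because the monotonicity of the arc length lets it dispense with both.
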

\begin{proof}
Assume not. Take a shortest curve $\beta: [0,1]\to W^s(v)$ with $\beta(0)=v$ and $\b(1)=\dot c_{p,\xi}(0)$. Define $\b(s,t)=g^t(\b(s))$ for every $t\ge 0$ and $0\le s\le1$. By the assumption and \cite[Lemma 2.13]{BCFT}, there exists $c>0$ such that $l^s(\b([0,1],t))\searrow c$ as $t\to +\infty$, where $l^s$ denotes the length of the curve with respect to $d^s$. In other words, for any $0<\rho\ll \min\{c,\d\}$ where $\d$ is from \eqref{smalldelta}, there exists $T_1>L$ large enough such that 
$$l^s(\b([0,1],t))\in [c, c+\rho/100), \quad \forall t\ge T_1.$$

By Lemma \ref{core}, for any $t\ge L$, $\int_{T_0}^t\lambda(g^sv)ds \ge \eta(t-T_0)$. Applying Pliss lemma \ref{pli} with 
$a_i=-\int_0^1\lambda(g^s(g^{T_0+i-1}v))ds$, $a_*= -\|\lambda\|$, $c_2=-\eta, c_1=-\frac{9\eta}{10}$, and $N$ large enough, we know that there exists a large Pliss time $T_2\gg T_1$. It implies that
\begin{equation}\label{ave}
\int_{T_1}^{T_2}\lambda(g^sv)ds \ge \frac{5\eta}{6}(T_2-T_1).
\end{equation}
Clearly, there exists $s_0\in [0,1]$ such that $l^s(\b([0,s_0],T_1)=\rho.$
By \eqref{smalldelta}, \eqref{ave} and Lemma \ref{stablegeo}, we know 
$$l^s(\b([0,s_0],T_2))\le \rho e^{-\frac{\eta}{2}(T_2-T_1)}\le \frac{\rho}{10}$$ 
if $T_2$ is large enough.
But then 
\begin{equation*}
\begin{aligned}
l^s(\b([0,1],T_2))&=  l^s(\b([0,s_0],T_2))+ l^s(\b([s_0,1],T_2))\\
&\le  l^s(\b([0,s_0],T_1))-\frac{9\rho}{10}+l^s(\b([s_0,1],T_1))\\
&=  l^s(\b([0,1],T_1))-\frac{9\rho}{10}\le c+\frac{\rho}{100}-\frac{9\rho}{10}<c.
\end{aligned}
\end{equation*}
A contradiction. The lemma follows.
\end{proof}

\begin{lemma}\label{decay}
Let $v=\dot{c}_{o,\xi}(0)$ where $\xi\in \L_c^L(\C)$. If $p\in X$, then 
$$d_K(\dot c_{p,\xi}(t+T_3), \dot c_{o,\xi}(t))\le \d e^{\frac{\eta}{2}T_4}e^{-\frac{\eta}{2}t}, \ \forall t>T_5,$$
for some $T_3(p,\xi)\in \RR$ and $0<T_4(p,\xi) <T_5(p,\xi)$ independent of $t>0$.
\end{lemma}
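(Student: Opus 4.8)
The plan is to reduce a general $p\in X$ to the horospherical situation of Lemma~\ref{limitzero} by a time shift, and then to convert the qualitative statement $d^s\to 0$ there into an explicit exponential rate by combining the contraction estimate of Lemma~\ref{stablegeo} with the linear lower bound on $\int\lambda$ supplied by Lemma~\ref{core}.

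First I would introduce the shift. Set $T_3:=b_v(p)=b_\xi(p,o)$, so $|T_3|\le d(o,p)$; since the Busemann function $b_v$ decreases at unit speed along the ray $[p,\xi)$, the point $q:=c_{p,\xi}(T_3)$ lies on the horosphere $H^s(v)=\{b_v=0\}$. Put $w:=\dot c_{p,\xi}(T_3)$, so that $\pi(w)=q\in H^s(v)$ and $w^+=v^+=\xi$, whence $w\in W^s(v)$, and $\dot c_{p,\xi}(t+T_3)=g^tw$ while $\dot c_{o,\xi}(t)=g^tv$; thus the quantity to bound is $d_K(g^tw,g^tv)$. By Lemma~\ref{limitzero} applied at $q\in H^s(v)$ we get $d^s(g^tw,g^tv)\to 0$, so we may fix $t_1=t_1(p,\xi)\ge L$ with $d^s(g^{t_1}w,g^{t_1}v)<\delta$. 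Lemma~\ref{stablegeo}, based at $g^{t_1}v$ (legitimate since now $g^{t_1}w\in W^s(g^{t_1}v,\delta)$), then yields for all $t\ge t_1$
$$d^s(g^tw,g^tv)\ \le\ d^s(g^{t_1}w,g^{t_1}v)\,e^{-\int_{t_1}^{t}\tilde\lambda^s(g^\tau v)\,d\tau}\ <\ \delta\, e^{-\int_{t_1}^{t}\tilde\lambda^s(g^\tau v)\,d\tau}.$$

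To estimate the exponent I would use $\tilde\lambda^s(u)\ge\lambda^s(u)-\tfrac\eta3\ge\lambda(u)-\tfrac\eta3$ together with Lemma~\ref{core} (applicable since $t\ge L$) to get, for $t\ge t_1$,
$$\int_{t_1}^{t}\tilde\lambda^s(g^\tau v)\,d\tau\ \ge\ \eta(t-T_0)-\|\lambda\|\,(t_1-T_0)-\tfrac\eta3(t-t_1)\ \ge\ \tfrac\eta2\,t-\tfrac\eta2\,T_4,$$
where $T_4=T_4(p,\xi)>0$ depends only on $\eta,L,T_0,t_1,\|\lambda\|$ — the dominant term being $\tfrac{2\eta}{3}t>\tfrac\eta2 t$, so the excess slope absorbs the finite constant. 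Finally, on a common stable leaf one has $d_K(g^tw,g^tv)\le d^s(g^tw,g^tv)$, since $d^s$ is nonincreasing under the flow and dominates the ambient distance in $X$, giving $d(\pi g^{t+s}w,\pi g^{t+s}v)\le d^s(g^tw,g^tv)$ for $s\in[0,1]$. Combining the two displays, for every $t>T_5:=\max\{L,t_1,T_4\}+1$ we obtain
$$d_K\big(\dot c_{p,\xi}(t+T_3),\dot c_{o,\xi}(t)\big)=d_K(g^tw,g^tv)\ \le\ \delta\,e^{\frac\eta2 T_4}\,e^{-\frac\eta2 t},$$
with $0<T_4<T_5$, as claimed.

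I expect the substantive point to be the passage from Lemma~\ref{limitzero} to a uniform rate, i.e.\ the exponent estimate above: the core property gives a linear lower bound for $\int_{T_0}^{t}\lambda$ of slope $\eta$, \emph{strictly} larger than the target decay rate $\eta/2$, and this margin is exactly what allows the unknown ``warm-up'' cost accumulated on $[T_0,t_1]$ (where $\lambda$ is only controlled by the crude bound $\|\lambda\|$) to be absorbed into the $(p,\xi)$-dependent constant $T_4$. The remaining ingredients — that $w\in W^s(v)$ and the comparison $d_K\le d^s$ along a stable leaf — are routine consequences of nonpositive curvature and the horospherical description of stable manifolds.
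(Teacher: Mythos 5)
Your proposal is correct and follows essentially the same route as the paper: shift by $T_3=b_\xi(p,o)$ to place the comparison point on $H^s(v)$, invoke Lemma \ref{limitzero} to get within distance $\delta$ of the stable leaf at some finite time, and then combine Lemma \ref{core} with Lemma \ref{stablegeo}, using the margin between the slope $\eta$ and the target rate $\eta/2$ to absorb the warm-up interval into the constant. The only difference is bookkeeping: the paper first records the averaged bound $\frac{1}{t-T_4}\int_{T_4}^t\lambda\ge\frac{99\eta}{100}$ before applying Lemma \ref{stablegeo}, whereas you bound $\int_{t_1}^t\tilde\lambda^s$ directly; these are the same computation.
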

\begin{proof}
Take $T_3=b_\xi(p,o)$, so that $c_{p,\xi}(T_3)\in H^s(v)$. For simplicity we assume $T_3=0$ below and therefore $p\in H^s(v)$. By Lemma \ref{limitzero}, there exists $T_4=T_4(p,\xi)$ such that 
$$d_K(\dot c_{p,\xi}(t), \dot c_{o,\xi}(t))\le d^s(\dot c_{p,\xi}(t), \dot c_{o,\xi}(t))\le  \d, \ \forall t>T_4$$
where $\d$ is from \eqref{smalldelta}.

We claim that there exists $T_5=T_5(p,\xi)>T_4(p,\xi)$ such that
$$\int_{T_4}^t\lambda(g^sv)ds \ge \frac{99\eta}{100}(t-T_4),\ \forall t>T_5.$$ 
Indeed, by Lemma \ref{core}, for any $t\ge L$ we have $\int_{T_0}^t\lambda(g^sv)ds \ge \eta(t-T_0)$. So 
$$(T_4-T_0)\|\lambda\|+\int_{T_4}^t\lambda(g^sv)ds\ge  \eta(t-T_0)$$
which implies that
$$\frac{1}{t-T_4}\int_{T_4}^t\lambda(g^sv)ds\ge \frac{\eta(t-T_0)-(T_4-T_0)\|\lambda\|}{t-T_4}> \frac{99\eta}{100}$$
if $t$ is large enough. So the claim holds.

By the claim and Lemma \ref{stablegeo}, if $t>T_5$,
$$d_K(\dot c_{p,\xi}(t), \dot c_{o,\xi}(t))\le d^s(\dot c_{p,\xi}(t), \dot c_{o,\xi}(t))\le \d e^{-\frac{\eta}{2}(t-T_4)}=\d e^{\frac{\eta}{2}T_4}e^{-\frac{\eta}{2}t}.$$
The proof of the lemma is complete.
\end{proof}

\subsection{Regular radial limit set}
Given $\l>0$ small enough, we define $K_\l:=\{v\in SM: \l^s(v)\ge \l\}$ and lift it into a fundamental domain which we still denote by $K_\l$. Here we use function $\l^s$ instead of function $\l$, so Lemma \ref{nhdsing} can be applied. Indeed, the computation below does not involve unstable manifolds and function $\l^u$. 
\begin{definition}
The \emph{regular radial limit set} is defined as 
$$\L_{r}(\C):=\bigcup_{\l>0}\L_r^\l(\C)$$
where
\[\L_r^\l(\C):=\{\xi\in \pX: \exists \{\c_n\}_{n=1}^\infty\subset \C \text{\ s.t.\ }c_{o,\xi}([0,+\infty))\cap \c_n K_\l\neq \emptyset\}.\]
\end{definition}

\begin{lemma}\label{limitzero1}
Let $v=\dot{c}_{o,\xi}(0)$ where $\xi\in \L_r^\l(\C)$. If $p\in H^s(v)$, then $\lim_{t\to +\infty}d^s(\dot c_{p,\xi}(t), \dot c_{o,\xi}(t))=0$.
\end{lemma}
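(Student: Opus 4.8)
The plan is to run the same scheme as in the proof of Lemma~\ref{limitzero}, replacing the one ingredient there that genuinely uses the \emph{core} condition — the uniform lower bound on the time‑average of $\lambda$ furnished by Lemma~\ref{core} together with the Pliss lemma — by an argument that extracts a definite amount of contraction from the recurrence of $c_{o,\xi}$ to the regular set $K_\lambda$. Concretely, since $p\in H^s(v)$ we have $\dot c_{p,\xi}(0)\in W^s(v)$; I take a shortest curve $\beta\colon[0,1]\to W^s(v)$ from $v$ to $\dot c_{p,\xi}(0)$ and set $\beta(s,t)=g^t\beta(s)$. The function $t\mapsto l^s(\beta([0,1],t))$ is non‑increasing (cf.\ \cite[Lemma 2.13]{BCFT}), so it decreases to some $c\ge0$, and since $d^s(\dot c_{p,\xi}(t),\dot c_{o,\xi}(t))\le l^s(\beta([0,1],t))$ it suffices to rule out $c>0$. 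Assuming $c>0$, I fix $\rho>0$ small (how small will depend on $\lambda$), choose $T_1$ with $l^s(\beta([0,1],t))\in[c,c+\rho/100)$ for $t\ge T_1$, and pick $s_0$ with $l^s(\beta([0,s_0],T_1))=\rho$; by monotonicity the ``small piece'' $\beta([0,s_0],\cdot)$ stays within $d^s$‑distance $\rho$ of $\dot c_{o,\xi}(\cdot)$ for every $t\ge T_1$.

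The next step is to use $\xi\in\L_r^\lambda(\C)$. It provides infinitely many distinct $\c_n\in\C$ and times $u_n\ge0$ with $\dot c_{o,\xi}(u_n)\in\c_nK_\lambda$; since the $\C$‑translates of a fundamental domain are locally finite, $u_n\ge d(o,\c_n\F)\to\infty$, and since $\lambda^s$ is $\C$‑invariant, $\lambda^s(\dot c_{o,\xi}(u_n))\ge\lambda$. By uniform continuity of $(w,s)\mapsto\lambda^s(g^sw)$ on the compact set $SM\times[-1,1]$ I can fix $\tau_0=\tau_0(\lambda)>0$ so that $\lambda^s(w)\ge\lambda$ forces $\lambda^s(g^sw)\ge\lambda/2$ for $|s|\le\tau_0$; passing to a subsequence I may assume $T_1+\tau_0<u_1<u_2<\cdots\to\infty$ with $u_{k+1}-u_k>2\tau_0$. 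On each window $[u_k-\tau_0,u_k+\tau_0]$ one then has $\lambda^s(\dot c_{o,\xi}(\tau))\ge\lambda/2$, and — provided $\rho=\rho(\lambda)$ is small enough that $d^s$‑closeness of size $\rho$ forces $\lambda^s$ to vary by less than $\lambda/4$ (uniform continuity of $\lambda^s$ on $SM$, together with the fact that $d^s$‑close vectors on a stable leaf are close in $SM$) — also $\lambda^s(\dot\beta(s,\tau))\ge\lambda/4$ for all $s\in[0,s_0]$. Since the norm of a stable Jacobi field decays along its geodesic at rate at least $\lambda^s$, integrating over the window and over $s$ gives $l^s(\beta([0,s_0],u_k+\tau_0))\le e^{-\lambda\tau_0/2}\,l^s(\beta([0,s_0],u_k-\tau_0))$; between windows $l^s(\beta([0,s_0],\cdot))$ is non‑increasing, so after $N$ windows $l^s(\beta([0,s_0],u_N+\tau_0))\le e^{-N\lambda\tau_0/2}\rho$. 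Choosing $N$ with $e^{-N\lambda\tau_0/2}<1/10$ and setting $T_2:=u_N+\tau_0$, the small piece has length $<\rho/10$ at time $T_2$.

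Then I would conclude exactly as in Lemma~\ref{limitzero}: using that $l^s(\beta([s_0,1],\cdot))$ is non‑increasing,
\[
l^s(\beta([0,1],T_2))=l^s(\beta([0,s_0],T_2))+l^s(\beta([s_0,1],T_2))<\tfrac{\rho}{10}+l^s(\beta([0,1],T_1))-\rho<c,
\]
which contradicts $l^s(\beta([0,1],T_2))\ge c$; hence $c=0$. I expect the main obstacle to be the contraction estimate of the middle paragraph: one cannot simply quote Lemma~\ref{stablegeo}, whose decay rate $\tilde\lambda^s=\max\{\lambda^s-\eta/3,0\}$ involves the fixed constant $\eta/3$ and may vanish identically along $c_{o,\xi}$ when $\lambda$ is small, so I must instead work with the stable Jacobi fields directly on a piece whose size $\rho$ is calibrated to $\lambda$, so that the genuinely positive infinitesimal rate $\lambda^s$ is felt on each visit to $K_\lambda$ and accumulates over the (infinitely many) visits. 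A minor point to verify carefully is that the recurrence condition really produces times $u_n\to+\infty$ (via local finiteness of the translates $\C\F$) that can be separated into disjoint windows, and that $d^s$‑closeness transfers to closeness in $SM$ so that the uniform‑continuity estimates for $\lambda^s$ apply.
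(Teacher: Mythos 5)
Your proof is correct and follows essentially the same scheme as the paper's: argue by contradiction from the monotone limit $c>0$, split $\beta$ into a small initial piece of $d^s$-length $\rho$ plus the remainder, extract a definite contraction of that piece from a visit of $c_{o,\xi}$ to $\Gamma K_\lambda$ (with $\rho$ calibrated so that $\lambda^s$ stays bounded below along the piece), and contradict $l^s(\beta([0,1],\cdot))\ge c$. The only difference is bookkeeping: the paper uses a single visit over a window of length $\delta'$ and calibrates the tolerance at $T_1$ to $\tfrac12\rho\bigl(1-e^{-\lambda\delta'/3}\bigr)$, whereas you accumulate contraction over $N$ disjoint visit windows to beat a fixed tolerance $\rho/100$ — both are valid, and your explicit handling of the Jacobi-field decay rate (rather than quoting Lemma \ref{stablegeo} verbatim with its $\eta/3$ correction) is, if anything, more careful than the paper's.
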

\begin{proof}
Assume not. By uniform continuity of $\l^s:SM\to \RR$, there exists $\d'>0$ such that if $d_K(v,w)\le \d'$, then $|\l^s(v)-\l^s(w)|<\l/100$. Take a shortest curve $\beta: [0,1]\to W^s(v)$ with $\beta(0)=v$ and $\b(1)=\dot c_{p,\xi}(0)$. Define $\b(s,t)=g^t(\b(s))$ for every $t\ge 0$ and $0\le s\le1$. By the assumption and \cite[Lemma 2.13]{BCFT}, there exists $c>0$ such that $l^s(\b([0,1],t))\searrow c$ as $t\to +\infty$. In other words, for any $0<\rho\ll \min\{c,\d'\}$, there exists $T_1>0$ such that 
$$l^s(\b([0,1],t))\in [c, c+\frac{1}{2}\rho (1-e^{-\frac{\l}{3}\d'})), \quad \forall t\ge T_1.$$

Let $T_2\ge T_1$ be the first time after $T_1$ when $c_{o,\xi}$ enters $\a K_\l$ for some $\a\in \C$, and then let $T_3=T_2+\d'$. Such $T_2$ exists since $\xi\in \L_r^\l(\C)$. We have $\l^s(g^sv)\ge 2\l/3$ for any $s\in [T_2,T_3]$ by the choice of $\d'$.

Note that there exists $s_0\in [0,1]$ such that $l^{s}(\b([0,s_0],T_2)=\rho.$
By the choice of $\d',\rho$ and Lemma \ref{stablegeo}, we know 
$$l^s(\b([0,s_0],T_3))\le \rho e^{-\frac{\l}{3}(T_3-T_2)}= \rho e^{-\frac{\l}{3}\d'}.$$
But then 
\begin{equation*}
\begin{aligned}
l^s(\b([0,1],T_3))&=  l^s(\b([0,s_0],T_3))+ l^s(\b([s_0,1],T_3))\\
&\le  l^s(\b([0,s_0],T_2))-\rho(1-e^{-\frac{\l}{3}\d'})+l^s(\b([s_0,1],T_2))\\
&=  l^s(\b([0,1],T_2))-\rho (1-e^{-\frac{\l}{3}\d'})\\
&\le c+\frac{1}{2}\rho (1-e^{-\frac{\l}{3}\d'})-\rho (1-e^{-\frac{\l}{3}\d'})<c.
\end{aligned}
\end{equation*}
A contradiction. The lemma follows.
\end{proof}

\subsection{Uniformly recurrent and regular vectors}
For a regular vector $v\in \text{Reg}$, we want to describe its regularity quantitatively using the exponentially contracting rate of the geodesic flow along the stable manifold of $v$. By Lemma \ref{stablegeo}, $\l(v)$, or more precisely $\l^s(v)$, indicates the regularity of $v$ along its local stable manifold. In this section, in order to have exponentially decreasing rate along global stable manifolds, we consider uniformly recurrent and regular vectors.
\begin{definition}\label{uniformlyrecurrent}
A vector $v\in SM$ is said to be \emph{uniformly recurrent} if for any neighborhood $U$ of $v$ in $SM$
$$\liminf_{t \to \pm \infty} \frac{1}{T}\int_0^T \chi_U(g^t(v))dt > 0,$$
where $\chi_U$ is the characteristic function of $U$.
\end{definition}

The notion of uniformly recurrent vectors has appeared in \cite{BBE}, where the authors constructed a strong stable manifold for any uniformly recurrent and regular vector and showed there is an exponential contraction along the strong stable manifold \cite[Proposition 3.10]{BBE}. Though \cite{BBE} deals with manifolds of nonpositive curvature with higher rank, similar results hold for rank one manifolds.

We say that $v\in SX$ is uniformly recurrent, if its projection to $SM$, $dp(v)$, is uniformly recurrent. Let $\UR$ denote both the set of uniformly recurrent vectors in $SM$ and its lift to $SX$. The next two lemmas are stated in \cite[p. 192]{BBE} without a proof. For completeness, we provide proofs in the Appendix.

\begin{lemma}\label{uniformrec}
Let $M$ be a closed rank one manifold of nonpositive curvature. We have
\begin{enumerate}
  \item $\UR$ is $g^t$-invariant and flip invariant;
  \item $\nu(\UR)=1$ for any $\nu\in \M_{\GGG}(SM)$.
\end{enumerate}
\end{lemma}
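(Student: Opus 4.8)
The plan is to establish the two assertions separately, with part (2) carrying most of the weight. For part (1), the $g^t$-invariance is essentially immediate from the definition: if $v$ is uniformly recurrent and $w=g^{t_0}v$, then for a neighborhood $U$ of $w$ the set $g^{-t_0}U$ is a neighborhood of $v$, and the Birkhoff-type lower averages $\liminf_{T\to\pm\infty}\frac1T\int_0^T\chi_U(g^t w)\,dt$ and $\liminf_{T\to\pm\infty}\frac1T\int_0^T\chi_{g^{-t_0}U}(g^t v)\,dt$ differ only by a boundary term of size $O(t_0/T)\to 0$, hence the first is positive whenever the second is. Flip invariance follows because the flip $\iota$ conjugates $g^t$ to $g^{-t}$: if $v$ is uniformly recurrent then for a neighborhood $U$ of $\iota v$ one has $\iota U$ a neighborhood of $v$, and $\frac1T\int_0^T\chi_U(g^t(\iota v))\,dt=\frac1T\int_0^T\chi_{\iota U}(g^{-t}v)\,dt$, so the $\liminf$ as $T\to +\infty$ (resp. $-\infty$) for $\iota v$ equals the one as $T\to -\infty$ (resp. $+\infty$) for $v$; since the definition requires positivity of both one-sided limits, $\iota v\in\UR$.

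For part (2), fix $\nu\in\M_{\GGG}(SM)$. The natural route is via the Birkhoff ergodic theorem together with a second-countability argument. Choose a countable base $\{U_k\}_{k\ge 1}$ for the topology of $SM$, and for each $k$ apply the Birkhoff ergodic theorem to $\chi_{U_k}$ for both the flow $(g^t)$ and the reverse flow $(g^{-t})$: for $\nu$-a.e.\ $v$ the forward and backward time averages $\bar\chi_{U_k}^{\pm}(v):=\lim_{T\to\pm\infty}\frac1T\int_0^T\chi_{U_k}(g^tv)\,dt$ exist, and $\int \bar\chi_{U_k}^{\pm}\,d\nu=\nu(U_k)$. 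Let $\Omega$ be the full-measure set on which all these limits exist simultaneously for every $k$. The key point is then a Poincaré-recurrence/positivity statement: for $v\in\Omega$ and any neighborhood $U$ of $v$, one can pick a basic open set $U_k$ with $v\in U_k\subset U$ and $\nu(U_k)>0$; one must then deduce $\bar\chi_{U_k}^{\pm}(v)>0$. This positivity is not automatic pointwise, so here one restricts attention to the set where the ergodic average equals its natural value: define $\Omega'\subset\Omega$ to be the set of $v$ such that for every $k$ with $v\in U_k$ the averages $\bar\chi_{U_k}^{\pm}(v)$ are strictly positive; using $\int\bar\chi_{U_k}^{\pm}\,d\nu=\nu(U_k)$ one shows the complement is null by a countable union over $k$ of the sets $\{v\in U_k:\bar\chi_{U_k}^{\pm}(v)=0\}$, each of which has $\nu$-measure zero since on $U_k$ a nonnegative integrable function with positive integral cannot vanish on a set whose complement in $U_k$ is null unless... — more carefully, one invokes that $\{\bar\chi^\pm_{U_k}=0\}$ is $g^t$-invariant, so if it met $U_k$ in positive measure it would force $\nu(U_k)=\int_{\{\bar\chi^\pm_{U_k}>0\}}\bar\chi^\pm_{U_k}\,d\nu$ while simultaneously the invariant set $\{\bar\chi^\pm_{U_k}=0\}$ has positive measure and yet $\int_{\{\bar\chi^\pm_{U_k}=0\}}\chi_{U_k}\,d\nu=\int_{\{\bar\chi^\pm_{U_k}=0\}}\bar\chi^\pm_{U_k}\,d\nu=0$, contradicting that this invariant set has positive intersection with $U_k$. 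Hence $\nu(\UR)\ge\nu(\Omega')=1$.

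The main obstacle I anticipate is precisely this last positivity step: passing from ``the ergodic average of $\chi_{U_k}$ integrates to $\nu(U_k)>0$'' to ``the ergodic average is a.e.\ positive on $U_k$'' requires using that the zero set of the ergodic average is flow-invariant and then decomposing the measure along that invariant set — a standard but slightly delicate measure-theoretic argument that needs the two-sided (forward and backward) versions handled in tandem, since uniform recurrence demands positivity of \emph{both} one-sided averages. One clean way to package it: work with the symmetrized average or simply intersect the two full-measure sets obtained for $(g^t)$ and $(g^{-t})$. Everything else — the second-countability reduction, the boundary-term estimates in part (1) — is routine.
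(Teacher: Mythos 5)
Your proof is correct and follows essentially the same route as the paper: a countable base, the Birkhoff ergodic theorem applied to each basic open set (in both time directions), and the observation that the zero set of the time average of $\chi_{U_k}$ is flow-invariant and therefore meets $U_k$ only in a null set. The paper packages this last step as a single global contradiction (a second application of Birkhoff to the characteristic function of the putative bad subset of $\UR^c$), whereas you argue directly via the identity $\int_A\bar f\,d\nu=\int_A f\,d\nu$ for invariant $A$ and a countable union of null sets; part (1), which the paper dismisses as immediate, you also verify correctly.
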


\begin{lemma}\label{recrate}
If $v\in SX$ is uniformly recurrent, then for any open neighborhood $U$ of $v$, any $T>0$, there exist $\{\phi_n\}$ in the isometry group $\text{Iso}(X)$ of $X$, $t_n\to \infty$, and $\sigma>0$, such that $d\phi_ng^{t_n}v\in U, t_{n+1}-t_n>T$ and $t_n<\frac{n}{\sigma}$ for any $n\in \NN$.
\end{lemma}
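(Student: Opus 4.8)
The plan is to work upstairs in $SX$, translate the uniform recurrence of $dp(v)$ into a positive lower density statement for a set of return times, extract from it a $T$-separated subsequence whose $n$-th term grows at most linearly in $n$, and finally lift the returns back to $SX$ via deck transformations.

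First I would fix a neighborhood $W$ of $dp(v)$ in $SM$ small enough to be evenly covered by the covering map $dp\colon SX\to SM$ (whose deck group is $\C$ acting by differentials), and small enough that the connected component $\widetilde U$ of $dp^{-1}(W)$ containing $v$ satisfies $\widetilde U\subseteq U$; this is possible because $dp$ is a local homeomorphism and $U$ is open. Set
\[
A:=\{t\ge 0:\ g^t(dp(v))\in W\}.
\]
Since $t\mapsto g^t(dp(v))$ is continuous and $W$ is open, $A$ is open; and since $dp(v)$ is uniformly recurrent, $A$ has positive lower density, so there are $d_0>0$ and $T_1>0$ with $|A\cap[0,\tau]|\ge \tfrac{d_0}{2}\tau$ for all $\tau\ge T_1$ (Lebesgue measure). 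In particular $A$ is unbounded.

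The heart of the matter is producing the times $t_n$, and I expect this to be the only delicate point: one must pull a $T$-separated return sequence of \emph{linear} growth out of merely positive lower density of the open set $A$. Replacing $T$ by $T+1$, build $t_1<t_2<\cdots$ in $A$ greedily: choose $t_1\in A$ arbitrarily; given $t_{n-1}$, put $s_n:=\inf\{t\in A:\ t\ge t_{n-1}+T\}$ (finite, as $A$ is unbounded) and pick $t_n\in A$ with $s_n\le t_n<s_n+2^{-n}$ (possible since $s_n$ is either in $A$ or a limit point of $A$ from above). Then $t_{n+1}-t_n>T$ and $t_n\to\infty$. By definition of $s_n$ one has $A\cap[t_{n-1}+T,s_n)=\emptyset$, hence
\[
A\cap(t_{n-1},t_n)\subseteq (t_{n-1},t_{n-1}+T]\cup[s_n,s_n+2^{-n}),
\]
so $|A\cap(t_{n-1},t_n)|\le T+2^{-n}$. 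Summing and using $|A\cap[0,t_1]|\le t_1$ gives $|A\cap[0,t_n)|\le t_1+1+(n-1)T$, while the lower density bound gives $|A\cap[0,t_n)|\ge \tfrac{d_0}{2}t_n$ once $t_n\ge T_1$. Hence $t_n\le \tfrac{2}{d_0}(t_1+1+nT)$ for all large $n$; since $(t_n)$ is a fixed sequence increasing to $\infty$, I may then shrink $\sigma>0$ so that $t_n<n/\sigma$ holds for \emph{every} $n\in\NN$.

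Finally I would lift. For each $n$, $dp(g^{t_n}v)=g^{t_n}(dp(v))\in W$, so $g^{t_n}v$ lies in some connected component $V_n$ of $dp^{-1}(W)$; since $\C$ acts transitively on these components by deck transformations, there is $\gamma_n\in\C$ with $d\gamma_n(V_n)=\widetilde U$. Setting $\phi_n:=\gamma_n\in\C\subseteq\text{Iso}(X)$ gives $d\phi_n(g^{t_n}v)\in\widetilde U\subseteq U$, which together with $t_{n+1}-t_n>T$ and $t_n<n/\sigma$ established above proves the lemma.
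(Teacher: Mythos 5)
Your proof is correct and follows essentially the same strategy as the paper: both arguments convert uniform recurrence into a positive lower density bound for the set of return times to a small neighborhood, extract a $T$-separated subsequence that this density bound forces to grow at most linearly, and then lift back to $SX$ by deck transformations. The only cosmetic difference is that the paper fixes $\sigma$ in advance and builds the $t_n$ inductively inside the windows $[0,n/\sigma)$, whereas you build the $t_n$ greedily and deduce the linear bound (hence $\sigma$) a posteriori; your treatment of the lifting step is in fact more careful than the paper's.
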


We derive exponential decay along stable manifolds in the following two propositions. Write $\URR:=\UR\cap \text{Reg}$, the set of uniformly recurrent and regular vectors. For $w\in SX$, let $J^s(w)$ denote the space of normal stable Jacobi fields along $c_w$.
\begin{proposition}\label{rate}
Let $M$ be a closed rank one manifold of nonpositive curvature and $v\in \URR\subset SX$. Then there exist a neighborhood $U$ of $v$ in $W^s(v)$ and constants $\lambda=\lambda(v)>0, C=C(v)>1$ such that for every $w\in U$, every $Y\in J^s(w)$,
\begin{equation}\label{jacobi}
\|Y(t)\|< Ce^{-\lambda t}\|Y(0)\|, \quad \forall t>0.
\end{equation}
\end{proposition}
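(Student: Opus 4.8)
We sketch a self-contained argument. The plan is to deduce \eqref{jacobi} from the Jacobi equation $Y'(\tau)=\UUU^s_{g^\tau w}Y(\tau)$ for stable Jacobi fields, once we have a uniform positive lower bound for the averages $\tfrac1t\int_0^t\lambda^s(g^\tau w)\,d\tau$; such a bound will come from uniform recurrence via Lemma \ref{recrate}.

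First I would reduce to the case $\lambda^s(v)>0$. If $\lambda^s(g^tv)=0$ for every $t\in\RR$, then Lemma \ref{nhdsing} applied with arbitrarily small $\rho$ forces $d_K(v,\text{Sing})=0$, contradicting $v\in\text{Reg}$; hence $\lambda^s(g^{t_1}v)>0$ for some $t_1\in\RR$, and $v':=g^{t_1}v$ still lies in $\URR$ by the $g^t$-invariance in Lemma \ref{uniformrec}. Since a stable Jacobi field satisfies $Y'(\tau)=\UUU^s_{g^\tau w}Y(\tau)$ along the entire geodesic, $s\mapsto Y(s+t_1)$ belongs to $J^s(g^{t_1}w)$ whenever $Y\in J^s(w)$, and $\|Y(t_1)\|\le e^{\Lambda|t_1|}\|Y(0)\|$ because $\|\UUU^s\|\le\Lambda$ on $SM$; so \eqref{jacobi} for $v'$ on a neighborhood $U'\subset W^s(v')$ implies it for $v$ on $U:=\{w\in W^s(v):g^{t_1}w\in U'\}$ after absorbing a bounded-time distortion into $C$. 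Thus I assume $\lambda^s(v)=:2a>0$.

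Next I would establish the average bound for the orbit of $v$. Fix $\eta\in(0,\tfrac{3a}{2})$ and let $\delta=\delta(\eta)$ be as in \eqref{smalldelta}. Choose $\ell_0>0$ with $\lambda^s(g^rv)\ge\tfrac{3a}{2}$ for $r\in[0,\ell_0]$, let $L\ge1$ be a $d_K$-Lipschitz constant for $\{g^r:0\le r\le\ell_0\}$ (uniform on $SM$ by compactness), and pick $r_0>0$ with $Lr_0\le\delta$ and small enough that $d_K(\cdot,\cdot)<Lr_0$ makes $\lambda^s$ vary by less than $a/2$. Applying Lemma \ref{recrate} to $U_0:=\{w\in SX:d_K(w,v)<r_0\}$ with $T:=\ell_0$ yields $\phi_n\in\text{Iso}(X)$, $t_n\to\infty$, $\sigma>0$ with $d\phi_ng^{t_n}v\in U_0$, $t_{n+1}-t_n>\ell_0$, $t_n<n/\sigma$. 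As $d_K$ and $\lambda^s$ are $\text{Iso}(X)$-invariant, $d_K(g^{t_n}v,d\phi_n^{-1}v)<r_0$; flowing both points by $g^r$, $r\in[0,\ell_0]$, keeps them within $Lr_0$, and since $\lambda^s(d\phi_n^{-1}g^rv)=\lambda^s(g^rv)\ge\tfrac{3a}{2}$ this gives $\lambda^s(g^sv)\ge a$ for all $s\in\bigcup_n[t_n,t_n+\ell_0]$. These intervals are pairwise disjoint, and $t_n<n/\sigma$ puts at least $\sigma T-C_0(v)$ of them inside $[0,T]$; hence, with $\tilde\lambda^s$ as in Lemma \ref{stablegeo},
\[
\int_0^T\tilde\lambda^s(g^sv)\,ds\ \ge\ \big(a-\tfrac{\eta}{3}\big)\ell_0\big(\sigma T-C_0(v)\big)\ \ge\ \lambda_0\,T,\qquad T\ge T_*(v),
\]
where $\lambda_0:=\tfrac{a\sigma\ell_0}{4}>0$.

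Finally I would combine this with the Jacobi equation for nearby $w$. For $w\in U:=W^s(v,\delta)$, Lemma \ref{stablegeo} gives $d_K(g^\tau w,g^\tau v)\le d^s(g^\tau w,g^\tau v)\le d^s(w,v)<\delta$ for all $\tau\ge0$, so by \eqref{smalldelta} $\lambda^s(g^\tau w)\ge\lambda^s(g^\tau v)-\tfrac{\eta}{3}$, and since $\lambda^s\ge0$ always, $\lambda^s(g^\tau w)\ge\tilde\lambda^s(g^\tau v)$. For $Y\in J^s(w)$ one has $\tfrac{d}{d\tau}\|Y(\tau)\|^2=2\langle\UUU^s_{g^\tau w}Y(\tau),Y(\tau)\rangle\le-2\lambda^s(g^\tau w)\|Y(\tau)\|^2$, hence $\|Y(t)\|\le\|Y(0)\|\exp(-\int_0^t\lambda^s(g^\tau w)\,d\tau)\le\|Y(0)\|\exp(-\int_0^t\tilde\lambda^s(g^\tau v)\,d\tau)$, which is $\le\|Y(0)\|e^{-\lambda_0t}$ for $t\ge T_*$ and trivially $\le e^{\lambda_0T_*}e^{-\lambda_0t}\|Y(0)\|$ for $0<t\le T_*$; taking $\lambda:=\lambda_0/2$, $C:=e^{\lambda_0T_*}+1$ and shrinking $U$ as in the reduction gives \eqref{jacobi}. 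The step I expect to be the main obstacle is the middle one --- promoting pointwise recurrence at the discrete times $t_n$ to a lower bound on the \emph{integral} of $\lambda^s$: this relies both on the positive lower density of return times built into $t_n<n/\sigma$ in Lemma \ref{recrate} and on propagating the positivity $\lambda^s(v)>0$ to a definite interval around each returning point via uniform continuity of the geodesic flow; the reduction and the passage from $c_v$ to $c_w$ are routine by comparison.
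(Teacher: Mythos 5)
Your argument is correct, but it is a genuinely different route from the one the paper takes. The paper's proof is qualitative: it first shows, using recurrence of $v$ together with the rigidity result that a Jacobi field of constant norm on all of $\RR$ is parallel (\cite[Proposition 2.4]{Wat}), that $J^s(v)$ contains no stable Jacobi field of constant length on $[0,\infty)$; a compactness argument on the unit sphere of $J^s(v)$ then yields a fixed time $T$ and a definite contraction factor $e^{-\delta}$, which is propagated to an exponential rate exactly as in your last step, via Lemma \ref{recrate} (the density bound $t_n<n/\sigma$) and monotonicity of $t\mapsto\|Y(t)\|$. You instead work quantitatively through the second fundamental form function $\lambda^s$: you reduce to $\lambda^s(v)>0$ via Lemma \ref{nhdsing}, use uniform recurrence to produce a positive lower density of disjoint time intervals on which $\lambda^s(g^s v)\ge a$, and integrate the Riccati inequality $\frac{d}{d\tau}\|Y\|^2\le-2\lambda^s\|Y\|^2$ (the same computation underlying Lemma \ref{stablegeo}). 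Both proofs rest on the same two pillars --- regularity of $v$ to get contraction somewhere, and the $t_n<n/\sigma$ density of returns to spread it out --- and you correctly identified the density step as the crux. What your version buys is an explicit rate $\lambda\approx a\sigma\ell_0$ expressed in terms of $\lambda^s(v)$ and the recurrence density, and a direct link to the $\tilde\lambda^s$ machinery used elsewhere in the paper; what the paper's version buys is that it avoids the reduction step and the quantitative input of Lemma \ref{nhdsing}, needing only the soft flat-strip rigidity. Two cosmetic points: you should invoke uniform continuity of $\lambda^s$ directly rather than \eqref{smalldelta}, which is stated for $\lambda=\min\{\lambda^u,\lambda^s\}$; and the monotonicity $\|Y(t)\|\le\|Y(0)\|$ you use for $0<t\le T_*$ and in the reduction should be flagged as the standard convexity-plus-boundedness fact for stable Jacobi fields in nonpositive curvature.
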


Let $v\in \URR$. It is proved in \cite[Proposition 3.10]{BBE} that 
$$W^s(v)=\{w\in SX: \lim_{t\to +\infty}d(g^tv, g^tw)=0\}.$$ 
Moreover, we have the following quantitative estimate.

\begin{proposition}\label{contracting}
Let $M$ be a closed rank one manifold of nonpositive curvature and $v\in \URR\subset SX$. Then there exist a constant $\lambda=\lambda(v)> 0$ such that for any $w\in W^s(v)$,
\begin{equation}\label{e:imp}
d_K(g^tv,g^tw) \le Ce^{d^s(v,w)/\lambda}d^s(v,w)e^{-\lambda t}, \quad \forall t>0
\end{equation}
where $C>1$ is a universal constant.
\end{proposition}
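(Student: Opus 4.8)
The plan is to upgrade the infinitesimal estimate of Proposition \ref{rate} to a genuine, explicit comparison of the Knieper distance between $g^tv$ and $g^tw$ along the whole global stable manifold. First I would set $r := d^s(v,w)$ and work inside the intrinsic metric on $W^s(v)$. By Proposition \ref{rate} there is a neighborhood $U$ of $v$ in $W^s(v)$, radius $\rho=\rho(v)>0$, and constants $\lambda=\lambda(v)>0$, $C=C(v)>1$ such that every normal stable Jacobi field $Y$ along $c_u$, $u\in U$, satisfies $\|Y(t)\|<Ce^{-\lambda t}\|Y(0)\|$ for all $t>0$. Integrating this bound along a shortest path $\beta:[0,1]\to W^s(v)$ from $v$ to $w$ (so $l^s(\beta)=r$) and flowing it by $g^t$, one gets, for the flowed curve $\beta_t:=g^t\circ\beta$, the length bound $l^s(\beta_t)\le C r e^{-\lambda t}$ \emph{provided} the entire curve $\beta([0,1])$ stays in $U$; since $\beta$ has length $r$ this holds when $r<\rho$. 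This controls $d^s(g^tv,g^tw)$, hence $d_K(g^tv,g^tw)$ (which is bounded by $d^s$, as used repeatedly in the excerpt), by $Cre^{-\lambda t}$, which is the claimed inequality with the factor $e^{d^s(v,w)/\lambda}\ge 1$ harmlessly inserted.

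The main obstacle is the \textbf{large-$r$ case}, $r\ge\rho$: here $\beta([0,1])$ need not lie in the good neighborhood $U$, so the Jacobi-field estimate from Proposition \ref{rate} does not apply uniformly along $\beta$. The remedy is a chaining/bootstrapping argument along the flow direction. By \cite[Proposition 3.10]{BBE}, $W^s(v)=\{w: d(g^tv,g^tw)\to0\}$, so $d^s(g^tv,g^tw)\to 0$; in particular there is a first time $t_0=t_0(v,w)$ with $d^s(g^{t_0}v,g^{t_0}w)<\rho$, after which the small-$r$ argument applies to the pair $(g^{t_0}v,g^{t_0}w)$ and gives exponential decay for $t\ge t_0$ with rate $\lambda$. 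The point is to bound $t_0$ linearly in $r$: by convexity of $t\mapsto d(g^tv,g^tw)$ coming from nonpositive curvature together with the fact that this function tends to $0$, the function is nonincreasing, and one shows $d^s(g^tv,g^tw)\le d^s(v,w)$ for all $t\ge 0$ (a Rauch-type comparison / convexity estimate, cf.\ \cite[Lemma 2.13]{BCFT}); more quantitatively, a crude uniform-speed estimate for how fast a stable pair can contract yields $t_0\lesssim r/\lambda$ (up to adjusting the constant $\lambda$), so that $e^{-\lambda(t-t_0)}\le e^{r/\lambda}e^{-\lambda t}$ after renaming constants. Feeding this into the small-$r$ bound applied from time $t_0$ onward produces $d_K(g^tv,g^tw)\le C e^{d^s(v,w)/\lambda}d^s(v,w)e^{-\lambda t}$ for all $t>0$, which is exactly \eqref{e:imp}.

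Two bookkeeping points I would be careful about. First, the constants $\lambda(v)$ and $C(v)$ may depend on $v$ through Proposition \ref{rate}; the statement only claims $\lambda=\lambda(v)$ and a universal $C$, so I would absorb the $v$-dependence of the multiplicative constant into the factor $e^{d^s(v,w)/\lambda}$ and into a renormalization of $\lambda$, and track carefully that $C$ can be taken independent of $v$ (e.g.\ $C=2$) at the cost of shrinking $\lambda$ — this is where compactness of $SM$ and the uniform bounds $\|\lambda^{s/u}\|\le\Lambda$ enter. Second, for $t$ in a bounded initial window $[0,T_5]$-type range one does not even need decay: the trivial bound $d_K(g^tv,g^tw)\le d^s(g^tv,g^tw)\le r$ together with choosing $C$ large enough to dominate $e^{\lambda t}$ on that window closes the gap, so the genuinely new content is the behavior for large $t$, handled as above. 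I expect the convexity/comparison step bounding $t_0$ linearly in $r$ to be the technical heart; everything else is integration of the infinitesimal estimate and constant-chasing.
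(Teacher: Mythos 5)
Your small-distance case (the whole connecting curve lies in the neighborhood $U$ from Proposition \ref{rate}, so you integrate the Jacobi-field bound along it) is correct and is exactly the base case of the paper's argument. The large-distance case, however, has a genuine gap at its "technical heart." You propose to find the first time $t_0$ with $d^s(g^{t_0}v,g^{t_0}w)<\rho$ and to bound $t_0\lesssim d^s(v,w)/\lambda$ by ``convexity together with a crude uniform-speed estimate for how fast a stable pair can contract.'' Convexity in nonpositive curvature gives only that $t\mapsto d^s(g^tv,g^tw)$ is nonincreasing; it gives no rate, and in this nonuniformly hyperbolic setting there is no uniform lower bound on the contraction speed -- the stable distance can stay essentially constant for arbitrarily long stretches, and contraction only occurs when the orbit revisits the good region supplied by recurrence (Lemma \ref{recrate}). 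So the linear bound on $t_0$, which is where the factor $e^{d^s(v,w)/\lambda}$ is supposed to come from, is precisely the quantitative content to be proved and cannot be taken as input. A second, related problem: applying the ``small-$r$'' argument at time $t_0$ requires Proposition \ref{rate} at the basepoint $g^{t_0}v$, whose neighborhood and constants depend on that point; since $t_0$ depends on $w$, your final constants would too. Compactness of $SM$ does not rescue this -- uniform constants are only available at the return times $t_n$ of Lemma \ref{recrate}, not at arbitrary times.

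The paper's proof (following \cite[Proposition 3.10]{BBE}, whose estimates it makes explicit) resolves both points with a chaining argument in the $d^s$-variable rather than in time: subdivide the connecting curve $\b$ into $m\approx L/\e$ pieces short enough to fit in $U$; the first piece contracts for all $t>0$ by Proposition \ref{rate}; once it has shrunk enough (at a return time, where the good neighborhood has uniform size and uniform constants), the first two pieces together fit inside the good neighborhood and contract from then on; and so on. Each additional piece costs a bounded multiplicative factor (a factor of $2$), yielding
$$\mathrm{length}(g^t\b)\le \frac{2^{m+1}L}{2^{\sigma t}},\qquad m=\Big\lfloor\frac{2L}{\e}\Big\rfloor,$$
which is exactly the $e^{d^s(v,w)/\lambda}$ prefactor with $\lambda=\min\{\e/(4\log 2),\sigma\log 2\}$ and $C=8$. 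Your writeup would need to replace the ``uniform speed'' step with this bootstrapping to close the gap.
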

 
The proof of Propositions \ref{rate} and \ref{contracting} is given in the Appendix. From the proof, we obtain some explicit estimates, which enable us to formulate the following quantitative definition. Let $-a^2$ be a lower bound for the sectional curvature of $M$.
\begin{definition}
Given $\lambda>0$, we say that $v\in SX$ is \emph{$\lambda$-uniformly recurrent and regular}, denoted by $v\in \UR(\lambda)$, if $v\in \URR$ and
\begin{enumerate}
   \item there exist $C'=C'(v)>1$ and a neighborhood $U$ of $v$ in $W^s(v)$ containing $W^s(v, 8\log 2\cdot \lambda\sqrt{1+a^2})$ such that 
\begin{equation*}
\|Y(t)\|< C'e^{-\lambda t}\|Y(0)\|, \quad \forall t > 0
\end{equation*}
holds for every $w\in U$ and every $Y\in J^s(w)$;
\item  for any $w\in W^s(v)$,
$$d_K(g^tv,g^tw)\le Cd^s(v,w)e^{d^s(v,w)/\lambda}e^{-\lambda t}, \quad \forall t > 0$$
where $C>1$ is a universal constant;
\item the above two items also hold for $-v$ instead of $v$. 
\end{enumerate}
Moreover, if $v\in \text{Reg}$ satisfies (1), (2) and (3) above, we denote $v\in \R(\lambda)$.
\end{definition}
It is clear that $\URR=\bigcup_{\lambda>0}\UR(\lambda)$ by Propositions \ref{rate} and \ref{contracting}. We note again that $\UR$ and thus $\URR$ are $g^t$-invariant and flip invariant by Lemma \ref{uniformrec}.

The proof of the following lemma is given in the Appendix. 
\begin{lemma}\label{contracting1}
Let $v\in \R(\lambda)$. For any $p\in H^s(v)$, $T>0$, let $w_T:=\dot c_{p, \pi g^Tv}(0)$, $w:=\dot c_{p,v^+}(0)$ and $S= d(\pi w_T, \pi g^Tv)$. Then for any $0\le t\le \min\{S,T\}$=T,
\begin{equation*}
d_K(g^{T-t}v,g^{S-t}w_T) \le C_0d^s(v,w)e^{d^s(v,w)/\lambda}e^{-\lambda (T-t)},
\end{equation*}
where $C_0$ is a universal constant.
\end{lemma}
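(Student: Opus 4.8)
The plan is to reduce the estimate at time $T-t$ to an estimate at time $T$ via the exponential contraction property built into $\R(\lambda)$, after controlling how far the foot points $\pi w_T$ and $\pi g^T v$ can be. First I would observe that both $w=\dot c_{p,v^+}(0)$ and $w_T=\dot c_{p,\pi g^T v}(0)$ issue from the same base point $p\in H^s(v)$; since $p$ lies on the stable horosphere $H^s(v)$, the point $c_v(T)=\pi g^T v$ lies on $H^s(g^T v)$, so the geodesic from $p$ to $\pi g^T v$ is a stable ``chord'' and $S=d(\pi w_T,\pi g^T v)$ satisfies $S\le T$ (the Busemann function is $1$-Lipschitz along the chord and the horospheres are $T$ apart), which is why $\min\{S,T\}=T$. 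Then I would note $w_T\in W^s(v)$: indeed $w_T$ points at the same base as $w$ but ``aims'' at the finite point $\pi g^T v$ rather than at $v^+$; as $T$ grows $w_T\to w$, and for each fixed $T$ the segment from $p$ to $\pi g^T v$ lies within bounded $d^s$-distance of the stable leaf of $v$. The cleanest route is to apply Proposition \ref{contracting} (equivalently item (2) in the definition of $\R(\lambda)$) twice and compare.

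The key steps, in order, are: (i) show $w_T\in W^s(v)$ with $d^s(v,w_T)\le d^s(v,w)$ (or at worst $\le d^s(v,w)$ up to a harmless universal constant absorbed into $C_0$) — this uses convexity of the distance function in nonpositive curvature and the fact that the stable Jacobi fields computed from $p$ toward $\pi g^T v$ are dominated by those toward $v^+$; (ii) apply the quantitative contraction estimate from $\R(\lambda)$ to the pair $v, w_T$ to get $d_K(g^{s}v, g^{s}w_T)\le C d^s(v,w_T)e^{d^s(v,w_T)/\lambda}e^{-\lambda s}$ for all $s>0$; (iii) reindex: we want the estimate comparing $g^{T-t}v$ and $g^{S-t}w_T$, not $g^{T-t}v$ and $g^{T-t}w_T$, so I would use that $g^{S}w_T$ and $g^{T}v$ are both on the same stable horosphere $H^s(g^T v)$ and have the same (or exponentially close) forward behavior, hence $d_K(g^{S-t}w_T, g^{T-t}w_T)$ is controlled — alternatively, reparametrize the chord so that $g^{S-t}w_T$ is literally the vector on $c_{p,\pi g^T v}$ at the foot point matching $c_v(T-t)$'s horosphere, making the comparison direct; (iv) combine (ii) and (iii) and absorb all universal constants into a single $C_0$, using $d^s(v,w_T)\le d^s(v,w)$ from step (i) to replace $w_T$ by $w$ in the final bound.

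The main obstacle I expect is step (iii): matching the time parameters $S-t$ on the $w_T$-orbit with $T-t$ on the $v$-orbit. The subtlety is that $w_T$ and $v$ are synchronized at the common base point $p$ (time $0$) but their ``target'' times $S$ and $T$ differ, so the naive application of the contraction inequality gives decay $e^{-\lambda(T-t)}$ along $v$ only after I verify that the vector $g^{S-t}w_T$ is the correct ``partner'' of $g^{T-t}v$ under the stable identification — i.e.\ that both project into the horosphere $H^s(g^{T-t}v)$. This is a Busemann-function bookkeeping argument: $b_{v^+}(\pi g^{S-t}w_T) = b_{v^+}(\pi g^{T-t}v)$ should hold by the definition of $S$ and the fact that $\pi w_T = \pi w = p$ forces the horospherical ``clocks'' to agree once we subtract the common offset. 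Once that identification is pinned down, the estimate is just Proposition \ref{contracting} applied to $v$ and $w_T$ at time $T-t$, together with $S-t\ge 0$ guaranteeing we stay in the forward regime where the inequality is valid. Everything else is routine constant-chasing and can be compressed.
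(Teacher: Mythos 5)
The central step of your plan fails: $w_T=\dot c_{p,\pi g^Tv}(0)$ is in general \emph{not} in $W^s(v)$. It points at the finite point $\pi g^Tv$, so its forward endpoint $w_T^+=c_{p,\pi g^Tv}(+\infty)$ is the continuation of the chord past $\pi g^Tv$, which need not equal $v^+$; only $w=\dot c_{p,v^+}(0)$ lies on the stable leaf of $v$. This is exactly why the lemma is nontrivial --- it transfers the contraction estimate, which is only available for the asymptotic geodesic $c_w$, to the finite-target geodesic $c_{w_T}$. Consequently your step (ii), applying Proposition \ref{contracting} to the pair $(v,w_T)$, is not legitimate, and step (i) ($w_T\in W^s(v)$ with $d^s(v,w_T)\le d^s(v,w)$) is false as stated. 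Your fallback in step (iii) also does not hold: the identity $b_{v^+}(\pi g^{S-t}w_T)=b_{v^+}(\pi g^{T-t}v)$ would require $b_{v^+}$ to decrease at unit speed along $c_{w_T}$, which again happens only when $c_{w_T}$ points at $v^+$. (As a side remark, the $1$-Lipschitz property of $b_{v^+}$ gives $T=|b_{v^+}(p)-b_{v^+}(\pi g^Tv)|\le d(p,\pi g^Tv)=S$, i.e.\ $S\ge T$, the opposite of the inequality you asserted; this is what makes $\min\{S,T\}=T$.)

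The paper's proof keeps your auxiliary vector $w$ but routes every use of the contraction through it rather than through $w_T$. First it bounds the time mismatch: since $\pi w=\pi w_T=p$ and $d(p,\pi g^Tw)=T$, one has $|S-T|=|d(p,\pi g^Tv)-d(p,\pi g^Tw)|\le d(\pi g^Tw,\pi g^Tv)\le Cd^s(v,w)e^{d^s(v,w)/\lambda}e^{-\lambda T}$ by Proposition \ref{contracting} applied to the legitimate stable pair $(v,w)$. Then it splits $d_K(g^{T-t}v,g^{S-t}w_T)$ by the triangle inequality through $g^{T-t}w$ and $g^{T-t}w_T$: the first term is the contraction estimate for $(v,w)$ at time $T-t$; the middle term $d_K(g^{T-t}w,g^{T-t}w_T)$ is controlled by convexity of the distance between the two geodesics emanating from the common point $p$, whose separation at time $T-t$ is at most their separation at time $T$, again exponentially small by the previous bound; the last term is $|S-T|$. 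Everything is absorbed into $C_0=3C$. To repair your write-up you would need to replace steps (i)--(iii) by this scheme: bound $|S-T|$ first, then compare $c_{w_T}$ to $c_w$ by convexity, never invoking the stable contraction for the pair $(v,w_T)$ directly.
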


At the end of this subsection, we define the following subsets of $SX$, which will be frequently used later. 
\begin{definition}\label{lsets}
Given $\l>0$ and $k,N\in \NN$, define
\begin{equation}\label{keyset}
    \begin{aligned}
    \L_{k,\l, N}:=\Big\{&v\in \UR(\l)\subset SX: \frac{1}{t}\int_0^t\chi_{\UR(\l)}(g^sv)ds\ge 1-\frac{1}{k},\\
    &\text{\ and \ }\frac{1}{t}\int_0^t\chi_{\iota(\UR(\l))}(g^sv)ds\ge 1-\frac{1}{k}, \quad \forall t\ge N\Big\}.
    \end{aligned}
\end{equation}
Then we define $\tilde\L_k:=\cup_{i=1}^\infty\cup_{N=1}^\infty \L_{k,\frac{1}{i},N}$.
\end{definition}
\subsection{Gibbs cocycles and Bounded distortion}
Now consider a H\"{o}lder continuous function $F:SM\to \RR$ with  H\"{o}lder exponent $\a\in (0,1)$ and H\"{o}lder constant $K$, that is 
$$K=\sup_{v,w\in SM, v\neq w}\frac{|F(v)-F(w)|}{d_K^\a(v,w)}.$$ 
Denote $\|F\|=\max_{v\in SM}|F(v)|$.
We lift $F$ to the universal cover $X$, still denoted by $F$. 

\begin{definition}\label{cocdef}
Let $\xi\in \pX$. We define the \emph{Gibbs cocycle} for any $p, q\in X$ as
$$C_{F,\xi}(q,p):=\lim_{t\to +\infty}\int_{p}^{c_{p,\xi}(t)}F-\int_{q}^{c_{q,\xi}(t+s)}F$$
where $s=b_\xi(q,p)$, whenever the limit exists.
\end{definition}

It is clear that Gibbs cocycles satisfy the following properties whenever they are well defined.
\begin{lemma}\label{cocycleprop}
\begin{enumerate}
  \item For all $\c\in \Gamma$, $C_{F,\xi}(q,p)=C_{F,\c\xi}(\c q,\c p)$.
  \item $C_{F,\xi}(q,p)+C_{F,\xi}(p,r)=C_{F,\xi}(q,r)$.
  \item If $c_{q,p}(+\infty)=\xi$, then $C_{F,\xi}(q,p)=-\int_q^p F$.
\end{enumerate}
\end{lemma}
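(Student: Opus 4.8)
The plan is to verify each of the three identities directly from the defining limit in Definition \ref{cocdef}, using that $F$ descends to $SM$ (so is $\Gamma$-invariant on $X$), that $\Gamma$ acts on $X$ by isometries, and the standard Busemann cocycle identity $b_\xi(q,r)=b_\xi(q,p)+b_\xi(p,r)$.

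For (1), I would first record the equivariance $c_{\gamma p,\gamma\xi}(t)=\gamma\,c_{p,\xi}(t)$ for all $t$, the invariance $\int_{\gamma p}^{\gamma q}F=\int_p^q F$ (since $F$ is $\Gamma$-invariant and $\gamma$ is an isometry), and $b_{\gamma\xi}(\gamma q,\gamma p)=b_\xi(q,p)$. Substituting these into the expression defining $C_{F,\gamma\xi}(\gamma q,\gamma p)$ turns it, term by term and for every $t$, into the expression defining $C_{F,\xi}(q,p)$, so the two limits coincide.

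For (2), the key step is a harmless constant reindexing. Put $s_1=b_\xi(p,r)$ and $s_2=b_\xi(q,p)$, so that $b_\xi(q,r)=s_1+s_2$ by the Busemann cocycle identity. Since $C_{F,\xi}(q,p)$ is a limit as $t\to+\infty$, it also equals $\lim_{t\to\infty}\bigl(\int_p^{c_{p,\xi}(t+s_1)}F-\int_q^{c_{q,\xi}(t+s_1+s_2)}F\bigr)$. Adding to this the expression defining $C_{F,\xi}(p,r)=\lim_{t\to\infty}\bigl(\int_r^{c_{r,\xi}(t)}F-\int_p^{c_{p,\xi}(t+s_1)}F\bigr)$, the two occurrences of $\int_p^{c_{p,\xi}(t+s_1)}F$ cancel, and what remains is precisely $\lim_{t\to\infty}\bigl(\int_r^{c_{r,\xi}(t)}F-\int_q^{c_{q,\xi}(t+s_1+s_2)}F\bigr)=C_{F,\xi}(q,r)$.

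For (3), the hypothesis $c_{q,p}(+\infty)=\xi$ forces the ray $[q,\xi)$ to pass through $p$; hence $c_{q,\xi}(d(q,p))=p$, $c_{p,\xi}(\tau)=c_{q,\xi}(\tau+d(q,p))$ for all $\tau\ge0$, and $b_\xi(q,p)=d(q,p)=:s$. Rewriting both integrals over the single geodesic $c_{q,\xi}$ gives $\int_p^{c_{p,\xi}(t)}F=\int_s^{t+s}F(\dot c_{q,\xi}(u))\,du$ and $\int_q^{c_{q,\xi}(t+s)}F=\int_0^{t+s}F(\dot c_{q,\xi}(u))\,du$, whose difference is $-\int_0^s F(\dot c_{q,\xi}(u))\,du=-\int_q^p F$, independent of $t$. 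I do not expect a genuine obstacle here — these are bookkeeping identities — the only points deserving care being the legitimacy of the constant index shift in (2) (valid because we pass to the limit $t\to+\infty$ and the relevant limits are assumed to exist) and invoking the classical Busemann cocycle identity.
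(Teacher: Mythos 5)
Your verification is correct: all three identities follow by direct substitution into Definition \ref{cocdef} exactly as you describe, and the only nontrivial points (the index shift $t\mapsto t+s_1$ in (2), justified because the limits are assumed to exist, and the Busemann cocycle identity $b_\xi(q,r)=b_\xi(q,p)+b_\xi(p,r)$) are handled properly. The paper states this lemma without proof ("It is clear that\dots"), and your argument is precisely the routine computation being omitted.
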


The following lemmas are often referred as ``bounded distortion'' property, which is key ingredient to prove the existence of Gibbs cocycles. 

\subsubsection{Uniformly recurrent and regular vectors}
\begin{lemma}\label{Bowen}
Let $v\in \UR(\lambda)$ for some $\l>0$,  $p\in X$, $T_1=b_{v^+}(p, \pi v)$ and $w=g^{T_1}\dot c_{p,v^+}(0)\in W^s(v)$. Then for any $T>0$, we have
$$\left|\int_p^{\pi g^Tv}F-\int_{\pi v}^{\pi g^Tv}F\right|\le C_1d^s(v,w)^\a  e^{d^s(v,w)\a/\lambda}/\l+\|F\|d(p,\pi v)$$
for some constant $C_1=C_1(K,\a)$. Moreover, 
$$\left|\int_p^{\pi g^Tw}F-\int_{\pi v}^{\pi g^Tv}F\right|\le C_2d^s(v,w)^\a  e^{d^s(v,w)\a/\lambda}/\l+\|F\|d(p,\pi v)$$
for some constant $C_2=C_2(K,\a)$.

\end{lemma}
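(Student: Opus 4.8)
The plan is to bound $\int_p^{\pi g^Tv}F - \int_{\pi v}^{\pi g^Tv}F$ by splitting the path from $p$ to $\pi g^Tv$ into the segment $[p,\pi v]$ followed by $[\pi v,\pi g^Tv]$, so that the "tail" comparison reduces to comparing $\int_{\pi v}^{\pi g^Tv}F$ along two nearby geodesic segments that share the same forward endpoint data, plus the trivial error $\|F\|\,d(p,\pi v)$ from the initial segment. More precisely, first I would write
$$\left|\int_p^{\pi g^Tv}F - \int_{\pi v}^{\pi g^Tv}F\right| \le \|F\|\,d(p,\pi v) + \left|\int_{\pi w}^{\pi g^Tw}F - \int_{\pi v}^{\pi g^Tv}F\right| + (\text{length-discrepancy error}),$$
using that $w = g^{T_1}\dot c_{p,v^+}(0)$ points along the ray $[p,v^+)$ reparametrized to lie on $H^s(v)$; the geodesic from $p$ to $\pi g^Tv$ and the geodesic ray $c_w$ both converge to $v^+$, and a convexity argument bounds $|d(p,\pi g^Tv) - (d(p,\pi w)+ d(\pi w,\pi g^Tw))|$ and hence controls how much the integration intervals fail to match.

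The heart of the matter is the middle term, which I would handle by the standard Bowen-property estimate: for $0\le t\le T$ the points $g^t w$ and $g^t v$ both lie on $W^s(v)$ (shifted appropriately), and by item (2) in the definition of $\R(\lambda)$ — equivalently Proposition \ref{contracting}, applied here along $W^s(v)$ — we have $d_K(g^{t}v, g^{S-T+t}w)\le C\,d^s(v,w)\,e^{d^s(v,w)/\lambda}\,e^{-\lambda t}$ (this is exactly the content of Lemma \ref{contracting1} with $S=T=T_1$-adjusted lengths). Then Hölder continuity of $F$ with exponent $\alpha$ gives
$$\left|F(g^t v) - F(g^{S-T+t}w)\right| \le K\,\big(C\,d^s(v,w)\,e^{d^s(v,w)/\lambda}\big)^{\alpha} e^{-\alpha\lambda t},$$
and integrating over $t\in[0,T]$ yields a bound $\le \dfrac{K C^{\alpha}}{\alpha\lambda}\,d^s(v,w)^{\alpha} e^{\alpha\, d^s(v,w)/\lambda}$, absorbing constants into $C_1=C_1(K,\alpha)$. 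Summing with the two prior error terms gives the first inequality; note the length discrepancy $|d(p,\pi g^Tv)-d(\pi w,\pi g^Tw)|$ is uniformly bounded by $d(p,\pi v)$ (convexity of $t\mapsto d(p,c_w(t))-t$ and the definition of $T_1$ as a Busemann value), contributing another $\|F\|\,d(p,\pi v)$ term which is already present.

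For the "Moreover" statement, I would repeat the argument but now compare $\int_p^{\pi g^Tw}F$ against $\int_{\pi v}^{\pi g^Tv}F$; this is in fact slightly cleaner since $\pi g^Tw$ lies exactly on $c_w$, so only the initial-segment error $\|F\|\,d(p,\pi v)$ and the $W^s$-contraction/Hölder estimate appear, giving the constant $C_2=C_2(K,\alpha)$. The main obstacle I anticipate is the careful bookkeeping of the reparametrization shift $S$ versus $T$ (the forward endpoints $\pi g^Tv$ and $\pi g^Tw$ need not be at equal distance from their basepoints), which is precisely what Lemma \ref{contracting1} is designed to absorb; once that lemma is invoked with the right identification $p\in H^s(v)$, $w=\dot c_{p,v^+}(0)$, the rest is routine Hölder integration and triangle inequalities.
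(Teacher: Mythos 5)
Your proposal is correct and follows essentially the same route as the paper: trim an initial segment of length $|S-T|\le d(p,\pi v)$ (contributing $\|F\|\,d(p,\pi v)$), compare the remaining length-$T$ piece with $g^{[0,T]}v$ via Lemma \ref{contracting1}, and integrate the H\"older estimate against $e^{-\lambda\alpha t}$; the ``Moreover'' part likewise uses Proposition \ref{contracting} directly since $w\in W^s(v)$. The only imprecision is in your first displayed decomposition, where the middle term should involve the vector $w_T=\dot c_{p,\pi g^Tv}(0)$ (the comparison runs along the geodesic $[p,\pi g^Tv]$, not along $c_w$) --- which is in fact what your subsequent invocation of Lemma \ref{contracting1} does.
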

\begin{proof}
Denote $w_T=\dot c_{p,\pi g^Tv}(0)$ and $S=d(p,\pi g^Tv)$. Note that
$$d(p,\pi g^{S-T}w_T)=|S-T|\le d(p,\pi v).$$
It follows by Lemma \ref{contracting1} that
\begin{equation*}
\begin{aligned}
&\left|\int_p^{\pi g^Tv}F-\int_{\pi v}^{\pi g^Tv}F\right|\le \left|\int_{\pi g^{S-T}w_T}^{\pi g^Tv}F-\int_{\pi v}^{\pi g^Tv}F\right|
+\left|\int_p^{\pi g^{S-T}w_T}F\right|\\
\le &K(C_0d^s(v,w))^\a e^{d^s(v,w)\a/\lambda}\int_0^\infty e^{-\lambda\a s}ds+\|F\|d(p,\pi v)\\
\le &\frac{KC_0^\a}{\lambda\a}\cdot d^s(v,w)^\a  e^{d^s(v,w)\a/\lambda}+\|F\|d(p,\pi v)\\
:=&C_1d^s(v,w)^\a  e^{d^s(v,w)\a/\lambda}/\l+\|F\|d(p,\pi v).
\end{aligned}
\end{equation*}
where $C_1=C_1(K,\a):=\frac{KC_0^\a}{\a}$. This proves the first inequality. 

It follows by Proposition \ref{contracting} that
\begin{equation*}
\begin{aligned}
&\left|\int_p^{\pi g^Tw}F-\int_{\pi v}^{\pi g^Tv}F\right|\le \left|\int_{\pi w}^{\pi g^Tw}F-\int_{\pi v}^{\pi g^Tv}F\right|
+\left|\int_p^{\pi w}F\right|\\
\le &KC^\a d^s(w,w)^\a e^{d^s(v,w)\a/\lambda}\int_0^\infty e^{-\lambda\a s}ds+\|F\|d(p,\pi v)\\
\le &\frac{KC^\a}{\lambda\a}\cdot d^s(v,w)^\a e^{d^s(v,w)\a/\lambda}+\|F\|d(p,\pi v)\\
:=&C_2d^s(v,w)^\a e^{d^s(v,w)\a/\lambda}/\l+\|F\|d(p,\pi v)
\end{aligned}
\end{equation*}
where $C_2:=\frac{KC^\a}{\a}$. The lemma follows.
\end{proof}
\begin{corollary}\label{cocexi1}
If $v\in \UR(\l)$ for some $\l>0$, then for any $p,q\in X$,  Gibbs cocycle $C_{F,v^+}(q,p)$ is well defined, that is, the limit in Definition \ref{cocdef} exists.
\end{corollary}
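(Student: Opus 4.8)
The plan is to reduce the existence of the limit defining $C_{F,v^+}(q,p)$ to a Cauchy criterion and then control the difference $\bigl|\int_{p}^{c_{p,v^+}(t)}F-\int_{q}^{c_{q,v^+}(t+s)}F - \bigl(\int_{p}^{c_{p,v^+}(t')}F-\int_{q}^{c_{q,v^+}(t'+s)}F\bigr)\bigr|$ for $t'>t$ large, where $s=b_{v^+}(q,p)$. First I would observe that for a single base point, the ``tail'' pieces along the two asymptotic rays $c_{p,v^+}$ and $c_{q,v^+}$ can be matched up to the flow on stable manifolds: since $v\in\UR(\l)$, Proposition \ref{contracting} (applied to $dp(v)$ and its translates, using $g^t$- and flip-invariance of $\URR$ recorded after Lemma \ref{contracting1}) gives exponential decay $d_K(g^\tau v, g^\tau w)\le C d^s(v,w)e^{d^s(v,w)/\l}e^{-\l\tau}$ for the stable representative $w$ of the ray from $p$.

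Concretely, I would set $T_1=b_{v^+}(p,\pi v)$ and $w=g^{T_1}\dot c_{p,v^+}(0)\in W^s(v)$, exactly as in Lemma \ref{Bowen}, and likewise produce the stable representative from $q$. The key input is the second inequality of Lemma \ref{Bowen}: for every $T>0$,
\[
\left|\int_p^{\pi g^T w}F-\int_{\pi v}^{\pi g^T v}F\right|\le C_2 d^s(v,w)^\a e^{d^s(v,w)\a/\l}/\l+\|F\|\,d(p,\pi v),
\]
which is uniform in $T$. This says that the integral of $F$ from $p$ out to ``depth $T$'' along the ray toward $v^+$ differs from the reference integral $\int_{\pi v}^{\pi g^T v}F$ by a quantity bounded independently of $T$, and moreover (reading the proof of Lemma \ref{Bowen}) the \emph{increments} past a large depth are exponentially small: for $T'>T$,
\[
\left|\Bigl(\int_p^{\pi g^{T'}w}F-\int_{\pi v}^{\pi g^{T'}v}F\Bigr)-\Bigl(\int_p^{\pi g^{T}w}F-\int_{\pi v}^{\pi g^{T}v}F\Bigr)\right|
\le \frac{KC^\a}{\l\a}\,d^s(v,w)^\a e^{d^s(v,w)\a/\l}\,e^{-\l\a T},
\]
because the contribution comes from integrating the Hölder bound $K\,d_K(g^\tau v,g^\tau w)^\a$ over $\tau\in[T,T']$ and $\sum$/$\int$ of $e^{-\l\a\tau}$ over that range is $\le e^{-\l\a T}/(\l\a)$. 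Doing the same for the base point $q$ (with its own stable representative, noting $b_{v^+}(q,\pi v)$ is finite) and subtracting, the reference terms $\int_{\pi v}^{\pi g^T v}F$ cancel, and one is left with the tail of a convergent integral; hence the defining expression is Cauchy as $t\to+\infty$, so the limit exists. Finally I would invoke cocycle property (2) of Lemma \ref{cocycleprop} to reduce the general pair $(q,p)$ to the case where one of them is $\pi v$, streamlining the bookkeeping.

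The main obstacle I anticipate is purely technical rather than conceptual: carefully aligning the parametrizations of the three rays (from $p$, from $q$, and the reference geodesic through $v$) so that the Busemann shift $s=b_{v^+}(q,p)$ is correctly absorbed and the ``reference'' pieces genuinely cancel in the difference — the same reparametrization subtlety handled in the proof of Lemma \ref{Bowen} via the auxiliary vector $w_T$ and the bound $|S-T|\le d(p,\pi v)$. Once that is set up, the estimate is a geometric-series tail and the convergence is immediate; no new dynamical input beyond Proposition \ref{contracting} and Lemma \ref{Bowen} is needed.
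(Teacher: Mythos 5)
Your proposal is correct and follows essentially the same route as the paper: reduce via the cocycle property of Lemma \ref{cocycleprop}(2) to the case where one base point is $\pi(v)$, then verify the Cauchy criterion for $a_t$ by integrating the H\"older bound $K\,d_K(\dot c_v(t),\dot c_{q,v^+}(t+s_0))^\a$ against the exponential decay from Proposition \ref{contracting}, yielding a tail of order $e^{-\l\a t_1}$. The detour through Lemma \ref{Bowen} and the cancellation of reference integrals is unnecessary but harmless; the paper compares the two rays directly.
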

\begin{proof}
By Lemma \ref{cocycleprop}(2), it is enough to prove that $C_{F,v^+}(q,\pi(v))$ is well defined for every $q\in X$. Denote $a_t:=\int_{\pi(v)}^{c_vt)}F-\int_{q}^{c_{q,v^+}(t+s_0)}F$ where $s_0=b_{v^+}(q,\pi(v))$. Then for any $t_2>t_1$ large enough, by Proposition \ref{contracting}
\begin{equation*}
\begin{aligned}
&|a_{t_1}-a_{t_2}|=\left|\int_{c_{v}(t_1)}^{c_{v}(t_2)}F-\int_{c_{q,v^+}(t_1+s_0)}^{c_{q,v^+}(t_2+s_0)}F\right|\\
\le &K\int_{t_1}^{t_2}(d_K(\dot c_{v}(t), \dot c_{q,v^+}(t+s_0)))^\a dt\\
\le &KC^\a d^s(v,\dot c_{q,v^+}(s_0))^\a e^{d^s(v,\dot c_{q,v^+}(s_0))\a/\lambda}\int_{t_1}^{t_2} e^{-\lambda\a t}dt\\
\le & KC^\a d^s(v,\dot c_{q,v^+}(s_0))^\a e^{d^s(v,\dot c_{q,v^+}(s_0))\a/\lambda}\frac{e^{-\lambda\a t_1}}{\l\a}
\end{aligned}
\end{equation*}
which converges to zero exponentially as $t_1\to \infty$. Thus $C_{F,v^+}(q,\pi(v))$ is well defined and the proof of the corollary is complete. 
\end{proof}

\subsubsection{Core limit set}
\begin{lemma}\label{Bowen1}
Let $v=\dot{c}_{o,\xi}(0)$ where $\xi\in \L_c^L(\C)$. For any $p\in X$ and $T>T_5$, we have
$$\left|\int_p^{c_{p,\xi}(T+T_3)}F-\int_{o}^{\pi g^Tv}F\right|\le K\d^\a e^{\frac{\eta\a}{2}T_4}\int_{T_5}^{T}e^{-\frac{\eta\a}{2}t}dt+\|F\|(d(p,o)+2T_5)$$
where $T_3(p,\xi)$, $T_4(p,\xi)$ and $T_5(p,\xi)$ are from Lemma \ref{decay} and independent of $T$.
\end{lemma}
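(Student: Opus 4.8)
The plan is to compare the two integrals arc-length by arc-length along the geodesic lines $c_{p,\xi}$ and $c_{o,\xi}$, feeding the exponential estimate of Lemma~\ref{decay} through the H\"older continuity of $F$. First I would rewrite both sides using the notation $\int_p^q F=\int_0^{d(p,q)}F(\dot c_{p,q}(s))\,ds$: since $v=\dot c_{o,\xi}(0)$ and $d(o,\pi g^Tv)=T$, we have $\int_o^{\pi g^Tv}F=\int_0^TF(\dot c_{o,\xi}(s))\,ds$; and since $d(p,c_{p,\xi}(T+T_3))=T+T_3$ for $T$ large (the regime of interest), $\int_p^{c_{p,\xi}(T+T_3)}F=\int_0^{T+T_3}F(\dot c_{p,\xi}(s))\,ds$. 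The substitution $s=t+T_3$ turns the latter into $\int_{-T_3}^{T}F(\dot c_{p,\xi}(t+T_3))\,dt$, which is precisely the parametrization in which Lemma~\ref{decay} controls $d_K(\dot c_{p,\xi}(t+T_3),\dot c_{o,\xi}(t))$.

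Next I would split the difference into a ``main'' part over $[T_5,T]$ and two short ``boundary'' parts:
\begin{multline*}
\int_0^{T+T_3}F(\dot c_{p,\xi}(s))\,ds-\int_0^{T}F(\dot c_{o,\xi}(s))\,ds\\
=\int_{-T_3}^{T_5}F(\dot c_{p,\xi}(t+T_3))\,dt-\int_{0}^{T_5}F(\dot c_{o,\xi}(t))\,dt\\
+\int_{T_5}^{T}\big(F(\dot c_{p,\xi}(t+T_3))-F(\dot c_{o,\xi}(t))\big)\,dt .
\end{multline*}
The two boundary integrals (the first read with its natural orientation if $-T_3>T_5$) are bounded in absolute value by $\|F\|\,|T_5+T_3|\le\|F\|(T_5+d(p,o))$, using $|T_3|=|b_\xi(p,o)|\le d(p,o)$, and by $\|F\|T_5$ respectively, so together they contribute at most $\|F\|(d(p,o)+2T_5)$. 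For the main part I would apply H\"older continuity pointwise, $|F(\dot c_{p,\xi}(t+T_3))-F(\dot c_{o,\xi}(t))|\le K\,d_K(\dot c_{p,\xi}(t+T_3),\dot c_{o,\xi}(t))^\a$, and then invoke Lemma~\ref{decay}, which gives $d_K(\dot c_{p,\xi}(t+T_3),\dot c_{o,\xi}(t))\le\d e^{\frac{\eta}{2}T_4}e^{-\frac{\eta}{2}t}$ for $t>T_5$; raising to the power $\a$ and integrating over $[T_5,T]$ yields exactly $K\d^\a e^{\frac{\eta\a}{2}T_4}\int_{T_5}^{T}e^{-\frac{\eta\a}{2}t}\,dt$. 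Adding the three bounds gives the claimed inequality.

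The only delicate point is the bookkeeping around the sign of $T_3=b_\xi(p,o)$, so that $\int_{-T_3}^{T_5}$ and $\int_0^{T+T_3}$ are understood along the full geodesic line $c_{p,\xi}:\RR\to X$ (which exists and is unique in nonpositive curvature), together with the harmless assumption that $T$ is large enough that $T+T_3\ge 0$. There is no substantial obstacle here: the nonuniform-hyperbolicity content is already packaged in Lemma~\ref{decay} via the core limit set, and what remains is the same splitting-and-H\"older argument used for the $\UR(\l)$ case in Lemma~\ref{Bowen} and Corollary~\ref{cocexi1}.
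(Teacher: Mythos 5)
Your proof is correct and follows exactly the route the paper intends: the paper's own proof simply cites Lemma \ref{decay}, notes $|T_3|\le d(p,o)$, and defers to ``an analogous computation as in the proof of Lemma \ref{Bowen}'', which is precisely the splitting into boundary pieces (bounded by $\|F\|(d(p,o)+2T_5)$) and a main piece on $[T_5,T]$ controlled by H\"older continuity plus the exponential decay that you carry out. No discrepancies.
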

\begin{proof}
By Lemma \ref{decay}, 
$$d_K(\dot c_{p,\xi}(t+T_3), \dot c_{o,\xi}(t))\le \d e^{\frac{\eta}{2}T_4}e^{-\frac{\eta}{2}t}, \ \forall t>T_5.$$
Note that $|T_3|\le d(p,o)$. Then the lemma follows from an analogous computation as in the proof of Lemma \ref{Bowen}.
\end{proof}

\begin{corollary}\label{cocexi2}
If $p, q\in X$ and $\xi\in \L_c^L(\C)$, then Gibbs cocycle $C_{F,\xi}(q,p)$ is well defined.    
\end{corollary}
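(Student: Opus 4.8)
The plan is to reduce Corollary~\ref{cocexi2} to the bounded distortion estimate of Lemma~\ref{Bowen1} together with the exponential decay of Lemma~\ref{decay}, exactly as Corollary~\ref{cocexi1} was deduced from Lemma~\ref{Bowen} and Proposition~\ref{contracting}. By the cocycle relation Lemma~\ref{cocycleprop}(2), it suffices to show that $C_{F,\xi}(q,o)$ is well defined for every $q\in X$; the general case $C_{F,\xi}(q,p)$ then follows by writing $C_{F,\xi}(q,p)=C_{F,\xi}(q,o)-C_{F,\xi}(p,o)$.

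So fix $q\in X$ and set $s_0=b_\xi(q,o)$. Define
$$a_T:=\int_{o}^{c_{o,\xi}(T)}F-\int_{q}^{c_{q,\xi}(T+s_0)}F.$$
First I would apply Lemma~\ref{decay} with $p=q$: there are constants $T_3=T_3(q,\xi)\in\RR$ and $0<T_4(q,\xi)<T_5(q,\xi)$, independent of $T$, with
$$d_K(\dot c_{q,\xi}(t+T_3),\dot c_{o,\xi}(t))\le \d e^{\frac{\eta}{2}T_4}e^{-\frac{\eta}{2}t},\qquad \forall\, t>T_5.$$
Since $c_{q,\xi}(T_3)\in H^s(\dot c_{o,\xi}(0))$ we have $T_3=b_\xi(q,o)=s_0$, so the shift in Lemma~\ref{decay} is exactly the Busemann normalization appearing in $a_T$. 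Hence for $T_5<T_1<T_2$,
$$|a_{T_1}-a_{T_2}|=\left|\int_{c_{o,\xi}(T_1)}^{c_{o,\xi}(T_2)}F-\int_{c_{q,\xi}(T_1+s_0)}^{c_{q,\xi}(T_2+s_0)}F\right|\le K\int_{T_1}^{T_2}\big(d_K(\dot c_{o,\xi}(t),\dot c_{q,\xi}(t+s_0))\big)^\a\,dt\le K\d^\a e^{\frac{\eta\a}{2}T_4}\int_{T_1}^{T_2}e^{-\frac{\eta\a}{2}t}\,dt,$$
using that $\xi\in\L_c^L(\C)$ forces $\dot c_{o,\xi}(0)\in\L_c^L$ and that both geodesic rays travel at unit speed so the arclength parameter matches the $t$-parameter up to the bounded shift already absorbed. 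The right-hand side is bounded by $\tfrac{2K\d^\a}{\eta\a}e^{\frac{\eta\a}{2}T_4}e^{-\frac{\eta\a}{2}T_1}$, which tends to $0$ as $T_1\to\infty$. Thus $(a_T)_{T>T_5}$ is Cauchy, the limit defining $C_{F,\xi}(q,o)$ exists, and the corollary follows.

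The only genuinely delicate point is verifying that the parametrizations line up: one must check that replacing the basepoint $q$ by $o$ and reparametrizing by the Busemann shift $s_0=T_3$ makes the two unit-speed rays $t\mapsto c_{o,\xi}(t)$ and $t\mapsto c_{q,\xi}(t+s_0)$ converge at the rate supplied by Lemma~\ref{decay}, with no residual linear drift; this is precisely what the convexity of $t\mapsto d(c_{q,\xi}(t),c_{o,\xi}(t))$ together with $c_{q,\xi}(T_3)\in H^s$ guarantees, and it is the analogue of the step $d^s(v,\dot c_{q,v^+}(s_0))$ used in the proof of Corollary~\ref{cocexi1}. Everything else is the routine Hölder-times-exponential estimate already carried out in Lemmas~\ref{Bowen} and~\ref{Bowen1}.
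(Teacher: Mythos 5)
Your proposal is correct and follows essentially the same route as the paper: reduce to $C_{F,\xi}(q,o)$ via the cocycle relation, define $a_T$, and show it is Cauchy using the exponential decay of $d_K(\dot c_{q,\xi}(t+s_0),\dot c_{o,\xi}(t))$ from Lemma~\ref{decay} together with the H\"older estimate on $F$. The paper's own proof of Corollary~\ref{cocexi2} is exactly this computation, so no further comment is needed.
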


\begin{proof}
The proof is analogous to that of Corollary \ref{cocexi1}. It is enough to prove that $C_{F,\xi}(q,o)$ is well defined. Denote $a_t:=\int_{o}^{c_{o,\xi}(t)}F-\int_{q}^{c_{q,\xi}(t+s_0)}F$ where $s_0=b_\xi(q,o)$. Then for any $t_2>t_1>T_5$, by Lemma \ref{decay}
\begin{equation*}
\begin{aligned}
&|a_{t_1}-a_{t_2}|=\left|\int_{c_{o,\xi}(t_2)}^{c_{o,\xi}(t_1)}F-\int_{c_{q,\xi}(t_2+s_0)}^{c_{q,\xi}(t_1+s_0)}F\right|\\
\le &K\int_{t_1}^{t_2}d_K(\dot c_{o,\xi}(t), \dot c_{q,\xi}(t+s_0))^\a dt\\
\le &K\int_{t_1}^{t_2}(\d e^{\frac{\eta}{2}T_4}e^{-\frac{\eta}{2}t})^\a dt\le K \d^\a e^{\frac{\eta\a}{2}T_4}\frac{2}{\eta\a}e^{-\frac{\eta\a}{2}t_1}
\end{aligned}
\end{equation*}
which converges to zero exponentially as $t_1\to \infty$. We are done.
\end{proof}

\subsubsection{Regular radial limit set}

\begin{lemma}\label{cocyle2}
Assume that $F$ is locally constant on an open neighborhood of $\text{Sing}$. Then for any $p, q\in X$ and $\xi\in \pX$, Gibbs cocycle $C_{F,\xi}(q,p)$ is well defined.
\end{lemma}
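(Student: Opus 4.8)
\textbf{Proof plan for Lemma \ref{cocyle2}.} The goal is to establish that the Gibbs cocycle $C_{F,\xi}(q,p)$ is well defined for \emph{every} $\xi\in\pX$, not merely for $\xi$ in the core or regular radial limit set. The crucial additional hypothesis is that $F$ is locally constant near $\text{Sing}$; the plan is to use this to recover a uniform contraction estimate along arbitrary geodesic rays once we restrict attention to the portion of the orbit lying outside the region where $F$ fails to be constant. By Lemma \ref{cocycleprop}(2) it suffices, as in Corollaries \ref{cocexi1} and \ref{cocexi2}, to prove that $C_{F,\xi}(q,o)$ exists, i.e.\ that the sequence $a_t:=\int_o^{c_{o,\xi}(t)}F-\int_q^{c_{q,\xi}(t+s_0)}F$ with $s_0=b_\xi(q,o)$ is Cauchy.

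First I would fix the open neighborhood $V$ of $\text{Sing}$ on which $F\equiv c_0$ is constant, and choose $\rho>0$ so small that the $\rho$-neighborhood (in $d_K$) of $\text{Sing}$ is contained in $V$. Apply Lemma \ref{nhdsing} to this $\rho$: there are $\eta>0$ and $T>0$ so that if $\l^s(g^sw)\le\eta$ for all $s\in[-T,T]$ then $d_K(w,\text{Sing})<\rho$, hence $w\in V$ and $F(w)=c_0$. The key dichotomy for a unit vector $w=\dot c_{o,\xi}(t)$ along the ray: either $w\in V$ (so $F$ contributes the constant $c_0$, which cancels between the two integrals up to a controlled boundary term since $b_\xi(q,o)$ is a fixed real number), or $w\notin V$, in which case there is some $s\in[-T,T]$ with $\l^s(g^sw)>\eta$. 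In the latter case, by convexity/continuity of $\l^s$ along the orbit and compactness, one gets a definite lower bound on the integral $\int\l^s$ over a window around $t$. Summing these windows, the set of times where the ray is "$\eta$-regular" carries a definite linear density, and on that set Lemma \ref{stablegeo} (applied to nearby stable leaves) gives exponential contraction of $d^s$ between $\dot c_{o,\xi}$ and $\dot c_{q,\xi}$; on the complementary times the contribution to $|a_{t_1}-a_{t_2}|$ vanishes because $F$ is constant there. More precisely, I would show that $d_K(\dot c_{o,\xi}(t),\dot c_{q,\xi}(t+s_0))$ is summable against the H\"older estimate $|F(v)-F(w)|\le K d_K(v,w)^\a$ once we weight by the indicator of $\{t:\dot c_{o,\xi}(t)\notin V\}$, and bound the remaining part of the integrand by $0$.

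The cleanest way to organize this is probably to prove a geometric lemma analogous to Lemmas \ref{decay} and \ref{limitzero1}: for $p\in H^s(v)$ with $v=\dot c_{o,\xi}(0)$, one has $d^s(\dot c_{p,\xi}(t),\dot c_{o,\xi}(t))\to 0$ whenever the ray enters the "$\eta$-regular" region $\{\l^s\ge\eta\}$ infinitely often, and \emph{if} it eventually stays in $V$ forever then $\int F$ along both rays differ only by a fixed constant (because $F$ is constant there and the two rays are asymptotic, hence $b_\xi$-synchronized). The only way the argument of Lemma \ref{limitzero1} could fail is if $l^s(\b([0,1],t))$ does not go to $0$, which forces the ray to be "$\eta$-singular" from some time on, i.e.\ $\l^s(g^sv)\le\eta$ for all large $s$; but then by Lemma \ref{nhdsing} the whole forward ray lies in $V$ (after checking the two-sided window condition using that once $\l^s$ is small it stays small by convexity of the relevant comparison), so $F$ is constant along it and the cocycle limit trivially exists. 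I expect the main obstacle to be exactly this case analysis: making rigorous that "eventually $\eta$-singular" implies "eventually in the region where $F$ is constant" — the window $[-T,T]$ in Lemma \ref{nhdsing} is two-sided, so I need an argument (monotonicity of $\l^s$ along the orbit in the relevant direction, or passing to a slightly smaller $\eta$ and using continuity) ensuring that $\l^s$ being below the threshold for all large forward times actually produces the two-sided smallness needed to invoke Lemma \ref{nhdsing}. Once that is settled, the Cauchy estimate for $a_t$ follows by splitting $\int_{t_1}^{t_2}$ according to whether $\dot c_{o,\xi}(t)\in V$, bounding the "in $V$" part by $0$ and the "outside $V$" part by an exponentially small tail as in the proof of Corollary \ref{cocexi2}.
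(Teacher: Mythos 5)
Your overall strategy is the same as the paper's: use Lemma \ref{nhdsing} to convert ``$\l^s$ small over a window of length $T$'' into ``$F$ is constant there'', use the regular-radial machinery (the argument of Lemma \ref{limitzero1}) to get contraction at the times when the ray meets $\C K_\l$, and split the integral accordingly. Your handling of the two-sided window is also fine and not really an obstacle: if $\l^s(g^t v)<\l$ for all $t\ge T_1$, then for $t\ge T_1+T$ the whole window $[t-T,t+T]$ lies in $[T_1,\infty)$, so Lemma \ref{nhdsing} applies directly; this is exactly the shift by $T$ the paper uses.

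The genuine gap is that your dichotomy and your cancellation are formulated in terms of the single ray $c_{o,\xi}$, whereas the cocycle $a_t=\int_o^{c_{o,\xi}(t)}F-\int_q^{c_{q,\xi}(t+s_0)}F$ involves \emph{two} rays whose regularity behaviour need not match. Concretely: (i) in your ``eventually $\eta$-singular'' branch you conclude that the limit ``trivially exists'' because $F$ is constant along the forward $o$-ray, but nothing forces $\dot c_{q,\xi}(t+s_0)$ to also lie in $V$ — the two rays are only boundedly close, and bounded distance gives no control on $\l^s$ of the second ray. This is precisely the paper's second case ($\xi\notin\L_r^\l$ as seen from $o$, but $c_{q,\xi}$ meets $\C K_\l$ infinitely often), which is \emph{not} trivial: one must rerun the contraction argument with the roles of $o$ and $q$ swapped. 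The truly trivial case is only when \emph{both} rays eventually avoid $\C K_\l$, in which case $F$ is eventually constant on both and $a_t$ is eventually constant. (ii) The same mismatch infects your main branch: splitting $\int_{t_1}^{t_2}$ by whether $\dot c_{o,\xi}(t)\in V$ and ``bounding the in-$V$ part by $0$'' presupposes $F(\dot c_{q,\xi}(t+s_0))=c_0$ at those times, which again does not follow. The correct bookkeeping (as in the paper) is to define the singular intervals as those on which $\max\{\l^s(g^tv),\l^s(g^{t+s_0}w)\}<\l$ — so that after trimming $T$-windows \emph{both} integrands equal $c_0$ and cancel exactly — and the regular intervals as those on which at least one of the two vectors lies in $\C K_\l$, whence both satisfy $\l^s\ge\tfrac{5\l}{6}$ by uniform continuity once the rays are $\d'$-close, and Lemma \ref{stablegeo} applies. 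Finally, your claim that the regular times carry ``a definite linear density'' is neither true in general nor needed: what makes the boundary contributions of the trimmed $T$-windows summable is that $d^s$ drops by a definite factor $e^{-\l\d'/3}$ across each regular interval, so these terms form a geometric series.
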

\begin{proof}
By Lemma \ref{cocycleprop}(2), it is enough to prove the existence of $C_{F,\xi}(p,o)$ for any $\xi\in \pX$ and $p\in X$. Pick $\rho$ small enough such that $F\equiv c$ on a $\rho$-neighborhood of $\text{Sing}$. By Lemma \ref{nhdsing}, there exist $\l>0$ and $T>0$ such that if $\l^s(g^tv)\le \l$ for all $t\in [-T,T]$, then $d_K(v, \text{Sing})<\rho$.

Assume first that $\xi\in \L_r^\l$ and denote $v=\dot c_{o,\xi}(0)$, $w=\dot c_{p,\xi}(0)$. 
By uniform continuity of $\l^s:SM\to \RR$, there exists a $\d'>0$ such that if $d_K(v,w)\le \d'$, then $|\l^s(v)-\l^s(w)|<\l/100$. By Lemma \ref{limitzero1}, there exists $t_0\in \NN$ such that for every $t\ge t_0$, 
$$d_K(g^{t}v, g^{t+s_0}w)\le d^s(g^{t}v, g^{t+s_0}w)<\d'$$
where $s_0=b_\xi(p, o)$. Then $|\l^s(g^{t}v)-\l^s(g^{t+s_0}w)|<\l/100.$

Since $\xi\in \L_r^\l$, there exists a sequence of successive times $t_0\le T_1\le T_2\le T_3\le \cdots$ such that 
\begin{itemize}
    \item for any $t\in [T_{2i+1},T_{2i+2}]$, either $g^tv\in \C K_\l$ or $g^{t+s_0}w\in \C K_\l$;
    \item  for any $t\in [T_{2i+2},T_{2i+3}]$, $\max\{\l^s(g^tv), \l^s(g^{t+s_0}w)\}<\l$.
\end{itemize}
Then for every $t\in [T_{2i+1},T_{2i+2}]$, $\min\{\l^s(g^{t}v),\l^s(g^{t+s_0}w))\}\ge \frac{5\l}{6}$. Then by Lemma \ref{stablegeo},
\begin{equation}\label{decresingpart}
    \begin{aligned}
&\left|\int_{c_{o,\xi}(T_{2i+1})}^{c_{o,\xi}(T_{2i+2})}F-\int_{c_{p,\xi}(T_{2i+1}+s_0)}^{c_{p,\xi}(T_{2i+2}+s_0)}F\right|\\
\le &K(d_K(g^{T_{2i+1}}v, g^{T_{2i+1}+s_0}w))^\a \int_{T_{2i+1}}^{T_{2i+2}}e^{-\a\l t/2}dt.        
    \end{aligned}
\end{equation}
On the other hand, if $|T_{2i+3}-T_{2i+2}|>2T$, then by Lemma \ref{nhdsing}, $F\equiv c$ on the time interval $[T_{2i+2}+T, T_{2i+3}-T]$. We have 
\begin{equation}\label{constantpart}
    \begin{aligned}
&\left|\int_{c_{o,\xi}(T_{2i+2}+T)}^{c_{o,\xi}(T_{2i+3}-T)}F-\int_{c_{p,\xi}(T_{2i+2}+s_0+T)}^{c_{p,\xi}(T_{2i+3}+s_0-T)}F\right|=0.
    \end{aligned}
\end{equation}
On the two intervals $[T_{2i+2}, T_{2i+2}+T]\cup [T_{2i+3}-T, T_{2i+3}]$, or if $|T_{2i+3}-T_{2i+2}|\le 2T$, the difference of the two integrals are bounded above by $2T\cdot K(d_K(g^{T_{2i+2}}v, g^{T_{2i+2}+s_0}w))^\a$. Note that from the proof of Lemma \ref{limitzero1}, $d^s(g^{T_{2i+4}}v, g^{T_{2i+4}+s_0}w)\le e^{-\frac{\l}{3}\d'}d^s(g^{T_{2i+2}}v, g^{T_{2i+2}+s_0}w)$ if $|T_{2i+4}-T_{2i+2}|\ge \d'$. In the extreme case that $|T_{2i+4}-T_{2i+2}|< \d'$, we combine the three intervals together and \eqref{decresingpart} holds on $[T_{2i+1}, T_{2i+4}]$.

Now noticing that  $d_K(g^{t}v, g^{t+s_0}w)$ is decreasing in $t$, we have that for every $t>t_0$,
\begin{equation}\label{total}
    \begin{aligned}
&\left|\int_{o}^{c_{o,\xi}(t)}F-\int_{p}^{c_{p,\xi}(t+s_0)}F\right|\le K(d_K(g^{t_0}v, g^{t_{0}+s_0}w))^\a\int_{t_0}^te^{-\a\l s/2}ds\\
&+2T\cdot K(d_K(g^{t_0}v, g^{t_{0}+s_0}w))^\a\sum_{i=1}^\infty e^{-\frac{\l}{3}\d'\a i}+2t_0\|F\|+d(p,o)\|F\|.\\
    \end{aligned}
\end{equation}

Similarly to the proof of Corollaries \ref{cocexi1} and \ref{cocexi2}, we denote 
$$a_t:=\int_{o}^{c_{o,\xi}(t)}F-\int_{p}^{c_{p,\xi}(t+s_0)}F.$$ 
As in the above proof, for any $t_2>t_1>t_0$ we have
\begin{equation*}
    \begin{aligned}
\left|a_{t_1}-a_{t_2}\right|\le &K(d_K(g^{t_1}v, g^{t_{1}+s_0}w))^\a\int_{t_1}^{\infty}e^{-\a\l t/2}dt\\
+&2T\cdot K(d_K(g^{t_1}v, g^{t_{1}+s_0}w))^\a\sum_{i=1}^\infty e^{-\frac{\l}{3}\d'\a i}
    \end{aligned}
\end{equation*}
which converges to $0$ as $t_1\to\infty$. This proves the lemma when $\xi\in \L_r^\l$.

If $\xi\notin \L_r^\l$, then there exists $T_1>0$ such that $c_{o,\xi}([T_1, +\infty))\cap \C K_\l=\emptyset$. We have two cases:
\begin{enumerate}
    \item If there exists $\{\c_n\}_{n=1}^\infty\subset \C$ such that $c_{p,\xi}([0,+\infty))\cap \c_n K_\l\neq \emptyset$, by repeating the above argument for $p$ instead of $o$, we prove the lemma.
\item If there exists $T_2>0$ such that $c_{p,\xi}([T_2, +\infty))\cap \C K_\l=\emptyset$, it is easy to see by Lemma \ref{nhdsing} that $F$ is constant on the interval $[T_3+T,\infty)$ and hence
$$C_{F,\xi}(p, o)=\int_{o}^{c_{o,\xi}(T_3+T)}F-\int_{p}^{c_{p,\xi}(T_3+T)}F$$
where $T_3=\max\{T_1,T_2\}$.
\end{enumerate}
The proof of the lemma is complete.
\end{proof}

\section{Patterson-Sullivan construction}

\subsection{Poincar\'{e} series, critical exponent and topological pressure}
Let $M$ be a closed rank one Riemannian manifold of nonpositive curvature and $X$ its universal cover with $M=X/\Gamma$. Then $\Gamma$ is an infinite discrete subgroup of the isometry group $\text{Iso}(X)$. Fix a reference point $o\in X$ and a fundamental domain $\F$ containing $o$. For any $s \in \mathbb{R}$, \emph{Poincar\'e series} is defined as
\begin{equation*}
P_F(s,o):= \sum_{\c \in \Gamma}e^{\int_o^{\c o}(F-s)}.
\end{equation*}
The \emph{critical exponent of $\Gamma$} is defined as
$$\delta_F:=\limsup_{n\to +\infty}\frac{1}{n}\log \sum_{\c \in \Gamma, n-1\le d(o, \c o)<n}e^{\int_o^{\c o}F}.$$
Poincar\'{e} series $P_F(s,o)$ diverges when $s<\delta_F$ and converges when  $s>\delta_F$. We say $\Gamma$ is of \emph{divergent type} if $P_F(s,o)$ diverges when $s=\delta_F$.


The following fact is interesting.
\begin{lemma}\label{criticalexponent}
\begin{enumerate}
    \item For every $c>0$, 
    $$\delta_F=\limsup_{n\to +\infty}\frac{1}{n}\log \sum_{\c \in \Gamma, n-c\le d(o, \c o)<n}e^{\int_o^{\c o}F}.$$
\item If $\d_F\ge 0$, then
 $$\delta_F=\limsup_{n\to +\infty}\frac{1}{n}\log \sum_{\c \in \Gamma,  d(o, \c o)<n}e^{\int_o^{\c o}F}.$$
\end{enumerate}   
\end{lemma}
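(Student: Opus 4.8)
The plan is to prove both parts by standard dyadic-grouping arguments. Write $S_n := \sum_{\c\in\C,\, n-1\le d(o,\c o)<n} e^{\int_o^{\c o}F}$, so that $\d_F = \limsup_{n\to\infty}\frac1n\log S_n$. Note first that the potential $F$ is bounded, say $|F|\le\|F\|$, so for any two group elements $\c,\c'$ with $d(o,\c o)$ and $d(o,\c' o)$ differing by at most $c$ we have $|\int_o^{\c o}F-\int_o^{\c' o}F|\le c\|F\|$ plus a bounded error; more simply, $\int_o^{\c o}F$ differs from $d(o,\c o)$-linear comparisons only by a factor $e^{\pm\|F\|\cdot(\text{length})}$. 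This kind of crude control is exactly what makes regrouping harmless at the exponential scale.

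For part (1): fix $c>0$ and set $T_n := \sum_{\c\in\C,\, n-c\le d(o,\c o)<n} e^{\int_o^{\c o}F}$. The interval $[n-c,n)$ is covered by $\lceil c\rceil + 1$ intervals of the form $[m-1,m)$ (for $m$ ranging over the integers in $[n-c, n+1)$), so $T_n \le \sum_{m} S_m$ where $m$ runs over at most $\lceil c\rceil+1$ consecutive integers near $n$. Hence $\limsup_n \frac1n\log T_n \le \limsup_n \frac1n\log\big((\lceil c\rceil+1)\max_m S_m\big) = \d_F$, since $\frac1n\log(\lceil c\rceil+1)\to 0$ and the indices $m$ stay within a bounded window of $n$. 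Conversely, each $S_n$ is one of the summands used to build $T_{n+\lceil c\rceil}$ (choosing the window to contain $[n-1,n)$), so $S_n \le T_{n'}$ for some $n'$ with $|n'-n|\le \lceil c\rceil+1$, giving $\d_F \le \limsup_n\frac1n\log T_n$. Combining the two inequalities yields equality. The only subtlety is to make sure the bounded shift in index does not affect the $\limsup$, which is immediate.

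For part (2): assume $\d_F\ge 0$ and set $U_n := \sum_{\c\in\C,\, d(o,\c o)<n} e^{\int_o^{\c o}F} = \sum_{m=1}^{n} S_m$ (with the convention that $S_1$ collects the finitely many $\c$ with $d(o,\c o)<1$). On the one hand $U_n \ge S_n$, so $\limsup_n\frac1n\log U_n \ge \d_F$. On the other hand, for the reverse inequality fix $\ep>0$; by definition of $\d_F$ as a $\limsup$ there is $C_\ep>0$ with $S_m \le C_\ep e^{(\d_F+\ep)m}$ for all $m$. Then $U_n = \sum_{m=1}^n S_m \le C_\ep\sum_{m=1}^n e^{(\d_F+\ep)m}$. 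Here we use $\d_F\ge 0$, so $\d_F+\ep>0$ and the geometric sum is dominated by its last term up to a constant: $\sum_{m=1}^n e^{(\d_F+\ep)m} \le \frac{e^{(\d_F+\ep)(n+1)}}{e^{\d_F+\ep}-1}$. Therefore $\frac1n\log U_n \le (\d_F+\ep) + o(1)$, so $\limsup_n\frac1n\log U_n \le \d_F+\ep$, and letting $\ep\to 0$ finishes the proof.

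The main (and really only) obstacle is part (2): the geometric-series domination by the top term genuinely requires $\d_F\ge 0$, which is why the hypothesis appears — if $\d_F<0$ the sum $U_n$ would be dominated by its first terms and the identity would fail. Everything else is bookkeeping: bounded index shifts, the boundedness of $F$ entering only to justify that $e^{\int_o^{\c o}F}$ is finite and that regrouping over a bounded range of distances costs at most a constant factor. I would present part (1) first since its two-sided estimate is the cleanest, then deduce part (2) using the same $S_m$-notation. No geodesic-flow input is needed here; this lemma is purely about the growth of a Dirichlet-type series.
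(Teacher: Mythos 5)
Part (2) of your proposal is correct: the decomposition $U_n=\sum_{m\le n}S_m$ together with $S_m\le C_\ep e^{(\delta_F+\ep)m}$ and the geometric-sum bound (which is exactly where $\delta_F\ge 0$ enters) is the standard and intended argument. Note that the paper itself omits the proof entirely, deferring to \cite[Lemma 3.3(vii)]{PPS}, so the comparison here is really with that reference.

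Part (1), however, has a genuine gap, and it occurs precisely in the case the paper actually needs. Your ``conversely'' step asserts that $[n-1,n)$ is contained in some window $[n'-c,n')$ with $n'$ an integer; this requires an integer $n'$ with $n\le n'\le n-1+c$, which exists only when $c\ge 1$. For $0<c<1$ not only does this fail for every single $n'$, but the union $\bigcup_{n'\in\NN}[n'-c,n')$ does not cover $[0,\infty)$ at all: it misses every interval $[n-1,\,n-c)$. Group elements $\gamma$ with $d(o,\gamma o)$ in those gaps are therefore invisible to \emph{all} of the sums $T_{n'}$, and a priori they could carry the entire exponential growth, so no amount of index-shifting rescues the inequality $\delta_F\le\limsup_n\frac1n\log T_n$. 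This is exactly the nontrivial content of the lemma for small $c$, and it is the case invoked in the paper (e.g.\ in the proof of Proposition \ref{pro9}, where the window has width $\e<\inj(M)/4<1$). A correct treatment must either (i) run a pigeonhole argument splitting $[n-1,n)$ into $\lceil 1/c\rceil$ subwindows of width $\le c$ and then confront the fact that the resulting window endpoints are real rather than integer (which forces one either to interpret the $\limsup$ over real $n$ or to add a further argument), or (ii) use the geometric input of \cite{PPS} showing that any orbit point can be modified by a uniformly bounded group element, with uniformly bounded change in $\int_o^{\gamma o}F$ and bounded multiplicity, so as to land its distance in one of the sampled annuli. Your claim that ``no geodesic-flow input is needed'' is exactly what breaks down here: some input about the distribution of the orbit distances $d(o,\gamma o)$ at scale $c<1$ is unavoidable.
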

\begin{proof}
The proof is identical to that of \cite[Lemma 3.3(vii)]{PPS}, and thus omitted here.
\end{proof}

The following lemma relates the topological pressure of the geodesic flow with the pressure over ``uniformly regular'' subsets.
Recall Definition \ref{lsets} for the definition of sets $\L_{k,\l, N}$ and $\tilde \L_k$.
\begin{lemma}\label{pesin}
Assume that $P(\text{Sing},F)<P(F)$. Then for every $k\in \NN$, we have
$$P(F)=P(\tilde{\L}_k, F)=\sup_{i\in \NN}\sup_{N\in \NN}P(\L_{k,\frac{1}{i},N}, F).$$
\end{lemma}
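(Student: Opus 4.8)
The plan is to prove the chain of equalities in Lemma \ref{pesin} by a sandwiching argument: one inclusion $\tilde\L_k\subset SM$ gives $P(\tilde\L_k,F)\le P(F)$ and $\sup_{i,N}P(\L_{k,1/i,N},F)\le P(\tilde\L_k,F)$ for free (since $\L_{k,1/i,N}\subset\tilde\L_k$), so the entire content is the reverse inequality $P(F)\le \sup_{i\in\NN}\sup_{N\in\NN}P(\L_{k,\frac1i,N},F)$. The idea is that the pressure gap $P(\text{Sing},F)<P(F)$ forces the pressure to be carried by orbit segments that spend almost all their time in the uniformly recurrent and regular set, so a $(t,\delta)$-separated set of near-maximal weight can be replaced, with negligible loss, by one lying essentially inside the sets $\L_{k,1/i,N}$.

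First I would fix $k$ and choose, via the variational principle, an ergodic equilibrium-type measure $\mu$ (or a sequence of measures $\mu_n$) with $h_{\mu}(g^1)+\int F\,d\mu$ arbitrarily close to $P(F)$; by the pressure gap such a $\mu$ cannot be supported on $\text{Sing}$, and more quantitatively $\mu(\text{Reg})=1$. The crucial input is Lemma \ref{uniformrec}(2): $\nu(\UR)=1$ for every $\nu\in\M_{\CG}(SM)$, hence $\mu(\URR)=1$, and since $\URR=\bigcup_{\lambda>0}\UR(\lambda)$ we may pick $\lambda=1/i$ with $\mu(\UR(1/i))$ as close to $1$ as desired. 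By the Birkhoff ergodic theorem applied to the indicator functions $\chi_{\UR(1/i)}$ and $\chi_{\iota(\UR(1/i))}$, for $\mu$-a.e.\ $v$ the time averages exceed $1-\frac1k$ for all $t\ge N(v)$; by Egorov (or simply by choosing $N$ large) there is a set of measure $>1-\frac1k$ on which $N(v)\le N$, i.e.\ a positive-measure subset of $\L_{k,1/i,N}$.

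Next I would run the standard Katok-type entropy-via-separated-sets argument localized to $\L_{k,1/i,N}$: using the Brin--Katok / Katok entropy formula, a set of positive $\mu$-measure inside $\L_{k,1/i,N}$ supports, for large $t$, a $(t,\delta)$-separated subset $E\subset(\L_{k,1/i,N})_t$ with $\sum_{v\in E}e^{\F_\delta(v,t)}$ at least $e^{t(h_\mu(g^1)+\int F\,d\mu-\epsilon)}$. Because $\F_\delta(v,t)\ge\F(v,t)\ge \int_0^t F(g^sv)\,ds$ and the ergodic average of $F$ along such orbits is close to $\int F\,d\mu$, and because the entropy of $\mu$ is captured by the exponential growth rate of the maximal $(t,\delta)$-separated sets restricted to a generic set, one obtains $P(\L_{k,1/i,N},F,\delta)\ge h_\mu(g^1)+\int F\,d\mu-\epsilon$ for all small $\delta$; taking $\delta\to0$, then $\epsilon\to0$, then the sup over $\mu$ yields $\sup_{i,N}P(\L_{k,1/i,N},F)\ge P(F)$. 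One must be mildly careful that $\L_{k,1/i,N}$ is a subset of $SX$ while the pressure is defined on $SM$; passing to the projection $dp(\L_{k,1/i,N})$ and using that $\UR$ is defined via projection to $SM$ and the defining time-average conditions descend, this causes no trouble.

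The main obstacle I anticipate is the careful interplay between (a) the fact that the defining conditions for $\L_{k,1/i,N}$ are \emph{forward} time-average conditions on orbit segments, not a closed invariant condition, so one has to verify that a positive-measure, non-invariant set genuinely supports near-maximal separated sets, and (b) making the Katok entropy estimate honestly produce the \emph{pressure} (with the $\F_\delta$ weights) rather than just the entropy. The cleanest route is probably to invoke a known ``entropy/pressure density'' statement — e.g.\ the lower bound in the variational principle obtained by looking at generic points of an ergodic measure, as in Katok's formula generalized to pressure — applied to $\mu\!\restriction$ the generic set; the pressure gap is then used only to guarantee that the relevant $\mu$ charges $\URR$. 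I would also double-check that the $\frac1k$ slack (rather than, say, $\frac1{2k}$) is enough, and if convenient prove the intermediate statement with $\UR(\lambda)$ replaced by a slightly smaller set and then absorb the discrepancy. If a direct reference for the pressure version of Katok's formula along a non-invariant generic set is awkward, an alternative is to compare with $P(\text{Reg},F)$: by the pressure gap and \cite[Theorem A]{CT16}-type reasoning $P(\text{Reg},F)=P(F)$, and then one localizes inside $\text{Reg}$ using the a.e.\ recurrence and regularity of points for any near-maximizing measure.
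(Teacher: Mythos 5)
Your proposal is correct and follows the same overall strategy as the paper: use the pressure gap to restrict to ergodic measures with $\mu(\text{Reg})=1$, invoke Lemma \ref{uniformrec} to get $\mu(\URR)=1$, and then Birkhoff to make $\mu(\L_{k,\frac1i,N})>1-\rho$ for suitable $i,N$. Where you diverge is in the final conversion from ``near-maximizing measures give nearly full measure to $\L_{k,\frac1i,N}$'' to ``the separated-set pressure of $\L_{k,\frac1i,N}$ is at least $P(F)$.'' The paper routes this through the Carath\'eodory--Pesin pressure $P_Z(F)$: it first shows $P(F)=P_{\text{Reg}}(F)$ using that Carath\'eodory pressure of a finite union of invariant sets is the maximum, then applies Pesin's inverse variational principle \cite[Theorem A2.1]{Pe3} to get $P(F)\le P_{\tilde\L_k}(F)$, and finally compares Carath\'eodory pressure with the partition-function pressure $P(\cdot,F)$ used in the statement. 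You instead apply a pressure version of Katok's formula directly: a maximal $(t,\delta)$-separated subset of a set of measure $>1-\rho$ is $(t,\delta)$-spanning for it, so its cardinality is at least $N(t,\delta,\rho)\gtrsim e^{t(h_\mu(g^1)-\epsilon)}$, and weighting by $e^{\F_\delta(v,t)}\ge e^{\int_0^tF(g^sv)\,ds}$ on the (nearly full measure) set where Birkhoff averages of $F$ have converged gives the lower bound on $P(\L_{k,\frac1i,N},F,\delta)$. Your route is more self-contained in that it avoids the dimension-theoretic pressure and the comparison $P_{\L_{k,\frac1i,N}}(F)\le P(\L_{k,\frac1i,N},F)$ entirely, at the cost of having to justify the weighted (pressure) form of Katok's formula on a non-invariant positive-measure set; the paper's route outsources exactly this work to \cite{Pe3}. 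Both are sound, and your observation about the $SX$ versus $SM$ ambiguity in the definition of $\L_{k,\lambda,N}$ is a fair point that the paper glosses over as well.
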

\begin{proof}
For an arbitrary subset $Z\subset SM$, let $P_Z(F)$ denote the pressure of $F$ on $Z$ using Carathe\'{o}dory-Pesin construction, see \cite[Theorem 11.1]{Pe3}.
Since \text{Sing} is closed and $g^t$-invariant, we know $P(F)=P_{SM}(F)$ and $P(\text{Sing},F)=P_{\text{Sing}}(F)$.
Now $P(\text{Sing},F)<P(F)$. Since 
$$P(F)=P_{SM}(F)=\max\{P_{\text{Sing}}(F),P_{\text{Reg}}(F)\},$$ 
we have $P(F)=P_{\text{Reg}}(F)$.

Let $\mu\in \M_{g^t}^e(SM)$ with $\mu(\text{Reg})=1$. Then $\mu(\URR)=1$. It is easy to see that $\iota_*\mu \in \M^e_{g^t}(SM)$. For any $0<\rho<1$ and $k\in \NN$, pick $i\in \NN$ large enough such that $\mu(\UR(\frac{1}{i}))>\max\{1-\rho, 1-\frac{1}{k}\}$ and
$$\mu(\iota(\UR(\frac{1}{i})))=(\iota_*\mu)(\UR(\frac{1}{i}))>\max\{1-\rho, 1-\frac{1}{k}\}.$$ 
Thus by Birkhorff ergodic theorem, if $N$ is large enough, we have $\mu(\L_{k,\frac{1}{i},N})>1-\rho$. So $\mu(\tilde \L_k)=1$ for every $k\in \NN$. 

From definition $\tilde \L_k$ is a $g^t$-invariant subset of $SM$. By the variational principle  \cite[Theorem A2.1]{Pe3},
\begin{equation*}
\begin{aligned}
P(F)=&\sup_{\mu\in \M^e_{g^t}(SM)}\Big\{h_\mu(f)+\int Fd\mu\Big\}\\
=&\sup_{\mu\in \M^e_{g^t}(SM),\ \mu(\text{Reg})=1 }\Big\{h_\mu(f)+\int Fd\mu\Big\}\\
=&\sup_{\mu\in \M^e_{g^t}(SM),\ \mu(\tilde \L_k)=1}\Big\{h_\mu(f)+\int Fd\mu\Big\}\\
=&P_{\mathcal{L}(\tilde \L_k)}(F)\le P_{\tilde \L_k}(F)=\sup_{i\in \NN}\sup_{N\in \NN}P_{\L_{k,\frac{1}{i},N}}(F)\\
\le &\sup_{i\in \NN}\sup_{N\in \NN}P(\L_{k,\frac{1}{i},N}, F)\le P(\tilde \L_k, F)\le P(F)
\end{aligned}
\end{equation*}
where $\mathcal{L}(\tilde{\L}_k)$ is defined on \cite[p. 88]{Pe3}.
The lemma follows.
\end{proof}

\begin{proof}[Proof of Proposition \ref{pro9}]
Let $0<\e<\text{inj}(M)/4$. For any distinct $\c_1,\c_2\in \C$ satisfying $n-\e\le d(o, \c_io)<n, i=1,2$, we know that the two vectors $\dot c_{o,\c_io}(0), i=1,2$ are $(n,\e)$-separated. Indeed, otherwise, we have 
$$d(\c_1 o, \c_2o)\le 2\e+d( c_{o,\c_1o}(n), c_{o,\c_2o}(n))\le 3\e<\inj(M),$$ 
a contradiction. Then we know from Lemma \ref{criticalexponent}(1) that $\delta_F\le P(F).$

Let us prove the other direction. Let $k\in \NN$ be arbitrary. For every $i,N\in \NN$, let $S$ be a maximal $(T,\e)$-separated subset of $\L_{k,\frac{1}{i},N}$ with $T\gg N$. We still write $\l=1/i$ for convenience. Pick any $v\in S$ and still denote by $v$ its lifting to $X$ such that $\pi v\in \F$. Then there exists $\c\in \Gamma$ such that $\pi g^Tv\in \c \F$. Denote $w_T:=\dot c_{o, \pi g^Tv}(0), w:=\dot c_{o,\c o}(0)$ and $S_1=d(o, \pi g^Tv)$. Let $D:=\diam \F$ and so $|S_1-T|\le D$. Since $v\in \UR(\l)$, by Lemma \ref{contracting1}, for any $t_0>0$
$$d_K(g^{t_0}v, g^{S_1-T+t_0}w_T)\le C_0d^s(v,w')e^{d^s(v,w')/\lambda}e^{-\lambda t_0}$$
where $w'\in W^s(v)$ such that $\pi w'\in c_{o,\pi g^Tv}$. Since $v\mapsto H^s(v)$ is continuous in the sense of \cite[Proposition 6.3]{CKW1}, there exists $D'>0$ such that $d^s(v,w')\le D'$ for any $v, w'\in S\F$ with $w'\in W^s(v)$. Pick $t_0>0$ large enough (independent of $v$ and $T$) such that the last term above is less than $\lambda/100$.
By comparison theory,
\begin{equation*}
\begin{aligned}
&d_K(g^{S_1-T+t_0}w_T,g^{S_1-T+t_0}w)\\
\le &\frac{|S_1-T|+t_0}{S_1}d_K(g^{S_1}w_T,g^{S_1}w)\le \frac{D+t_0}{S_1}(4D+2).
\end{aligned}
\end{equation*}
Choose $T$ large enough such that the last term is less than $\lambda/100$. We obtain
$$d_K(g^{t_0}v, g^{S_1-T+t_0}w)\le \lambda/50.$$

By definition of $\L_{k,\l,N}$, there exists $s\in [0,\frac{1}{k}T]$ such that $\iota(g^{T-s}v)\in \UR(\l)$. Reversing the direction, we obtain similarly
$$d_K(g^{T-s-t_0}v, g^{S_2-s-t_0}w)\le \lambda/50$$
for some $S_2$ with $|S_2-T|\le 3D$. Thus by convexity of the distance function, for any $t\in [S_1-T+t_0,S_2-s-t_0]$, we have $g^tw$ is within a $\lambda/50$-neighborhood of the geodesic segment $(v,T)$.
By definition of $\L_{k,\l,N}$, there exist $s_1,s_2\in [0, \frac{1}{k}T]$ such that $g^{t_0+s_1}v\in \UR(\l)$ and $\iota(g^{T-s-t_0-s_2}v)\in \UR(\l)$.
From the proof of Proposition \ref{rate}, we see that \eqref{jacobi} holds for the orbit segment of $w$ corresponding to $g^{[t_0+s_1, T-s-t_0-s_2]}v$. Thus applying Lemma \ref{contracting1} (with slight modification) twice,
\begin{equation}\label{e:Bowen1}
\left|\int_{\pi v}^{\pi g^Tv}F-\int_{o}^{\c o}F\right|\le L_1+(4t_0+6D+\frac{6}{k}T)\|F\|
\end{equation}
for some constant $L_1$ depending on $\l$, but independent of $v, T$.

The above argument shows that there is a well-defined map $q: S\to \Gamma$.
Consider a maximal $\e/3$-separated set $F_\e$ of $S\F$. We claim that $q$ is at most $(\# F_{\e})^2$ to $1$.
Indeed, $F_\e$ is $\e/3$-spanning of $S\F$ and given a $\c\in \C$, $\c F_\e$ is $\e/3$-spanning of $\c S\F$. Then there is a map $q': q^{-1}(\c)\to F_\e\times \c F_\e$ by choosing $q'(v)=(v_1,v_2)$ if $v\in B(v_1,\e/3)$ and $g^Tv\in B(v_2,\e/3)$. By the convexity of the distance function and $(T,\e)$-separatedness of $q^{-1}(\c)$, $q'$ is injective. Thus $\# q^{-1}(\c)\le (\# F_{\e})^2$. This proves the claim.

Combining with We have \eqref{e:Bowen1}, we have
\begin{equation*}
\begin{aligned}
\sum_{v \in S}e^{\int_{\pi v}^{\pi g^T v}F}\le \sum_{\c \in \Gamma, T-2D\le d(o, \c o)<T+2D} (\# F_{\e})^2 e^{L_1+(4t_0+6D+\frac{6}{k}T)\|F\|}e^{\int_{o}^{\c o}F}.
\end{aligned}
\end{equation*}
By Lemma \ref{criticalexponent}(1), we obtain $P(\L_{k,\frac{1}{i},N},F)\le \delta_F+\frac{6}{k}\|F\|$ for every $i,N\in \NN$. Thus by Lemma \ref{pesin},
$$P(F)=P(\tilde{\L}_k,F)\le \delta_F+\frac{6}{k}\|F\|.$$ 
Since $k$ is arbitrary, the proposition follows.
\end{proof}

\subsection{Patterson-Sullivan measures under Condition (A)}
In this subsection, we assume that Condition (A) holds.
\subsubsection{Construction of $\{\mu_{F, \a o}: \a\in\C\}$}
Fix a reference point $o\in X$. For each $s > \delta_F$, consider the measure:
$$\mu_{F,o,s}:=\frac{1}{P_F(s,o)}\sum_{\c \in \Gamma}e^{\int_o^{\gamma o}(F-s)}\delta_{\c o},$$
where $\delta_{\c o}$ is the Dirac measure at point $\c o$ and $P_F(s,o)=\sum_{\c\in \C}e^{\int_o^{\gamma o}(F-s)}$ is the Poincar\'{e} series.
Then $\Gamma o \subset \text{supp} \mu_{F,o,s} \subset \overline{\Gamma o}$, where $\Gamma o$ is the orbit of $o\in X$ under the action of $\Gamma$.

Without loss of generality, we assume that $\Gamma$ is of divergent type.
Otherwise, we follow Patterson's  method (cf.~\cite{Pat}) to modify the definition of $\mu_{F,o,s}$ as follows.
As in the  proof of \cite[Proposition 3.9]{PPS}, let $h:[0,\infty)\to [0,\infty)$ be a non-decreasing map such that
\begin{enumerate}
  \item for any $\e>0$, there exists $r_\e\geq 0$ such that $h(t+r)\le e^{\e t}h(r)$ for any $t\ge 0$, $r\ge r_\e$;
  \item $\bar{P}_F(s,o):= \sum_{\c \in \Gamma}e^{\int_o^{\c o}(F-s)}h(d(o,\c o))$ diverges if and only if $s\le \d_F$.
\end{enumerate}
Then we consider
$$\mu_{F,o,s}:=\frac{1}{\bar P_F(s,o)}\sum_{\c \in \Gamma}e^{\int_o^{\gamma o}(F-s)}h(d(o,\c o))\delta_{\c o}.$$

From the definition, we see that $\mu_{F,o,s}(\overline{X})=1$. So consider a weak$^{\star}$ limit
\begin{equation}\label{ps1}
  \lim_{s_{k}\searrow \d_F}\mu_{F, o, s_{k}}=\mu_{F,o}.  
\end{equation}
\begin{lemma}
$\supp\mu_{F,o}\subset \pX$ and $\mu_{F,o}(\pX)=1$.
\end{lemma}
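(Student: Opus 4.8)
The plan is to show that as $s\searrow\d_F$ the mass of $\mu_{F,o,s}$ escapes to infinity, so that in the weak$^\star$ limit all of it lands on $\pX$. I would first record two elementary facts. Since $\Gamma$ is the deck transformation group of the covering $X\to M$ with $M$ compact, $\Gamma$ acts properly discontinuously on $X$; as $\overline{B(o,R)}$ is compact, it follows that for every $R>0$ the set $\Gamma_R:=\{\gamma\in\Gamma:\ d(o,\gamma o)\le R\}$ is finite. Second, the relevant series blows up as $s\searrow\d_F$: each summand $e^{\int_o^{\gamma o}(F-s)}$ (resp.\ $e^{\int_o^{\gamma o}(F-s)}h(d(o,\gamma o))$ in the Patterson modification) is non-increasing in $s$ because $d(o,\gamma o)\ge 0$, so by monotone convergence $\lim_{s\searrow\d_F}P_F(s,o)=P_F(\d_F,o)=+\infty$ in the divergent type case, and likewise $\lim_{s\searrow\d_F}\bar P_F(s,o)=+\infty$ by property (2) of $h$ in general.

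Next I would bound the mass $\mu_{F,o,s}$ puts on a fixed ball. Fix $R>0$ and take $\d_F<s<\d_F+1$. Since $\mu_{F,o,s}$ is atomic on $\Gamma o$, only $\gamma\in\Gamma_R$ contribute to $\mu_{F,o,s}(B(o,R))$, and for such $\gamma$ one has $\bigl|\int_o^{\gamma o}(F-s)\bigr|\le(\|F\|+|s|)\,d(o,\gamma o)\le(\|F\|+|\d_F|+1)R$ (and $h(d(o,\gamma o))\le h(R)$ in the modified case). Hence
\[
\mu_{F,o,s}\bigl(B(o,R)\bigr)\ \le\ \frac{1}{P_F(s,o)}\sum_{\gamma\in\Gamma_R}e^{\int_o^{\gamma o}(F-s)}\ \le\ \frac{(\#\Gamma_R)\,e^{(\|F\|+|\d_F|+1)R}}{P_F(s,o)}\ \xrightarrow[s\searrow\d_F]{}\ 0,
\]
with the same bound (an extra factor $h(R)$, and $\bar P_F$ in the denominator) in the non-divergent type case.

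To conclude, I would pass to the limit along the sequence $s_k\searrow\d_F$ from \eqref{ps1}. The ball $B(o,R)=\{x\in X:\ d(o,x)<R\}$ is open in the compact metrizable space $\overline X$, so the portmanteau theorem gives
\[
\mu_{F,o}\bigl(B(o,R)\bigr)\ \le\ \liminf_{k\to\infty}\mu_{F,o,s_k}\bigl(B(o,R)\bigr)\ =\ 0 .
\]
Since $X=\bigcup_{R>0}B(o,R)$ is an increasing union of open balls, $\mu_{F,o}(X)=\lim_{R\to\infty}\mu_{F,o}(B(o,R))=0$. Because $\overline X$ is compact and each $\mu_{F,o,s}$ is a probability measure, the weak$^\star$ limit $\mu_{F,o}$ is again a probability measure on $\overline X$; thus $\mu_{F,o}(\pX)=\mu_{F,o}(\overline X)-\mu_{F,o}(X)=1$, and since $X$ is open and $\mu_{F,o}$-null, $\supp\mu_{F,o}\subset\overline X\setminus X=\pX$.

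The only point that really needs care is the divergence of $P_F(s,o)$ (resp.\ $\bar P_F(s,o)$) as $s\searrow\d_F$ — this is precisely why one reduces to the divergent type case and, failing that, inserts Patterson's slowly growing weight $h$ — together with invoking the open-set (lower-semicontinuity) half of the portmanteau theorem, since the closed-set half would give an inequality in the useless direction.
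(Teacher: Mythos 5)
Your proof is correct and follows exactly the route the paper takes — the paper's own proof is the one-line observation that $\bar P_F(\d_F,o)=\infty$ forces the mass to escape $X$, and your argument (finiteness of $\Gamma_R$ by proper discontinuity, divergence of the normalizing series at $s=\d_F$ via monotone convergence, and the open-set half of the portmanteau theorem) is precisely the standard justification of that assertion, written out in full.
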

\begin{proof}
Since $\bar P_F(\d_F, o)=\infty$, we know $\supp\mu_{F,o}\subset \pX$. Clearly, $\mu_{F,o}(\pX)=\mu_{F,o}(\overline{X})=1$.
\end{proof}

We continue to define the measures $\mu_{F, \a o,s}$ for any $\a\in\C, s>\d_F$ as follows.
$$\mu_{F,\a o,s}:=\frac{1}{P_F(s,o)}\sum_{\c \in \Gamma}e^{\int_{\a o}^{\c o}(F-s)}\delta_{\c o}.$$
\begin{lemma}\label{equiv}
Let $(s_k)_{k=1}^\infty$ be the sequence in \eqref{ps1}. Then for every $\a\in \Gamma$ the limit
$\lim_{s_{k}\searrow \d_F}\mu_{F, \a o, s_{k}}$
exists and denoted by $\mu_{F, \a o}$. Moreover, we have $\a_*\mu_{F,o}=\mu_{F, \a o}$ for every $\a\in \Gamma$.
\end{lemma}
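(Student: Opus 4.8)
The plan is to prove the exact transformation identity $\alpha_*\mu_{F,o,s}=\mu_{F,\alpha o,s}$ for the approximating measures at every $s>\delta_F$, and then pass to the limit along the chosen sequence $s_k\searrow\delta_F$ using continuity of pushforward under a homeomorphism.

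For the first step I would fix $\alpha\in\Gamma$, recall that $\alpha$ extends to a homeomorphism of $\overline{X}=X\cup\pX$, and compute, using $\alpha_*\delta_{\gamma o}=\delta_{\alpha\gamma o}$ and the substitution $\gamma'=\alpha\gamma$ (a bijection of $\Gamma$, so rearranging the sum of positive terms is harmless),
$$\alpha_*\mu_{F,o,s}=\frac{1}{P_F(s,o)}\sum_{\gamma\in\Gamma}e^{\int_o^{\gamma o}(F-s)}\delta_{\alpha\gamma o}=\frac{1}{P_F(s,o)}\sum_{\gamma'\in\Gamma}e^{\int_o^{\alpha^{-1}\gamma' o}(F-s)}\delta_{\gamma' o}.$$
Since $F$ is the lift of a function on $SM$ it is $\Gamma$-invariant, so $\int_o^{\alpha^{-1}\gamma'o}F=\int_{\alpha o}^{\gamma'o}F$ and $d(o,\alpha^{-1}\gamma'o)=d(\alpha o,\gamma'o)$; substituting these shows each exponent equals $\int_{\alpha o}^{\gamma'o}(F-s)$, and hence $\alpha_*\mu_{F,o,s}=\mu_{F,\alpha o,s}$. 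The point that makes this clean is that both measures carry the same normalizing constant $P_F(s,o)$. In the Patterson-modified case the same computation leaves an extra factor $h(d(\alpha o,\gamma'o))/h(d(o,\gamma'o))$ in the $\gamma'$-term; since $|d(\alpha o,\gamma'o)-d(o,\gamma'o)|\le d(o,\alpha o)$ and $h$ has the slow-growth property, this factor lies in $[e^{-\varepsilon d(o,\alpha o)},e^{\varepsilon d(o,\alpha o)}]$ once $d(o,\gamma'o)$ is large, for every $\varepsilon>0$, so it does not affect any weak$^*$ accumulation point.

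For the second step: $\overline{X}$ is compact and metrizable, so $\nu\mapsto\alpha_*\nu$ is weak$^*$ continuous on probability measures. By \eqref{ps1} we have $\mu_{F,o,s_k}\to\mu_{F,o}$ weak$^*$, hence $\mu_{F,\alpha o,s_k}=\alpha_*\mu_{F,o,s_k}\to\alpha_*\mu_{F,o}$ weak$^*$. This shows the limit $\lim_{s_k\searrow\delta_F}\mu_{F,\alpha o,s_k}$ exists, and setting $\mu_{F,\alpha o}:=\alpha_*\mu_{F,o}$ gives $\alpha_*\mu_{F,o}=\mu_{F,\alpha o}$, as claimed.

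I do not expect a serious obstacle: the content is just the $\Gamma$-invariance of $F$ together with a reindexing, plus the standard continuity of pushforward. The only point requiring a little care is the Patterson-modified (non-divergent-type) case, where the identity at finite $s$ holds only up to the multiplicative error coming from $h$, and one must check that this error washes out in the weak$^*$ limit; since we reduced to divergent type without loss of generality, this is not strictly needed, but it is the one place where the argument is not purely formal.
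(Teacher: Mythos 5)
Your proposal is correct and follows essentially the same route as the paper: establish the exact identity $\a_*\mu_{F,o,s_k}=\mu_{F,\a o,s_k}$ by the reindexing $\c'=\a\c$ together with the $\Gamma$-invariance of $F$, then pass to the weak$^*$ limit using continuity of pushforward under the homeomorphism $\a$ of $\overline{X}$. Your extra remark on the Patterson-modified case (where the $h$-factor makes the finite-$s$ identity only approximate, with an error controlled by the slow-growth property of $h$) is a legitimate point of care that the paper's proof glosses over.
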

\begin{proof}
Let $A\subset \pX$ be a Borel measurable set. Then for every $\a\in \Gamma$,
\begin{equation*}
    \begin{aligned}
&(\a_*\mu_{F,o,s_k})(A)=\mu_{F,o,s_k}(\a^{-1}A)\\
=&\frac{1}{P_F(s_k,o)}\sum_{\c \in \Gamma}e^{\int_o^{\gamma o}(F-s_k)}\delta_{\c o}(\a^{-1}A)=\frac{1}{P_F(s_k,o)}\sum_{\c \in \Gamma}e^{\int_o^{\gamma o}(F-s_k)}\delta_{\a \c o}(A)\\
=&\frac{1}{P_F(s_k,o)}\sum_{\c' \in \Gamma}e^{\int_{\a o}^{\gamma' o}(F-s_k)}\delta_{\c' o}(A)=\mu_{F,\a o,s_k}(A).
    \end{aligned}
\end{equation*}
Since  $\lim_{s_{k}\searrow \d_F}\a_*\mu_{F,o,s_k}=\a_*\mu_{F,o}$, we have $\lim_{s_{k}\searrow \d_F}\mu_{F,\a o,s_k}=\a_*\mu_{F,o}$.
\end{proof}

\subsubsection{Full $\mu_o$-measure of the core limit set}
\begin{proposition}\label{full}
    $\mu_{F,o}((\L_c(\C))^c)=0$.
\end{proposition}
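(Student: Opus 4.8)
The plan is to show that, as $s\searrow\d_F$, essentially all of the weighted orbit mass of $\mu_{F,o,s}$ that accumulates off $\L_c(\C)$ comes from a set of ``bad'' group elements whose proportion of the Poincaré mass vanishes; this combines a Poincaré–series estimate (where Condition (A) and the pressure gap enter) with a soft weak$^*$ argument transferring it to $\pX$. First I would record two consequences of Condition (A). Bounding $e^{-\d_F d(o,\c o)}$ from below on each shell $n-1<d(o,\c o)\le n$ and invoking Condition (A) gives $\sum_{n-1<d(o,\c o)\le n}e^{\int_o^{\c o}(F-\d_F)}\ge c$ for a fixed $c>0$ and every $n$; hence $P_F(\d_F,o)=\infty$, so $\Gamma$ is of divergent type and the measures in \eqref{ps1} are the unmodified ones $\mu_{F,o,s}=\tfrac1{P_F(s,o)}\sum_{\c}e^{\int_o^{\c o}(F-s)}\d_{\c o}$. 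The same bound gives $P_F(s,o)\ge c\sum_{n\ge1}e^{-n(s-\d_F)}$, so $P_F(s,o)$ blows up at the critical rate $(s-\d_F)^{-1}$ as $s\searrow\d_F$.

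Next comes the heart of the matter. Fix $\eta>0$ so that the $(\P,\GGG,\SSS)$-decomposition of Definition \ref{decom} satisfies the pressure gap $P(\BBB(\eta),F)\le\d_F-\d_0$ for some $\d_0>0$ (using $P(\mathrm{Sing},F)<P(F)$ and Proposition \ref{pro9}). For $L>0$ set $\Gamma_b^L:=\{\c\in\Gamma:[o,\c o]\notin\GGG^L\}$. For $\c\in\Gamma_b^L$, write $[o,\c o]=[o,x_1]\,[x_1,x_2]\,[x_2,\c o]$ as in Definition \ref{decom}, so that $\max\{d(o,x_1),d(x_2,\c o)\}>L$ and $[o,x_1],[x_2,\c o]$ are $\BBB(\eta)$-segments. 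Using the comparison between group elements and $(\cdot,\ep)$-separated geodesic segments from the proof of Proposition \ref{pro9}, the counting estimates of \cite{BCFT} for the $\P$- and $\SSS$-parts (which carry pressure $\le\d_F-\d_0$), and the matching upper bound $\sum_{n-1<d(o,\c o)\le n}e^{\int_o^{\c o}F}\le C'e^{n\d_F}$, the aim is to prove
$$\sum_{\c\in\Gamma_b^L,\ n-1<d(o,\c o)\le n}e^{\int_o^{\c o}F}\ \le\ \psi(L)\,e^{n\d_F}\quad(\forall n),\qquad \psi(L)\xrightarrow{L\to\infty}0 .$$
Multiplying by $e^{-sd(o,\c o)}$, summing in $n$, and dividing by the lower bound for $P_F(s,o)$ then gives
$$\theta_L:=\limsup_{s\searrow\d_F}\ \frac1{P_F(s,o)}\sum_{\c\in\Gamma_b^L}e^{\int_o^{\c o}(F-s)}\ \xrightarrow{L\to\infty}\ 0 .$$
I expect this step to be the main obstacle: the delicate point is to obtain a clean factor $e^{n\d_F}$ (not $e^{n(\d_F+\ep')}$) in the bad-element count, since an $\ep'$ loss in the exponent would make the ratio diverge before $s$ reaches $\d_F$; this is precisely where Condition (A) and the \cite{BCFT} counting bounds are genuinely needed.

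It remains to transfer this to the boundary. Fix $L$ and split $\mu_{F,o,s}=\mu^g_s+\mu^b_s$, the restrictions of the sum to $\Gamma\setminus\Gamma_b^L$ and to $\Gamma_b^L$. Since $\overline X$ is compact, along a subsequence $s_{k_j}\searrow\d_F$ of the sequence in \eqref{ps1} we may assume $\mu^g_{s_{k_j}}\to\nu^g$ and $\mu^b_{s_{k_j}}\to\nu^b$ weakly$^*$, and then $\nu^g+\nu^b=\mu_{F,o}$ and $\nu^b(\overline X)\le\theta_L$. Because $\Gamma o$ is discrete in $X$, every $\c_0 o$ with $\c_0\in\Gamma\setminus\Gamma_b^L$ has an open neighbourhood $U\subset\overline X$ with $U\cap\Gamma o=\{\c_0 o\}$, so $\nu^g(U)\le\liminf_j e^{\int_o^{\c_0 o}(F-s_{k_j})}/P_F(s_{k_j},o)=0$; hence $\nu^g$ charges no point of $(\Gamma\setminus\Gamma_b^L)o$. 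Since $\supp\nu^g\subset\overline{(\Gamma\setminus\Gamma_b^L)o}=(\Gamma\setminus\Gamma_b^L)o\cup\L_c^L(\C)$ — the accumulation points of a subset of $\Gamma o$ lie in $\pX$ and are, by Definition \ref{corelimit}, exactly $\L_c^L(\C)$ — it follows that $\nu^g$ is carried by $\L_c^L(\C)$. Therefore $\mu_{F,o}\big((\L_c^L(\C))^c\big)=\nu^b\big((\L_c^L(\C))^c\big)\le\nu^b(\overline X)\le\theta_L$. Finally $\L_c^L(\C)$ increases with $L$, so $(\L_c(\C))^c=\bigcap_{L>0}(\L_c^L(\C))^c$ and $\mu_{F,o}((\L_c(\C))^c)\le\theta_L$ for every $L$; letting $L\to\infty$ gives $\mu_{F,o}((\L_c(\C))^c)=0$.
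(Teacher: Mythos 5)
Your overall strategy is the same as the paper's: show that the weighted orbital mass of the ``bad'' elements (those $\c$ with $[o,\c o]\notin\GGG^L$) in each shell is at most $\psi(L)e^{n\d_F}$ with $\psi(L)\to 0$, use Condition (A) to bound the Poincar\'e series from below at the same exponential rate, and pass to the weak$^*$ limit. Your boundary-transfer step (splitting $\mu_{F,o,s}$ into good and bad parts, killing atoms on $\Gamma o$, and identifying the accumulation set of $(\Gamma\setminus\Gamma_b^L)o$ with $\L_c^L(\C)$) is organized a little differently from the paper's (which instead covers $(\L_c(\C))^c$ by open sets $U_L$ and applies portmanteau directly), but both versions are correct and essentially equivalent. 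The preliminary observations (divergence type, $P_F(s,o)\gtrsim (s-\d_F)^{-1}$) are also fine.

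The genuine gap is exactly the step you flag as ``the main obstacle'': you never prove the shell estimate $\sum_{\c\in\Gamma_b^L,\ n-1<d(o,\c o)\le n}e^{\int_o^{\c o}F}\le\psi(L)e^{n\d_F}$, and without it nothing closes. The paper obtains it not from a ``matching upper bound'' on the full shell sum, but from the $(\PPP,\GGG,\SSS)$-decomposition machinery of \cite{CT16}: each bad separated vector $v$ in the shell is injected into a triple $(v_1,v_2,v_3)$ of separated points in $[\PPP]_i$, $\GGG^1_{t-i-k}$, $[\SSS]_k$ with $i\vee k>L$, and then one uses the two partition-function bounds $\L_i([\PPP],\e,3\e)\,\L_k([\SSS],\e,3\e)\le C_1^2e^{(i+k)(P(F)-\b_1)}$ (from the pressure gap $P([\PPP]\cup[\SSS],F)<P(F)$, \cite[Proposition 5.2]{BCFT}) and $\L_{t-i-k}(\GGG^1,\e,3\e)\le C_2e^{(t-i-k)P(F)}$ (from \cite[Proposition 4.7]{CT16}, i.e.\ specification on $\GGG$ gives the partition function upper bound at the \emph{exact} pressure). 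Summing over $i\vee k>L$ yields the bound $C_1^2C_2\,K(L)\,e^{tP(F)}$ with $K(L)=\sum_{i\vee k>L}e^{-(i+k)\b_1}\to 0$, and $P(F)=\d_F$ by Proposition \ref{pro9}. This is precisely how the ``clean factor'' $e^{n\d_F}$, whose necessity you correctly identify, is produced; as written, your argument assumes it rather than establishing it.
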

\begin{proof}
Denote by $\CCC:=\{[o,\c o]: \c\in \C\}$. Fix a sufficiently small $0<\e\ll \inj(M)/4$. Denote $\tilde{\CC}_t:=\{v\in S_oX: v=\dot c_{o,\c o}(0), t-\e<d(o,\c o)\le t, \c\in \C\}$. For $t>0$ large enough, $\tilde\CC_t$ is a $(t,2\e)$-separated set. 

Recall in \cite{CT16} that the notation $[\DDD]$ for $\DDD\subset SM\times \RR^+$ means
$$[\DDD]:=\{(v,n)\in SX\times \NN: (g^{-s}v, n+s+t)\in \DDD \text{\ for some\ }s,t\in [0,1]\}.$$
By \cite[Proposition 5.2]{BCFT}, there exists $\b_1>0$ such that
$P([\PPP]\cup[\SSS], F, \e, 3\e)<P(F)-2\b_1$. Therefore, there exists a constant $C_1>0$ such that
$$\L_t([\PPP]\cup[\SSS], F, \e, 3\e)\le C_1e^{t(P(F)-\b_1)}, \quad \forall t>0.$$

We want to estimate 
$$\sum_{(p\vee s)(v,t)>L, v\in \tilde\CC_t}e^{\F_{2\e}(v,t)}$$
where $p\vee s:=\max\{p,s\}.$ Given $v\in \tilde\CC_t$ with $\lfloor p(v,t)\rfloor =i$ and $\lfloor s(v,t)\rfloor =k$, we have
$$(v,i)\in [\PPP], (g^iv, t-i-k)\in \GGG^1, (g^{t-k}v,k)\in [\SSS].$$
Given $i,k\in \{0,1,\cdots, \lceil t\rceil\}$, define
$$C(i,k):=\{v\in \tilde\CC_t: \lfloor p(x,t)\rfloor =i, \lfloor s(x,t)\rfloor =k\}.$$
For each $0\le i \le \lceil t\rceil$, define $E_i^\P \subset [\P]_i$ to be a maximal $(i,\e)$-separated set. $E_j^{\GGG^1}\subset \GGG^1_j$ and $E_k^\SSS \subset [\SSS]_k$ are defined similarly. According to the proof of \cite[Lemma 4.8]{CT16}, there exists an injection $\pi: C(i,k)\to E_i^\P\times E_{t-i-k}^{\GGG^1}\times E_k^\SSS$
by $\pi(v)=(v_1, v_2,v_3)$ such that
\begin{itemize}
    \item $v_1\in E_i^\P$ satisfies $v\in \bar B_i(v_1,\e)$,
     \item $v_2\in E_{t-i-k}^{\GGG^1}$ satisfies $g^iv\in \bar B_{t-i-k}(v_2,\e)$,
     \item $v_3\in E_k^\SSS$ satisfies $g^{t-k}v\in \bar B_k(v_3,\e)$.
\end{itemize}
So if $v\in C(i,k)$, 
$$\F_{2\e}(v,t)\le \F_{3\e}(v_1, i)+\F_{3\e}(v_2, t-i-k)+\F_{3\e}(v_3, k).$$
Let $L\in \NN$. Using \cite[Proposition 4.7]{CT16}, we have
\begin{equation}\label{badsegment}
    \begin{aligned}
&\sum_{(p\vee s)(v,t)>L, v\in \tilde\CC_t}e^{\F_{2\e}(v,t)}= \sum_{i\vee k> L}\sum_{v\in C(i,k)}e^{\F_{2\e}(v,t)}\\
\le &\sum_{i\vee k> L}\L_i([\PPP],\e, 3\e)\L_k([\SSS],\e, 3\e)\L_{t-i-k}(\GGG^1,\e, 3\e)\\
\le &C_2\sum_{i\vee k> L}\L_i([\PPP],\e, 3\e)\L_k([\SSS],\e, 3\e)e^{(t-i-k)P(F)}\\
\le &C_1^2C_2\sum_{i\vee k\ge L}e^{(i+k)(P(F)-\b_1)}e^{(t-i-k)P(F)}\\
= &C_1^2C_2e^{tP(F)}\sum_{i\vee k> L}e^{-(i+k)\b_1}.
    \end{aligned}
\end{equation}
Denote $K(L):=\sum_{i\vee k> L}e^{-(i+k)\b_1}$. Then $\lim_{L\to \infty}K(L)=0$.

Fix $\e=1/l\ll\inj(M)/4$ for sufficiently large $l\in \NN$. Let $\xi\in (\L_c)^c$. For any $L\in \NN$, there exists a sufficiently small open neighborhood $U(\xi,L)$ of $\xi$ in $\overline{X}$ such that if $\c\in \C$ satisfies $\c o\in U$ and $j\e-\e<d(o, \c o)\le j\e$ for some $j\in \NN$, then $(p\vee s)(v,j\e)>L$ where $v=\dot c_{o,\c o}(0)\in \tilde\CC_{j\e}$. Denote $U_L:=\cup_{\xi\in (\L_c)^c}U(\xi,L)$. Then $(\L_c)^c\subset \cap_{L=1}^\infty U_L$. By \eqref{badsegment} and Condition (A), we have
\begin{equation*}
    \begin{aligned}
&\mu_{F,o}(U_L)\le \lim_{s_k\searrow \d_F}\frac{1}{P_F(s_k,o)}e^{\e\|F\|}\sum_{j=1}^\infty\sum_{(p\vee s)(v,t)>L, v\in \tilde\CC_{j\e}}e^{\F_{2\e}(v,j\e)}e^{-s_k(j-1)\e}\\
\le &\lim_{s_k\searrow \d_F}\frac{ e^{\e\|F\|}C_1^2C_2 K(L)\sum_{j=1}^\infty e^{j\e P(F)}e^{-s_k(j-1)\e}}{\sum_{n=1}^\infty\sum_{\c\in \C, n-1< d(o, \c o)\le n}e^{\int_o^{\c o}(F-s_k)}}\\
\le &\lim_{s_k\searrow \d_F}\frac{ e^{\e\|F\|}C_1^2C_2 K(L)\sum_{n=1}^\infty l e^{nP(F)+|P(F)|}e^{-s_k(n-1)}e^{s_k\e}}{\sum_{n=1}^\infty e^{-s_kn}\sum_{\c\in \C, n-1< d(o, \c o)\le n}e^{\int_o^{\c o}F}}\\
\le &\lim_{s_k\searrow \d_F}\frac{ e^{\e\|F\|}C_1^2C_2 K(L)e^{|P(F)|+s_k\e}l\sum_{n=1}^\infty e^{nP(F)}e^{-s_k(n-1)}}{\sum_{n=1}^\infty e^{-s_kn}Ce^{n\d_F}}\\
= &\frac{e^{\e\|F\|}C_1^2C_2 K(L)e^{|P(F)|+\d_F\e+\d_F}l}{C}.
    \end{aligned}
\end{equation*}
Since $K(L)\to 0$ as $L\to \infty$, we see that $\mu_{F,o}((\L_c)^c)=0$.
\end{proof}

\begin{proposition}\label{Busemann2}
For every $\c\in \C$, we have
$$\frac{d\mu_{F,\c o}}{d\mu_{F,o}}(\xi)=e^{-C_{F-\d_F, \xi}(\c o,o)}$$
for $\mu_{F,o} \ae\ \xi\in \pX$.
\end{proposition}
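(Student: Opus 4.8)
The plan is to compute the Radon--Nikodym derivative by passing to the limit in the explicit formula for $\mu_{F,\c o, s_k}$ versus $\mu_{F,o,s_k}$ along the sequence $s_k\searrow\d_F$. By Lemma \ref{equiv} we already know $\mu_{F,\c o}=\c_*\mu_{F,o}$, but this alone does not give the density; the point is that $\mu_{F,\c o,s}$ differs from $\mu_{F,o,s}$ only by reweighting each atom $\d_{\eta o}$ by the factor $e^{\int_{\c o}^{\eta o}(F-s)-\int_o^{\eta o}(F-s)}$. So the natural first step is: for a fixed $\xi\in\pX$, the weight attached to an orbit point $\eta o$ that is close to $\xi$ (i.e. such that $\eta o\to\xi$ along some sequence) should converge to $e^{-C_{F-\d_F,\xi}(\c o,o)}$. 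This is precisely the content of the definition of the Gibbs cocycle, once we observe that
$$\int_{\c o}^{\eta o}(F-s)-\int_o^{\eta o}(F-s)\longrightarrow -C_{F-s,\xi}(\c o,o)$$
when $\eta o\to\xi$, and then let $s\to\d_F$. Here the cocycle is well defined $\mu_{F,o}$-a.e.\ because, by Proposition \ref{full}, $\mu_{F,o}$ is carried by $\L_c(\C)=\bigcup_L\L_c^L(\C)$, and on each $\L_c^L(\C)$ the cocycle $C_{F,\xi}$ exists by Corollary \ref{cocexi2}; note $\d_F=P(F)$ (Proposition \ref{pro9}), so $F-\d_F$ is again a fixed H\"older potential and the same corollary applies to it.

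The second, more delicate step is to make the convergence uniform enough to pass it through the weak$^*$ limit. The standard device (as in \cite{PPS}) is: given a point $\xi\in\L_c^L(\C)$ and $\e>0$, choose a small open neighborhood $U$ of $\xi$ in $\overline X$ such that for every $\eta\in\C$ with $\eta o\in U$ and $d(o,\eta o)$ large, the quantity $\int_{\c o}^{\eta o}(F-s)-\int_o^{\eta o}(F-s)$ lies within $\e$ of $-C_{F-\d_F,\xi}(\c o,o)$, uniformly for $s$ in a small right-neighborhood of $\d_F$. This uniformity is exactly what the bounded distortion estimates (Lemma \ref{Bowen1} and the exponential decay in Lemma \ref{decay}) provide: the difference of the two integrals is governed by $\int d_K(\dot c_{\c o,\eta'}(t),\dot c_{o,\eta'}(t))^\a dt$ for nearby endpoints $\eta'$, which is bounded by a convergent geometric-type series with constants depending only on $L$ (through $T_3,T_4,T_5$), not on the particular $\eta$. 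One then covers $\L_c^L(\C)$ by finitely many (or countably many, then use inner regularity) such neighborhoods, writes
$$e^{-C_{F-\d_F,\xi}(\c o,o)-\e}\,\mu_{F,o,s_k}(U)\ \le\ \mu_{F,\c o,s_k}(U)\ \le\ e^{-C_{F-\d_F,\xi}(\c o,o)+\e}\,\mu_{F,o,s_k}(U),$$
and takes $k\to\infty$, using that $U$ can be chosen with $\mu_{F,o}(\partial U)=0$ so that $\mu_{F,\cdot,s_k}(U)\to\mu_{F,\cdot}(U)$. Since $\e$ is arbitrary and the covering sets can be taken of arbitrarily small diameter, this identifies $\frac{d\mu_{F,\c o}}{d\mu_{F,o}}(\xi)=e^{-C_{F-\d_F,\xi}(\c o,o)}$ at $\mu_{F,o}$-a.e.\ $\xi$, i.e.\ on $\L_c(\C)$.

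The main obstacle I anticipate is making the error control in the second step genuinely uniform over all atoms $\eta o\in U$ simultaneously and over $s$ near $\d_F$, rather than merely for each fixed geodesic ray: the constants $T_3(p,\xi),T_4(p,\xi),T_5(p,\xi)$ in Lemma \ref{decay} depend on $\xi$, and one must argue they can be bounded uniformly on a neighborhood of a given $\xi\in\L_c^L(\C)$ (using that the core condition persists at level $L$ along nearby rays, as in the proof of Lemma \ref{core}). A secondary technical point is the case $\Gamma$ of convergent type, where the Patterson weights $h(d(o,\eta o))$ enter; but since $h(t+r)\le e^{\e t}h(r)$ for large $r$ and $d(o,\c o)$ is fixed, these factors contribute a multiplicative error that tends to $1$, and the argument goes through unchanged after absorbing it into $\e$. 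Finally, property (1) in Theorem \ref{ps} then follows immediately from Lemma \ref{equiv}, and property (2) is the present proposition together with the cocycle relation Lemma \ref{cocycleprop}(2).
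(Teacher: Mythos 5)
Your proposal is correct and follows essentially the same route as the paper: reduce to $\xi\in\L_c$ via Proposition \ref{full} and Corollary \ref{cocexi2}, show the atom-weight ratio $e^{\int_{\c o}^{\eta o}(F-s)-\int_o^{\eta o}(F-s)}$ converges to $e^{-C_{F-s,\xi}(\c o,o)}$ uniformly for $s$ in a compact right-neighborhood of $\d_F$, and pass through the weak$^*$ limit using shrinking neighborhoods with $\mu_{F,o}$- and $\mu_{F,\c o}$-null boundary. The obstacle you flag (uniformity of the contraction constants over nearby orbit points) is exactly the point the paper handles by noting that the Pliss-time constant $T_1$ can be chosen universally over all orbit segments in $\GGG^L$.
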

\begin{proof}
By Proposition \ref{full}, it is sufficient to prove the proposition for $\xi\in \L_c$. So the Busemann cocycle $C_{F-s, \xi}(\c o,o)$ is well defined by Corollary \ref{cocexi2} for any $s\ge \d_F$.

Fix $\d$ from \eqref{smalldelta}. By Lemma \ref{limitzero} and Corollary \ref{cocexi2}, for any small $\e>0$, there exists $T>0$ large enough such that 
$$d^s(\dot c_{o,\xi}(t), \dot c_{\c o,\xi}(t+s_0))<\d/10, \quad \forall t\ge T$$ 
and
\begin{equation}\label{e:est1}
\begin{aligned}
\left|C_{F-s,\xi}(\c o,o)-\int_{o}^{c_{o,\xi}(T)}(F-s)+\int_{\c o}^{c_{\c o,\xi}(T+s_0)}(F-s)\right|<\e/5
\end{aligned}
\end{equation}
where $s_0=b_\xi(\c o,o)$. Since $\xi\in \L_c$, let $\c_n\in \C$ be as in Definition \ref{corelimit}. Then $\c_n o\to \xi$ as $n\to \infty$. For small $0<\rho \ll \min\{\e, \d/10\}$, pick $n$ large enough such that 
$$\max\{\angle_o(\c_n o, c_{o,\xi}(T)), \angle_{\c o}(\c_n o, c_{\c o,\xi}(T+s_0))\}\ll \rho.$$
Therefore by comparison theorem,
$$\max\{d_K(\dot c_{o,\c_n o}(T), \dot c_{o,\xi}(T)), d_K(\dot c_{\c o,\c_n o}(T+s_0), \dot c_{\c o,\xi}(T+s_0))\}< \rho.$$
Therefore, if $\rho$ is small enough, then
\begin{equation}\label{e:est2}
\begin{aligned}
\left|\int_{o}^{c_{o,\xi}(T)}(F-s)-\int_{o}^{c_{o,\c_n o}(T)}(F-s)\right|&<\e/5\\
\left|\int_{\c o}^{c_{\c o,\xi}(T+s_0)}(F-s)-\int_{\c o}^{c_{o,\c_n o}(T+s_0)}(F-s)\right|&<\e/5.
\end{aligned}
\end{equation}

By the choice of $T$ and $\rho$, $d_K(\dot c_{o,\c_n o}(T),\dot c_{\c o,\c_n o}(T+s_0))<\d/5$.
Recall that $[o,\c o]\in \GGG^L$. By a similar argument as in the proof of Lemma \ref{decay}, there exists $T_1>T$ such that for any $T_1<t<d(o,\c_no)-L$,
\[\frac{1}{t-T}\int_T^t\lambda(g^s \dot c_{o,\c_n o}(0))ds\ge \frac{99\eta}{100}.\]
Then following a similar argument as in the proof of Lemma \ref{Bowen1} and noting that $L$ is fixed, we have
\begin{equation}\label{e:est3}
\begin{aligned}
\left|\int_{c_{o,\c_n o}(T_1)}^{\c_n o}(F-s)-\int_{c_{\c o,\c_n o}(T_1+s_0)}^{\c_n o}(F-s)\right|<\e/5
\end{aligned}
\end{equation}
by enlarging $T$ if necessary. But we still need consider the time interval $[T,T_1]$. Here we emphasize that the choice of $T_1$ is universal for all orbit segments in $\GGG^L$, and hence independent of the particular orbit segment $[o,\c_n o]$. Thus we can enlarge $n$ enough, so that  \eqref{e:est1}, \eqref{e:est2} still hold for $T_1$ instead of $T$, and meanwhile on this new orbit segment $[o,\c_n o]$, \eqref{e:est3} also holds.
Combining \eqref{e:est1}, \eqref{e:est2} (both with $T_1$ instead of $T$) and \eqref{e:est3}, we know
\begin{equation*}
\begin{aligned}
C_{F-s,\xi}(\c o,o)=\lim_{n\to \infty }\int_{o}^{\c_n o}(F-s)-\int_{\c o}^{\c_n o}(F-s).
\end{aligned}
\end{equation*}

Now take a sequence of neighborhoods $\{U_i\}_{i=1}^\infty$ of $\xi$ in $\overline X$ such that $\{\xi\}=\cap_{i} U_i$ and $\mu_{F,o}(\partial U_i)=\mu_{F,\c o}(\partial U_i)=0, \forall i\in \NN$. We have showed that for any $\c_i o\in U_i$,
\begin{equation*}
\begin{aligned}
\frac{e^{\int_{\c o}^{\c_i o}(F-s)}}{e^{\int_{o}^{\c_i o}(F-s)}}\to e^{-C_{F-s,\xi}(\c o,o)}
\end{aligned}
\end{equation*}
as $i\to \infty$. From the above proof, this convergence is uniform for all $s$ in a compact interval $[\d_F, \d_F+c]$ for some $c>0$.
Then
\begin{equation*}
\begin{aligned}
&\frac{d\mu_{F,\c o}}{d\mu_{F,o}}(\xi)=\lim_{i\to \infty}\frac{\mu_{F, \c o}(U_i)}{\mu_{F, o}(U_i)}=\lim_{i\to \infty}\lim_{s_k\searrow \d_F}\frac{\mu_{F, \c o,s_k}(U_i)}{\mu_{F, o,s_k}(U_i)}\\
=&\lim_{s_k\searrow \d_F}\lim_{i\to \infty}\frac{\mu_{F, \c o,s_k}(U_i)}{\mu_{F, o,s_k}(U_i)}=e^{-C_{F-\d_F,\xi}(\c o, o)}.
\end{aligned}
\end{equation*}
The proof of the proposition is complete.
\end{proof}

\subsubsection{Construction of Patterson-Sullivan measures}
By Proposition \ref{full}, we have $\mu_{F,o}((\L_c)^c)=0$. For any $\xi\in \L_c$ and any $q\in X$, $C_{F-\d_F,\xi}(q,o)$ is well defined by Corollary \ref{cocexi2}. Then we define a family of measures $\{\mu_{F,q}\}_{q\in X}$ on $\L_c\subset \pX$ as follows:
\[d\mu_{F,q}(\xi):=e^{-C_{F-\d_F,\xi}(q,o)}d\mu_{F,o}, \quad \forall \xi\in \Lambda_c.\]
When $q=\c o, \c\in \C$, the above definition of $\mu_{F, \c o}$ coincides with the previous one, according to Proposition \ref{Busemann2}. Note that $\mu_{F, \c o}, \c\in \C$ are all probability measures on $\pX$. However, for general $q\in X$, $\xi\mapsto C_{F-\d_F,\xi}(q,o)$ may be not bounded. So it is not evident that $\mu_{F,q}$ is a finite or Radon measure on $\pX$.
\begin{lemma}\label{gamma}
$\{\mu_{F,q}\}_{q\in X}$ is $\Gamma$-equivariant, i.e.,
 $$\mu_{F,\gamma q}(A)=\mu_{F,q}(\c^{-1}A)$$
  for any $\c\in \Gamma$ and any Borel set $A\subset \pX$.
\end{lemma}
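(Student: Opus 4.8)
The plan is to reduce $\C$-equivariance of $\{\mu_{F,q}\}_{q\in X}$ to the transformation rules for these measures already established (Lemma \ref{equiv} and Proposition \ref{Busemann2}) together with the cocycle relations of Lemma \ref{cocycleprop}. Before computing, I would pin down the relevant null sets. By Proposition \ref{full} we have $\mu_{F,o}((\L_c)^c)=0$, and by Proposition \ref{Busemann2} the density $d\mu_{F,\c o}/d\mu_{F,o}=e^{-C_{F-\d_F,\cdot}(\c o,o)}$ is finite and strictly positive $\mu_{F,o}$-a.e.; hence $\mu_{F,o}$, $\mu_{F,\c o}$, and (for any $q\in X$) $\mu_{F,q}$ are mutually absolutely continuous. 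Applying this with $\c^{-1}$ in place of $\c$ and combining with Lemma \ref{equiv} gives $\mu_{F,o}(\c\,\L_c)=\mu_{F,\c^{-1}o}(\L_c)=1$, so $\mu_{F,o}$ gives full mass to $\L_c\cap\c\,\L_c$. (One can also verify directly that $\L_c(\C)$ is $\C$-invariant, since the $(\PPP,\GGG,\SSS)$-decomposition descends to $SM$ and translating a segment $[o,\c_n o]$ by a fixed $\c$ changes the lengths of its ``bad'' end pieces by a bounded amount, so $\c\,\L_c^L(\C)\subset\L_c^{L'}(\C)$; but the measure-theoretic statement is all that is needed.) Consequently it suffices to establish $\mu_{F,\c q}(A)=\mu_{F,q}(\c^{-1}A)$ for Borel sets $A\subset\L_c\cap\c\,\L_c$, and for such $A$ every Gibbs cocycle occurring below is well defined by Corollary \ref{cocexi2}.

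Now the computation. Starting from the defining formula $d\mu_{F,q}=e^{-C_{F-\d_F,\cdot}(q,o)}\,d\mu_{F,o}$ on $\L_c$, the change of variables $\eta=\c^{-1}\xi$, then Lemma \ref{equiv} in the form $\c_*\mu_{F,o}=\mu_{F,\c o}$, and then Proposition \ref{Busemann2}, yield
\begin{align*}
\mu_{F,q}(\c^{-1}A)
&=\int_{\c^{-1}A}e^{-C_{F-\d_F,\eta}(q,o)}\,d\mu_{F,o}(\eta)
=\int_{A}e^{-C_{F-\d_F,\c^{-1}\xi}(q,o)}\,d\mu_{F,\c o}(\xi)\\
&=\int_{A}e^{-C_{F-\d_F,\c^{-1}\xi}(q,o)-C_{F-\d_F,\xi}(\c o,o)}\,d\mu_{F,o}(\xi),
\end{align*}
whereas by definition $\mu_{F,\c q}(A)=\int_{A}e^{-C_{F-\d_F,\xi}(\c q,o)}\,d\mu_{F,o}(\xi)$. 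Hence the lemma follows once we check the pointwise identity
\[
C_{F-\d_F,\xi}(\c q,o)=C_{F-\d_F,\c^{-1}\xi}(q,o)+C_{F-\d_F,\xi}(\c o,o)\qquad\text{for $\mu_{F,o}$-a.e.\ }\xi .
\]

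This last identity is pure cocycle algebra. Applying the $\C$-equivariance Lemma \ref{cocycleprop}(1) with the isometry $\c$ and boundary point $\c^{-1}\xi$ gives $C_{F-\d_F,\c^{-1}\xi}(q,o)=C_{F-\d_F,\xi}(\c q,\c o)$; then the additivity Lemma \ref{cocycleprop}(2) gives $C_{F-\d_F,\xi}(\c q,\c o)+C_{F-\d_F,\xi}(\c o,o)=C_{F-\d_F,\xi}(\c q,o)$, which is exactly what is needed. (Lemma \ref{cocycleprop} applies to the potential $F-\d_F$ as well, since $C_{F-\d_F,\xi}(q,p)=C_{F,\xi}(q,p)+\d_F\,b_\xi(q,p)$ and Busemann functions are isometry-invariant.) I do not expect a genuine obstacle here; the only place needing care is the first paragraph --- tracking which reference measure each Radon--Nikodym density is taken against and ensuring the cocycles are defined on a $\mu_{F,o}$-full set --- after which the identity is a short formal verification.
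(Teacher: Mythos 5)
Your proof is correct and follows essentially the same route as the paper's: the paper runs the identical chain of equalities (definition of $\mu_{F,q}$, cocycle additivity from Lemma \ref{cocycleprop}, Proposition \ref{Busemann2}, and the change of variables via Lemma \ref{equiv}) starting from $\mu_{F,\c q}(A)$ rather than from $\mu_{F,q}(\c^{-1}A)$. Your additional bookkeeping of the full-measure set on which the cocycles are defined is a sound refinement but does not change the argument.
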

\begin{proof}
For any Borel set $A\subset \pX$, by Lemmas \ref{cocycleprop} and \ref{equiv},
\begin{equation*}
\begin{aligned}
&\mu_{F, \c q}(A)=\int_Ad\mu_{F, \c q}(\xi)=\int_A e^{-C_{F-\d_F,\xi}(\c q, o)}d\mu_{F,o}(\xi)\\
=&\int_A e^{-C_{F-\d_F,\xi}(\c q,\c o)-C_{F-\d_F,\xi}(\c o, o)}d\mu_{F,o}(\xi)\\
=&\int_Ae^{-C_{F-\d_F,\xi}(\c q,\c o)}d\mu_{F,\c o}(\xi)=\int_{\c^{-1}A} e^{-C_{F-\d_F,\eta}(q,o)}d\mu_{F, o}(\eta)\\
=&\int_{\c^{-1}A} d\mu_{F, q}(\eta)=\mu_{F, q}(\c^{-1}A).
\end{aligned}
\end{equation*}
The lemma is proved.
\end{proof}

In summary, we have proved the following:
\begin{proposition}\label{ps_a}
$\{\mu_{F,q}\}_{q\in X}$ is a $\d_F$-dimensional Busemann density, that is
\begin{enumerate}
  \item $\mu_{F,\gamma q}(\gamma A)=\mu_{F,q}(A)$ for any $\c\in \Gamma$ and any Borel set $A\subset \pX$;
  \item $\frac{d\mu_{F,q}}{d\mu_{F,p}}(\xi)=e^{-C_{F-\d_F,\xi}(q,p)}$ for almost every $\xi\in \pX$.
\end{enumerate}
\end{proposition}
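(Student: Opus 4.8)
Since everything required has already been assembled in this subsection, the plan is simply to record how Proposition \ref{ps_a} follows from the earlier statements rather than to argue from scratch.

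\emph{Part (1)} is a restatement of Lemma \ref{gamma}. That lemma gives $\mu_{F,\gamma q}(A)=\mu_{F,q}(\gamma^{-1}A)$ for every $\gamma\in\Gamma$ and every Borel set $A\subset\pX$; replacing $A$ by $\gamma A$ yields $\mu_{F,\gamma q}(\gamma A)=\mu_{F,q}(\gamma^{-1}\gamma A)=\mu_{F,q}(A)$, which is exactly (1).

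\emph{Part (2)}: first, by Proposition \ref{full} we have $\mu_{F,o}((\L_c)^c)=0$, and for $\xi\in\L_c$ and any $q\in X$ the Gibbs cocycle $C_{F-\d_F,\xi}(q,o)$ is well defined by Corollary \ref{cocexi2}. Hence the defining relation
$$d\mu_{F,q}(\xi)=e^{-C_{F-\d_F,\xi}(q,o)}\,d\mu_{F,o}(\xi)$$
exhibits $\mu_{F,q}$ as absolutely continuous with respect to $\mu_{F,o}$ on $\L_c$ with an everywhere positive, finite density; consequently all the measures in the family $\{\mu_{F,q}\}_{q\in X}$ are mutually absolutely continuous, and the almost-everywhere statement in (2) is well posed (here ``almost every'' means $\mu_{F,o}$-a.e., equivalently $\mu_{F,p}$- or $\mu_{F,q}$-a.e.). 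Dividing the densities of $\mu_{F,q}$ and $\mu_{F,p}$ relative to $\mu_{F,o}$ gives, for $\mu_{F,o}$-a.e.\ $\xi$,
$$\frac{d\mu_{F,q}}{d\mu_{F,p}}(\xi)=\exp\bigl(-C_{F-\d_F,\xi}(q,o)+C_{F-\d_F,\xi}(p,o)\bigr).$$
Applying the cocycle identity Lemma \ref{cocycleprop}(2) to the triple $(q,p,o)$ gives $C_{F-\d_F,\xi}(q,p)+C_{F-\d_F,\xi}(p,o)=C_{F-\d_F,\xi}(q,o)$, hence $-C_{F-\d_F,\xi}(q,o)+C_{F-\d_F,\xi}(p,o)=-C_{F-\d_F,\xi}(q,p)$, and (2) follows. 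When $q=\gamma o$ with $\gamma\in\Gamma$ this is consistent with Proposition \ref{Busemann2}, which is exactly what identifies the present definition of $\mu_{F,\gamma o}$ with the one built directly from the Poincar\'e series.

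The genuine work was already done in Propositions \ref{full} and \ref{Busemann2} and Lemma \ref{gamma}, so no real obstacle remains; the only point requiring a moment's care is ensuring that the a.e.\ identity in (2) is meaningful, i.e.\ that $\mu_{F,o}$, $\mu_{F,p}$ and $\mu_{F,q}$ are mutually absolutely continuous, which is immediate from the positivity and finiteness of the defining density on the full $\mu_{F,o}$-measure set $\L_c$.
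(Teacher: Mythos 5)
Your proposal is correct and matches the paper exactly: the paper offers no separate argument for Proposition \ref{ps_a}, presenting it as a summary of Lemma \ref{gamma} (part (1)) and of the definition of $\mu_{F,q}$ via the Gibbs cocycle together with Proposition \ref{full}, Corollary \ref{cocexi2}, the cocycle identity, and Proposition \ref{Busemann2} (part (2)). Your added remark that mutual absolute continuity follows from the pointwise positivity and finiteness of the density on the full-measure set $\L_c$ is a correct and harmless elaboration.
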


\subsection{Patterson-Sullivan measures under Condition (B)}
Under Condition (B), we can still follow the steps in Subsection 3.2 to construct a family of Patterson-Sullivan measures. In fact, by Lemma \ref{cocyle2} the Buseman function is well defined for every $\xi\in \pX$. In this case, we can refine Proposition \ref{Busemann2} as follows. So the corresponding version of Proposition \ref{full} is not needed.

\begin{proposition}\label{Busemann3}
For every $\c\in \C$, we have
$$\frac{d\mu_{F,\c o}}{d\mu_{F,o}}(\xi)=e^{-C_{F-\d_F, \xi}(\c o,o)}$$
for every $\xi\in \pX$.
\end{proposition}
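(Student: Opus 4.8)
The plan is to run the argument of Proposition \ref{Busemann2} almost verbatim, replacing its use of the core limit set $\L_c$ (and of Proposition \ref{full}) by Lemma \ref{cocyle2}: under Condition (B) the latter provides bounded distortion of $F$, hence a well-defined Gibbs cocycle, at \emph{every} point of $\pX$, so there is no ``bad'' set of boundary points to be discarded. Since $\C$ acts cocompactly, its limit set is all of $\pX$; so fix $\c\in\C$, $\xi\in\pX$ and a sequence $\c_j\in\C$ with $\c_jo\to\xi$, so that $\dot c_{o,\c_jo}(t)\to\dot c_{o,\xi}(t)$ and $\dot c_{\c o,\c_jo}(t)\to\dot c_{\c o,\xi}(t)$ uniformly on compact time intervals. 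The central claim I would establish is that for some $c>0$,
\[
\lim_{j\to\infty}\Big(\int_{o}^{\c_jo}(F-s)-\int_{\c o}^{\c_jo}(F-s)\Big)=C_{F-s,\xi}(\c o,o),
\]
with the convergence uniform for $s\in[\d_F,\d_F+c]$ and over all such sequences $(\c_j)$.

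To prove the claim I would rerun the proof of Lemma \ref{cocyle2}, but truncating the orbit rays from $o$ and $\c o$ at a large finite time $\tau$ instead of at $+\infty$. Fix $\rho$ small enough that $F\equiv c$ on the $\rho$-neighborhood of $\text{Sing}$ and let $\l>0,\,T>0$ be as in Lemma \ref{nhdsing}. If $\xi\in\L_r^\l$, decompose $[0,+\infty)$ into the alternating ``regular'' blocks, on which $c_{o,\xi}$ meets some $\c'K_\l$ and one has exponential contraction along the stable leaf by Lemma \ref{stablegeo}, and ``singular'' blocks, on which $\l^s\le\l$ so that off boundary layers of length $\le 2T$ one has $F\equiv c$ and the two integrals cancel, the boundary layers contributing $\le 2T\cdot K\, d_K(\cdot,\cdot)^\a$ with the $d_K$ values decaying geometrically along successive blocks. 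Since $\dot c_{o,\c_jo}\to\dot c_{o,\xi}$ uniformly on $[0,\tau]$, for $j$ large the segment $[o,\c_jo]$ inherits this block structure up to time $\tau$ (shrink $\l$ slightly and use uniform continuity of $\l^s$ and continuity of $v\mapsto H^s(v)$, exactly as in Lemmas \ref{limitzero1} and \ref{cocyle2}); hence $\int_{o}^{c_{o,\c_jo}(\tau)}(F-s)\to\int_{o}^{c_{o,\xi}(\tau)}(F-s)$ as $j\to\infty$, and similarly from $\c o$, while the tails past time $\tau$ are bounded uniformly in $j$ by the estimates of Lemma \ref{cocyle2} and tend to $0$ as $\tau\to\infty$. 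Letting $j\to\infty$ and then $\tau\to\infty$ gives the claim in this case. If $\xi\notin\L_r^\l$ then, as in Lemma \ref{cocyle2}, $F$ is eventually constant along $c_{o,\xi}$ (and along $c_{\c o,\xi}$, or else one repeats the argument with $\c o$ in place of $o$), so $C_{F-s,\xi}(\c o,o)$ reduces to a difference of integrals over bounded segments and the limit is immediate. In every case the bounds are uniform in $s\in[\d_F,\d_F+c]$ because the contraction exponents $\l,\a$ do not depend on $s$ and $F-s$ stays uniformly bounded there.

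Granted the claim, I would finish exactly as in Proposition \ref{Busemann2}. Choose neighborhoods $U_i\downarrow\{\xi\}$ in $\overline X$ with $\mu_{F,o}(\partial U_i)=\mu_{F,\c o}(\partial U_i)=0$; by the weak$^\star$ construction \eqref{ps1} and Lemma \ref{equiv} one has $\mu_{F,q}(U_i)=\lim_{s_k\searrow\d_F}\mu_{F,q,s_k}(U_i)$ for $q\in\{o,\c o\}$, and
\[
\frac{\mu_{F,\c o,s_k}(U_i)}{\mu_{F,o,s_k}(U_i)}=\frac{\sum_{\c_j o\in U_i}e^{\int_{\c o}^{\c_j o}(F-s_k)}}{\sum_{\c_j o\in U_i}e^{\int_{o}^{\c_j o}(F-s_k)}},
\]
the auxiliary weights $h(d(o,\c_jo))$ of the divergent-type modification cancelling in the ratio. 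By the uniform version of the claim each summand ratio tends to $e^{-C_{F-s_k,\xi}(\c o,o)}$ as $i\to\infty$, uniformly in $s_k$, so $\mu_{F,\c o}(U_i)/\mu_{F,o}(U_i)\to e^{-C_{F-\d_F,\xi}(\c o,o)}$; since this holds for \emph{every} $\xi\in\pX$, we obtain $\frac{d\mu_{F,\c o}}{d\mu_{F,o}}(\xi)=e^{-C_{F-\d_F,\xi}(\c o,o)}$ for all $\xi$, which is the assertion, and the analogue of Proposition \ref{full} is not needed.

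The hardest part will be the uniformity over the approximating sequence in the middle step: one must guarantee that the regular/singular block decomposition of $[0,+\infty)$ attached to $c_{o,\xi}$ transfers to $[o,\c_jo]$ on an initial segment whose length grows with $j$, so that the error coming from times $>\tau$ is controlled \emph{uniformly in $j$}. Because meeting $\C K_\l$ is an open condition and the exponential stable contraction is stable under the $C^0$-perturbation $\dot c_{o,\c_jo}\to\dot c_{o,\xi}$, this can be arranged by slightly shrinking $\l$ and invoking continuity of $\l^s$ and of $v\mapsto H^s(v)$, just as in the proofs of Lemmas \ref{limitzero1} and \ref{cocyle2}; the uniformity in $s$ near $\d_F$ is comparatively routine.
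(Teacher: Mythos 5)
Your treatment of the case $\xi\in\L_r^\l$ (and the symmetric case where $c_{\c o,\xi}$ meets $\C K_\l$ infinitely often) matches the paper's argument, and your concluding weak$^\star$ limit computation is the same as in Proposition \ref{Busemann2}. But there is a genuine gap in the remaining case, which is precisely the one the paper singles out as the main difficulty: when both $c_{o,\xi}([T_2,\infty))$ and $c_{\c o,\xi}([T_2,\infty))$ eventually avoid $\C K_\l$, you assert that ``$C_{F-s,\xi}(\c o,o)$ reduces to a difference of integrals over bounded segments and the limit is immediate.'' That settles the \emph{existence} of the cocycle (already Lemma \ref{cocyle2}), but not the statement you actually need, namely
$$C_{F-s,\xi}(\c o,o)=\lim_{n\to\infty}\Big(\textstyle\int_{o}^{\c_n o}(F-s)-\int_{\c o}^{\c_n o}(F-s)\Big).$$
The summands here are integrals over the segments $[o,\c_no]$ and $[\c o,\c_no]$, whose lengths tend to infinity; uniform convergence on compacta controls only an initial piece $[0,\tau]$, and the difference of the two tails must be estimated by pairing the two segments. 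All the distortion machinery (Lemma \ref{stablegeo}, Lemma \ref{Bowen}, the analogue of \eqref{e:est3}) requires the two geodesics to be within the scale $\d$ of each other in the stable metric at time $T_1$. In this case that may fail: it is possible that $d^s(\dot c_{o,\xi}(t),\dot c_{\c o,\xi}(t+s_0))\to c>0$, so \eqref{smalldis} is unavailable and no choice of $T_1$ makes the two segments $\d$-close. Nothing in your outline addresses this.

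The paper's resolution is a separate construction: take the shortest stable curve $\b$ from $v=\dot c_{o,\xi}(0)$ to $\dot c_{\c o,\xi}(s_0)$, show that each intermediate geodesic ray $\tilde\b(s,[0,+\infty))$ can meet $\C K_\l$ only finitely often (otherwise the stable length would contract, contradicting $c>0$), subdivide $\b$ into $N=\lceil d^s(v,\tilde\b(1,0))/(\d/100)\rceil$ pieces of stable length $<\d/100$, run the two-point argument on each adjacent pair (for which the required closeness holds by construction), and then chain the resulting identities using the cocycle property of both sides. You would need to add this subdivision-and-chaining step; without it the proof does not go through for all $\xi\in\pX$, which is the whole point of Proposition \ref{Busemann3} as opposed to Proposition \ref{Busemann2}. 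Relatedly, the uniformity issue you flag as ``the hardest part'' (transferring the block decomposition to $[o,\c_jo]$) is comparatively routine; the real obstruction is the one above.
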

\begin{proof}
Pick $\rho$ small enough such that $F\equiv c$ on a $\rho$-neighborhood of $\text{Sing}$. By Lemma \ref{nhdsing}, there exist $\l>0$ and $T>0$ such that if $\l^s(g^tv)\le \l$ for all $t\in [-T,T]$, then $d_K(v, \text{Sing})<\rho$. 

Assume first that $\xi\in \L_r^\l$. Then we can follow the lines of the proof of Proposition \ref{Busemann2}. Indeed, by Lemma \ref{limitzero1}, there exists $T_1>0$ such that 
\begin{equation}\label{smalldis}
\begin{aligned}
d^s(\dot c_{o,\xi}(T_1), \dot c_{\c o,\xi}(T_1+s_0))<\d/10
\end{aligned}
\end{equation}
where $s_0=b_\xi(\c o, o).$ We have to prove \eqref{e:est3}, i.e.,
\begin{equation}\label{tri}
\begin{aligned}
\left|\int_{c_{o,\c_n o}(T_1)}^{\c_n o}(F-s)-\int_{c_{\c o,\c_n o}(T_1+s_0)}^{\c_n o}(F-s)\right|<\e/5
\end{aligned}
\end{equation}
provided that $d(c_{o,\c_n o}(T_1),c_{\c o,\c_n o}(T_1+s_0))<\d/10$. This can be achieved by following the ideas of the proof of Lemmas \ref{Bowen} and \ref{cocyle2}. More precisely, if any one of the three geodesic rays, $c_{o,\c_n o}([T_1,+\infty))$, $[c_{\c o,\c_n o}(T_1+s_0), c_{o,\c_n o}(+\infty))$ and $[c_{\c o,\c_n o}(T_1+s_0), \c_n o]$, intersects $\C K_\l$, we have similar esitmates as in \eqref{decresingpart}. For the remaining time interval, we have estimates as in \eqref{constantpart} and \eqref{total}. In this way, we get a similar estimates as in \eqref{total}
\begin{equation*}
\begin{aligned}
&\left|\int_{c_{o,\c_n o}(T_1)}^{\c_n o}(F-s)-\int_{c_{\c o,\c_n o}(T_1+s_0)}^{\c_n o}(F-s)\right|\\
\le &2K(d^s(g^{T_1}w_1, g^{T_1}w_2))^\a\int_{T_1}^{\infty}e^{-\a\l t/2}dt\\
+&2T\cdot K(d^s(g^{T_1}w_1, g^{T_1}w_2))^\a\sum_{i=1}^\infty e^{-\frac{\l}{3}\d'\a i}
\end{aligned}
\end{equation*}
where $w_1=\dot c_{o,\c_n o}(0)$, and $w_2\in W^s(w_1)$ with $c_{\c o,\c_n o}(T_1+s_0)\in c_{w_2}$.
By enlarging $T_1$, we get the above \eqref{tri} and finish the proof in the case $\xi\in \L_r^\l$.

If $\xi\notin \L_r^\l$ but there exists $\{\c_n\}_{n=1}^\infty\subset \C$ such that $c_{\c o,\xi}([0,+\infty))\cap \c_n K_\l\neq \emptyset$, the above argument works as well.

The remaining case is when there exists $T_2>0$ such that 
$$c_{o,\xi}([T_2, +\infty))\cap \C K_\l=c_{\c o,\xi}([T_2, +\infty))\cap \C K_\l=\emptyset.$$
The main difference and main difficulty is that we may not have \eqref{smalldis}. In other words, it is possible that $\lim_{t\to +\infty}d^s(\dot c_{o,\xi}(t), \dot c_{\c o,\xi}(t+s_0))=c$ for some $c>0$. Then $d^s(\dot c_{o,\c_n o}(T_1),\dot c_{\c o,\c_n o}(T_1+s_0))$ is almost $c$ by the choice of $T_1$.

To overcome this difficulty, take a shortest curve $\beta: [0,1]\to W^s(v)$ with $\beta(0)=v$ and $\b(1)=v_1$ where $v_1=\dot c_{\c o,\xi}(s_0)$. Then there exists $\tilde{\b}$ satisfying $\tilde{\b}(s,t+t_s)=\b(s,t)$ where $t_s=b_{\xi}(\pi \tilde{\b}(s, 0),o)$. We claim that each geodesic ray $\tilde{\b}(s,[0,+\infty))$ cannot intersect $\C K_\l$ infinitely many times. Otherwise, by a similar argument as in the proof of Lemma \ref{limitzero1}, we have a contradiction to $\lim_{t\to +\infty}d^s(\dot c_{o,\xi}(t), \dot c_{\c o,\xi}(t+s_0))=c$.

We divide the curve $\tilde{\b}(s,0)$ into $N=\lceil\frac{d^s(v,\tilde\b(1,0))}{\d/100}\rceil$ small pieces with length less than $\d/100$. Let $0=s_0<s_1<s_2<\cdots<s_N$ be the boundary points of these pieces. Then for each piece, \eqref{smalldis} holds. Thus we can now repeat the above argument to show that if $\c_no\to \xi$, then
\begin{equation*}
\begin{aligned}
C_{F-s,\xi}(\pi\tilde{\b}(s_{i+1}, 0),\pi\tilde{\b}(s_{i}, 0))=\lim_{n\to \infty }\int_{\pi\tilde{\b}(s_{i}, 0)}^{\c_n o}(F-s)-\int_{\pi\tilde{\b}(s_{i+1}, 0)}^{\c_n o}(F-s).
\end{aligned}
\end{equation*}
Since each side of the above equality has cocycle property, we then have 
\begin{equation*}
\begin{aligned}
C_{F-s,\xi}(\c o,o)=\lim_{n\to \infty }\int_{o}^{\c_n o}(F-s)-\int_{\c o}^{\c_n o}(F-s).
\end{aligned}
\end{equation*}
We are done with the proof.
\end{proof}

\begin{proof}[Proof of Theorem \ref{ps}]
Under Condition (A), Theorem \ref{ps} is exactly Proposition  \ref{ps_a}. Under Condition (B), using Proposition \ref{Busemann3}, we can prove Theorem \ref{ps} by following all steps in Subsection 3.2 except Proposition \ref{full}.  
\end{proof}

\subsection{Shadow lemma}
We obtain some properties of Patterson-Sullivan measures in this subsection. Particularly, we prove a version of (half) shadow lemma.

The following property of rank one geodesics plays significant roles in nonpositive curvature.
\begin{lemma}(\cite[Lemma 2.1]{Kn}, \cite[Lemma 3.1]{BB})\label{key}
Let $c$ be a rank one geodesic on $X$. For each $\epsilon > 0$ there are neighborhoods $U$ of $c(-\infty)$ and $V$ of $c(+\infty)$ such that for all $\xi\in U$ and $\eta\in V$ there exists a rank one geodesic $h$ connecting $\xi$ and $\eta$ such that $\dot h(0)\in B(\dot c(0),\epsilon)$.

Moreover, if $c$ is an axis of $\gamma\in \Gamma$, then there exists $n_0$ such that
$\gamma^n(\overline X\setminus U)\subset V$ and $\gamma^{-n}(\overline X\setminus V)\subset U$
for all $n>n_0$.

In particular, the endpoints of a rank one axis can be connected by a rank one geodesic to any other point in $\pX.$
\end{lemma}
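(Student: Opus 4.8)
The plan is to prove Lemma \ref{key} by a compactness argument on the compactum $\overline X = X\cup\pX$, combining three ingredients: the openness in $SX$ of the set $\mathrm{Reg}$ of rank one vectors; the flat strip theorem; and the ``contracting'' property of rank one geodesics (Knieper, Ballmann--Brin), namely that for a rank one geodesic $c$ and any $R>0$ the sublevel set $\{x\in X:\ b_{c(-\infty)}(x,c(0))+b_{c(+\infty)}(x,c(0))\le R\}$ lies in a bounded tubular neighbourhood of $c(\RR)$. A preliminary observation, used repeatedly, is that since $\mathrm{rank}(c)=1$ the geodesic $c$ is the \emph{unique} geodesic joining $c(-\infty)$ to $c(+\infty)$ up to reparametrization: any other such geodesic is bi-asymptotic to $c$, hence by the flat strip theorem bounds a flat strip of positive width with $c$, producing a parallel Jacobi field orthogonal to $\dot c$ and contradicting rank one.

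\emph{Step 1 (existence and nearness).} Suppose the first assertion fails: there are $\xi_n\to c(-\infty)$ and $\eta_n\to c(+\infty)$ so that $\xi_n,\eta_n$ are either joined by no geodesic at all, or joined only by geodesics whose tangent line misses $B(\dot c(0),\epsilon)$. In the first case pick interior points $a_k\to\xi_n$ and $b_k\to\eta_n$; if the segments $[a_k,b_k]$ stayed in a bounded region they would subconverge to a geodesic joining $\xi_n$ and $\eta_n$, so $d(c(0),[a_k,b_k])\to\infty$; but for $n$ large, $\xi_n,\eta_n$ are so close to $c(\mp\infty)$ that the contracting property forces $[a_k,b_k]$ to meet a fixed tube around $c$, a contradiction. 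In the second case, orient a joining geodesic $h_n$ so that $h_n(\mp\infty)=\xi_n,\eta_n$ and $h_n(0)$ is the nearest point of $h_n(\RR)$ to $c(0)$; if $d(c(0),h_n(0))\to\infty$ the contracting property is again violated, so $d(c(0),h_n(0))$ stays bounded, $\dot h_n(0)$ subconverges in $SX$, and the limit geodesic joins $c(-\infty)$ to $c(+\infty)$ with nearest point to $c(0)$ at parameter $0$; by uniqueness it equals $c$, whence $\dot h_n(0)\to\dot c(0)$ — contradicting that $\dot h_n(\RR)$ misses $B(\dot c(0),\epsilon)$. This produces neighbourhoods $U\ni c(-\infty)$ and $V\ni c(+\infty)$ as required, and since $\dot c(0)\in\mathrm{Reg}$ and $\mathrm{Reg}$ is open, after shrinking $U,V$ every such $h$ is rank one.

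\emph{Step 2 (north--south dynamics, then arbitrary points).} If $c$ is an axis of $\gamma$, so $\gamma c(t)=c(t+|\gamma|)$ with $|\gamma|>0$, then for $x\in X$ we have $d(\gamma^n x,c(n|\gamma|))=d(x,c(0))$, hence $\gamma^n x\to c(+\infty)$ in $\overline X$; the rank one property upgrades this to $\gamma^{\pm n}\to c(\pm\infty)$ uniformly on compact subsets of $\overline X\setminus\{c(\mp\infty)\}$ (cf.\ \cite{Bal,BB,Kn}). Since $c(-\infty)\in U$, the set $\overline X\setminus U$ is a compact subset of $\overline X\setminus\{c(-\infty)\}$, so $\gamma^n(\overline X\setminus U)\subset V$ for all large $n$, and symmetrically $\gamma^{-n}(\overline X\setminus V)\subset U$ for all large $n$, which is the ``Moreover'' statement. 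For the last assertion, let $\zeta\in\pX$: if $\zeta\in\{c(-\infty),c(+\infty)\}$ then $c$ itself is a rank one geodesic joining the two endpoints; otherwise $\zeta\ne c(+\infty)$, so $\gamma^{-m}\zeta\in U$ for some $m$, Step 1 gives a rank one geodesic $h$ joining $\gamma^{-m}\zeta\in U$ and $c(+\infty)\in V$, and $\gamma^m h$ is then a rank one geodesic (rank is $\Gamma$-invariant) joining $\zeta=\gamma^m(\gamma^{-m}\zeta)$ to $\gamma^m c(+\infty)=c(+\infty)$.

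\emph{Main obstacle.} The crux is Step 1: one must rule out that the joining geodesics $h_n$, or the approximating segments $[a_k,b_k]$ in the non-joined case, drift off to infinity. Soft compactness does not suffice here, because the endpoints only \emph{converge} to those of $c$ rather than equal them; this is precisely where the quantitative tube/contracting property of rank one geodesics is needed, and it is also the only place where rank one — as opposed to mere axiality of $\gamma$ — is essential.
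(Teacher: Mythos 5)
The paper does not prove this lemma; it quotes it verbatim from Knieper \cite[Lemma 2.1]{Kn} and Ballmann \cite[Lemma 3.1]{BB}, so there is no in-paper argument to compare with and your proposal must stand on its own. Its architecture (contradiction plus compactness, the flat strip theorem, uniqueness of the rank one geodesic between its endpoints, reduction of the last assertion to north--south dynamics) is the right one, but the crux of Step 1 is deferred to a ``contracting property'' that does not do the work you ask of it. First, the assertion that $\{x:\ b_{c(-\infty)}(x,c(0))+b_{c(+\infty)}(x,c(0))\le R\}$ lies in a bounded tube around $c(\RR)$ is not a citable black box independent of the present lemma: for a general (non-periodic) rank one geodesic the natural proof, translating a putative unbounded bulge back to a fundamental domain, only yields a flat half-plane along a \emph{limit} of translates of $c$, which may be singular, so no contradiction with $\mathrm{rank}(c)=1$ arises; in the uniform form you state it the property is at best delicate and may fail when $c$ shadows larger and larger flats far out along its orbit. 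Second, and decisively, even granting the tube property it does not ``force $[a_k,b_k]$ to meet a fixed tube around $c$'': the function $g=b_{c(-\infty)}+b_{c(+\infty)}$ is convex, so along a segment it is bounded above by its endpoint values --- but $g(a_k),g(b_k)\to+\infty$ whenever $\xi_n\neq c(-\infty)$ and $\eta_n\neq c(+\infty)$, so convexity produces no point of the segment in $\{g\le R\}$. The same issue undercuts your second case: nothing guarantees that the geodesics $h_n$ enter the sublevel set at all, so ``$d(c(0),h_n(0))\to\infty$ violates the contracting property'' is unjustified.

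What the quoted proofs actually do at this point is precisely the step you have skipped: assuming segments $\tau_m=[x_m,y_m]$ with $x_m\to c(-\infty)$, $y_m\to c(+\infty)$ all avoid $B(c(0),\epsilon)$, one takes the feet $p_m\in\tau_m$ of $c(0)$, uses angle comparison to show the triangles $(c(0),x_m,y_m)$ flatten, and extracts in the limit a flat strip (or half-plane) bounded by $c$ \emph{itself} --- not by a translate --- contradicting rank one. A correct write-up must contain this limiting flat-strip construction; it is the only place the rank one hypothesis enters, as you yourself observe. A smaller instance of the same problem occurs in Step 2: the ``upgrade'' to uniform north--south dynamics on $\overline X\setminus\{c(\mp\infty)\}$ is again only a citation, and the naive angle argument behind it fails already in the Euclidean plane; the standard derivation of the dynamics goes through part 1 of the lemma. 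Since your deduction of the final assertion from the dynamics is correct, Step 2 is repairable once part 1 is in place; Step 1 is where the real content is missing.
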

\begin{lemma}\label{supp}
For any $p\in X$, $\supp \mu_{F,p}= \pX$.
\end{lemma}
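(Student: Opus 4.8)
The plan is to prove that the closed set $C:=\supp\mu_{F,p}$ does not depend on $p$, is nonempty and $\C$-invariant, and then to force $C=\pX$ using the North--South dynamics supplied by Lemma \ref{key} together with the density of the fixed points of rank one axial isometries.

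\emph{Reduction to $p=o$, and $\C$-invariance.} By Theorem \ref{ps}(2) we have $d\mu_{F,p}(\xi)=e^{-C_{F-\d_F,\xi}(p,o)}\,d\mu_{F,o}(\xi)$ with a density that is positive and finite for $\mu_{F,o}$-almost every $\xi$; hence $\mu_{F,p}$ and $\mu_{F,o}$ are mutually absolutely continuous, so $\supp\mu_{F,p}=\supp\mu_{F,o}=:C$, and it suffices to treat $\mu_{F,o}$. Since $\mu_{F,o}$ is a probability measure, $C\ne\emptyset$, and $C$ is closed by definition. For $\c\in\C$, Theorem \ref{ps}(1) gives $\c_*\mu_{F,o}=\mu_{F,\c o}$, which by Theorem \ref{ps}(2) is again equivalent to $\mu_{F,o}$; therefore $\c C=\supp(\c_*\mu_{F,o})=C$ for every $\c\in\C$. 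Finally $C$ is infinite: a finite closed $\C$-invariant subset of $\pX$ would be a finite $\C$-orbit, so a finite-index subgroup of $\C$ would fix a point of $\pX$; this is impossible since such a subgroup still acts cocompactly on the rank one space $X$, hence contains two rank one axial elements with all four endpoints distinct and so fixes no point at infinity. (Equivalently, $\mu_{F,o}$ cannot be a Dirac mass.)

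\emph{Capturing axial fixed points.} Fix any rank one axial $\c\in\C$ with axis $c$, which exists because $M$ is rank one, and write $\c^{\pm}:=c(\pm\infty)$. Choose $\eta\in C$ with $\eta\ne\c^-$, which is possible since $C$ is infinite. By Lemma \ref{key} there are neighborhoods $U\ni\c^-$ and $V\ni\c^+$ with $\eta\notin U$ and $\c^n(\overline X\setminus U)\subset V$ for all $n>n_0$; since $V$ may be chosen arbitrarily small this gives $\c^n\eta\to\c^+$. As $C$ is closed and $\C$-invariant, $\c^n\eta\in C$, whence $\c^+\in C$. This argument applies to every rank one axial element of $\C$, so $C$ contains the attracting fixed point of each of them.

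\emph{Conclusion.} I will then invoke the standard fact that, for a closed rank one manifold, the attracting fixed points of the rank one axial elements of $\C$ form a dense subset of $\pX$ (this underlies Knieper's construction of the measure of maximal entropy; see \cite{Kn}, \cite{Bal}); since $C$ is closed and contains all of these points, $C=\pX$, which is the assertion. The main (and only nonroutine) ingredient is this density statement — equivalently, the minimality of the $\C$-action on $\pX$ in the cocompact rank one setting, which by itself gives $\overline{\C\xi_0}=\pX$ for any $\xi_0\in C$ — while the remaining steps are elementary manipulations with the equivariance and mutual absolute continuity already established for the family $\{\mu_{F,q}\}_{q\in X}$.
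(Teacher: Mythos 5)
Your argument is correct and rests on exactly the same ingredients as the paper's proof --- Lemma \ref{key} (the North--South dynamics of rank one axial isometries), the density of rank one axes in $SM$, and the $\Gamma$-equivariance together with the mutual equivalence of the measures $\mu_{F,q}$ --- merely packaged dually: the paper assumes an open null set and shows its $\Gamma$-translates cover every point of $\pX$, forcing the support to be empty, while you show directly that the closed, $\Gamma$-invariant, non-singleton support must contain the dense set of attracting fixed points. The one extra step your formulation needs, ruling out a finite support, is adequately handled by observing that $\Gamma$ contains rank one axial elements with disjoint fixed-point pairs, so this is essentially the paper's proof in contrapositive form.
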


\begin{proof}
Suppose $\supp(\mu_{F,p})\neq \pX$, then there exists $\xi \in \pX$ which is not in $\supp(\mu_{F, p})$. So we can find an open neighborhood $U$ of $\xi$ in $\pX$ such that $\mu_{F, p}(U)=0$.

Take any $\eta\in \pX$, and then $\eta'\in \pX$ with $\eta'\neq\eta$. Choose a neighborhood $V$ of $\eta'$ in $\pX$ which does not contain $\eta$. Since rank one axes are dense $SM$ (cf. \cite{Bal0}), there exists $\c\in \Gamma$ such that $\c(\pX\setminus U)\subset V$ by Lemma \ref{key}. Thus $\eta\in \c U$.
By the $\Gamma$-equivariance, $\mu_{F, \c p}(\c U)=\mu_{F, p}(U)=0$. Since $\mu_{F, p}$ is equivalent to $\mu_{F, \c p}$, we have $\mu_{F, p}(\c U)=0$. Since $\eta$ is arbitrary, $\supp(\mu_{F, p}) = \emptyset$, which is a contradiction to the nontriviality of $\mu_{F,p}$.
\end{proof}

For each $\sigma \in \overline X$, define the projection map
$$pr_{\sigma}: X \rightarrow \pX, ~~q \mapsto c_{\sigma,q}(+\infty).$$
\begin{proposition}\label{alph}
For any small $r>0$, $k\in \NN, \l>0$ and $N\in \NN$, there exists $\rho(r, k, \lambda, N)>1$ such that for any $p\in \F, x\in X$, $\xi=c_{p,x}(-\infty)$ with $c_{p,x}(0)=p\in \F, v:=\dot c_{p,x}(0)\in \L_{k,\l, N}$ and $d(p,x)\gg N$, we have
\begin{equation}\label{e:alph}
\mu_{F,p}(pr_{\xi}(B(x,r))\cap G_\xi(k, \l, N))\le \rho e^{\int_p^x(F-\d_F)+\frac{8k-2}{k(k-1)}d(p,x)\|F\|}
\end{equation}
where $G_\xi(k, \l, N):=\{\eta\in \pX: c_{\xi,\eta}(0)\in \F, \dot{c}_{\xi,\eta}(0)\in \L_{k,\l, N}\}$.
\end{proposition}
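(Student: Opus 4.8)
The plan is to realize $\mu_{F,p}$ as a weak$^*$ limit of the normalized Poincar\'e measures $\mu_{F,p,s}:=\frac{1}{P_F(s,o)}\sum_{\c\in\C}e^{\int_p^{\c o}(F-s)}\delta_{\c o}$ as $s\searrow\d_F$, and to estimate the resulting orbit sum over those $\c$ whose orbit point lies in a thin neighborhood of $pr_\xi(B(x,r))\cap G_\xi(k,\lambda,N)$. First I would fix an open neighborhood $U$ in $\overline X$ of $A:=pr_\xi(B(x,r))\cap G_\xi(k,\lambda,N)$ so thin --- of angular size at most $c\,r/d(p,x)$ about $v$ --- that for every $\c\in\C$ with $\c o\in U$ the geodesic $[p,\c o]$ stays within $2r$ of $c_v=c_{p,x}$ on $[0,d(p,x)]$, passes within $2r$ of $x$, satisfies $d(p,\c o)=d(p,x)+d(x,\c o)\pm 4r$, and has its forward continuation shadowing a geodesic that witnesses membership of the nearby boundary point in $G_\xi(k,\lambda,N)$ (so that the outer arc is almost uniformly regular). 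Since $P_F(\d_F,o)=\infty$, the mass of $\mu_{F,p,s}$ escapes to $\pX$, so only $\c$ with $d(o,\c o)$ large matter in the limit; arguing as in the proof of Proposition \ref{full}, together with the $\C$-equivariance of Proposition \ref{ps_a}, one gets
$$\mu_{F,p}\big(pr_\xi(B(x,r))\cap G_\xi(k,\lambda,N)\big)\ \le\ \liminf_{s_k\searrow\d_F}\frac{1}{P_F(s_k,o)}\sum_{\c o\in U}e^{\int_p^{\c o}(F-s_k)}.$$

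The main step is to split, for each such $\c$, the integral at the point $m(\c):=[p,\c o](d(p,x))$ lying over $x$, writing $\int_p^{\c o}(F-s_k)=\int_p^{m(\c)}(F-s_k)+\int_{m(\c)}^{\c o}(F-s_k)$, and to compare the inner term with $\int_p^x(F-\d_F)$ by bounded distortion along the uniformly recurrent and regular arc $c_v|_{[0,d(p,x)]}$; this is where $v\in\L_{k,\lambda,N}$ is used. On the ``good'' times --- of density at least $1-1/k$ in each time direction by Definition \ref{lsets} --- Proposition \ref{contracting} and Lemmas \ref{Bowen}, \ref{contracting1} give exponentially shadowed convergence of $[p,m(\c)]$ to $c_v$, so the H\"older oscillations of $F$ sum to a constant $L_0=L_0(K,\a,\lambda,\diam\F,r)$; on the ``bad'' times --- total length at most $\tfrac1k d(p,x)$ in each direction --- and in the two constant-size endpoint adjustments one only uses $|F|\le\|F\|$, contributing at most $\tfrac{8k-2}{k(k-1)}d(p,x)\|F\|$. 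Since $d(p,m(\c))=d(p,x)\pm 2r$ and $(s_k-\d_F)d(p,x)\to 0$ in the limit, this yields
$$e^{\int_p^{\c o}(F-s_k)}\ \le\ \rho_1\, e^{\int_p^x(F-\d_F)+\frac{8k-2}{k(k-1)}d(p,x)\|F\|}\, e^{\int_{m(\c)}^{\c o}(F-s_k)}\, e^{o_k(1)}$$
for some $\rho_1=\rho_1(r,k,\lambda,N)$.

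It then remains to bound $\sum_{\c o\in U}e^{\int_{m(\c)}^{\c o}(F-s_k)}$ by $C\cdot P_F(s_k,o)$ with $C$ independent of $x$ and of $s_k$ near $\d_F$. Here I would pick $\c_x\in\C$ with $d(\c_x o,x)\le\diam\F$ (possible since $\C o$ is $\diam\F$-dense in $X$), replace the basepoint $m(\c)$, which lies within $\diam\F+2r$ of $\c_x o$, by $\c_x o$ at the cost of an error along the outer arc $[m(\c),\c o]$ controlled by the forward uniform regularity supplied by the $G_\xi(k,\lambda,N)$-condition, bound the restricted sum by the full sum over $\C$, and re-index $\sum_{\c\in\C}e^{\int_{\c_x o}^{\c o}(F-s_k)}=\sum_{\c'\in\C}e^{\int_o^{\c' o}(F-s_k)}=P_F(s_k,o)$ using the $\C$-periodicity of the lifted potential. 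Dividing by $P_F(s_k,o)$ and letting $s_k\searrow\d_F$ gives the claim with $\rho=\rho(r,k,\lambda,N)=\rho_1\cdot C\cdot e^{L_0}$, up to further constants absorbed along the way.

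The hard part is this last comparison of Poincar\'e-type sums based at different points: in nonpositive curvature $P_F(s,\cdot)$ is \emph{not} uniformly comparable across basepoints --- there is no global Bowen property for $F$ --- so the comparison genuinely fails for arbitrary geodesics, and it is only the uniform regularity built into $\L_{k,\lambda,N}$ and $G_\xi(k,\lambda,N)$, giving exponential shadowing on a $(1-1/k)$-fraction of every relevant geodesic, that makes the errors summable and $x$-independent, at the price of the subexponential-in-$d(p,x)$ correction in the exponent. Two further points need care: ensuring that this near-regularity survives the passage from $A$ to the neighborhood $U$ (which is why $U$, and hence $\rho$, must be allowed to depend on all of $r,k,\lambda,N$), and carrying out the bookkeeping of the forward and backward bad fractions together with the $\diam\F$- and $r$-sized endpoint discrepancies precisely enough to reach the coefficient $\tfrac{8k-2}{k(k-1)}$.
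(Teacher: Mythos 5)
Your route is genuinely different from the paper's: you go back to the Poincar\'e series and try to prove a Sullivan/PPS-style shadow lemma upper bound by summing $e^{\int_p^{\c o}(F-s_k)}$ over orbit points $\c o$ in a thin cone $U$, splitting each term at a point $m(\c)$ over $x$ and comparing the tail sum with $P_F(s_k,o)$. The paper never returns to the Poincar\'e series here. It works directly with the already-constructed densities: it writes $\mu_{F,p}(pr_\xi(B(x,r))\cap G_\xi)=\int e^{-C_{F-\d_F,\eta}(p,x)}d\mu_{F,x}(\eta)$, estimates the cocycle $C_{F-\d_F,\eta}(p,x)$ against $-\int_p^x(F-\d_F)$ by bounded distortion (using the backward $\UR(\l)$ time of $v$ and the forward $\UR(\l)$ time of $\dot c_{\xi,\eta}(0)$), and then bounds $\mu_{F,x}(G_\xi)$ by moving the basepoint to $\F$ via $\Gamma$-equivariance and invoking $\mu_{F,o}(\pX)=1$. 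The role your sum comparison $\sum_{\c o\in U}e^{\int_{m(\c)}^{\c o}(F-s_k)}\le C\,P_F(s_k,o)$ is meant to play is played in the paper by the trivial total-mass bound $\mu_{F,x}(G_\xi)\le e^{O(\frac{1}{k-1}d(p,x)\|F\|)}\mu_{F,o}(\pX)$ --- precisely because the classical shadow lemma is not available in this setting.

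The step of yours that I do not believe can be carried out as described is the replacement of the basepoint $m(\c)$ by $\c_x o$ in the outer arc, ``at the cost of an error \ldots controlled by the forward uniform regularity supplied by the $G_\xi(k,\lambda,N)$-condition.'' The condition $\eta\in G_\xi(k,\l,N)$ constrains only the bi-infinite geodesics $c_{\xi,\eta}$ through $\F$ and their limit points; it says nothing about the finite geodesic segments $[m(\c),\c o]$ and $[\c_x o,\c o]$ for an individual $\c$ with $\c o\in U$. To compare $\int_{m(\c)}^{\c o}F$ with $\int_{\c_x o}^{\c o}F$ up to a constant independent of $\c$ and of $d(x,\c o)$ you need exponential shadowing along one of these finite segments, i.e.\ a Bowen property for $[x,\c o]$; in nonpositive curvature this fails for arbitrary orbit points, and nothing in your setup forces the segments $[x,\c o]$ to carry good (core-type or $\UR(\l)$) orbit structure --- only their limits $\eta$ do. (This is exactly the issue the paper confronts in Proposition 3.6, where the approximating orbit points are required to satisfy $[o,\c_n o]\in\GGG^L$ before such a limit comparison is legitimate.) A secondary, fixable, gap: for $p\in\F\setminus\C o$ the measure $\mu_{F,p}$ is defined by its density against $\mu_{F,o}$, not as a weak$^*$ limit of $\mu_{F,p,s}$, so your opening inequality needs an extra reduction to the basepoint $o$. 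As written, then, the proposal does not close; the honest way to finish along the paper's lines is to abandon the tail-sum comparison and use the cocycle identity together with $\mu_{F,o}(\pX)=1$.
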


\begin{proof}
We have
\begin{equation}\label{e:alph1}
\begin{aligned}
\mu_{F,p}(pr_{\xi}(B(x,r))\cap G_\xi(k, \l, N))=\int_{pr_{\xi}(B(x,r))\cap G_\xi(k, \l, N)}e^{-C_{F-\d_F,\eta}(p,x)}\mu_{F,x}(\eta).
\end{aligned}
\end{equation}
Take $\eta \in pr_{\xi}(B(x,r))\cap  G_\xi(k, \l, N)$ and $y\in c_{p,\eta}\cap H^s(x,\eta).$ Take any $y'\in c_{p,\eta}\cap B(x,r)$. Then $|b_\eta(y',x)|\le d(y',x)\le r$ and hence $d(y,y')\le r$. We see that $y\in B(x,2r)$. Since $v=\dot{c}_{p,x}(0)\in  \L_{k,\l, N}$ and $d(p,x)\gg N$, by \eqref{keyset}, there exists $s\in [(1-\frac{1}{k})d(p,x), d(p,x)]$ such that
$\iota(g^sv)\in \UR(\l)$. In a $r$-neighborhood of $\iota(g^sv)$, there is a local product structure with constant $\kappa>0$ (cf. \cite[Lemma 4.4]{BCFT}). Then by Lemma \ref{Bowen}
\begin{equation}\label{up2}
\begin{aligned}
&\left|\int_p^x(F-\d_F)-\int_p^y(F-\d_F)\right|\\
\le& \left|\int_p^{c_{p,x}(s)}(F-\d_F)-\int_p^{c_{p,y}(s)}(F-\d_F)\right|+\frac{2}{k}d(p,x)\|F\|+2r\|F\|\\
\le &C_1(4\kappa r)^\a e^{4\kappa r\a/\lambda}/\l+6r\|F\|+\frac{2}{k}d(p,x)\|F\|.
\end{aligned}
\end{equation}

Denote $z_0:=c_{\xi,\eta}(0)\in \F$ and $z_1\in c_{\xi,\eta}\cap H^s(x,\eta)$. Similarly, we have $z_1\in B(x,2r)$. Note that $d(z_0,p)\le D:=\diam \F$ and thus 
$$|d(z_0,z_1)-d(p,x)|\le d(z_0,p)+d(x,z_1)\le D+2r.$$ 
Since $w:=\dot{c}_{\xi,\eta}(0)\in  \L_{k,\l, N}$, by \eqref{keyset}, there exists $s'\in [d(z_0,z_1), \frac{k}{k-1}d(z_0,z_1)]$ such that $g^{s'}w\in \UR(\l)$.
Thus by Lemma \ref{Bowen},
\begin{equation}\label{up1}
\begin{aligned}
&\lim_{t\to \infty}\left|\int_x^{c_{x,\eta}(t)}(F-\d_F)-\int_y^{c_{y,\eta}(t)}(F-\d_F)\right|\\
\le &\lim_{t\to \infty}\left(\left|\int_x^{c_{x,\eta}(t)}(F-\d_F)-\int_{z_1}^{c_{z_1,\eta}(t)}(F-\d_F)\right|\right)\\
+&\lim_{t\to \infty}\left(\left|\int_{z_1}^{c_{z_1,\eta}(t)}(F-\d_F)-\int_y^{c_{y,\eta}(t)}(F-\d_F)\right|\right)\\
\le &2C_2(4\kappa r)^\a e^{4\kappa r\a/\lambda}/\l+8\|F\|r+\frac{4}{k-1}d(z_0,z_1)\|F\|\\
\le &2C_2(4\kappa r)^\a e^{4\kappa r\a/\lambda}/\l+8\|F\|r+\frac{4}{k-1}d(p,x)\|F\|+\frac{4(D+2r)}{k-1}\|F\|.
\end{aligned}
\end{equation}

Therefore, by \eqref{up2} and \eqref{up1} we have
\begin{equation}\label{e:alph2}
\begin{aligned}
&\left|\int_p^x (F-\d_F)+C_{F-\d_F,\eta}(p,x)\right|\\
=&\lim_{t\to \infty}\left|\int_p^x(F-\d_F)-\int_p^y(F-\d_F)+\int_x^{c_{x,\eta}(t)}(F-\d_F)-\int_y^{c_{y,\eta}(t)}(F-\d_F)\right|\\
\le &C_1(4\kappa r)^\a e^{4\kappa r\a/\lambda}/\l+6r\|F\|+\frac{2}{k}d(p,x)\|F\|\\
+&2C_2(4\kappa r)^\a e^{4\kappa r\a/\lambda}/\l+8\|F\|r+\frac{4}{k-1}d(p,x)\|F\|+\frac{4(D+2r)}{k-1}\|F\|\\
=:&L_1(r, k,\l, N)+\frac{6k-2}{k(k-1)}d(p,x)\|F\|.
\end{aligned}
\end{equation}
Let $\a\in \C$ be such that $\a x\in \F$. As we see above, for any $\eta\in G_\xi(k, \l, N)$, there exists $s'\in [d(z_0,z_1), \frac{k}{k-1}d(z_0,z_1)]$ such that $\dot c_{\xi,\eta}(s')\in \UR(\l)$. Then 
\begin{equation*}
\begin{aligned} 
d(\a c_{\xi,\eta}(s'),o)&\le d(c_{\xi,\eta}(s'),x)+D\le 2r+\frac{1}{k-1}d(z_0,z_1)+D\\
&\le \frac{1}{k-1}d(p,x)+\frac{k}{k-1}(D+2r).
 \end{aligned}
\end{equation*}
Similarly, we have
$$d(\a c_{\xi,\eta}(s'),\a x)\le \frac{1}{k-1}d(p,x)+\frac{k}{k-1}(D+2r).$$
Note that $\a c_{\xi,\eta}(s')\in \UR(\l)$ and $\mu_{F,o}(\pX)=1$. 

Following a similar computation as above, we have
\begin{equation}\label{up}
\begin{aligned} 
&\mu_{F,x}(pr_{\xi}(B(x,r))\cap G_\xi(k,\l,N))\le \mu_{F,x}(G_\xi(k,\l,N))\\
=&\mu_{F,\a x}(\a G_\xi(k,\l,N))=\int_{\a G_\xi(k,\l,N)}e^{-C_{F-\d_F}(\a x,o)}d\mu_{F,o}(\zeta)\\
\le & e^{L_1(r,k,\l,N)+L_1(D+r,k,\l,N)+(\frac{2}{k-1}d(p,x)+\frac{2k}{k-1}(D+2r))\|F\|}\mu_{F,o}(\a G_\xi(\l,k,N))\\
\le &e^{ L_1(r,k,\l,N)+L_1(D+r,k,\l,N)+\frac{2k}{k-1}(D+2r)\|F\|}e^{\frac{2}{k-1}d(p,x)\|F\|}\mu_{F,o}(\pX)\\
=: &L_2(r,k,\l,N)e^{\frac{2}{k-1}d(p,x)\|F\|}.   
 \end{aligned}
\end{equation}
By \eqref{e:alph1}, \eqref{e:alph2} and \eqref{up}, taking $\rho=e^{L_1}L_2$ gives \eqref{e:alph}.
\end{proof}
\section{Equilibrium states}

\subsection{Construction of invariant measures}
Let $P:SX \rightarrow \pX \times \pX$ be the projection given by $P(v)=(c_{v}(-\infty),c_{v}(+\infty))$. Denote by $\p^2X:=P(SX)$ the set of pairs $(\xi,\eta)\in \pX\times \pX$ which can be connected by a geodesic on $X$. Recall that $\iota: SM\to SM$ is the flip map, i.e., $\iota(v)=-v$.
Fix a reference point $o\in X$ as before. We can define a measure $\bar{\mu}_{F}$
on $\p^2X$ by the following formula:
$$d \bar \mu_{F}(\xi,\eta) = e^{C_{F\circ \iota-\d_F,\xi}(o, \pi(v))+C_{F-\d_F,\eta}(o, \pi(v))}d\mu_{F\circ \iota,o}(\xi) d\mu_{F,o}(\eta)$$
where $P(v)=(\xi,\eta)$. By Corollary \ref{cocexi2}, Lemma \ref{cocyle2} and Proposition \ref{full}, the Busemann cocycle $C_{F\circ \iota-\d_F,\xi}(o, \pi(v))$ and $C_{F-\d_F,\eta}(o, \pi(v))$ in the above definition are well defined almost everywhere.

\begin{lemma}\label{equi}
We have
\begin{enumerate}
  \item For any $p\in X$,
  $$d \bar \mu_{F}(\xi,\eta) = e^{C_{F\circ \iota-\d_F,\xi}(p, \pi(v))+C_{F-\d_F,\eta}(p, \pi(v))}d\mu_{F\circ \iota,p}(\xi) d\mu_{F,p}(\eta).$$
  \item $\bar\mu_{F}$ is invariant under the action of $\Gamma$.
  \item $\iota_* \bar\mu_F=\bar \mu_{F\circ\iota}$.
\end{enumerate}
\end{lemma}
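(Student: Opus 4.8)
The plan is to prove (1) first, since it is just a change-of-basepoint identity, and then to deduce (2) and (3) from it using the cocycle identities of Lemma \ref{cocycleprop}, the $\Gamma$-equivariance of the Busemann density (Lemma \ref{gamma}), and the fact that $F\circ\iota$ is again H\"older and satisfies the same standing hypotheses, so that $\{\mu_{F\circ\iota,q}\}_{q\in X}$ is likewise a $\d_F$-dimensional Busemann density. Here $\d_{F\circ\iota}=\d_F$ because $\int_o^{\gamma o}(F\circ\iota)=\int_{\gamma o}^o F$ and $\gamma\mapsto\gamma^{-1}$ is a length-preserving bijection of $\Gamma$, and $\text{Sing}$ (hence Conditions (A) and (B)) is $\iota$-symmetric. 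All Gibbs cocycles occurring below are defined off a common null set, as recorded right after the definition of $\bar\mu_F$ (Corollary \ref{cocexi2}, Lemma \ref{cocyle2}, Proposition \ref{full}).

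For (1), fix $p\in X$. Property (2) of a $\d_F$-dimensional Busemann density (Theorem \ref{ps}), applied to $\{\mu_{F\circ\iota,q}\}$ and $\{\mu_{F,q}\}$, gives $d\mu_{F\circ\iota,o}(\xi)=e^{-C_{F\circ\iota-\d_F,\xi}(o,p)}\,d\mu_{F\circ\iota,p}(\xi)$ and $d\mu_{F,o}(\eta)=e^{-C_{F-\d_F,\eta}(o,p)}\,d\mu_{F,p}(\eta)$ for almost every $\xi$, resp.\ $\eta$. I would substitute these into the definition of $\bar\mu_F$ and then use additivity of the Gibbs cocycle (Lemma \ref{cocycleprop}(2)) in the form $C_{H,\zeta}(o,\pi v)-C_{H,\zeta}(o,p)=C_{H,\zeta}(p,\pi v)$, with $H=F\circ\iota-\d_F$ and $H=F-\d_F$: the two occurrences of the base point $o$ cancel against the Radon--Nikodym factors and the exponent becomes $C_{F\circ\iota-\d_F,\xi}(p,\pi v)+C_{F-\d_F,\eta}(p,\pi v)$, which is exactly the claimed formula.

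For (2), fix $\gamma\in\Gamma$ and a continuous function $\varphi$ on $\p^2X$, and compute $\int\varphi\,d(\gamma_*\bar\mu_F)=\int\varphi(\gamma\xi,\gamma\eta)\,d\bar\mu_F(\xi,\eta)$, where $P(v)=(\xi,\eta)$ in the integrand. Using the $\Gamma$-invariance $C_{H,\zeta}(o,\pi v)=C_{H,\gamma\zeta}(\gamma o,\pi(\gamma v))$ of the cocycle (Lemma \ref{cocycleprop}(1), valid since $F$ and $F\circ\iota$ descend to $SM$), followed by the substitution $(\xi,\eta,v)\mapsto(\gamma\xi,\gamma\eta,\gamma v)$ together with $\gamma_*\mu_{H,o}=\mu_{H,\gamma o}$ (Lemma \ref{gamma} for $H=F$ and $H=F\circ\iota$), $\pi(\gamma v)=\gamma\pi v$ and $P(\gamma v)=(\gamma\xi,\gamma\eta)$, the integral is transformed into the one defining $\bar\mu_F$ with base point $\gamma o$; by (1) this equals $\int\varphi\,d\bar\mu_F$, so $\gamma_*\bar\mu_F=\bar\mu_F$. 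For (3), since $c_{-v}(\pm\infty)=c_v(\mp\infty)$, the flip acts on $\p^2X$ by $(\xi,\eta)\mapsto(\eta,\xi)$, while $\pi(-v)=\pi v$; writing out the definition of $\bar\mu_{F\circ\iota}$ and using $(F\circ\iota)\circ\iota=F$ and $\d_{F\circ\iota}=\d_F$, and then relabelling the integration variables in $\int\varphi(\eta,\xi)\,d\bar\mu_F(\xi,\eta)$ via $(\xi,\eta)\mapsto(\eta,\xi)$ and $v\mapsto-v$, one sees the two cocycle factors and the two boundary measures match up term by term, which gives $\iota_*\bar\mu_F=\bar\mu_{F\circ\iota}$.

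All three computations are short; the only part that needs genuine care rather than bookkeeping is the measure-theoretic one: the Radon--Nikodym identities used in (1) and the relabellings used in (2) and (3) are valid only outside null sets, and these null sets must be compatible across the steps. This causes no trouble, because the conull sets carrying the cocycles ($\L_c$ under Condition (A), all of $\pX$ under Condition (B)), together with $\text{Reg}$ and $\text{Sing}$, can be taken invariant under both $\Gamma$ and $\iota$, so a single $\Gamma$- and $\iota$-invariant conull set serves every step.
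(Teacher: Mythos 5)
Your proposal is correct and follows essentially the same route as the paper: part (1) by combining the Radon--Nikodym property of the Busemann densities with the cocycle additivity of Lemma \ref{cocycleprop}(2), part (2) by the $\Gamma$-equivariance of the cocycle and of $\{\mu_{H,\c o}\}$ followed by an appeal to (1), and part (3) directly from the definition. The extra care you take over $\d_{F\circ\iota}=\d_F$ and the $\Gamma$- and $\iota$-invariance of the conull set on which the cocycles are defined is sound and only makes explicit what the paper leaves implicit.
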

\begin{proof}
Let $p\in X$. Then
\begin{equation*}
\begin{aligned}
&e^{C_{F\circ \iota-\d_F,\xi}(p, \pi(v))+C_{F-\d_F,\eta}(p, \pi(v))}d\mu_{F\circ \iota,p}(\xi) d\mu_{F,p}(\eta)\\
=&e^{C_{F\circ \iota-\d_F,\xi}(p, \pi(v))+C_{F-\d_F,\eta}(p, \pi(v))}e^{-C_{F\circ \iota-\d_F,\xi}(p, o)-C_{F-\d_F,\eta}(p, o)}d\mu_{F\circ \iota,o}(\xi) d\mu_{F,o}(\eta)\\
=&e^{C_{F\circ \iota-\d_F,\xi}(o, \pi(v))+C_{F-\d_F,\eta}(o, \pi(v))}d\mu_{F\circ \iota,o}(\xi) d\mu_{F,o}(\eta)\\
=&d \bar \mu_{F}(\xi,\eta).
\end{aligned}
\end{equation*}
This proves (1).

To prove that $\bar\mu_{F}$ is invariant under the action of $\c\in \Gamma$, it is enough to show
\begin{equation*}
\begin{aligned}
&e^{C_{F\circ \iota-\d_F,\c\xi}(o, \pi(\c v))+C_{F-\d_F,\c\eta}(o, \pi(\c v))}d\mu_{F\circ \iota,o}(\c \xi) d\mu_{F,o}(\c\eta)\\
=&e^{C_{F\circ \iota-\d_F,\xi}(\c^{-1}o, \pi(v))+C_{F-\d_F,\eta}(\c^{-1}o, \pi(v))}d\mu_{F\circ \iota,\c^{-1}o}(\xi) d\mu_{F,\c^{-1}o}(\eta)\\
=&e^{C_{F\circ \iota-\d_F,\xi}(o, \pi(v))+C_{F-\d_F,\eta}(o, \pi(v))}d\mu_{F\circ \iota,o}(\xi) d\mu_{F,o}(\eta).
\end{aligned}
\end{equation*}
The last equality follows from item (1). 

(3) follows directly from the definition.
\end{proof}

Denote $\b_F(\xi,\eta):= e^{C_{F\circ \iota-\d_F,\xi}(o, \pi(v))+C_{F-\d_F,\eta}(o, \pi(v))}$. It is not evident that $\b_F$ is bounded on $\p^2X$. So $\bar\mu_F$ may be not a Radon measure on $\p^2X$.

Let $a$ be a regular geodesic axis in $X$ such that $a(0)\in \F$. By Lemma \ref{key}, there exists a pair of neighborhoods $U,V$ of $a(-\infty)$ and $a(+\infty)$ respectively in $\pX$, such that for each pair $(\xi,\eta)\in U \times V$ there is a unique regular geodesic connecting $\xi$ and $\eta$. 
By Lemma \ref{supp}, $\mu_{F\circ \iota,o}(U)>0$ and  $\mu_{F,o}(V)>0$.
For every $Q>0$, denote by $\p^2X(Q)$ the set of pairs $(\xi,\eta)\in \p^2X$ such that $\b_F(\xi,\eta)\le Q$ and $c_{\xi\eta}$ passes through $\F$. 
Then $0<\bar\mu_F((U\times V)\cap \p^2X(Q))<+\infty$ for $Q$ sufficiently large. The restrict $\bar\mu_F$ onto $\C\cdot((U\times V)\cap \p^2X(Q))$, denoted by
$$\bar\mu_{F,Q}:=\bar\mu_F|_{\C\cdot((U\times V)\cap \p^2X(Q))},$$
gives a Radon measure since $\mu_{F,o}$ and $\mu_{F\circ \iota,o}$ are probability measures on $\pX$ and $\b_F\le Q$. 
Then $\bar\mu_{F,Q}$ induces a $g^t$-invariant Radon measure $\mu_{F}$ on $SX$ with
\begin{equation*}
\nu_{F}(A)=\int_{\C\cdot((U\times V)\cap \p^2X(Q))} \text{Vol}(\pi(P^{-1}(\xi,\eta)\cap A))d  \bar \mu_{F}(\xi,\eta),
\end{equation*}
for all Borel sets $A\subset SX$. Here  $P^{-1}(\xi,\eta)$ is either a single geodesic on $X$ or a flat totally geodesic submanifold of $X$. In either case, Vol is the volume element on  $P^{-1}(\xi,\eta)$.

Note that $\nu_F$ is a $\C$-invariant Radon measure. We can project $\nu_{F}$ to get a Radon measure on $SM$, and then normalize it to a probability measure, still denoted by $\nu_{F}$. $\nu_F$ is a $g^t$-invariant probability measure on $SM$ (cf. \cite[Theorem 2.1]{Kai}).

By Lemma \ref{uniformrec}, $\nu_F(\UR)=1$. Define $\BBB^\l:=\{v^+: v\in S\F\cap \UR(\l)\}$. Then $P(\UR(\l))\subset \C\cdot (\BBB^\l\times \BBB^\l)$. Then if $\l_0$ is small enough, $\bar\mu_{F,Q}((U\times V)\cap (\BBB^{\l_0}\times \BBB^{\l_0}))>0$ by definition of $\nu_F$. Thus $\bar\mu_F((U\times V)\cap (\BBB^{\l_0}\times \BBB^{\l_0}))>0$ . Moreover, $\b_F$ is bounded on $\BBB^{\l_0}\times \BBB^{\l_0}$ by Lemma \ref{Bowen}. We further define
$$\bar\mu_{F,\l_0}:=\bar\mu_F|_{\C\cdot((U\times V)\cap (\BBB^{\l_0}\times \BBB^{\l_0})},$$
and a $g^t$-invariant and $\C$-invariant Radon measure on $SX$
\begin{equation*}
\mu_{F}(A)=\int_{\C\cdot((U\times V)\cap (\BBB^{\l_0}\times \BBB^{\l_0}))} \text{Vol}(\pi(P^{-1}(\xi,\eta)\cap A))d  \bar \mu_{F}(\xi,\eta).
\end{equation*}
So $\mu_{F}$ descends to $SM$ and can be normalized to a probability measure on $SM$. We finally obtain the $g^t$-invariant probability measure on $SM$ we want, still denoted by $\mu_F$.

\subsection{Ergodicity}
To prove the ergodicity of $\mu_F$, we follow the idea of \cite{Kn}, with significant modifications due to the definition of $\mu_F$. 

Let $a$ be a regular axis in $X$ and  $U,V$ neighborhoods of $a(-\infty)$ and $a(+\infty)$ respectively in $\pX$, as in the construction of measure $\mu_F$.  Consider the sets
\begin{equation*}
\begin{aligned}
\mathcal{G}(U,V):=&\{\text{geodesic\ } c: (c(-\infty), c(+\infty))\in (\BBB^{\l_0}\cap U)\times (\BBB^{\l_0}\cap V)\},\\
\mathcal{G}'(U,V):=&\{\dot c(t):~ c\in \mathcal{G}(U,V),~t\in\mathbb{R} \},\\
\mathcal{G}_{rec}(U,V):=&\{c : c\in \mathcal{G}(U,V),~\mbox{and} ~\dot{c}(0) ~\mbox{is recurrent}\},\\
\mathcal{G}'_{rec}(U,V):=&\{\dot c(t): c\in \mathcal{G}_{rec}(U,V),~t\in\mathbb{R} \}.
\end{aligned}
\end{equation*}
By construction, $\mu_F(\mathcal{G}'(U,V))>0$. By Poincar\'e recurrence theorem, we have $\mu_F(\mathcal{G}'(U,V)\setminus \mathcal{G}'_{rec}(U,V))=0$.

Let $f:SX \to \mathbb{R}$ be a continuous $\Gamma$-invariant function.
By Birkhoff ergodic theorem,
for $\mu_F \ae v\in SX$, the following two functions
$$f^{\pm}(v):= \lim_{T\rightarrow \pm\infty}\frac{1}{T}\int^{T}_{0}f(\dot c_v(t))dt$$
are well-defined and equal.
Obviously, $f^{\pm}$ are constants along each orbit of the geodesic flow.
So we can write $f^{\pm}(c):=f^{\pm}(\dot c(0))$ for every geodesic $c$.
Denote 
\begin{equation*}
\begin{aligned}
\mathcal{\widetilde{G}}_{rec}(U,V):=&\{c : c \in \mathcal{G}_{rec}(U,V), ~f^+(c)=f^-(c)\},\\
\mathcal{\widetilde{G}}'_{rec}(U,V):=&\{\dot c(t): c\in\mathcal{\widetilde{G}}_{rec}(U,V),~t\in\mathbb{R}\}.
\end{aligned}
\end{equation*}
Therefore we have $\mu_F(\mathcal{G}'(U,V)\setminus \mathcal{\widetilde{G}}'_{rec}(U,V))=0.$

The following lemma is based on a Fubini type argument.
\begin{lemma}\label{Gxi}
There exists a geodesic $c_1\in \mathcal{\widetilde{G}}_{rec}(U,V)$ with $c_1(-\infty)=\xi_1\in  \BBB^{\l_0}\cap U$, such that
$$G_{\xi_1}:= \{\eta \in V : \exists~c\in\mathcal{\widetilde{G}}_{rec}(U,V) ~\mbox{with}~c(-\infty)=\xi_1,  c(+\infty)=\eta\}$$
has full $\mu_{F,o}$-measure in $\BBB^{\l_0}\cap V$, i.e., $\mu_{F,o}(G_{\xi_1})=\mu_{F,o}(\BBB^{\l_0}\cap V)$.
\end{lemma}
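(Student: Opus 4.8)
The plan is to run a standard Fubini/Hopf-type argument, but carried out inside the "uniformly regular" part of the boundary so that all cocycles are defined and the measure $\bar\mu_F$ (restricted to $\BBB^{\l_0}\times\BBB^{\l_0}$) makes sense as a genuine Radon measure. First I would note that $\mu_F(\mathcal G'(U,V)\setminus \widetilde{\mathcal G}'_{rec}(U,V))=0$, which translates, via the defining formula for $\mu_F$ in terms of $\bar\mu_{F,\l_0}$ and the Vol-fibration over $\p^2X$, into a statement about $\bar\mu_{F,\l_0}$: the set of pairs $(\xi,\eta)\in (\BBB^{\l_0}\cap U)\times(\BBB^{\l_0}\cap V)$ for which the connecting geodesic $c$ is \emph{not} in $\widetilde{\mathcal G}_{rec}(U,V)$ has $\bar\mu_{F,\l_0}$-measure zero (here one uses that the fibers have positive Vol, e.g. a geodesic with infinite length, so a positive-measure set of bad fibers would force $\mu_F$ of the bad set to be positive).

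Next I would disintegrate $\bar\mu_{F,\l_0}$ over its first coordinate. Since $\b_F$ is bounded on $\BBB^{\l_0}\times\BBB^{\l_0}$ by Lemma \ref{Bowen}, on this piece $\bar\mu_F$ is equivalent to the product $\mu_{F\circ\iota,o}\times\mu_{F,o}$ with bounded density bounded away from $0$ and $\infty$; hence a null set for $\bar\mu_{F,\l_0}$ is a null set for the product measure restricted to $(\BBB^{\l_0}\cap U)\times(\BBB^{\l_0}\cap V)$. By Fubini for the product measure, for $\mu_{F\circ\iota,o}$-almost every $\xi\in \BBB^{\l_0}\cap U$, the slice
$$\{\eta\in \BBB^{\l_0}\cap V: c_{\xi,\eta}\notin \widetilde{\mathcal G}_{rec}(U,V)\}$$
has $\mu_{F,o}$-measure zero, i.e. $\mu_{F,o}(G_\xi)=\mu_{F,o}(\BBB^{\l_0}\cap V)$ for such $\xi$. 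Finally, among this full-measure set of good $\xi$, I would pick one $\xi_1$ that is \emph{itself} realized as $c(-\infty)$ for some $c\in \widetilde{\mathcal G}_{rec}(U,V)$; this is possible because the set of such realized left-endpoints has positive $\mu_{F\circ\iota,o}$-measure (again from $\mu_F(\mathcal G'_{rec}(U,V))=\mu_F(\mathcal G'(U,V))>0$ and the fibration formula, noting $\mu_{F\circ\iota,o}$ restricted to $\BBB^{\l_0}\cap U$ is the pushforward under $\xi\mapsto$ (endpoint) of a positive-measure set), so it meets the full-measure set of good $\xi$. Set $\xi_1$ to be a point in the intersection and $c_1$ a witnessing geodesic; then $c_1\in\widetilde{\mathcal G}_{rec}(U,V)$, $c_1(-\infty)=\xi_1\in\BBB^{\l_0}\cap U$, and $\mu_{F,o}(G_{\xi_1})=\mu_{F,o}(\BBB^{\l_0}\cap V)$, as required.

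The main obstacle I anticipate is purely bookkeeping about which measures live where: passing cleanly between the $g^t$-invariant measure $\mu_F$ on $SM$ (or $SX$), the $\Gamma$-invariant Radon measure $\nu_F/\mu_F$ on $SX$, the restricted boundary measure $\bar\mu_{F,\l_0}$ on $\p^2X$, and the product $\mu_{F\circ\iota,o}\times\mu_{F,o}$ on $\pX\times\pX$. One must make sure (i) that the fibration $P^{-1}(\xi,\eta)$ has positive Vol on the relevant set so that ``$\mu_F(\text{bad})=0$'' upgrades to ``$\bar\mu_{F,\l_0}(\text{bad pairs})=0$'', (ii) that $\BBB^{\l_0}\cap U$ and $\BBB^{\l_0}\cap V$ have positive measure (which follows from Lemma \ref{supp} together with $\nu_F(\UR)=1$, exactly as used in constructing $\mu_F$), and (iii) that the equivalence $\bar\mu_F\sim \mu_{F\circ\iota,o}\times\mu_{F,o}$ on $\BBB^{\l_0}\times\BBB^{\l_0}$, with two-sided density bounds from Lemma \ref{Bowen}, is legitimately invoked so the Fubini step applies. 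None of these is deep, but they need to be stated carefully; the geometric content (rank one axis, Lemma \ref{key}) was already used to set up $U,V$, so it does not reappear here.
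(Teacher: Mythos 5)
Your proposal is correct and follows essentially the same route as the paper: both pass from $\mu_F(\mathcal{G}'(U,V)\setminus\widetilde{\mathcal{G}}'_{rec}(U,V))=0$ to a null set of pairs for $\bar\mu_{F,\l_0}$ via the Vol-fibration, and then apply Fubini using the product structure of $\bar\mu_{F,\l_0}$ on $(\BBB^{\l_0}\cap U)\times(\BBB^{\l_0}\cap V)$ to conclude that $G_\xi$ has full measure for $\mu_{F,o}$-a.e.\ $\xi$. Your extra care about the two-sided bounds on $\b_F$ and about choosing a $\xi_1$ that is actually realized is sound (though the latter already follows from $G_{\xi_1}$ being nonempty, since $\mu_{F,o}(\BBB^{\l_0}\cap V)>0$); the paper states these steps more tersely.
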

\begin{proof}
Let $\mathcal{E}=\mathcal{G}_{rec}(U,V)\setminus\mathcal{\widetilde{G}}_{rec}(U,V),$ and $\mathcal{E}'=\{\dot c(t): c\in\mathcal{E},~t\in\mathbb{R}\}$. Then $\mu_F(\mathcal{E}')=0$ from the discussion above. Moreover, for any $\xi\in U$, let
$$G^{co}_{\xi}= \{\eta \in V :  \exists~c\in\mathcal{E}~\mbox{with}~ c(-\infty)=\xi, ~c(+\infty)=\eta\}.$$
Choosing $v\in P^{-1}(\xi,\eta)\subset \mathcal{E}'$, we have
\begin{equation*}
\begin{aligned}
\bar\mu_{F,\l_0}(\mathcal{E}')=\int_{P(\mathcal{E}')}e^{C_{F\circ \iota-\d_F,\xi}(o, \pi(v))+C_{F-\d_F,\eta}(o, \pi(v))}d\mu_{F\circ \iota,o}(\xi) d\mu_{F,o}(\eta)=0.
\end{aligned}
\end{equation*}
It follows that $\int_{\BBB^{\l_0}\cap U}(\int_{G^{co}_{\xi}}d\mu_{F,o}(\eta) )d\mu_{F,o}(\xi)=0$, which implies that for  $\mu_{F,o} \ae \xi \in \BBB^{\l_0}\cap U$,
$\mu_{F,o}(G^{co}_{\xi})=0$. Thus $\mu_{F,o}(G_{\xi})=\mu_{F,o}(\BBB^{\l_0}\cap V)$ for $\mu_{F,o}$-a.e.~$\xi \in \BBB^{\l_0}\cap U$. Pick such $\xi_1$ and corresponding $c_1$, then the lemma follows.
\end{proof}

The following lemma is based on a Hopf type argument.
\begin{lemma}\label{lem8}
$f^{+}(c)=f^+(c_1)$ for almost all $c\in\mathcal{\widetilde{G}}_{rec}(U,V)$.
\end{lemma}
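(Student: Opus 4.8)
The plan is to run a Hopf-type argument, whose crucial ingredient is a \emph{transitivity along a common endpoint} statement: if $c,c'$ are geodesics on $X$ with $c(+\infty)=c'(+\infty)=:\eta\in\BBB^{\l_0}$ and both $f^+(c)$ and $f^+(c')$ exist, then $f^+(c)=f^+(c')$; and symmetrically for a common backward endpoint lying in $\BBB^{\l_0}$. To prove the forward version I would pick $w\in S\F\cap\UR(\l_0)$ with $w^+=\eta$ (possible by the very definition of $\BBB^{\l_0}$). Since $\dot c(0)$ and $w$ have the same forward endpoint, the Busemann time-shift $s:=b_\eta(\pi\dot c(0),\pi w)$ satisfies $\pi g^s\dot c(0)\in H^s(w)$, so $g^s\dot c(0)$ lies on the global stable manifold $W^s(w)$, and Proposition \ref{contracting} (equivalently item (2) in the definition of $\UR(\l_0)$) gives $d_K(g^{s+t}\dot c(0),g^tw)\to 0$ exponentially; the same holds for $c'$ with its own shift $s'$, hence $d_K(g^{s+t}\dot c(0),g^{s'+t}\dot c'(0))\to 0$. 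Because $f$ descends to a uniformly continuous function on the compact $SM$ and a bounded time-shift changes a Ces\`aro average by $O(1/T)$, the two forward Birkhoff averages converge to the same limit. Applying this to $f\circ\iota$ together with the flipped geodesics (whose forward endpoint is $c(-\infty)$) yields the statement for a common backward endpoint.

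Granting transitivity, I would chain through $\xi_1$ as follows. For $c\in\mathcal{\widetilde{G}}_{rec}(U,V)$ the limits $f^{\pm}(c)$ exist and $f^+(c)=f^-(c)$. Given $\eta\in G_{\xi_1}$, choose (by the definition of $G_{\xi_1}$) some $c_\eta\in\mathcal{\widetilde{G}}_{rec}(U,V)$ with $c_\eta(-\infty)=\xi_1$ and $c_\eta(+\infty)=\eta$. Since $c_\eta$ and $c_1$ share the backward endpoint $\xi_1\in\BBB^{\l_0}\cap U$, transitivity gives $f^-(c_\eta)=f^-(c_1)$, whence $f^+(c_\eta)=f^-(c_\eta)=f^-(c_1)=f^+(c_1)$. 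Now let $c\in\mathcal{\widetilde{G}}_{rec}(U,V)$ be arbitrary with $\eta:=c(+\infty)\in G_{\xi_1}$; since $c$ and $c_\eta$ share the forward endpoint $\eta\in\BBB^{\l_0}\cap V$, transitivity again gives $f^+(c)=f^+(c_\eta)=f^+(c_1)$.

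It then remains to check that $\mu_F$-almost every $c\in\mathcal{\widetilde{G}}_{rec}(U,V)$ has $c(+\infty)\in G_{\xi_1}$. By Lemma \ref{Gxi}, $\mu_{F,o}\big((\BBB^{\l_0}\cap V)\setminus G_{\xi_1}\big)=0$. Since $\mu_F$ restricted to $\mathcal{\widetilde{G}}'_{rec}(U,V)$ is built from the measure on pairs $\b_F(\xi,\eta)\,d\mu_{F\circ\iota,o}(\xi)\,d\mu_{F,o}(\eta)$ (integrated against $\mathrm{Vol}$ along the connecting geodesic), and $\b_F$ is bounded on $\BBB^{\l_0}\times\BBB^{\l_0}$ by Lemma \ref{Bowen}, the push-forward of $\mu_F|_{\mathcal{\widetilde{G}}'_{rec}(U,V)}$ under $v\mapsto c_v(+\infty)$ is absolutely continuous with respect to $\mu_{F,o}|_{\BBB^{\l_0}\cap V}$; hence $\{c\in\mathcal{\widetilde{G}}_{rec}(U,V):c(+\infty)\notin G_{\xi_1}\}$ is $\mu_F$-null. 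Combined with the previous paragraph, this gives $f^+(c)=f^+(c_1)$ for $\mu_F$-a.e.\ $c\in\mathcal{\widetilde{G}}_{rec}(U,V)$.

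The main obstacle is the transitivity statement — making rigorous that the forward Birkhoff average depends only on the forward endpoint — because here the base vector $\dot c(0)$ is \emph{not} assumed uniformly recurrent: only the endpoint is assumed to lie in $\BBB^{\l_0}$. One therefore has to realize $\dot c(0)$, after the correct Busemann time-shift, as a point of the \emph{global} stable manifold $W^s(w)$ of a uniformly recurrent and regular representative $w$ of the endpoint, and then invoke the global exponential contraction of Proposition \ref{contracting}; once this reduction is set up, the rest is the classical Hopf chaining plus a routine Fubini argument as in Lemma \ref{Gxi}.
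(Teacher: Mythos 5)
Your proposal is correct and follows the same Hopf chaining through $\xi_1$ as the paper; the only genuine difference is how the key asymptoticity step is justified. The paper compares $c$ and $c_2$ directly via Knieper's result (\cite[Proposition 4.1]{Kn}): since $\dot c(0)$ and $\dot c_2(0)$ are recurrent — this is built into the definition of $\mathcal{G}_{rec}(U,V)$ — two such geodesics with the same forward endpoint satisfy $d_K(\dot c(t),\dot c_2(t))\to 0$ after reparametrization, and symmetrically backward. You instead exploit that the common endpoint lies in $\BBB^{\l_0}$, pick a representative $w\in S\F\cap\UR(\l_0)$ of it, place the Busemann-shifted vector on the global stable leaf $W^s(w)$, and invoke Proposition \ref{contracting}; flip-invariance of $\UR(\l_0)$ then handles the common backward endpoint. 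This buys a transitivity statement that requires no recurrence of $\dot c(0)$ at all (only membership of the endpoint in $\BBB^{\l_0}$) and stays entirely within the paper's own machinery rather than an external citation, at the cost of being slightly longer. You also spell out the Fubini/absolute-continuity argument showing that almost every $c$ has $c(+\infty)\in G_{\xi_1}$, which the paper asserts without detail (it is the same computation as in Lemma \ref{Gxi}). Both routes are sound.
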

\begin{proof}
By Lemma \ref{Gxi}, there exists a geodesic $c_1\in \mathcal{\widetilde{G}}_{rec}(U,V)$ with $c_1(-\infty)=\xi_1\in  \BBB^{\l_0}\cap U$ such that $\mu_{F,o}(G_{\xi_1})=\mu_{F,o}(\BBB^{\l_0}\cap V)$. Then almost every $c\in \mathcal{\widetilde{G}}_{rec}(U,V)$ satisfies that $c(+\infty)\in G_{\xi_1}$. By the definition of $G_{\xi_1}$, we know that there is a geodesic $c_2\in \mathcal{\widetilde{G}}_{rec}(U,V)$ with $c_2(-\infty)=c_1(-\infty)=\xi_1$ and $c_2(+\infty)=c(+\infty)$. Then by \cite[Proposition 4.1]{Kn}, after a reparameterization we have 
$$\lim_{t\to +\infty}d_K(\dot c_2(t),\dot c(t))= 0$$
which implies that $f^+(c)=f^+(c_2)~(=f^-(c_2))$. Similarly we have $f^-(c_2)=f^-(c_1)~(=f^+(c_1))$. Therefore we get $f^+(c)=f^+(c_1)$ for almost all $c\in\mathcal{\widetilde{G}}_{rec}(U,V)$.
\end{proof}

Now we are ready to prove the ergodicity of $\mu_F$.
\begin{theorem}\label{ergodic}
$\mu_F$ is ergodic.
\end{theorem}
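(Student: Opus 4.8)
The plan is to complete the classical Hopf argument that has been set up in this subsection. It suffices to prove that for every continuous $\Gamma$-invariant $f\colon SX\to\RR$ — equivalently, the lift of every continuous function on $SM$ — the Birkhoff average $f^{+}$ is $\mu_F$-almost everywhere constant. Granting this, ergodicity follows by the standard density argument: $\mu_F$ is a probability measure and continuous functions are dense in $L^1(\mu_F)$, so if $A\subset SM$ were flow-invariant with $0<\mu_F(A)<1$, one approximates $\chi_A$ in $L^1(\mu_F)$ by a continuous $g$; since Birkhoff averaging is an $L^1(\mu_F)$-contraction that fixes $\chi_A$, one gets $\|g^{+}-\chi_A\|_{1}\le\|g-\chi_A\|_{1}$, forcing $\chi_A$ to lie within arbitrarily small $L^1$-distance of a constant, which is impossible.

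So fix such an $f$. By the Birkhoff ergodic theorem for the geodesic flow, $f^{\pm}$ are defined $\mu_F$-a.e., agree, and are constant along flow orbits; moreover, since $f(\gamma v)=f(v)$ and the flow commutes with the $\Gamma$-action, $f^{+}$ is $\Gamma$-invariant, i.e. $f^{+}(\gamma v)=f^{+}(v)$ for all $\gamma\in\Gamma$. Recalling that $\mu_F\big(\mathcal{G}'(U,V)\setminus\widetilde{\mathcal{G}}'_{rec}(U,V)\big)=0$, Lemma \ref{lem8} produces a constant $c:=f^{+}(c_1)$ with $f^{+}=c$ for $\mu_F$-a.e. $v\in\mathcal{G}'(U,V)$.

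The remaining step is to propagate this constant to the whole support of $\mu_F$ using $\Gamma$-invariance. Writing $\mathcal{G}'(U,V)=P^{-1}\big((\BBB^{\l_0}\cap U)\times(\BBB^{\l_0}\cap V)\big)$, the $\Gamma$-invariance of $f^{+}$ gives $f^{+}=c$ for $\mu_F$-a.e. $v\in\gamma\,\mathcal{G}'(U,V)=P^{-1}\big(\gamma\cdot((U\times V)\cap(\BBB^{\l_0}\times\BBB^{\l_0}))\big)$ for each $\gamma\in\Gamma$, and, $\Gamma$ being countable, $f^{+}=c$ $\mu_F$-a.e. on $\bigcup_{\gamma\in\Gamma}\gamma\,\mathcal{G}'(U,V)=P^{-1}\big(\Gamma\cdot((U\times V)\cap(\BBB^{\l_0}\times\BBB^{\l_0}))\big)$. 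By the construction of $\mu_F$ this last set is exactly the support of $\mu_F$ in $SX$, so $f^{+}=c$ $\mu_F$-a.e. in $SX$; since $f^{+}$ is $\Gamma$-invariant it descends to a $\mu_F$-a.e. constant function on $SM$, as required.

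The only genuinely delicate points are bookkeeping ones: verifying the identity $\bigcup_{\gamma\in\Gamma}\gamma\,\mathcal{G}'(U,V)=\mathrm{supp}(\mu_F)$ and carrying the $\mu_F$-null sets correctly through the projection $P$ and through $X\to X/\Gamma$. All the geometric substance — the local product structure, the asymptotic contraction along stable and unstable manifolds of uniformly recurrent regular vectors, and the Fubini and Hopf arguments — has already been absorbed into Lemmas \ref{Gxi} and \ref{lem8}, so what is left is essentially soft measure theory.
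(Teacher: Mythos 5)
Your proof is correct and follows essentially the same route as the paper: reduce to showing $f^+$ is a.e.\ constant, invoke Lemmas \ref{Gxi} and \ref{lem8} to get $f^+=f^+(c_1)$ $\mu_F$-a.e.\ on $\mathcal{G}'(U,V)$, and propagate by the $\Gamma$-invariance of $f^+$ and of $\mu_F$ on $SX$ to the full-measure set $P^{-1}\bigl(\Gamma\cdot((U\times V)\cap(\BBB^{\l_0}\times\BBB^{\l_0}))\bigr)$. The paper performs this last bookkeeping step slightly differently---translating only the set $\widetilde V$ of forward endpoints and forming $Y=\Gamma\cdot\widetilde V$ and the saturated set $Z$---but your direct translation of $\mathcal{G}'(U,V)$ by the countable group $\Gamma$ is equivalent and equally valid (your "support" should more precisely read "a set of full $\mu_F$-measure").
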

\begin{proof}
It is sufficient to prove that for any continuous $\Gamma$-invariant function $f:SX \to \mathbb{R}$, the function $f^+$ is constant $\mu_F \ae$ on $SX$. Let 
$$\widetilde{V}:=\{\eta \in V : \exists~c \in \widetilde {\mathcal{G}}_{rec}(U,V)~\mbox{with}~\eta=c(+\infty)\}.$$
By Lemma \ref{lem8} we know $f^{+}(c)=f^+(c_1)$ for almost all geodesics $c$ with $c(+\infty)\in \widetilde{V}$. By a Fubini type argument as in the proof of Lemma \ref{Gxi}, we have that $\mu_{F,o}(\widetilde{V})=\mu_{F,o}(\BBB^{\l_0}\cap V)$.

Let $Y:=\C\cdot \widetilde{V}\subset \C\cdot (\BBB^{\l_0}\cap V)$. We just showed that $Y\cap \BBB^{\l_0}\cap V$ has full measure in $\BBB^{\l_0}\cap V$ with respect to $\mu_{F,o}$, and hence with respect to $\mu_{F,q}$ for any $q\in X$. Therefore, 
\begin{equation*}
\begin{aligned}
&\mu_{F,o}((\C\cdot (\BBB^{\l_0} \cap V))\setminus Y)\le \mu_{F,o}(\C\cdot ((\BBB^{\l_0}\cap V)\setminus \widetilde{V}))\\
\le &\sum_{\b\in \C}\mu_{F,o}(\b\cdot ((\BBB^{\l_0}\cap V)\setminus \widetilde{V}))=\sum_{\b\in \C}\mu_{F,\b^{-1} o}((\BBB^{\l_0}\cap V)\setminus \widetilde{V})=0.
\end{aligned}
\end{equation*}
It follows that $\mu_{F,o}(Y)=\mu_{F,o}(\C\cdot (\BBB^{\l_0} \cap V))$.
Then the set
$$Z := \{\dot c(t) : c(+\infty)\in Y, ~t\in\mathbb{R}\}$$
has full $\mu_{F}$-measure in $SX$. Since $f$ is a $\Gamma$-invariant function,
$f^{+}$ is also $\Gamma$-invariant. Therefore,
 $f^{+}\equiv f^+(c_1)$ $\mu_F \ae$on $Z$. This implies that $f^+$ is constant $\mu_F \ae$on $SX$.
 So $\mu_F$ is ergodic.
\end{proof}

\begin{corollary}\label{regularfull}
  $\mu_F(\text{Reg})=1$  
\end{corollary}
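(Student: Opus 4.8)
The plan is to deduce the statement directly from the construction of $\mu_F$, with ergodicity supplying a convenient shortcut. Since $\text{Reg}$ is an open, $g^t$-invariant subset of $SM$ and $\mu_F$ is ergodic by Theorem~\ref{ergodic}, we have $\mu_F(\text{Reg})\in\{0,1\}$, so it will suffice to show $\mu_F(\text{Reg})>0$.

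First I would recall that the $g^t$- and $\C$-invariant Radon measure on $SX$ giving rise to $\mu_F$ is
$$\mu_F(A)=\int_{\C\cdot((U\times V)\cap(\BBB^{\l_0}\times\BBB^{\l_0}))}\mathrm{Vol}\big(\pi(P^{-1}(\xi,\eta)\cap A)\big)\,d\bar\mu_F(\xi,\eta),$$
and that $\bar\mu_{F,\l_0}$ assigns positive mass to $(U\times V)\cap(\BBB^{\l_0}\times\BBB^{\l_0})$, as established just before its definition (using $\mu_{F\circ\iota,o}(U)>0$ and $\mu_{F,o}(V)>0$ from Lemma~\ref{supp}, together with the boundedness of $\b_F$ on $\BBB^{\l_0}\times\BBB^{\l_0}$ coming from Lemma~\ref{Bowen}). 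Here $a$ is a regular axis and $U,V$ are the neighborhoods of $a(-\infty)$ and $a(+\infty)$ furnished by Lemma~\ref{key}. The key observation is that for every $(\xi,\eta)\in U\times V$, Lemma~\ref{key} yields a rank one geodesic joining $\xi$ and $\eta$; since a rank one geodesic does not bound a nondegenerate flat strip, it is the unique geodesic of $X$ with these endpoints, so $P^{-1}(\xi,\eta)$ consists precisely of the unit tangent vectors along this single geodesic, all of which lie in $\text{Reg}$. Because $\text{Reg}$ is $\C$-invariant (rank is an isometry invariant), the same holds over each $\C$-translate of $(U\times V)\cap(\BBB^{\l_0}\times\BBB^{\l_0})$.

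Hence, taking $A=\text{Reg}^c$ in the displayed formula, every integrand $\mathrm{Vol}(\pi(P^{-1}(\xi,\eta)\cap\text{Reg}^c))$ vanishes on the domain of integration, which gives $\mu_F(\text{Reg}^c)=0$ outright; a fortiori $\mu_F(\text{Reg})>0$, and the ergodic dichotomy (or, equivalently, the positivity $\mu_F(\mathcal{G}'(U,V))>0$ noted in the construction together with the inclusion $\mathcal{G}'(U,V)\subset\text{Reg}$, which follows from the same observation) upgrades this to $\mu_F(\text{Reg})=1$. There is essentially no genuine obstacle here; the only point that deserves a line of justification is the uniqueness of the connecting geodesic, i.e.\ that a rank one geodesic cannot sit inside a nontrivial flat strip --- this is the flat strip theorem, and it is precisely why the neighborhoods $U,V$ were taken around the endpoints of a \emph{regular} axis in the first place.
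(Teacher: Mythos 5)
Your proof is correct and follows essentially the same route as the paper: positivity of $\mu_F(\text{Reg})$ comes from the construction of $\mu_F$ (the measure is built over $\C$-translates of $(U\times V)\cap(\BBB^{\l_0}\times\BBB^{\l_0})$, where connecting geodesics are rank one), and flow-invariance of $\text{Reg}$ plus ergodicity of $\mu_F$ upgrades this to full measure. Your additional observation that the construction in fact gives $\mu_F(\text{Reg}^c)=0$ directly is a valid (and slightly stronger) remark, but it does not change the substance of the argument.
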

\begin{proof}
From the definition of $\mu_F$, $\text{Reg}$ has positive $\mu_F$-measure. Since $\text{Reg}$ is invariant under the geodesic flow, it has full $\mu_F$-measure by the ergodicity of $\mu_F$.
\end{proof}


\subsection{Proof of Theorem \ref{es}}

\begin{lemma}\label{uppergibbs}
Let $n\gg N$. For $\mu_{F} \ae v\in \L_{k,\lambda,N}$, $\e>0$ sufficiently small, there exists $L=L(\e, k, \l, N)>0$ such that
\begin{equation}\label{e:bowenball}
\mu_{F}(B_n(v,\e)\cap\L_{k,\lambda,N})\le Le^{\int_0^n(F(g^tv)- \d_F)dt+\frac{10k-4}{k(k-1)}n\|F\|}.
\end{equation}
\end{lemma}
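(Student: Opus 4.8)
The plan is to reduce the bound to the half-shadow estimate of Proposition \ref{alph} via the local product structure of $\mu_F$. By $\Gamma$-equivariance of $\mu_F$ and of $\{\mu_{F,q}\}_{q\in X}$ we may assume $\pi v\in\F$; put $p:=\pi v$, $x:=\pi g^nv$ and $\xi:=v^-=c_{p,x}(-\infty)$. Since $c_v|_{[0,n]}$ is a geodesic segment we have $d(p,x)=n\gg N$ and $\dot c_{p,x}(0)=v\in\L_{k,\lambda,N}$, so Proposition \ref{alph} applies with this $p,x,\xi$ and central vector $v$, and, crucially, $\int_p^x(F-\d_F)=\int_0^n(F(g^tv)-\d_F)\,dt$ exactly. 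Since $v\in\L_{k,\lambda,N}\subset\UR(\lambda)\subset\text{Reg}$, for $\mu_F$-a.e.\ such $v$ the measure $\mu_F$ has, by its construction in Subsection 4.1, a local product decomposition in a flow box around $v$: up to the density $\b_F$ it is the product of Lebesgue measure along the flow, the conditional on backward endpoints (which is $\mu_{F\circ\iota,\cdot}$, the stable direction) and the conditional on forward endpoints (which is $\mu_{F,\cdot}$, the unstable direction); moreover \cite[Lemma 4.4]{BCFT} furnishes a local product structure on this box with some constant $\kappa>0$. Fix $\e>0$ small enough that $B_n(v,\e)$ and the comparison estimates below take place inside this box.

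Next I would locate $B_n(v,\e)\cap\L_{k,\lambda,N}$ inside the box. Let $w$ belong to it. Since the Bowen condition constrains only the times $[0,n]$, the backward-endpoint and flow coordinates of $w$ are merely $O(\e)$-small: $w^-$ lies in a neighborhood $\tilde U\subset\pX$ of $v^-$ of diameter $O(\e)$ and the flow offset ranges in an interval of length $O(\e)$. The forward-endpoint coordinate is pinned down by $d_K(g^nw,g^nv)<\e$: the ray $[\pi w,w^+)$ is at $\pi g^nw\in B(x,\e)$ at time $n$, so convexity of the distance function in nonpositive curvature gives $w^+\in pr_{\xi}(B(x,C_1\e))$ for a constant $C_1=C_1(\kappa)$; and $w\in\L_{k,\lambda,N}$ forces $\dot c_{\xi,w^+}(0)\in\L_{k,\lambda,N}$ after a harmless adjustment of the parameters, i.e.\ $w^+\in G_\xi(k,\lambda,N)$. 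Writing $\mu_F$ in the box as above and integrating out the flow and the backward-endpoint directions (the flow segment has length $O(\e)$, $\mu_{F\circ\iota,o}(\tilde U)\le 1$, and $\b_F$ is bounded on the uniformly regular part by the distortion estimates of Lemma \ref{Bowen} applied to $v,\iota v\in\UR(\lambda)$), one gets
\begin{equation*}
\mu_F(B_n(v,\e)\cap\L_{k,\lambda,N})\;\le\;C_2(\e,k,\lambda,N)\,\mu_{F,o}\big(pr_{\xi}(B(x,C_1\e))\cap G_\xi(k,\lambda,N)\big).
\end{equation*}

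The third step applies Proposition \ref{alph}. Because $pr_{\xi}(B(x,C_1\e))\cap G_\xi(k,\lambda,N)\subset G_\xi(k,\lambda,N)$ and $d\mu_{F,\pi v}/d\mu_{F,o}=e^{-C_{F-\d_F,\cdot}(\pi v,o)}$ (Theorem \ref{ps}(2)) is bounded over $G_\xi(k,\lambda,N)$ by a constant depending only on $\lambda$ (again via Lemma \ref{Bowen}), one may replace $\mu_{F,o}$ by $\mu_{F,\pi v}$ at the cost of a further bounded factor, and then Proposition \ref{alph} with $r=C_1\e$ yields
\begin{equation*}
\mu_{F,\pi v}\big(pr_{\xi}(B(x,C_1\e))\cap G_\xi(k,\lambda,N)\big)\;\le\;\rho(C_1\e,k,\lambda,N)\,e^{\int_0^n(F(g^tv)-\d_F)\,dt+\frac{8k-2}{k(k-1)}n\|F\|}.
\end{equation*}
Collecting all constants into $L=L(\e,k,\lambda,N)$ and absorbing the extra $\tfrac2k\,n\|F\|=\big(\tfrac{10k-4}{k(k-1)}-\tfrac{8k-2}{k(k-1)}\big)n\|F\|$ coming from the density comparisons and the parameter adjustment of the preceding paragraph proves \eqref{e:bowenball}.

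The main obstacle is exactly this constant-bookkeeping: $\b_F$ is a priori unbounded on $\p^2X$, so every comparison of conditional densities, the passage from $\mu_{F,o}$ to $\mu_{F,\pi v}$, and the adjustment ``$w\in\L_{k,\lambda,N}\Rightarrow w^+\in G_\xi(k,\lambda,N)$'' must be kept within the $\UR(\lambda)$-regime, where Lemma \ref{Bowen} (and Propositions \ref{rate}, \ref{contracting}) provide distortion bounds depending only on $\lambda$ rather than on $v$ or $n$. This uniformity is what makes $L$ depend only on $\e,k,\lambda,N$ and what pins the exponent to $\frac{10k-4}{k(k-1)}$.
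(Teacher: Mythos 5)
Your overall strategy is the same as the paper's: reduce the Bowen-ball bound to the half-shadow estimate of Proposition \ref{alph} via the product structure of $\mu_F$, with the flow direction contributing $O(\e)$ and the backward-endpoint marginal bounded by $1$. However, there is a genuine gap in the step where you pin down the forward endpoints. You project everything from the \emph{fixed} backward endpoint $\xi=v^-$ and assert that $w\in\L_{k,\lambda,N}$ forces $\dot c_{\xi,w^+}(0)\in\L_{k,\lambda,N}$ ``after a harmless adjustment of the parameters.'' This is not harmless and is most likely false: the vector $u:=\dot c_{v^-,w^+}(0)$ lies on a \emph{different} geodesic from $c_w$ (it is the bracket of $v$ and $w$, with backward endpoint $v^-$ and forward endpoint $w^+$), and membership in $\L_{k,\lambda,N}$ encodes uniform recurrence and the quantitative $\UR(\lambda)$ contraction of that specific orbit. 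The forward orbit of $u$ is asymptotic to that of $w$ and its backward orbit to that of $v$, so $g^{t}u$ accumulates (with positive density) on points near $w$ and $-v$, not on neighborhoods of $u$ itself; hence $u$ need not be uniformly recurrent at all, let alone lie in $\UR(\lambda)$ or in $\L_{k',\lambda',N'}$ for controlled parameters. Since the proof of Proposition \ref{alph} genuinely uses the $\L_{k,\lambda,N}$-property of $\dot c_{\xi,\eta}(0)$ (to find regular times $s'$ in $[d(z_0,z_1),\tfrac{k}{k-1}d(z_0,z_1)]$), you cannot apply it to the set $pr_\xi(B(x,C_1\e))\cap G_\xi(k,\lambda,N)$ without this membership, and the $\tfrac{2}{k}n\|F\|$ of slack in the exponent does not repair a parameter change of unknown size.

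The paper avoids this entirely by letting the reference point at infinity vary with $w$: it writes $P(B_n(v,\e)\cap\L_{k,\lambda,N})\subset\bigcup_{\eta}\{\eta\}\times\bigl(pr_\eta(B(x,\e))\cap G_\eta(k,\lambda,N)\bigr)$ with $\eta=w^-$, so that the relevant geodesic from $\eta$ to $w^+$ is $c_w$ itself and $\dot c_{\eta,w^+}(0)=w\in\L_{k,\lambda,N}$ holds by hypothesis. Proposition \ref{alph} is then applied at $p=\pi w$ (not $\pi v$), the backward marginal over $\eta\in pr_x(B(o,D+\e))$ is bounded by $1$, and the comparison $\int_0^nF(g^tw)\,dt$ versus $\int_0^nF(g^tv)\,dt$ (which your route was designed to avoid) is exactly what produces the extra $\tfrac{2}{k}n\|F\|$ turning $\tfrac{8k-2}{k(k-1)}$ into $\tfrac{10k-4}{k(k-1)}$. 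To repair your argument you should either adopt this $\eta$-dependent decomposition, or supply a genuine proof that the bracket vector $[v,w]$ retains the required uniform recurrence and regularity, which the definitions in the paper do not provide.
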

\begin{proof}
Let $o\in \F\subset X$ be the reference point in the definition of $\mu_{F}$. We lift $v$ to $X$, still denoted by $v$ such that $\pi v\in \F$. So $d(o,\pi v)\le D$.
For $\e\ll \inj(M)$, we can lift $B_n(v,\e)$ to $X$, such that for any lifted $w\in B_n(v,\e)$, we have $d_K(g^tv,g^tw)\le \e$ for any $0\le t\le n$. Moreover, we can assume that $\pi(B_n(v,\e))\subset \F$. Pick any $w\in B_n(v,\e) \cap \L_{k,\lambda,N}$.
Denote $x:=c_{v}(n)$ and $\eta:= c_w(-\infty)$. Let $c_{\eta,x}$ be the geodesic connecting $\eta$ and $x$ with $c_{\eta,x}(n)=x$.
Then
$$d(o,c_{\eta,x}(0))\le d(o, \pi w)+d(\pi w, c_{\eta,x}(0))\le D+\e.$$
Thus $\eta\in pr_x(B(o,D+\e))$. Therefore,
$$P(B_n(v,\e)\cap\L_{k,\lambda,N})\subset \bigcup_{\eta\in pr_x(B(o,D+\e))}\{\eta\}\times (pr_\eta(B(x,\e))\cap G_\eta( k, \l, N))$$
where $G_\eta( k, \l, N)$ is defined as in Proposition \ref{alph}.

Note that $pr_\eta(B(x,\e)) \subset pr_\eta(B(c_w(n),2\e))$. Then
\begin{equation*}
\begin{aligned}
&\mu_{F,o}(pr_\eta(B(c_w(n),2\e))\cap G_\eta(k, \l, N))\\
\le &L_1\mu_{F,\pi w}(pr_\eta(B(c_w(n),2\e))\cap G_\eta(k,\l, N))\\
\le &L_1\rho e^{\int_{0}^n (F(g^t w)-\d_F)dt+\frac{8k-2}{k(k-1)}n\|F\|} \\
\le &L_1\rho L_2 e^{\int_{0}^n (F(g^t v)-\d_F)dt+(\frac{8k-2}{k(k-1)}+\frac{2}{k})n\|F\|}
\end{aligned}
\end{equation*}
where $\rho=\rho(\e, k, \lambda, N)$ is from Proposition \ref{alph}, and $L_1=L_1(D, \e, \l)$, $L_2=L_2(\e,\lambda)$. In the last inequality above, we used a similar argument as in the proof of Proposition \ref{alph} based on the fact that $v,w\in \L_{k,\l, N}$.

On the other hand, we have
$$\mu_{F,o}(pr_x(B(o,D+\e)))\le \mu_{F,o}(\pX)=1.$$
Note that the diameter of $B(\pi v, \e)$ is no more than $2\e$. By the definition of $\mu_F$, we obtain \eqref{e:bowenball} with $L:=2\e L_1\rho L_2$ up to a normalization constant.
\end{proof}

\begin{lemma}(Cf. \cite[Theorem I.I]{Kat})\label{Katok}
 Let $f:Y\to Y$ be a homeomorphism on a compact metric space $(Y,d)$, and $\mu$ an ergodic measure. Then for any $0<\rho<1$,
 \[h_\mu(f)=\lim_{\e\to 0}\liminf_{n\to \infty}\frac{1}{n}\log N(n,\e,\rho)\]
 where $N(n,\e,\rho)$ denotes the minimal number of $(n,\e)$-Bowen balls which cover a set of measure more than $1-\rho$.
\end{lemma}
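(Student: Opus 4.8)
The plan is to prove the two inequalities
$\lim_{\e\to0}\liminf_{n\to\infty}\tfrac1n\log N(n,\e,\rho)\le h_\mu(f)$ and
$\lim_{\e\to0}\liminf_{n\to\infty}\tfrac1n\log N(n,\e,\rho)\ge h_\mu(f)$
separately; this is Katok's classical argument, combining a finite Borel partition with the Shannon--McMillan--Breiman theorem and Birkhoff's ergodic theorem. Throughout, for a finite Borel partition $\mathcal{P}$ we write $\mathcal{P}^n:=\bigvee_{k=0}^{n-1}f^{-k}\mathcal{P}$, and we recall that ergodicity makes $-\tfrac1n\log\mu(\mathcal{P}^n(x))\to h_\mu(f,\mathcal{P})$ for $\mu$-a.e.\ $x$.

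\textbf{Upper bound.} Fix $\delta>0$ and an arbitrary $\e>0$, and choose $\mathcal{P}$ with $\diam\mathcal{P}<\e$. Then any element $C$ of $\mathcal{P}^n$ has $d_n$-diameter less than $\e$, hence lies inside a single $(n,\e)$-Bowen ball. For $n$ large the union $G_n$ of those elements of $\mathcal{P}^n$ of measure at least $e^{-n(h_\mu(f,\mathcal{P})+\delta)}$ has $\mu(G_n)>1-\rho$ by Shannon--McMillan--Breiman, and it consists of at most $e^{n(h_\mu(f,\mathcal{P})+\delta)}$ elements; covering $G_n$ by the corresponding Bowen balls shows $N(n,\e,\rho)\le e^{n(h_\mu(f)+\delta)}$. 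Taking $\liminf_n$, then letting $\e$ and $\delta\to0$, gives the upper bound (this direction holds for every $\rho\in(0,1)$).

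\textbf{Lower bound.} Fix $\delta>0$ and a partition $\mathcal{P}$ with $h_\mu(f,\mathcal{P})>h_\mu(f)-\delta$ and $\mu(\partial\mathcal{P})=0$; set $m:=|\mathcal{P}|$ and pick $\beta>0$ with $\beta\log m<\delta$. Since $\mu(\partial\mathcal{P})=0$, there is $\e>0$ so small that the $2\e$-neighbourhood $U$ of $\partial\mathcal{P}$ satisfies $\mu(U)<\beta/2$. The elementary point is: if a point $y$ visits $U$ at most $\beta n$ times up to time $n$, then $B_n(y,2\e)$ meets at most $m^{\beta n}$ elements of $\mathcal{P}^n$, since at the times with $f^k y\notin U$ the relevant $\mathcal{P}$-element is uniquely determined and at the remaining fewer than $\beta n$ times there are at most $m$ possibilities. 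Let $G_n$ be the intersection of the Shannon--McMillan--Breiman-good set $\{\mu(\mathcal{P}^n(x))\le e^{-n(h_\mu(f,\mathcal{P})-\delta)}\}$ with the Birkhoff-good set $\{\tfrac1n\#\{0\le k<n: f^kx\in U\}<\beta\}$; then $\mu(G_n)\to1$, and for any $(n,\e)$-Bowen ball $B$ meeting $G_n$, choosing $y\in B\cap G_n$ and using $B\subseteq B_n(y,2\e)$ together with the observation above and the SMB bound on each of the few $\mathcal{P}^n$-elements that $B$ meets, we obtain $\mu(B\cap G_n)\le m^{\beta n}e^{-n(h_\mu(f,\mathcal{P})-\delta)}$. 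Now if $\{B_n(x_i,\e)\}_{i=1}^N$ covers a set $A$ with $\mu(A)>1-\rho$, then for $n$ large $\mu(A\cap G_n)>\tfrac12(1-\rho)$, whence $\tfrac12(1-\rho)\le N\,m^{\beta n}e^{-n(h_\mu(f,\mathcal{P})-\delta)}$ and therefore $\liminf_n\tfrac1n\log N(n,\e,\rho)\ge h_\mu(f,\mathcal{P})-\delta-\beta\log m\ge h_\mu(f)-3\delta$. Since $\e=\e(\delta)$ and $\liminf_n\tfrac1n\log N(n,\e,\rho)$ is non-increasing in $\e$, letting $\delta\to0$ completes the proof.

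The main obstacle is the lower bound, and the delicate point there is to make it work for every $\rho\in(0,1)$, including $\rho$ close to $1$: one must not try to bound the measure of an individual Bowen ball (whose center may be atypical), but only the measure of its intersection with the good set $G_n$, and then exploit that $\mu(A\cap G_n)$ stays bounded below by a fixed positive constant because $\mu(G_n)\to1$. The order of the choices is likewise essential --- $\mathcal{P}$ is chosen first, $\beta$ second (small relative to $\delta/\log m$), and only then $\e$ (small enough, via $\mu(\partial\mathcal{P})=0$, that the orbit-time in the $2\e$-neighbourhood of $\partial\mathcal{P}$ is $\mu$-typically below the level $\beta$).
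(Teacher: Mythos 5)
The paper offers no proof of this lemma at all --- it is quoted directly from Katok's paper --- so there is nothing in-paper to compare against. Your argument is the standard Katok proof (partition of small diameter plus Shannon--McMillan--Breiman for the upper bound; for the lower bound, counting the elements of $\mathcal{P}^n$ met by a Bowen ball in terms of the orbit's visits to a small neighbourhood of $\partial\mathcal{P}$), and it is essentially correct; in particular you handle the only delicate point correctly, namely bounding $\mu(B\cap G_n)$ rather than $\mu(B)$ so that the estimate survives for every $\rho\in(0,1)$.

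One step deserves a caveat. You assert that if $f^ky$ lies outside the $2\varepsilon$-neighbourhood $U$ of $\partial\mathcal{P}$ and $d(f^kz,f^ky)<2\varepsilon$, then $\mathcal{P}(f^kz)=\mathcal{P}(f^ky)$. This amounts to saying that a $2\varepsilon$-ball disjoint from $\partial\mathcal{P}$ lies in $\mathrm{int}\,\mathcal{P}(f^ky)$, which requires the ball to be connected: it holds in the paper's setting ($Y=SM$ a manifold, where small metric balls are connected), but fails for a general compact metric space as in the statement (two clopen partition elements at mutual distance less than $2\varepsilon$ have empty boundary, so $U$ does not see them, yet a single $2\varepsilon$-ball meets both). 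The standard repair costs nothing: define $U=U_\varepsilon:=\{x:B(x,2\varepsilon)\not\subseteq\mathcal{P}(x)\}$; these Borel sets decrease, as $\varepsilon\downarrow0$, to a subset of $\partial\mathcal{P}$, so $\mu(\partial\mathcal{P})=0$ still yields $\mu(U_\varepsilon)<\beta/2$ for small $\varepsilon$, and with this $U$ your counting of at most $m^{\beta n}$ elements goes through verbatim. (Two further cosmetic points: if $h_\mu(f)=\infty$ one runs the lower bound with partitions of arbitrarily large entropy rather than with $h_\mu(f,\mathcal{P})>h_\mu(f)-\delta$; and one should record that partitions with $\mu(\partial\mathcal{P})=0$, small diameter, and entropy close to $h_\mu(f)$ exist, which is standard.)
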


We reformulate and prove Theorem \ref{es} as follows.
\begin{proposition}
$\mu_F$ is an equilibrium state, that is,
\begin{equation*}
h_{\mu_{F}}(g^1)+\int Fd\mu_{F} =\d_F.
\end{equation*}
\end{proposition}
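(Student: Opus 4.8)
The plan is to obtain the equality from two opposite inequalities. Since $P(\text{Sing},F)<P(F)$, Proposition \ref{pro9} gives $P(F)=\d_F$, and the variational principle for pressure then yields at once
$$h_{\mu_F}(g^1)+\int F\, d\mu_F \le P(F)=\d_F.$$
For the reverse inequality I would apply Katok's entropy formula (Lemma \ref{Katok}) to the homeomorphism $g^1$ with the metric $d_K$: since the Knieper metric is built from the time-one flow, the time-one Bowen balls coincide with the balls $B_n(\cdot,\e)$ up to a shift of $n$ by $1$, so that $h_{\mu_F}(g^1)=\lim_{\e\to0}\liminf_{n\to\infty}\tfrac1n\log N(n,\e,\rho)$, where $N(n,\e,\rho)$ is the least number of balls $B_n(\cdot,\e)$ needed to cover a set of $\mu_F$-measure $>1-\rho$. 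The reverse inequality will then follow by bounding these covering numbers from below via the upper Gibbs estimate of Lemma \ref{uppergibbs}.

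Fix $k\in\NN$ and $\rho\in(0,\tfrac13)$. Since $\mu_F$ is ergodic (Theorem \ref{ergodic}) with $\mu_F(\text{Reg})=1$ (Corollary \ref{regularfull}) and $\mu_F(\UR)=1$ (Lemma \ref{uniformrec}), the argument in the proof of Lemma \ref{pesin} gives $\mu_F(\URR)=1$; hence we may fix $i,N\in\NN$ (write $\l=1/i$) so that the $\mu_F$-conull subset $\L^*$ of $\L_{k,\l,N}$ (Definition \ref{lsets}) on which the conclusion of Lemma \ref{uppergibbs} holds satisfies $\mu_F(\L^*)>1-\rho$. Fix also a small $\d>0$; by Birkhoff's theorem together with Egorov's theorem there are $N'\in\NN$ and a set $B_\d$ with $\mu_F(B_\d)>1-\rho$ such that $\big|\tfrac1n\int_0^n F(g^tv)\,dt-\int F\, d\mu_F\big|<\d$ for all $v\in B_\d$ and all $n\ge N'$.

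Now fix $\e\ll\inj(M)$, take $n\ge\max\{N,N'\}$ large, and let $A$ be a set with $\mu_F(A)>1-\rho$ covered by $M:=N(n,\e,\rho)$ Bowen balls $B_n(v_1,\e),\dots,B_n(v_M,\e)$. Discard those balls not meeting $B_\d\cap\L^*$, so that the remaining ones (at most $M$ of them) cover $A\cap B_\d\cap\L^*$, a set of measure $>1-3\rho>0$; for each of them choose a new center $v_j'\in B_n(v_j,\e)\cap B_\d\cap\L^*$, so that $B_n(v_j,\e)\subset B_n(v_j',2\e)$ by the triangle inequality for $d_n$. Using subadditivity, Lemma \ref{uppergibbs} at scale $2\e$ (applicable since $v_j'\in\L^*$), and $v_j'\in B_\d$,
$$1-3\rho<\mu_F(A\cap B_\d\cap\L^*)\le\sum_j\mu_F\big(B_n(v_j',2\e)\cap\L_{k,\l,N}\big)\le M\,L\,e^{\,n\left(\int F\, d\mu_F+\d-\d_F\right)+\frac{10k-4}{k(k-1)}n\|F\|},$$
where $L=L(2\e,k,\l,N)$. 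Hence $\tfrac1n\log N(n,\e,\rho)\ge\d_F-\int F\, d\mu_F-\d-\frac{10k-4}{k(k-1)}\|F\|-\tfrac1n\log\frac{L}{1-3\rho}$, and taking $\liminf_{n\to\infty}$ and then $\e\to0$, Lemma \ref{Katok} gives
$$h_{\mu_F}(g^1)\ge\d_F-\int F\, d\mu_F-\d-\frac{10k-4}{k(k-1)}\|F\|.$$
Letting $\d\to0$ and then $k\to\infty$ (so $\tfrac{10k-4}{k(k-1)}\to0$) yields $h_{\mu_F}(g^1)+\int F\, d\mu_F\ge\d_F$, which with the first inequality proves the proposition; combined with the uniqueness in Theorem \ref{bcft}, this identifies $\mu_F$ as \emph{the} equilibrium state of $F$.

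The real difficulty is the one already resolved by Lemma \ref{uppergibbs}: there is no two-sided shadow lemma for the Busemann density, only a one-sided (upper) Gibbs bound, and only over the uniformly regular sets $\L_{k,\l,N}$. Granting it, the two points needing care above are (i) that, at the cost of doubling radii, the centers of an arbitrary covering family can be relocated into $B_\d\cap\L^*$ — which works precisely because $\L^*$ is $\mu_F$-conull in $\L_{k,\l,N}$ and $B_\d$ is $\mu_F$-large — and (ii) that the additive error $\frac{10k-4}{k(k-1)}\|F\|$, which originates from the nonuniform hyperbolicity, can be made arbitrarily small by enlarging $k$, i.e.\ by pushing $\mu_F(\UR(\l))$ closer to $1$.
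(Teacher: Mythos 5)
Your proposal is correct and follows essentially the same route as the paper: the upper bound via Proposition \ref{pro9} and the variational principle, and the lower bound via Katok's entropy formula combined with the one-sided Gibbs estimate of Lemma \ref{uppergibbs} over the sets $\L_{k,\l,N}$, with centers relocated at the cost of doubling the Bowen-ball radius and the error $\frac{10k-4}{k(k-1)}\|F\|$ killed by letting $k\to\infty$. The only (favorable) difference is that you bound the covering number of an arbitrary set of measure $>1-\rho$ by intersecting with $B_\d\cap\L^*$, which matches the precise statement of Lemma \ref{Katok} slightly more carefully than the paper's choice of a minimal covering of the fixed good set.
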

\begin{proof}
Since $\mu_F$ is ergodic, by Birkhoff ergodic theorem, there exists a subset $W\subset SM$ of full $\mu_F$-measure, such that for any $w\in W$
\[\lim_{n\to \infty}\frac{1}{n}\int_0^nF(g^tw)dt=\int Fd\mu_F.\]
Let $\kappa>0$ be small. Denote $W_K\subset W$ the set of $w\in W$ such that
\begin{equation}\label{e:birkhoff}
\left|\frac{1}{n}\int_0^nF(g^tw)dt-\int Fd\mu_F\right|<\kappa, \quad \forall n\ge K.
\end{equation}
Then $W=\cup_{K=1}^\infty W_K$.

By Corollary \ref{regularfull}, $\mu_F(\URR)=\mu_{F\circ \iota}(\URR)=1$.
For any $0<\rho<1$ and any $k\in \NN$, pick $\l>0$ such that $\mu_F(\UR(\l))>\max\{1-\rho, 1-\frac{1}{k}\}$. By Lemma \ref{equi}(3), $\mu_F(\iota(\UR(\l)))=\mu_{F\circ\iota}((\UR(\l)))$. By shrinking $\l$ if necessary, we can assume $\mu_F(\iota(\UR(\l)))>\max\{1-\rho, 1-\frac{1}{k}\}$. Thus by Birkhorff ergodic theorem, if $N$ is large enough, we have $\mu_F(\L_{k,\l,N})>1-\rho$. Pick $K$ large enough  such that $\mu_F(\L_{k,\l,N}\cap W_K)>1-\rho$.  

Let $n\gg N$ and $\{B_n(v_i,\e)\}_{i=1}^S$ be a minimal set of $(n,\e)$-Bowen balls which cover $\L_{k,\l,N}\cap W_K$. Then $B_n(v_i,\e)\cap \L_{k,\l,N}\cap W_K\neq \emptyset$ for each $1\le i\le S$. Pick $w_i\in B_n(v_i,\e)\cap \L_{k,\l,N}\cap W_K$. Then $B_n(v_i,\e)\subset B_n(w_i,2\e)$. By Lemma \ref{uppergibbs} and \eqref{e:birkhoff}, 
\begin{equation*}
\begin{aligned}
&\mu_F(B_n(v_i,\e)\cap \L_{k,\l,N}\cap W_K)\le \mu_F(B_n(w_i,2\e)\cap \L_{k,\l,N}\cap W_K)\\
\le &Le^{\int_0^n(F(g^tw_i)- \d_F)dt+\frac{10k-4}{k(k-1)}n\|F\|}\le Le^{n(\int Fd\mu_F+\kappa-\d_F)+\frac{10k-4}{k(k-1)}n\|F\|}.
\end{aligned}
\end{equation*}
Thus $S>(1-\rho)L^{-1}e^{-n(\int Fd\mu_F+\kappa-\d_F)-\frac{10k-4}{k(k-1)}n\|F\|}$.
Then by Katok's entropy formula Lemma \ref{Katok}
\begin{equation*}
\begin{aligned}
h_{\mu_{F}}(g^1)&\ge \lim_{\e\to 0}\liminf_{n\to \infty}\frac{1}{n}\log \left((1-\rho)L^{-1}e^{-n(\int Fd\mu_F+\kappa-\d_F)-\frac{10k-4}{k(k-1)}n\|F\|}\right)\\
&=-\int Fd\mu_{F}-\kappa+\d_F-\frac{10k-4}{k(k-1)}\|F\|.
\end{aligned}
\end{equation*}
Since $\kappa$ and $k$ are arbitrary, we have $h_{\mu_{F}}(g^1)+\int Fd\mu_{F}\ge \d_F=P(F)$.
By variational principle, $h_{\mu_{F}}(g^1)+\int Fd\mu_{F}\le P(F)$, thus the proposition follows.
\end{proof}

\section{Bernoulli property of equilibrium states}
In this section, we provide a proof of Theorem \ref{bernoulli}, that is, the unique equilibrium state $\mu_F$ is Bernoulli. This will be done based on the classic argument showing that the Kolmogorov property implies Bernoulli property for smooth invariant measures of hyperbolic systems. In \cite{CT}, Call and Thompson showed that $\mu_F$ has the Kolmogorov property. They also showed the unique MME is Bernoulli by utilizing the Patterson-Sullivan construction of the MME. The key progress here is that the above Pattterson-Sullivan construction of $\mu_F$ provides the local product structure we need.

The argument was carried out by Chernov and Haskell \cite{CH} for smooth invariant measures of suspension flows over some nonuniformly hyperbolic maps with sigularities. The argument is also true for hyperbolic invariant measures with local product structure. We follow the lines in \cite[Section 7]{CT}.

In \cite{CH}, if an invariant measure $\mu$ has the Kolmogorov property and there exists an $\e$-regular covering
with non-atomic conditionals for $\mu$ for any $\e > 0$, then any finite partition $\xi$ of the
phase space with piecewise smooth boundary and a constant $C > 0$ such that
$\mu(B(\partial \xi, \d)) \le C\d$ for all $\d > 0$ is \emph{Very Weak Bernoulli}. A refining sequence of such
partitions with diameter going to $0$ suffices to conclude the Bernoulli property for
$\mu$. Such a sequence of partitions exists in this setting by \cite[Lemma 4.1]{OW}. 

Thus in our case, to conclude that $\mu_F$ is Bernoulli, we only need to show that $\e$-regular coverings for $\mu_F$ exist for all $\e > 0$. First, let us give Chernov and Haskell's definition of $\e$-regular covering.

\begin{definition}\label{recdef}
A \emph{rectangle} in $SM$ is a measurable set $R\subset SM$, equipped with a distinguished point $z \in R$ with the property that for all points $x, y \in R$ the local weak stable manifold $W_\loc^{cs}(x)$ and the local unstable $W_\loc^{u}(y)$ intersect each other at a single point, denoted by $[x,y]$, which lies in $R$.
\end{definition}
Notice that a rectangle $R$ can be thought of as the Cartesian product of $W_\loc^{cs}(z)\cap R$ and $W_\loc^{u}(z)\cap R$. Given a probability measure $\mu$ on $SM$, there is a natural product measure
$$\mu^p_R:=\mu^u_z\times \tilde\mu^{cs}_z,$$
where $\mu^u_z$ is the conditional measure induced by $\mu$ on $W_\loc^u(z)\cap R$ with respect to the measurable partition of $R$ into local unstable manifolds, and $\tilde\mu^{cs}_z$ is the corresponding factor measure on $W^{cs}_\loc(z)$.

\begin{definition} \label{coveringdef}
Given any $\e> 0$, we define an $\e$-regular covering for $\mu$ of the
phase space $SM$ to be a finite collection $\mathfrak{R}=\mathfrak{R}_\e$ of disjoint rectangles such that
\begin{enumerate}
  \item $\mu(\cup_{R\in \mathfrak{R}}R) > 1-\e$;
  \item Given any two points $x, y\in R \in \mathfrak{R}$, which lie in the same unstable or weakly
stable manifold, there is a smooth curve on that manifold which connects $x$
and $y$ and has length less than $100\diam R$;
  \item For every $R\in \mathfrak{R}$, with distinguished point $z\in R$, the product measure $\mu^p_R:=\mu^u_z\times \tilde\mu^{cs}_z$ satisfies
$$\left|\frac{\mu^p_R(R)}{\mu(R)}-1\right| <\e.$$
Moreover, $R$ contains a subset $G$ with $\mu(G) > (1-\e)\mu(R)$ such that for all $x\in G$,
$$\left|\frac{d\mu^p_R}{d\mu}(x)-1\right| <\e.$$
\end{enumerate}
\end{definition}

As discussed above, to finish the proof of Theorem \ref{bernoulli}, it remains to prove the following lemma.
\begin{lemma}\label{covering}
For any $\d>0$ and $\e>0$, there exists an $\e$-regular covering $\mathcal{R}_{\e}$ of connected rectangles of $SM$ for $\mu_F$, with $\diam(R)<\d$ for any $R\in \mathcal{R}_{\e}$.
\end{lemma}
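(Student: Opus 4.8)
The plan is to read off a geometric local product structure for $\mu_F$ directly from its Patterson--Sullivan description and to realise the rectangles of an $\e$-regular covering as small flow boxes adapted to the uniformly recurrent and regular set $\URR$. Fix $\e>0$. Using $\mu_F(\URR)=\mu_{F\circ\iota}(\URR)=1$ (Corollary \ref{regularfull}) together with Lemma \ref{equi}(3), first choose $\l_0>0$ with $\mu_F(\UR(\l_0)\cap\iota(\UR(\l_0)))>1-\e/4$; recall $\b_F$ is bounded on $\BBB^{\l_0}\times\BBB^{\l_0}$ by Lemma \ref{Bowen}, and write $M=M(\l_0)$ for this bound. For a vector $v\in\UR(\l_0)$ with $\pi v\in\F$, Lemma \ref{key} furnishes neighbourhoods $U_v\ni v^-$ and $V_v\ni v^+$ in $\pX$ joined by unique regular geodesics; with a small transverse cross-section $W_v\ni v$ and a small $\tau_v>0$, the flow box
\begin{equation*}
B_v=\{g^s w:\ w=\dot c_{\xi,\eta}(t)\in W_v,\ (\xi,\eta)\in U_v\times V_v,\ |s|<\tau_v\}
\end{equation*}
is a connected set carrying coordinates $(\xi,\eta,s)$ in which a local unstable leaf is $\{\xi=\mathrm{const},\,s=\mathrm{const}\}$ and a local weak stable leaf is $\{\eta=\mathrm{const}\}$, and, after shrinking, $\diam B_v<\d$. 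Since in the definition of $\nu_F$ the $\mathrm{Vol}$-fibre over $(\xi,\eta)$ is an arc of the corresponding geodesic, on $B_v$ one has, up to a normalising constant,
\begin{equation*}
d\mu_F|_{B_v}=\b_F(\xi,\eta)\,d\mu_{F\circ\iota,o}(\xi)\,d\mu_{F,o}(\eta)\,ds .
\end{equation*}

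Taking $R:=B_v$ with distinguished point $v$, the product measure $\mu^p_R=\mu^u_v\times\tilde\mu^{cs}_v$ of Definition \ref{recdef} is, up to normalisation, $\b_F(v^-,\eta)\big(\int_{V_v}\b_F(\xi,\eta')\,d\mu_{F,o}(\eta')\big)\,d\mu_{F,o}(\eta)\,d\mu_{F\circ\iota,o}(\xi)\,ds$, so $d\mu^p_R/d\mu_F$ is proportional to $\dfrac{\b_F(v^-,\eta)}{\b_F(\xi,\eta)}\cdot\dfrac{\int_{V_v}\b_F(\xi,\eta')\,d\mu_{F,o}(\eta')}{\int_{V_v}\b_F(v^-,\eta')\,d\mu_{F,o}(\eta')}$. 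Since $\log\b_F(\xi,\eta)=C_{F\circ\iota-\d_F,\xi}(o,\pi v')+C_{F-\d_F,\eta}(o,\pi v')$ for the vector $v'\in B_v$ with $P(v')=(\xi,\eta)$, and all base points $\pi v'$ and all endpoints stay within $O(\diam B_v)$ of $v$ and its endpoints, the bounded distortion estimates of Lemma \ref{Bowen} -- applied to compare these Gibbs cocycles exactly as in the proof of Proposition \ref{alph} -- bound the oscillation of $\log\b_F$ over $B_v$ by some $\omega_{\l_0}(\diam B_v)$ with $\omega_{\l_0}(r)\to0$ as $r\to0$, uniformly over $v\in\UR(\l_0)$. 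Hence, after normalising $\mu^p_R$, for $\diam B_v$ small enough (in terms of $\e$, $M$, $\l_0$) one gets $|\mu^p_R(R)/\mu_F(R)-1|<\e$ and $|d\mu^p_R/d\mu_F-1|<\e$ on the subset $G:=R\cap\UR(\l_0)\cap\iota(\UR(\l_0))$; condition (3) of Definition \ref{coveringdef} then reduces to ensuring $\mu_F(G)>(1-\e)\mu_F(R)$, which is arranged in the covering step. Condition (2) holds because, by the local product structure on the regular set (\cite[Lemma 4.4]{BCFT}), inside $B_v$ the local stable/unstable leaves are uniformly transverse with intrinsic and ambient metrics uniformly comparable, so two points of $R$ on a common leaf $W^{cs}_\loc$ or $W^u_\loc$ are joined within it by a curve of length $<100\,\diam R$ once $B_v$ is small.

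It remains to verify condition (1): a \emph{disjoint} finite family of such boxes whose union has $\mu_F$-measure $>1-\e$. As the Besicovitch covering theorem is not available for flow boxes, the plan follows the strategy used for Theorem \ref{margulis}: form a countable partition (modulo a $\mu_F$-null set) of a subset of $\UR(\l_0)\cap\iota(\UR(\l_0))$ of measure $>1-\e/2$ into flow boxes $B_v$ of diameter $<\d$; discard the boxes in which $G=R\cap\UR(\l_0)\cap\iota(\UR(\l_0))$ has $\mu_F$-proportion $\le1-\e$, which carry small total mass because $\mu_F$-almost every point is a density point of $\UR(\l_0)\cap\iota(\UR(\l_0))$ -- a statement to which Lebesgue density/Besicovitch \emph{does} apply once one works in the cross-sectional coordinates $(\xi,\eta)\in\p^2X$ with the Radon measure $\b_F\,d\mu_{F\circ\iota,o}\,d\mu_{F,o}$, together with the interval $(-\tau_v,\tau_v)$ for the flow direction -- and keep finitely many of the surviving ``good'' boxes whose union still has measure $>1-\e$. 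These boxes $R=B_v$, with their distinguished points $v$, form the desired $\e$-regular covering $\mathcal R_\e$.

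The main obstacle is this last step combined with the uniformity of the previous one: one needs a single scale $\d$ (and a single discard threshold) that simultaneously forces $\diam R<\d$, keeps the density oscillation below $\e$ for \emph{every} admissible box, and still leaves enough room to extract a disjoint finite subfamily each of whose boxes is ``mostly good'' -- all in the absence of a Besicovitch-type covering lemma for flow boxes, which is precisely what makes the explicit hand-built partition unavoidable.
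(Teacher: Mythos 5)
Your verification of condition (3) follows essentially the same route as the paper: both arguments read off the local product structure $d\mu_F=\b_F(\xi,\eta)\,d\mu_{F\circ\iota,o}(\xi)\,d\mu_{F,o}(\eta)\,dt$ from the Patterson--Sullivan construction and control the oscillation of $\log\b_F$ (equivalently, the Jacobian of the weak stable holonomy between unstable conditionals, which is how the paper phrases it) via the bounded distortion of the Gibbs cocycle from Lemma \ref{Bowen} on $\BBB^{\l_0}$. Where you genuinely diverge is in producing the disjoint rectangles themselves: the paper simply invokes Pesin theory (\cite[Lemma 8.3]{Pe1}, \cite[Lemma 1.8]{Pe2}, \cite[Lemma 9.5.7]{BP}) for the hyperbolic measure $\mu_F$ to obtain a finite disjoint family of small rectangles covering a Pesin set of measure $>1-\e$, which settles conditions (1) and (2) in two lines, whereas you hand-build geometric flow boxes from Lemma \ref{key} and then partition a large-measure compact piece of $\UR(\l_0)\cap\iota(\UR(\l_0))$ into such boxes in the spirit of Lemma \ref{partition}. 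Your route is more self-contained and consistent with the flow-box technology the paper develops for the counting argument, at the cost of redoing the partition construction; the paper's route is shorter but leans on external Pesin-theoretic lemmas. Note also that your concern about Besicovitch in the discard step is unnecessary: once you have a genuine partition, Markov's inequality applied to $\sum_i\mu_F(B_i\setminus G)$ already shows the boxes where the good set has proportion $\le 1-\e$ carry small total mass.

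Two small points to tighten. First, for the estimate $|\mu^p_R(R)/\mu_F(R)-1|<\e$ you need control of $d\mu^p_R/d\mu_F$ on all of $R$ (or at least of $\mu^p_R(R\setminus G)$), not only on $G$; this is available because $\mu_F|_R$ and $\mu^p_R$ are both carried by vectors whose endpoints lie in $\C\cdot\BBB^{\l_0}$, so the distortion bound applies on a set of full measure for both, but as written your argument only asserts it on $G$. Second, the Chernov--Haskell criterion requires the conditionals of the covering to be non-atomic; the paper includes an entropy argument showing $\mu^u_x$ and the factor measure have no atoms, which your proposal omits and should record.
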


\begin{proof}
$\mu_F$ is a hyperbolic measure by Theorem \ref{bcft}. By \cite[Lemma 8.3]{Pe1}, \cite[Lemma 1.8]{Pe2} and \cite[Lemma 9.5.7]{BP}, for a hyperbolic measure $\mu_F$, we can find a finite
collection of disjoint rectangles $R$ covering a Pesin set for $\mu_F$. By choosing a Pesin set with $\mu_F$-measure at least $1-\e$, we have Definition \ref{coveringdef}(1). Moreover, the rectangles $R$ can be chosen such that $\diam R<\d$. Since the metrics on local unstable or local weak stable manifold are uniformly equivalent to the Riemannian distance if $\d$ is small enough, Definition \ref{coveringdef}(2) is also satisfied.

To verify Definition \ref{coveringdef}(3), we use the local product structure of $\mu_F$:
\begin{equation*}
\mu_{F}(A)=\int_{\C\cdot((U\times V)\cap (\BBB^{\l_0}\times \BBB^{\l_0}))} \text{Vol}(\pi(P^{-1}(\xi,\eta)\cap A)) \b_F(\xi,\eta)d\mu_{F\circ \iota,o}(\xi) d\mu_{F,o}(\eta),
\end{equation*}
where the density function $\b_F(\xi,\eta):=e^{C_{F\circ \iota-\d_F,\xi}(o, \pi(v))+C_{F-\d_F,\eta}(o, \pi(v))}$ and $P(v)=(\xi,\eta)\in \C\cdot((U\times V)\cap (\BBB^{\l_0}\times \BBB^{\l_0}))$.
If $v\in \text{Reg}$, $P^{-1}(v^-, v^+)$ consists of a single geodesic $c_v$, and thus $\text{Vol}$ just becomes the length along the geodesic $c_v$. Since $\mu_F(\text{Reg}) = 1$, we in fact have for $\mu_F \ae v$,
$$d\mu_F(v)=\b_F(\xi, \eta)d\mu_{F,o} (\xi)d\mu_{F,o} (\eta)dt.$$

Let $R$ be a rectangle of sufficiently small diameter constructed above. We lift all objects to the universal cover $X$. Take a lift of $R$, which is still denoted by $R$. Since the local weak stable
and local unstable manifolds at $v$ intersect transversely if and only $v\in \text{Reg}$, it follows
that $R\subset \text{Reg}$. Notice that for $x\in \text{Reg}$, there exists a continuous map $\phi_x: W_\loc^u(x)\to \pX$ given by $\phi_x(v)=v^+$ where $v\in W_\loc^u(x)$. The conditional measure $\mu^u_x$ on $R\cap W_\loc^u(x)$ is given by
\begin{equation}\label{conditional}
d\mu^u_x(v) = \frac{\b_F(x^-,\phi_xv) d\mu_{F,o}(\phi_xv)}{\int_{R\cap W_\loc^u(x)\cap \phi_x^{-1}(\C\cdot(\BBB^{\l_0}\cap V))}\b_F(x^-,\phi_xv) d\mu_{F,o} (\phi_xv)}, 
\end{equation}
and $$\int d \mu_F(v)=c\int_{(\C\cdot(\BBB^{\l_0}\cap U))\times \RR}\Big(\int_{R\cap W_\loc^u(x)\cap \phi_x^{-1}(\C\cdot(\BBB^{\l_0}\cap V))} d\mu^u_x(v)\Big) d\mu_{F,o} (x^-)dt$$
for some normalization constant $c>0$. 

Here we remark that both the conditional measure $\mu_x^u$ and the factor measure $d\mu_{F,o} (x^-)dt$ have no atom. Indeed, let $\xi$ be an increasing measurable partition of $SM$ subordinate to Pesin local unstable manifolds with respect to $\mu_F$ (cf. \cite{LY2}),
and $\{\mu_x^\xi\}$ the conditional measures of $\mu_F$ with respect to $\xi$ which is equivalent to $\{\mu_x^u\}$ above. If $\mu_x^\xi$ has an atom at $x$, then
\begin{equation*}
\begin{aligned}
h_{\mu_F}(g^1)=&\lim_{n\to \infty}\frac{1}{n}H_{\mu_F}(g^{-n}\xi|\xi)=\lim_{n\to \infty}-\frac{1}{n}\log \mu_x^\xi((g^{-n}\xi)(x))\\
\le &\lim_{n\to \infty}-\frac{1}{n}\log \mu_x^\xi(\{x\}) =0.
\end{aligned}
\end{equation*}
Reversely, $h_{\mu_F}(g^1)=0$ implies that $\mu_x^s$ has an atom. So $\mu_F$ has an atom by the local product structure. But $\mu_F$ is ergodic with full support, a contraction. 
Thus $\mu_x^u$ has no atom. Analogously, $\mu_x^s$ and hence the factor measure $d\mu_{F,o} (x^-)dt$ cannot have atoms.

Given two points $x, y\in R$, the local weak stable holonomy map $\pi^{cs}_{xy}: W_\loc^{u}(x)\cap R \to W_\loc^{u}(y)\cap R$ is defined by
$$\pi_{xy}^{cs}(w)\in W_\loc^{u}(y)\cap W_\loc^{cs}(w), \quad w\in W_\loc^{u}(x).$$
Note that $\phi_x(w)=\phi_{y}(\pi^{cs}_{xy}w):=\eta_w$. By \eqref{conditional}, the Jacobian of the holonomy map is 
\begin{equation*} 
\begin{aligned}
\left|\frac{d(\pi_{yx}^{cs})_*\mu^u_y}{d\mu^u_x}(w)\right|=\left|\frac{\b_F(y^-,\eta_w)}{\b_F(x^-,\eta_w)}\cdot\frac{\int_{\phi_x(R\cap W_\loc^u(x))\cap \C\cdot(\BBB^{\l_0}\cap V)}\b_F(x^-,\eta_w) d\mu_{F,o} (\eta_w)}{\int_{\phi_x(R\cap W_\loc^u(x))\cap \C\cdot(\BBB^{\l_0}\cap V)}\b_F(y^-,\eta_w) d\mu_{F,o} (\eta_w)}\right|.
\end{aligned}
\end{equation*}
By taking $\text{diam\ }R<\d$ small enough, $x^-,y^-$ are close in $\pX$. We want to prove in this case, $\b_F(y^-,\eta_w)$ and $\b_F(x^-,\eta_w)$ are uniformly close. As $\b_F$ is $\C$-invariant, without loss of generality, we assume that $(x^-,\eta_w)\in (U\times V)\cap (\BBB^{\l_0}\times \BBB^{\l_0})$ and $(y^-,\eta_w)\in (U\times V)\cap (\BBB^{\l_0}\times \BBB^{\l_0})$. Take $p_1\in \F\cap c_{x^-\eta_w}$ and $p_2\in \F\cap c_{y^-\eta_w}$. Let $0<\rho\ll \e$. Then by the fact that $x^-,y^-\in \BBB^{\l_0}$ and Lemma \ref{Bowen}, there exists $T>0$ large enough such that
\begin{equation*}
\begin{aligned}
\left|\int_{c_{o,x^-(T)}}^{x^-}(F\circ\iota-\d_F)-\int_{c_{p_1,x^-(T+s_1)}}^{x^-}(F\circ\iota-\d_F)\right|&<\rho,\\
\left|\int_{c_{o,y^-(T)}}^{y^-}(F\circ\iota-\d_F)-\int_{c_{p_2,y^-(T+s_2)}}^{y^-}(F\circ\iota-\d_F)\right|&<\rho
\end{aligned}
\end{equation*}
where $s_1=b_{x^-}(p_1,o)$ and $s_2=b_{x^-}(p_2,o)$. If $y^-$ is close enough to $x^-$, one can pick $p_1$ and $p_2$ close enough. Since $T$ is fixed, one has
\begin{equation*}
\begin{aligned}
\left|\int_{o}^{c_{o,x^-(T)}}(F\circ\iota-\d_F)-\int_{o}^{c_{o,y^-(T)}}(F\circ\iota-\d_F)\right|&<\rho,\\
\left|\int_{p_1}^{c_{p_1,x^-(T+s_1)}}(F\circ\iota-\d_F)-\int_{p_2}^{c_{p_2,y^-(T+s_2)}}(F\circ\iota-\d_F)\right|&<\rho.
\end{aligned}
\end{equation*}
Combining the above four inequalities, we get 
$$|C_{F\circ \iota-\d_F,x^-}(o, p_1)-C_{F\circ \iota-\d_F,y^-}(o, p_2)|<4\rho.$$ 
Similarly we can prove if $x^-,y^-$ are close enough, 
$$|C_{F-\d_F,\eta_w}(o, p_1)-C_{F-\d_F,\eta_w}(o, p_2)|<4\rho.$$
 Therefore, $\frac{\b_F(y^-,\eta_w)}{\b_F(x^-,\eta_w)}<e^{8\rho}$ and hence
$$\left|\frac{d(\pi_{yx}^{cs})_*\mu^u_y}{d\mu^u_x}(w)-1\right| <\e$$ 
for $x,y\in R$.
By definition of conditional measures this implies Definition \ref{coveringdef}(3).
\end{proof}

\section{Equidistribution and counting}
\subsection{Geometric flow box}

From now on we fix $v_0\in \L_{k,\l,N}\cap \iota(\L_{k,\l,N})\subset SX$ for some $\l>0$ and $k,N\in \NN$. Let $o:=\pi(v_0)$, which will be the reference point in the Patterson-Sullivan construction as well as in the following discussions. We also fix a scale $0<\e\ll \min\{\frac{1}{16}, \frac{\inj (M)}{4}\}$ until the end of this section.

The \emph{Hopf map} $H: SX\to \pX\times \pX\times \RR$ relative to $o\in X$ is defined as
$$H(v):=(v^-, v^+, s(v)), \text{\ where\ }s(v):=b_{v^-}(\pi v, o).$$
It is clear that $s$ is continuous. Moreover, $s(g^t v)=s(v)+t$ for any $v\in SX$ and $t\in \RR$. 

Using Hopf map, for each $\theta>0$ and $0<\a\le \frac{3}{2}\e$, we can define
\begin{equation*}
\begin{aligned}
&\bP=\bP_\theta:=\{w^-: w\in S_oX \text{\ and\ }\angle_o(w, v_0)\le \theta\},\\
&\bF=\bF_\theta:=\{w^+: w\in S_oX \text{\ and\ }\angle_o(w, v_0)\le \theta\},\\
&B^\a=B_\theta^\a:=H^{-1}(\bP\times \bF\times [0,\a]),\\
&S=S_\theta:=B_\theta^{\e^2}=H^{-1}(\bP\times \bF\times [0,\e^2]).
\end{aligned}
\end{equation*}
$B^\a=B_\theta^\a$ is called a flow \emph{box} with depth $\a$, and $S=S_\theta$ is a \emph{slice} with depth $\e^2$. $\theta>0$ is small enough as specified below, and is usually dropped from the notation.

The following lemma is a variation of Lemma \ref{key}, see also \cite[Lemma 1]{Ri}.
\begin{proposition}\label{crucial} 
Let $X$ be a simply connected rank one manifold of nonpositive curvature and $v_0\in SX$ is regular. Then for any $\e>0$, there is a $\theta_1 > 0$ such that for any $\xi \in \bP_{\theta_1}$ and $\eta \in \bF_{\theta_1}$,
there is a unique geodesic $c_{\xi,\eta}$ connecting $\xi$ and $\eta$, i.e.,
$c_{\xi,\eta}(-\infty)=\xi$ and $c_{\xi,\eta}(+\infty)=\eta$.

Moreover, the geodesic $c_{\xi,\eta}$ is regular and $d(\dot c_{v}(0),\dot c_{\xi,\eta})<\epsilon/10$.
\end{proposition}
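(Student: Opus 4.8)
# Proof Proposal for Proposition \ref{crucial}

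The plan is to deduce the statement from Lemma \ref{key} by a compactness/continuity argument. The essential point is that $v_0 \in SX$ is a regular vector, so its axis-like geodesic $c_{v_0}$ is a rank one geodesic, and hence Lemma \ref{key} applies with the reference geodesic $c = c_{v_0}$. Given $\epsilon > 0$, Lemma \ref{key} (applied with $\epsilon/10$ in place of $\epsilon$, say) produces neighborhoods $U$ of $v_0^- = c_{v_0}(-\infty)$ and $V$ of $v_0^+ = c_{v_0}(+\infty)$ in $\pX$ such that for all $\xi \in U$, $\eta \in V$ there is a rank one geodesic $h = c_{\xi,\eta}$ connecting $\xi$ to $\eta$ with $\dot h(0) \in B(\dot c_{v_0}(0), \epsilon/10)$. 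Since the cone topology on $\pX$ is compatible with the angular data at $o$, and since $\xi \mapsto \xi$ is continuous from $S_oX$ to $\pX$, there is $\theta_1 > 0$ small enough that $\bP_{\theta_1} \subset U$ and $\bF_{\theta_1} \subset V$: indeed, $w \mapsto w^{\pm}$ is continuous on $S_oX$, and $v_0 \mapsto v_0^{\pm}$, so shrinking the angular aperture $\theta$ forces $\bP_\theta \to \{v_0^-\}$ and $\bF_\theta \to \{v_0^+\}$ in $\pX$; pick $\theta_1$ with both inclusions valid.

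With this choice of $\theta_1$, for any $\xi \in \bP_{\theta_1}$ and $\eta \in \bF_{\theta_1}$ we already get existence of a connecting geodesic $c_{\xi,\eta}$, its regularity (rank one), and the estimate $d(\dot c_{\xi,\eta}(0), \dot c_{v_0}(0)) < \epsilon/10$; reparametrizing so that the basepoint is appropriate gives $d(\dot c_v(0), \dot c_{\xi,\eta}) < \epsilon/10 < \epsilon/10$ on the level of the orbit (using that the distance from a vector to a geodesic is bounded by the distance to any one of its vectors). It remains to establish \emph{uniqueness} of the connecting geodesic. Suppose $c'$ is another geodesic with $c'(-\infty) = \xi$ and $c'(+\infty) = \eta$. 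Then $c'$ and $c_{\xi,\eta}$ are bi-asymptotic, so they bound a flat strip. But $c_{\xi,\eta}$ is a rank one (regular) geodesic, which by definition admits no nontrivial parallel normal Jacobi field, hence bounds no nondegenerate flat strip. Therefore $c' = c_{\xi,\eta}$ up to parametrization, giving uniqueness.

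I expect the main obstacle to be purely bookkeeping: making precise the passage from the angular neighborhoods $\bP_\theta, \bF_\theta$ (defined via angles at $o$) to the cone-topology neighborhoods $U, V$ supplied by Lemma \ref{key}, and checking that the regularity of $v_0$ is genuinely what lets us invoke Lemma \ref{key} (one must know $c_{v_0}$ is rank one, which holds since $v_0 \in \text{Reg}$, i.e., $\text{rank}(v_0) = 1$). One should also be slightly careful that the constant in Lemma \ref{key} can be taken as small as we wish — here we want $\epsilon/10$, and Lemma \ref{key} is stated for arbitrary $\epsilon > 0$, so this is immediate. The reparametrization point — translating "$\dot h(0) \in B(\dot c_{v_0}(0), \epsilon/10)$" into "$d(\dot c_v(0), \dot c_{\xi,\eta}) < \epsilon/10$" — uses only that $d(w, c) \le d_K(w, \dot c(t))$ for any $t$ together with $d_K(v_0, v) $ small if $v$ is close to $v_0$; since we may also shrink $\theta_1$ to control $d_K(\dot c_v(0), \dot c_{v_0}(0))$ when $v$ ranges over vectors at $o$ making angle $\le \theta_1$ with $v_0$, the final bound $d(\dot c_v(0), \dot c_{\xi,\eta}) < \epsilon/10$ follows, perhaps after replacing $\epsilon/10$ by $\epsilon/20$ in the application of Lemma \ref{key} and using the triangle inequality.
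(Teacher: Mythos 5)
Your proposal is correct and matches the paper's intent: the paper gives no detailed proof but explicitly presents this proposition as "a variation of Lemma \ref{key}" (citing also Ricks), which is exactly the reduction you carry out — apply Lemma \ref{key} to the rank one geodesic $c_{v_0}$, shrink $\theta_1$ so that $\bP_{\theta_1}\subset U$ and $\bF_{\theta_1}\subset V$, and obtain uniqueness from the flat strip theorem plus regularity of the connecting geodesic. The only blemishes are cosmetic (the stray "$<\epsilon/10<\epsilon/10$" and the conflation of $v$ with $v_0$, which is a typo already present in the statement).
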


Based on Proposition \ref{crucial}, we have the following result.

\begin{lemma}\label{diameter}(\cite[Lemma 2.13]{Wu2})
Let $v_0, o, \e$ be as above and $\theta_1$ be given in Proposition \ref{crucial}. Then for any $0<\theta<\theta_1$,
\begin{enumerate}
  \item $\text{diam\ } \pi H^{-1}(\bP\times \bF\times \{0\})<\frac{\e}{2}$;
  \item $H^{-1}(\bP\times \bF\times \{0\})\subset SX$ is compact;
  \item $B_\theta^\a$ is compact;
  \item $\text{diam\ } \pi B_\theta^\a <4\e$ for any $0<\a\le \frac{3\e}{2}$.
\end{enumerate}
\end{lemma}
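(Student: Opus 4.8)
The plan is to prove the four items in the listed order, since (2) rests on (1), (3) rests on (2), and (4) rests on (1) and (3); all the substance is concentrated in (1). To prove (1), I would take an arbitrary $v\in H^{-1}(\bP_\theta\times\bF_\theta\times\{0\})$ with $v^-=\xi\in\bP_\theta$ and $v^+=\eta\in\bF_\theta$, and note that since $0<\theta<\theta_1$, Proposition \ref{crucial} applies: $(\xi,\eta)$ is joined by the \emph{unique} geodesic $c_{\xi,\eta}$, so $\pi v$ lies on $c_{\xi,\eta}$, and the Hopf normalization $s(v)=b_\xi(\pi v,o)=0$ identifies $\pi v$ as the unique point of $c_{\xi,\eta}$ on the horosphere $\{b_\xi(\cdot,o)=0\}$. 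Proposition \ref{crucial} also supplies a parameter $s_0$ with $d_K(v_0,\dot c_{\xi,\eta}(s_0))<\e/10$, hence $d(o,c_{\xi,\eta}(s_0))<\e/10$ and therefore $|b_\xi(c_{\xi,\eta}(s_0),o)|\le d(c_{\xi,\eta}(s_0),o)<\e/10$. Since $t\mapsto b_\xi(c_{\xi,\eta}(t),o)$ is affine of slope $\pm1$, its unique zero $t_0$ satisfies $|t_0-s_0|<\e/10$, so $d(\pi v,o)=d(c_{\xi,\eta}(t_0),o)\le|t_0-s_0|+d(c_{\xi,\eta}(s_0),o)<\e/5$. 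The triangle inequality then gives $\diam \pi H^{-1}(\bP\times\bF\times\{0\})<\tfrac{2\e}{5}<\tfrac{\e}{2}$.

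For (2), I would first observe that $\bP_\theta$ and $\bF_\theta$ are compact: they are the images of the closed subsets $\{w\in S_oX:\angle_o(w,v_0)\le\theta\}$ of the compact sphere $S_oX$ under the homeomorphisms $w\mapsto w^-$ and $w\mapsto w^+$. Hence $\bP\times\bF\times\{0\}$ is compact, and since $H$ is continuous (the $\pX$-coordinates are continuous in the cone topology and $s$ is continuous), $H^{-1}(\bP\times\bF\times\{0\})$ is closed in $SX$; by (1) it is contained in the compact set $\pi^{-1}(\overline B(o,\e/2))$ (closed balls of $X$ are compact by Hopf--Rinow, and the unit-sphere bundle over a compact set is compact), hence it is compact. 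For (3), I would use that $B_\theta^\a=\{g^tv':t\in[0,\a],\ v'\in H^{-1}(\bP\times\bF\times\{0\})\}$: if $w\in B_\theta^\a$ then $g^{-s(w)}w$ has the same endpoints as $w$ and $s$-coordinate $0$, and conversely $s(g^tv')=t$ for $v'$ in the slice. Thus $B_\theta^\a$ is the continuous image of the compact set $[0,\a]\times H^{-1}(\bP\times\bF\times\{0\})$ under $(t,v')\mapsto g^tv'$, hence compact. For (4), every $w=g^tv'\in B_\theta^\a$ with $t\in[0,\a]\subset[0,\tfrac{3\e}{2}]$ satisfies $d(\pi w,o)\le|t|+d(\pi v',o)<\tfrac{3\e}{2}+\tfrac{\e}{5}<2\e$ by (1), so the triangle inequality gives $\diam \pi B_\theta^\a<4\e$.

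I expect the only genuine obstacle to be item (1), where the purely qualitative statement of Proposition \ref{crucial} (that $c_{\xi,\eta}$ stays within $\e/10$ of $v_0$ somewhere near $o$) must be converted into a quantitative bound on where the normalization $s(v)=0$ places the foot point $\pi v$. The key mechanism is the unit-speed affine behaviour of $b_\xi(\cdot,o)$ along $c_{\xi,\eta}$, which trades the $\e/10$-closeness of one point of $c_{\xi,\eta}$ to $o$ for an $\e/10$-bound on the displacement along $c_{\xi,\eta}$ to the horospheric foot point; once this is in hand, items (2)--(4) are routine point-set topology plus the triangle inequality and follow immediately.
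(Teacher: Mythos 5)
Your proof is correct, and it is the standard argument for this lemma (which the paper itself does not prove but imports from \cite[Lemma 2.13]{Wu2}): item (1) via the $\e/10$-closeness from Proposition \ref{crucial} together with the $1$-Lipschitz, unit-slope behaviour of $b_\xi$ along $c_{\xi,\eta}$, and items (2)--(4) by continuity of $H$, compactness of $\bP\times\bF\times[0,\a]$, and flowing the slice by at most $\tfrac{3\e}{2}$. No gaps.
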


\begin{corollary}\label{equicon}(\cite[Lemma 2.14]{Wu2})
Given $v_0, o, \e>0$ as above, there exists $\theta_2>0$ such that for any $0<\theta<\theta_2$, if $\xi,\eta\in \bP_\theta$ and any $q$ lying within $4\e$ of $\pi H^{-1}(\bP_\theta\times \bF_\theta\times [0,\infty))$, we have $|b_\xi(q,o)-b_\eta(q,o)|<\e^2$. Similar result holds if the roles of $\bP_\theta$ and $\bF_\theta$ are reversed.
\end{corollary}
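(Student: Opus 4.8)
This is \cite[Lemma 2.14]{Wu2}, and the plan is to reproduce its argument. Write $\mathcal T:=\pi H^{-1}(\bP_\theta\times\bF_\theta\times[0,+\infty))$ for the tube swept out by the geodesics joining $\bP_\theta$ to $\bF_\theta$, so that $q$ lies within $4\e$ of $\mathcal T$ and we must bound $|b_\xi(q,o)-b_\eta(q,o)|$ for $\xi,\eta\in\bP_\theta$. Since each $b_\zeta(\cdot,o)$ is $1$-Lipschitz, I would split according to $d(o,q)$: fix a large constant $D_0$, and on the range $d(o,q)\le D_0$ use that $(q,\zeta)\mapsto b_\zeta(q,o)$ is continuous on the compact set $\overline{B(o,D_0)}\times\bP_{\theta_1}$ while $\bP_\theta$ shrinks to $\{v_0^-\}$ as $\theta\to0$; this forces $\sup_{d(o,q)\le D_0}|b_\xi(q,o)-b_\eta(q,o)|<\e^2$ once $\theta$ is small enough.

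For the range $d(o,q)>D_0$ the plan is to pass, via the cocycle property, to a far base point $p_\zeta:=c_{o,\zeta}(R)$ on the $\zeta$-ray (for a large $R$), and to show that the error $d(p_\zeta,q)-b_\zeta(q,p_\zeta)$ is small and \emph{independent of} $d(o,q)$. When $D_0$ is large, $q$ lies within $4\e$ of some $\pi v\in\mathcal T$ with $d(o,\pi v)$ large; by Proposition \ref{crucial} the geodesic $c_{v^-,v^+}$ passes within $\e/10$ of $o$ with direction there within $\e/10$ of $v_0$, and since $b_{v^-}(\cdot,o)$ grows at unit rate along $c_{v^-,v^+}$, the condition $b_{v^-}(\pi v,o)\ge0$ forces $\pi v$ — and hence $q$ — to lie far out in a thin cone about $v_0$, away from the disjoint cone of $\bP_\theta$ about $v_0^-$. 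Then $b_\xi(q,o)=b_\xi(q,p_\xi)-R$, with $\angle_{p_\xi}(\xi,o)=\pi$ because $[p_\xi,o]$ is the reversal of $c_{o,\xi}|_{[0,R]}$; moreover $c_{v^-,v^+}$ ends at $v^-\in\bP_\theta$ with $\angle_o(\xi,v^-)\le2\theta$, so its $v^-$-ray remains within $\e/10+2R\theta$ of $[o,\xi)$ and in particular near $p_\xi$, whence, seen from the far point $p_\xi$, the points $o$ and $q$ (both close to that geodesic) subtend a small angle and $\angle_{p_\xi}(\xi,q)$ is close to $\pi$. Invoking the lower sectional curvature bound $-a^2$, this should give that $d(p_\xi,q)-b_\xi(q,p_\xi)$ is nonnegative and bounded above by a small quantity independent of $d(o,q)$, while trivially $|d(p_\xi,q)-d(p_\eta,q)|\le d(p_\xi,p_\eta)\le2R\theta$. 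Writing
\[
b_\xi(q,o)-b_\eta(q,o)=\bigl(b_\xi(q,p_\xi)-d(p_\xi,q)\bigr)-\bigl(b_\eta(q,p_\eta)-d(p_\eta,q)\bigr)+\bigl(d(p_\xi,q)-d(p_\eta,q)\bigr),
\]
I would then bound the right-hand side by a quantity tending to $0$ on first letting $R\to\infty$ and then $\theta\to0$, and fix $R$, then $D_0$, then $\theta_2\le\theta_1$ so that it is $<\e^2$. The statement with $\bP_\theta$ and $\bF_\theta$ interchanged follows by applying the flip $\iota$, which swaps these two sets and reverses the sign of $s$.

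The hard part is the uniformity in $q$: the naive estimate gives only $0\le d(o,q)-b_\xi(q,o)\le d(o,q)\,(1-\cos\angle_o(q,\xi))$, whose bound degenerates as $d(o,q)\to\infty$. What should rescue the argument is the combination of (i) regularity of $v_0$, which through Proposition \ref{crucial} pins the whole tube $\mathcal T$ into a bounded neighborhood of $o$ and makes it run between the \emph{disjoint} cones $\bP_\theta,\bF_\theta$ — so that from a far base point $p_\xi$ on the $\xi$-ray the remainder of $\mathcal T$, and hence $q$, subtends a bounded angle — and (ii) the two-sided curvature bound $-a^2\le K\le0$, which converts that angle bound into a $q$-independent bound on $d(p_\xi,q)-b_\xi(q,p_\xi)$.
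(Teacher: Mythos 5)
Your splitting into a compact range (settled by uniform continuity of $(\zeta,q)\mapsto b_\zeta(q,o)$ on $\pX\times\overline{B(o,D_0)}$) and a far range is the right skeleton, and the compact part is fine. The far range, however, has a genuine gap at its key step, namely the claim that $d(p_\xi,q)-b_\xi(q,p_\xi)$ is small \emph{independently of} $d(o,q)$. First, the mechanism you invoke does not produce such a bound: the CAT(0) law of cosines converts the angle estimate $\angle_{p_\xi}(\xi,q)\ge\pi-\beta$ only into $d(p_\xi,q)-b_\xi(q,p_\xi)\le 2\,d(p_\xi,q)(1-\cos\beta)$, which degenerates as $d(p_\xi,q)\to\infty$ --- exactly the degeneracy you flagged at the base point $o$; moving the base point to $p_\xi$ does not remove it. The lower bound $K\ge-a^2$ cannot rescue this: it controls divergence of geodesics from \emph{above}, whereas turning an angle bound into a $q$-independent bound on the Gromov product $d(p,q)-b_\xi(q,p)$ is a visibility statement requiring $K\le-k^2<0$ or a genuine use of the no-flat-strip property of $c_{v_0}$, neither of which appears in your argument. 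Second, even the $q$-independent bound that \emph{is} available --- namely $d(p_\xi,q)-b_\xi(q,p_\xi)\le 2\,\dist(p_\xi,[q,\xi))$, and the ray $[q,\xi)$ need only pass within roughly $4\e$ of $o$ because $q$ may sit a full $4\e$ off the tube --- is of order $\e$, not $o(1)$ as $R\to\infty$ and $\theta\to0$. So bounding the two defects separately can never yield the required $\e^2$: the statement is a cancellation between the $\xi$- and $\eta$-terms, not a smallness of each.

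The cited proof (\cite[Lemma 2.14]{Wu2}, following Ricks) obtains precisely this cancellation by compactness: if the claim failed there would be $\theta_n\to0$, $\xi_n,\eta_n\to v_0^-$ and points $q_n$ with $|b_{\xi_n}(q_n,o)-b_{\eta_n}(q_n,o)|\ge\e^2$; the bounded case is excluded by joint continuity, and if $q_n\to v_0^+$ then the rays $[q_n,\xi_n)$ and $[q_n,\eta_n)$ both converge to the \emph{unique} geodesic joining $v_0^+$ to $v_0^-$ (uniqueness is where regularity of $v_0$ enters, via the flat strip theorem; Proposition \ref{crucial} guarantees these rays meet a fixed compact set near $o$). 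Writing $b_{\xi_n}(q_n,o)=d(q_n,m_n)+b_{\xi_n}(m_n,o)$ with $m_n\in[q_n,\xi_n)$ near $o$, and using that $d(q_n,\cdot)-d(q_n,o)\to b_{v_0^+}(\cdot,o)$ uniformly on compacta, the difference converges to $b_{v_0^+}(m,m')+b_{v_0^-}(m,m')$ with $m,m'$ on that common geodesic, which vanishes identically. This cancellation identity is the missing ingredient in your proposal. (A minor point: $d(p_\xi,p_\eta)\le2R\theta$ should read $d(p_\xi,p_\eta)\le\frac{2}{a}\sinh(aR)\sin\theta$ under $K\ge-a^2$; this is harmless given that you fix $R$ before shrinking $\theta$.)
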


We always consider $0<\theta< \min\{\theta_1,\theta_2\}$ with the following properties. 
\begin{enumerate}
    \item As $\theta\mapsto \bar\mu_{F,\l_0}(\bP_\theta\times \bF_\theta)$ is nondecreasing, hence it has at most countably many discontinuities. Choose $\theta$ to be the continuity point of this function, i.e., 
    \begin{equation}\label{e:choice0}
\begin{aligned}
\lim_{\rho\to \theta}\bar\mu_{F,\l_0}(\bP_\rho\times \bF_\rho)=\bar\mu_{F,\l_0}(\bP_\theta\times \bF_\theta).
\end{aligned}
\end{equation}
\item Choose $\theta$ such that $\bar\mu_{F,\l_0}(\partial\bP_\theta\times \partial\bF_\theta)=0.$ By the product structure of $B^\a$ and $S$ and the definition of $\mu_F$, we have for any $\a\in (0,\frac{3\e}{2})$,
\begin{equation}\label{e:choice}
\begin{aligned}
\lim_{\rho\to \theta} \mu_F(S_\rho)=\mu_F(S_\theta), \quad \lim_{\rho\to \theta} \mu_F(B^\a_\rho)=\mu_F(B^\a_\theta), \quad \mu_F(\partial B^\a_\theta)=0.
\end{aligned}
\end{equation}
\end{enumerate}

Finally, given $\rho_0>0$ and $k\in \NN$, we choose $\l>0$ and $N\in \NN$ such that $\mu_F(\L)>\frac{1}{1+\rho_0}$ where 
\begin{equation*}
\begin{aligned}
\L:=\L_{k,\l,N}\cap \iota(\L_{k,\l,N}).
\end{aligned}
\end{equation*}
We choose $v_0$ as a Lebesgue density point of $\L$ with respect to $\mu_F$ and assume that
\begin{equation}\label{density}
\begin{aligned}
\mu_F(B^\e_{\theta}\cap \L)>\frac{1}{1+\rho_0}\mu_F(B^\e_{\theta}).
\end{aligned}
\end{equation}
We will discuss this assumption at the end of this section.

\subsection{Counting intersection components}
For convenience, we collect some important subsets of $\C$ concerning the intersections of flow boxes.
\begin{equation*}
\begin{aligned}
\C(t,\a)=\C_\theta(t,\a)&:=\{\c\in \C: S_\theta\cap g^{-t}\c B_\theta^\a\neq \emptyset\},\\
\C^*=\C^*_\theta&:=\{\c\in \C: \c\bF_\theta \subset \bF_\theta \text{\ and\ }\c^{-1}\bP_\theta\subset \bP_\theta\},\\
\C^*(t,\a)&:=\C^*\cap \C(t,\a),\\
\C'(t,\a)&:=\{\c\in \C^*(t,\a): \c\neq \b^n \text{\ for any\ } \b\in \C, n\ge 2\}.
\end{aligned}
\end{equation*}
Given $\xi\in \pX$ and $\c\in \C$, define $b_\xi^\c:=b_\xi(\c o, o)$.
\begin{lemma}\label{intersection2}(\cite[Lemmas 4.2-4.5]{Wu2})
We have
\begin{enumerate}
    \item Let $c$ be an axis of $\c\in \C$ and $\xi=c(-\infty)$. Then $b_\xi^\c=|\c|$.
    \item Given any $\c\in \C$ and any $t\in \RR$, we have
$$S\cap g^{-t}\c B^\a=\{w\in P^{-1}(\bP\times \c\bF): s(w)\in [0,\e^2]\cap (b_{w^{-}}^\c-t+[0,\a])\}.$$
    \item If $\c\in \C^*(t,\a)$, then $|\c|\in [t-\a-\e^2, t+2\e^2]$.
    \item If $\c\in \C^*(t,\a)$, then
$$S^\c:=H^{-1}(\bP\times \c\bF\times [0,\e^2])\subset S\cap g^{-(t+2\e^2)}\c B^{\a+4\e^2}.$$
\end{enumerate}
\end{lemma}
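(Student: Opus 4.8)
The plan is to verify the four items by direct manipulation of Busemann functions and the Hopf map $H$, reserving Corollary \ref{equicon} for the quantitative statements (3) and (4); the defining inclusions of $\C^*$ are exactly what make (2)--(4) clean.

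For (1) I would first reduce to the case $o\in c$: writing $p_0=c(0)$ (so $\c p_0=c(|\c|)$) and using that $\xi=c(-\infty)$ is fixed by $\c$, isometry invariance of Busemann functions gives $b_\xi(\c o,\c p_0)=b_\xi(o,p_0)$, and the cocycle identity $b_\xi(q,p)+b_\xi(p,r)=b_\xi(q,r)$ then reduces $b_\xi^\c=b_\xi(\c o,o)=b_\xi(\c o,\c p_0)+b_\xi(\c p_0,p_0)+b_\xi(p_0,o)$ to $b_\xi(\c p_0,p_0)=\lim_{s\to\infty}\bigl(d(c(|\c|),c(-s))-s\bigr)=|\c|$. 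For (2) I would unwind $w\in S\cap g^{-t}\c B^\a$ as $w\in S$ together with $\c^{-1}g^tw\in B^\a$, use $(\c^{-1}g^tw)^\pm=\c^{-1}w^\pm$ and $\pi(\c^{-1}g^tw)=\c^{-1}c_w(t)$ for the endpoint conditions, and rewrite the depth condition $s(\c^{-1}g^tw)\in[0,\a]$, via isometry invariance, the cocycle identity, and $s(g^tw)=s(w)+t$, as $s(\c^{-1}g^tw)=s(w)+t-b_{w^-}^\c$; intersecting with the slice constraint $s(w)\in[0,\e^2]$ and simplifying the endpoint sets through $\c^{-1}\bP\subset\bP$ and $\c\bF\subset\bF$ yields the displayed formula.

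For (3), nonemptiness of $S\cap g^{-t}\c B^\a$ produces via (2) a vector $w_1$ with $w_1^-\in\bP$, $s(w_1)\in[0,\e^2]$ and $s(w_1)\in b_{w_1^-}^\c-t+[0,\a]$, which forces $b_{w_1^-}^\c\in[t-\a,t+\e^2]$. Let $\xi_\c=c_\c(-\infty)$ be the repelling fixed point of an axis $c_\c$ of $\c$; since $\c^{-1}\bP\subset\bP$ and $\bP$ is compact (it is a continuous image of the compact angular cone, cf. Lemma \ref{diameter}), the attracting fixed point $\xi_\c$ of $\c^{-1}$ lies in $\bP$. To invoke Corollary \ref{equicon} with $q=\c o$, note $\c^{-1}c_{w_1}(t)=\pi(\c^{-1}g^tw_1)\in\pi B^\a$, a set that contains $o$ and has diameter $<4\e$ (Lemma \ref{diameter}), so $d(\c o,c_{w_1}(t))<4\e$, and $c_{w_1}(t)=\pi(g^tw_1)\in\pi H^{-1}(\bP\times\bF\times[0,\infty))$ (using $s(w_1)+t\ge -\e^2$, which is automatic when $t>0$, the only case needed). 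The corollary then gives $|b_{w_1^-}^\c-b_{\xi_\c}^\c|<\e^2$, and since $b_{\xi_\c}^\c=|\c|$ by (1), we conclude $|\c|\in(t-\a-\e^2,t+2\e^2)$.

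For (4), $S^\c\subset S$ is immediate from $\c\bF\subset\bF$. For the second inclusion I apply (2) with parameters $(t+2\e^2,\a+4\e^2)$ to an arbitrary $w\in S^\c$: the endpoint conditions hold because $w^-\in\bP$, $\c^{-1}w^-\in\c^{-1}\bP\subset\bP$, $w^+\in\c\bF$, $\c^{-1}w^+\in\bF$, and it remains to check $s(w)+(t+2\e^2)-b_{w^-}^\c\in[0,\a+4\e^2]$. Since $w^-,w_1^-\in\bP$ (with $w_1$ as in (3)), Corollary \ref{equicon} applied with $q=\c o$ gives $|b_{w^-}^\c-b_{w_1^-}^\c|<\e^2$, hence $b_{w^-}^\c\in(t-\a-\e^2,t+2\e^2)$; together with $s(w)\in[0,\e^2]$ this yields $s(w)+(t+2\e^2)-b_{w^-}^\c>0$ and $<\a+4\e^2$, as required. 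The step I expect to be the main obstacle is exactly this $\e^2$-error bookkeeping around Corollary \ref{equicon} in (3)--(4): one must confirm that $\c o$ genuinely lies within $4\e$ of the flow-box tube $\pi H^{-1}(\bP\times\bF\times[0,\infty))$ and that the fixed points of $\c$ stay inside $\bP$ and $\bF$ (the two places where the $\C^*$ hypothesis is indispensable), while keeping the constants tight enough that both ends of the final interval close up.
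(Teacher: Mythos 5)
Your route is the same as the one in the cited source \cite[Lemmas 4.2--4.5]{Wu2} (which follows Ricks): (1) via the Busemann cocycle and isometry invariance along the axis, (2) via the Hopf-coordinate identity $s(\c^{-1}g^tw)=s(w)+t-b^{\c}_{w^-}$, and (3)--(4) by combining these with Corollary \ref{equicon}. The computations in (1) and (2) and the $\e^2$-bookkeeping in (3)--(4) are correct; in particular you are right that in (4) one must compare $b^{\c}_{w^-}$ directly with $b^{\c}_{w_1^-}\in[t-\a,t+\e^2]$ rather than passing through $|\c|$ (which would lose an extra $\e^2$ and break the lower bound), and your verification that $d(\c o, c_{w_1}(t))<4\e$ via $\diam \pi B^{\a}<4\e$ is exactly what legitimizes the use of Corollary \ref{equicon} with $q=\c o$. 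Your remark that the equality in (2) uses $\c\bF\subset\bF$ and $\c^{-1}\bP\subset\bP$ is also accurate: for general $\c$ the exact right-hand side is $P^{-1}((\bP\cap\c\bP)\times(\bF\cap\c\bF))$ with the same depth condition, and only the forward inclusion (valid for all $\c$) is used later, so this is a defect of the statement rather than of your argument.

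The one step that is asserted rather than proved is the claim in (3) that $\xi_\c=c_\c(-\infty)\in\bP$. The justification ``since $\c^{-1}\bP\subset\bP$ and $\bP$ is compact, the attracting fixed point of $\c^{-1}$ lies in $\bP$'' presupposes that $\c^{-1}$ acts on $\pX$ with north--south type dynamics attracting all of $\bP$ to $c_\c(-\infty)$; in nonpositive curvature this is only available through Lemma \ref{key}, which requires knowing beforehand that $\c$ has a \emph{rank one} axis and that some point of $\bP$ lies outside the exceptional neighborhood of $c_\c(+\infty)$, neither of which you have established at that stage. The standard repair is a fixed-point argument: $\bP$ and $\bF$ are homeomorphic images of closed cones in $S_oX$, hence $\bP\times\bF$ is a closed ball, and the continuous map $(\xi,\eta)\mapsto(\c^{-1}\xi,\c\eta)$ sends it into itself, so Brouwer gives a fixed pair $(\xi_0,\eta_0)$; by Proposition \ref{crucial} the unique geodesic $c_{\xi_0,\eta_0}$ is rank one and is carried to itself by $\c$, hence is a rank one axis with $c(-\infty)=\xi_0\in\bP$ and $c(+\infty)=\eta_0\in\bF$. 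With that supplied (it is a separate preliminary lemma in \cite{Wu2}), the rest of your (3) and (4) goes through verbatim.
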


Now we consider the following subsets relative to $\L$. Denote $\tilde B_{\theta,\L}^\a:=P(B_{\theta}^\a\cap \L)\times [0,\a]$, which is a saturation of $B_{\theta}^\a\cap \L$ along the flow direction inside $B_{\theta}^\a$.
\begin{equation*}
\begin{aligned}
\C_\L(t,\a)=\C_{\theta,\L}(t,\a)&:=\{\c\in \C: \tilde S_{\theta,\L}\cap g^{-t}\c (\tilde B_{\theta,\L})\neq \emptyset\},\\
\C^*=\C^*_\theta&:=\{\c\in \C: \c\bF_\theta \subset \bF_\theta \text{\ and\ }\c^{-1}\bP_\theta\subset \bP_\theta\},\\
\C_\L^*(t,\a)&:=\C^*\cap \C_\L(t,\a),\\
\C'_\L(t,\a)&:=\{\c\in \C_\L^*(t,\a): \c\neq \b^n \text{\ for any\ } \b\in \C, n\ge 2\}.
\end{aligned}
\end{equation*}

\begin{lemma}(Closing lemma)\label{closing}
For every $0<\rho<\theta<\theta_1$, there exists some $t_0> 0$ such that for all $t\ge t_0$, we have $\C_{\rho,\L}(t,\a)\subset \C_{\theta,\L}^*(t,\a)$.
\end{lemma}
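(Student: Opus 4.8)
The plan is to prove the two inclusions $\C_{\rho,\L}(t,\a)\subset\C_{\theta,\L}(t,\a)$ and $\C_{\rho,\L}(t,\a)\subset\C^*_\theta$ separately. The first holds for every $t$ with no estimate: since $\rho<\theta$ we have $\bP_\rho\subset\bP_\theta$ and $\bF_\rho\subset\bF_\theta$, hence $B^\a_\rho\subset B^\a_\theta$, $S_\rho\subset S_\theta$, and therefore $\tilde S_{\rho,\L}\subset\tilde S_{\theta,\L}$, $\tilde B^\a_{\rho,\L}\subset\tilde B^\a_{\theta,\L}$; so any witness of $\gamma\in\C_{\rho,\L}(t,\a)$ is a witness of $\gamma\in\C_{\theta,\L}(t,\a)$. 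Thus everything reduces to producing $t_0>0$ such that $t\ge t_0$ and $\gamma\in\C_{\rho,\L}(t,\a)$ force $\gamma\bF_\theta\subset\bF_\theta$ and $\gamma^{-1}\bP_\theta\subset\bP_\theta$.

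Next I would unpack the hypothesis. Since deck transformations commute with the geodesic flow, $g^{-t}\gamma=\gamma g^{-t}$, so the nonempty intersection yields a vector $v$ with $H(v)\in\tilde S_{\rho,\L}$ and $g^t\gamma^{-1}v\in H^{-1}(\tilde B^\a_{\rho,\L})$; explicitly $v\in S_\rho$ (so $v^-\in\bP_\rho$, $v^+\in\bF_\rho$, $s(v)\in[0,\e^2]$) lies on the regular geodesic of some $u\in S_\rho\cap\L$, while $\gamma^{-1}v^-\in\bP_\rho$, $\gamma^{-1}v^+\in\bF_\rho$ and $s(\gamma^{-1}v)+t=s(g^t\gamma^{-1}v)\in[0,\a]$, with $g^t\gamma^{-1}v$ on $\gamma^{-1}$ of the regular geodesic of some $u'\in B^\a_\rho\cap\L$. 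Since $v_0$ lies in both $S_\rho\cap\L$ and $B^\a_\rho\cap\L$, and $\diam\pi S_\rho,\ \diam\pi B^\a_\rho<4\e$ by Lemma \ref{diameter}, both $\pi v$ and $\gamma^{-1}\pi(g^tv)$ lie within $4\e$ of $o$. As $\pi(g^tv)$ is the point at parameter $\approx t$ along $c_v$ forward from $\pi v\approx o$ toward $v^+\in\bF_\rho$, this gives $d(o,\gamma o)=t+O(\e)$ with $\gamma o$ within $O(\e)$ of that far-forward point, and symmetrically $\gamma^{-1}o$ within $O(\e)$ of $c_{o,\gamma^{-1}v^-}(t')$ for some $t'=t+O(\e)$ with $\gamma^{-1}v^-\in\bP_\rho$. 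Comparing the displacement of $\gamma$ at $\pi u'\approx o$ with the displacement on the axis shows the axis of the axial isometry $\gamma$ lies within $O(\sqrt\e)$ of $c_{v_0}$ (hence is rank one), with attracting endpoint within $\rho+O(\sqrt\e)$ of $v_0^+$, repelling endpoint within $\rho+O(\sqrt\e)$ of $v_0^-$, and $|\gamma|=t+O(\e)$.

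Then I would run a North--South argument, uniformly in $\gamma$. Fix $\theta'$ with $\rho<\theta'<\theta$. By the previous step the repelling endpoint of $\gamma$ lies in a fixed neighborhood $U_0$ of $v_0^-$ with $U_0\cap\bF_\theta=\emptyset$ (possible since $\bP_{\theta_1}\cap\bF_{\theta_1}=\emptyset$, which is implicit in Proposition \ref{crucial}), and the attracting endpoint lies in the fixed neighborhood $V_0:=\bF_{\theta'}\subset\bF_\theta$, once $t$ is large. A uniform version of Lemma \ref{key} then yields $t_0$ with $\gamma(\overline X\setminus U_0)\subset V_0$ for all $t\ge t_0$, hence $\gamma\bF_\theta\subset\bF_{\theta'}\subset\bF_\theta$. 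The inclusion $\gamma^{-1}\bP_\theta\subset\bP_\theta$ follows by the mirror argument, exchanging $\bP\leftrightarrow\bF$, $v_0\leftrightarrow-v_0$, $\gamma\leftrightarrow\gamma^{-1}$ (legitimate since $v_0\in\L_{k,\l,N}\cap\iota(\L_{k,\l,N})$, so $-v_0$ has the same properties), now using that $\gamma o$ is far in a $\bF_\rho$-direction. Taking the larger of the two thresholds gives the desired $t_0$.

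The crux is the uniformity invoked in the third step: $\C_{\rho,\L}(t,\a)$ contains many elements, none assumed to be a power of another, so one cannot simply quote Lemma \ref{key}, whose constant $n_0$ depends on the element. In negative curvature the contraction rate along an axis is bounded below, but in nonpositive curvature flat strips could a priori keep the angular width of $\bF_\theta$, as seen from the receding basepoint $\gamma^{\mp1}o$, from shrinking. What rescues the argument is that the axes of all $\gamma\in\C_{\rho,\L}(t,\a)$ are rank one and $O(\sqrt\e)$-close to the single regular geodesic $c_{v_0}$, with translation lengths tending to $\infty$ with $t$; combining Proposition \ref{crucial} with convexity of the distance function then produces a contraction estimate with constants depending only on $v_0,\e,\rho,\theta,k,\l,N$ and not on $\gamma$, which is exactly what is needed to extract a single $t_0$. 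Making this $\gamma$-independent estimate explicit is the technical heart of the proof.
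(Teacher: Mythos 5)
Your set-theoretic reduction is exactly the one the paper uses: the inclusions $\C_{\rho,\L}(t,\a)\subset\C_{\theta,\L}(t,\a)$ and $\C_{\rho,\L}(t,\a)\subset\C_\rho(t,\a)$ are immediate from $\bP_\rho\subset\bP_\theta$, $\bF_\rho\subset\bF_\theta$ and $\tilde S_{\rho,\L}\subset S_\rho$, so the whole lemma reduces to showing that elements of $\C_\rho(t,\a)$ lie in $\C^*_\theta$ once $t\ge t_0$. The difference is that the paper disposes of this last step in one line by quoting \cite[Lemma 3.11]{Wu2} --- the closing lemma for the unrestricted sets $\C_\rho(t,\a)$, itself an adaptation of Ricks' uniform closing argument \cite{Ri} --- and then writes $\C_{\rho,\L}(t,\a)\subset\C_\rho(t,\a)\cap\C_{\theta,\L}(t,\a)\subset\C^*_\theta\cap\C_{\theta,\L}(t,\a)=\C^*_{\theta,\L}(t,\a)$. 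You instead set out to re-derive that external statement from scratch.

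Your re-derivation follows the standard geometric route (locate the axis of $\gamma$ near the regular geodesic $c_{v_0}$ with $|\gamma|=t+O(\e)$, then run a North--South argument), but it stops exactly at the hard point. The uniform contraction estimate --- a single $t_0$ forcing $\gamma\bF_\theta\subset\bF_\theta$ and $\gamma^{-1}\bP_\theta\subset\bP_\theta$ for \emph{all} $\gamma\in\C_\rho(t,\a)$ with $t\ge t_0$ --- is asserted (``combining Proposition \ref{crucial} with convexity \dots produces a contraction estimate'') rather than proved, and you yourself flag it as the technical heart. Lemma \ref{key} only supplies an element-dependent threshold $n_0$, and in nonpositive curvature the upgrade to a $\gamma$-independent $t_0$ is nontrivial for exactly the flat-strip reason you identify; it is precisely the content of \cite[Lemma 3.11]{Wu2} (cf.\ \cite[Corollary 3.5]{Ri}). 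So as a self-contained argument the proposal has a genuine gap at this step, though the missing piece coincides with the result the paper is entitled to cite; if that citation is available to you, your proof collapses to the paper's.
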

\begin{proof}
By \cite[Lemma 3.11]{Wu2}, for every $0<\rho<\theta<\theta_1$, there exists some $t_0> 0$ such that for all $t\ge t_0$, we have $\C_\rho(t,\a)\subset \C_\theta^*$. 
It is clear from definition that $\C_{\rho,\L}(t,\a)\subset \C_\rho(t,\a)$ and $\C_{\rho,\L}(t,\a)\subset \C_{\theta,\L}(t,\a)$. Thus
\[\C_{\rho,\L}(t,\a)\subset \C_\rho(t,\a)\cap  \C_{\theta,\L}(t,\a)\subset \C_\theta^*\cap  \C_{\theta,\L}(t,\a)=\C_{\theta,\L}^*(t,\a).\]
The lemma follows.
\end{proof}

Define $\bP_{\L}:=\{w^-: w\in B^{3\e/2}\cap \L\}, \bF_{\L}:=\{w^+: w\in B^{3\e/2}\cap \L\}$ and
$$S_\L:=H^{-1}(\bP_\L\times \bF_\L\times [0,\e^2]), \quad B^\a_\L:=H^{-1}(\bP_\L\times \bF_\L\times [0,\a]).$$
$S_\L$ and $B_\L^\a$ can also be called \emph{rectangles}, which have local product structure in the sense of Definition \ref{recdef}. Obviously, $B^\a\cap \L\subset \tilde B^\a_\L\subset B^\a_\L$.
By \eqref{density}, 
\[\mu_F(B^\e_\L)\ge \mu_F(B^\e\cap \L)>\frac{1}{1+\rho_0}\mu_F(B^\e).\]
By definition of $\mu_F$ and $B^\e_\L$, we have $\bar{\mu}_{F,\l_0}(P(B^\e_\L))>\frac{1}{1+\rho_0}\bar\mu_{F,\l_0}(P(B^\e))$, i.e.,  $\bar{\mu}_{F,\l_0}(\bP_\L\times \bF_\L)>\frac{1}{1+\rho_0}\bar\mu_{F,\l_0}(\bP\times \bF)$. It follows that for $\forall \a\in (0,\frac{3}{2}\e]$,
\begin{equation}\label{density2}
\begin{aligned}
\mu_F(B^\a_\L) >\frac{1}{1+\rho_0}\mu_F(B^\a).
\end{aligned}
\end{equation}
In particular, $\mu_F(S_\L)>\frac{1}{1+\rho_0}\mu_F(S)$.

Given $\c\in \C_\L^*(t,\a)$, define $S_\L^\c:=H^{-1}(\bP_\L\times \c\bF_\L\times [0,\e^2])$.
\begin{lemma}\label{depth2}
Given any $\c\in \C$ and any $t\in \RR$, we have
$$S_\L\cap g^{-t}\c B^\a_\L\subset S_\L^\c.$$
\end{lemma}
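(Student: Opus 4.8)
The plan is to unwind all three rectangles through the Hopf map $H$ and to use only how the backward and forward endpoint coordinates transform under the $\C$-action; the time coordinate will play no role beyond recording membership in $[0,\e^2]$.

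I would start with an arbitrary $w\in S_\L\cap g^{-t}\c B^\a_\L$. From $w\in S_\L=H^{-1}(\bP_\L\times\bF_\L\times[0,\e^2])$ one reads off directly that $w^-\in\bP_\L$ and $s(w)\in[0,\e^2]$. For the remaining condition, since $\c$ is an isometry of $X$ it commutes with the geodesic flow, so $g^{-t}\c=\c g^{-t}$ and $w\in g^{-t}\c B^\a_\L$ is equivalent to $\c^{-1}g^t w\in B^\a_\L=H^{-1}(\bP_\L\times\bF_\L\times[0,\a])$. The middle coordinate of $H(\c^{-1}g^t w)$ then lies in $\bF_\L$, and by equivariance and flow-invariance of the forward endpoint, $(\c^{-1}g^t w)^+=\c^{-1}\big((g^t w)^+\big)=\c^{-1}(w^+)$. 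Hence $\c^{-1}(w^+)\in\bF_\L$, that is, $w^+\in\c\bF_\L$.

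Putting the pieces together, $H(w)=(w^-,w^+,s(w))\in\bP_\L\times\c\bF_\L\times[0,\e^2]$, i.e.\ $w\in H^{-1}(\bP_\L\times\c\bF_\L\times[0,\e^2])=S_\L^\c$, and since $w$ was arbitrary the desired inclusion follows.

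I do not expect a genuine obstacle here: this is the $\L$-version of the $\subseteq$ half of Lemma \ref{intersection2}(2), with the sharper restriction on $s(w)$ (the interval $b_{w^-}^\c-t+[0,\a]$ forced by the depth-$\a$ time coordinate of $B^\a_\L$) simply dropped, which is precisely why one gets an inclusion rather than an equality. The only points requiring a moment's care are the $\C$-equivariance of the endpoint maps $v\mapsto v^{\pm}$ and the commutation $g^{-t}\c=\c g^{-t}$, both of which are standard.
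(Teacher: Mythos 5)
Your proof is correct and is essentially the same as the paper's: both extract $w^-\in\bP_\L$ and $s(w)\in[0,\e^2]$ from $w\in S_\L$ and deduce $w^+\in\c\bF_\L$ from $\c^{-1}g^tw\in B^\a_\L$ via equivariance of the endpoint map. The only cosmetic difference is that the paper routes the time-coordinate bookkeeping through Lemma \ref{intersection2}(2) while you verify the three Hopf coordinates directly.
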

\begin{proof}
By Lemma \ref{intersection2}(2), it is sufficient to prove 
$$S_\L\cap g^{-t}\c B^\a_\L\subset P^{-1}(\bP_\L\times \c\bF_\L).$$
Let $w\in S_\L\cap g^{-t}\c B^\a_\L$. Then $w^-\in \bP_\L$. Since $g^t\c^{-1}w\in g^{t}\c^{-1}_*S_{\L}\cap  B_{\L}^\a$, $w^+\in \c \bF_\L$. The lemma follows.
\end{proof}

\begin{lemma}\label{depth1}
If $\c\in \C_\L^*(t,\a)$, then 
$$S_\L^\c\subset S\cap g^{-(t+2\e^2)}\c B^{\a+4\e^2}$$
\end{lemma}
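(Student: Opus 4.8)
The plan is to derive this directly from Lemma~\ref{intersection2}(4), which already records the corresponding statement for the undecorated slice $S^\c=H^{-1}(\bP\times\c\bF\times[0,\e^2])$, after observing that passing to the $\L$-decorated objects only shrinks everything in sight.

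First I would verify the inclusion of index sets $\C^*_\L(t,\a)\subset\C^*(t,\a)$. Since $\C^*_\L(t,\a)=\C^*\cap\C_\L(t,\a)$, membership in $\C^*$ is automatic, so it remains to see $\c\in\C(t,\a)$. This follows from $\tilde S_{\theta,\L}\subset S_\theta$ and $\tilde B^\a_{\theta,\L}\subset B^\a_\theta$: the $\L$-decoration only deletes points of the box and re-saturates along the flow, so $P(S_\theta\cap\L)\subset P(S_\theta)=\bP_\theta\times\bF_\theta$ and $P(B^\a_\theta\cap\L)\subset P(B^\a_\theta)=\bP_\theta\times\bF_\theta$, and applying $H^{-1}$ gives the two inclusions. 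Hence $\tilde S_{\theta,\L}\cap g^{-t}\c\tilde B^\a_{\theta,\L}\neq\emptyset$ forces $S_\theta\cap g^{-t}\c B^\a_\theta\neq\emptyset$, i.e.\ $\c\in\C(t,\a)$.

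Second, because $B^{3\e/2}=H^{-1}(\bP_\theta\times\bF_\theta\times[0,3\e/2])$ we have $\bP_\L\subset\bP$ and $\bF_\L\subset\bF$, hence $\c\bF_\L\subset\c\bF$; therefore
$$S_\L^\c=H^{-1}(\bP_\L\times\c\bF_\L\times[0,\e^2])\subset H^{-1}(\bP\times\c\bF\times[0,\e^2])=S^\c.$$
Since $\c\in\C^*(t,\a)$ by the first step, Lemma~\ref{intersection2}(4) gives $S^\c\subset S\cap g^{-(t+2\e^2)}\c B^{\a+4\e^2}$, and combining this with the display above completes the proof.

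I expect no genuine obstacle here; the only point requiring care is keeping the containments pointed the right way — the $\L$-decoration contracts the base sets ($\bP_\L\subset\bP$, $\bF_\L\subset\bF$) and simultaneously moves $\c$ into the smaller collection $\C^*(t,\a)$. Should one prefer to avoid invoking Lemma~\ref{intersection2}(4), the nontrivial inclusion $S_\L^\c\subset g^{-(t+2\e^2)}\c B^{\a+4\e^2}$ can be done by hand: for $w\in S_\L^\c$ one has $(\c^{-1}w)^-\in\bP$, $(\c^{-1}w)^+\in\bF$ as above, and $\C$-equivariance of Busemann functions gives $s(\c^{-1}w)=s(w)-b_{w^-}(\c o,o)$; a witness of $\c\in\C_\L(t,\a)$ together with Corollary~\ref{equicon} (applied to $q=\c o$) confines $b_{w^-}(\c o,o)-t$ to $[-\a-\e^2,2\e^2]$, whence $s(g^{t+2\e^2}\c^{-1}w)=s(w)-b_{w^-}(\c o,o)+t+2\e^2\in[0,\a+4\e^2]$, so $g^{t+2\e^2}\c^{-1}w\in B^{\a+4\e^2}$. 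The first route is shorter, so I would take it.
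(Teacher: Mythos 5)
Your proof is correct and is essentially the paper's own argument: the paper also observes $S_\L^\c\subset S^\c$ and invokes Lemma~\ref{intersection2}(4). Your explicit check that $\C^*_\L(t,\a)\subset\C^*(t,\a)$ (needed to apply that lemma) is a detail the paper leaves implicit, and it is correct.
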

\begin{proof}
Since $S_\L^\c \subset S^\c$, the lemma follows from Lemma \ref{intersection2}(4).
\end{proof}

The following notations in the asymptotic estimates are standard.
\begin{equation*}
\begin{aligned}
f(t)=a^{\pm C}g(t)&\Leftrightarrow a^{-C}g(t)\le f(t)\le a^{C}g(t) \text{\ for all\ } t;\\
f(t) \lesssim g(t) &\Leftrightarrow \limsup_{t\to \infty}\frac{f(t)}{g(t)}\le 1;\\
f(t) \gtrsim g(t) &\Leftrightarrow \liminf_{t\to \infty}\frac{f(t)}{g(t)}\ge 1;\\
f(t) \sim g(t) &\Leftrightarrow \lim_{t\to \infty}\frac{f(t)}{g(t)}= 1;\\
f(t)\sim a^{\pm C}g(t)&\Leftrightarrow a^{-C}g(t)\lesssim f(t)\lesssim a^{C}g(t).
\end{aligned}
\end{equation*}

\begin{lemma}\label{scaling1}
Given $0<\a\le \frac{3}{2}\e$ and $\c\in \C^*_{\L}(t,\a)$, then
$$\mu_F(S_\L^\c)=e^{\pm L(\e,\l)}e^{\Per_F(\c)-\d_Ft}\mu_F(S_\L)$$
for some constant $L(\e,\l)>0$, where $\Per_F(\c):=\int_0^{|\c|}F(\dot c(s))ds$ with $c$ being the unique hyperbolic axis of $\c$.
\end{lemma}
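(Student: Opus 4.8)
The plan is to compute $\mu_F(S_\L^\c)$ directly from the local product structure of $\mu_F$ and then identify the resulting combination of Busemann cocycles with $\Per_F(\c)-\d_F t$ up to a bounded error.

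First I would invoke the product formula for $\mu_F$ established in Subsection 4.1. Since $\mu_F(\text{Reg})=1$ by Corollary \ref{regularfull} and for $v\in\text{Reg}$ the fibre $P^{-1}(v^-,v^+)$ is a single regular geodesic, one has (up to the fixed normalization constant $c$) $d\mu_F(v)=\b_F(v^-,v^+)\,d\mu_{F\circ\iota,o}(v^-)\,d\mu_{F,o}(v^+)\,dt$, with $\b_F$ as in the definition of $\bar\mu_F$. Because $\bP_\L\subset\bP_\theta$ and $\c\bF_\L\subset\c\bF_\theta\subset\bF_\theta$, Proposition \ref{crucial} guarantees that every pair in $\bP_\L\times\c\bF_\L$ is joined by a regular geodesic which stays within $\e/10$ of $\dot c_{v_0}$ near $o$; hence $S_\L^\c\subset\text{Reg}$, and for each such pair $(\xi,\eta)$ the set of $v\in S_\L^\c$ with $P(v)=(\xi,\eta)$ is exactly a flow segment of length $\e^2$. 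Integrating out the flow direction,
\begin{equation*}
\mu_F(S_\L^\c)=c\,\e^2\int_{\bP_\L}\int_{\c\bF_\L}\b_F(\xi,\eta)\,d\mu_{F,o}(\eta)\,d\mu_{F\circ\iota,o}(\xi),
\end{equation*}
and likewise $\mu_F(S_\L)=c\,\e^2\int_{\bP_\L}\int_{\bF_\L}\b_F(\xi,\eta)\,d\mu_{F,o}(\eta)\,d\mu_{F\circ\iota,o}(\xi)$.

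Next I would perform the substitution $\eta=\c\eta'$ in the inner integral for $\mu_F(S_\L^\c)$. Using the $\Gamma$-equivariance of the Busemann density from Proposition \ref{ps_a} (namely $\c_*\mu_{F,o}=\mu_{F,\c o}$ together with $\frac{d\mu_{F,o}}{d\mu_{F,\c o}}(\eta)=e^{-C_{F-\d_F,\eta}(o,\c o)}$) and the cocycle identities of Lemma \ref{cocycleprop} (in particular $C_{F-\d_F,\c\zeta}(\c p,\c q)=C_{F-\d_F,\zeta}(p,q)$), a short computation turns $\int_{\c\bF_\L}\b_F(\xi,\eta)\,d\mu_{F,o}(\eta)$ into $\int_{\bF_\L}\b_F(\xi,\eta')\,e^{\Phi(\xi,\eta',\c)}\,d\mu_{F,o}(\eta')$, where, writing $v'$ for a unit vector on $c_{\xi,\eta'}$ and $v$ for a unit vector on $c_{\xi,\c\eta'}$, both chosen with footpoint within $\e/10$ of $o$ (legitimate since $\b_F$ does not depend on the footpoint),
\begin{equation*}
\Phi(\xi,\eta',\c)=C_{F\circ\iota-\d_F,\xi}(\pi v',\pi v)+C_{F-\d_F,\eta'}(\pi v',\c^{-1}\pi v).
\end{equation*}
It then remains to prove that $\Phi(\xi,\eta',\c)=\Per_F(\c)-\d_F t+O(L(\e,\l))$ uniformly in $\xi\in\bP_\L$, $\eta'\in\bF_\L$ and $\c\in\C^*_\L(t,\a)$; substituting this back yields $\mu_F(S_\L^\c)=e^{\pm L(\e,\l)}e^{\Per_F(\c)-\d_F t}\mu_F(S_\L)$.

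For the first term of $\Phi$, the footpoints $\pi v',\pi v$ lie within $\e/5$ of each other by Proposition \ref{crucial}, and $\xi\in\bP_\L$ is the forward endpoint of a vector in $\L$ (recall $\bP_\L=\{w^-:w\in B_\theta^{3\e/2}\cap\L\}$ and $\L=\L_{k,\l,N}\cap\iota(\L_{k,\l,N})$ is flip-invariant, so $\iota w\in\L\subset\UR(\l)$ and $(\iota w)^+=\xi$); hence the bounded-distortion estimates behind Lemma \ref{Bowen} and Corollary \ref{cocexi1} bound $|C_{F\circ\iota-\d_F,\xi}(\pi v',\pi v)|$ by a constant depending only on $\e$ and $\l$. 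For the dominant second term, since $\xi,\c^{-1}\xi\in\bP_\theta$ and $\eta',\c\eta'\in\bF_\theta$ are all $\theta$-close to $v_0^{\pm}$, the axis $c$ of $\c$ — which is regular and $\e/10$-close to $\dot c_{v_0}$ near $o$ by Proposition \ref{crucial} — is shadowed by the geodesics $c_{\xi,\c\eta'}$ and $c_{\c^{-1}\xi,\eta'}=\c^{-1}c_{\xi,\c\eta'}$ uniformly over a stretch of length comparable to $|\c|$, with the shadowing distance kept bounded by convexity of the distance function in nonpositive curvature (as in the geometric lemmas of \cite{Wu2}). Combining this with Lemma \ref{cocycleprop}(3) to rewrite the cocycle as $-\int(F-\d_F)$ along the shadowing segment and with Lemma \ref{Bowen} (applicable since $\eta'\in\bF_\L$ is the forward endpoint of a vector in $\L\subset\UR(\l)$) gives $C_{F-\d_F,\eta'}(\pi v',\c^{-1}\pi v)=\int_0^{|\c|}(F-\d_F)(\dot c(s))\,ds+O(L(\e,\l))=\Per_F(\c)-\d_F|\c|+O(L(\e,\l))$; finally $|\c|\in[t-\a-\e^2,\,t+2\e^2]$ by Lemma \ref{intersection2}(3), so $\d_F|\c|=\d_F t+O(\e)$, which is absorbed into $L(\e,\l)$. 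The main obstacle is exactly this uniform bounded-distortion estimate for the ``long'' cocycle: one must control the shadowing of the axis of $\c$ over a time interval of length $\approx t$ and then apply Lemma \ref{Bowen} with constants depending only on the fixed box $B_\theta^{3\e/2}$ around $v_0$ and on the fixed regularity level $\l$, and \emph{not} on $t$ or on the particular $\c$ — and this uniformity is precisely what the choice of $v_0\in\L$ and of $\bP_\L,\bF_\L$ as endpoints of $\L$-vectors is designed to supply.
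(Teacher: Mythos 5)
Your proposal is correct and follows essentially the same route as the paper: both compute the ratio $\mu_F(S_\L^\c)/\mu_F(S_\L)$ from the product structure of $\bar\mu_{F,\l_0}$, transfer the $\c$-translate of $\bF_\L$ into a Radon--Nikodym factor via the $\Gamma$-equivariance of the Busemann density, and identify the resulting cocycle $C_{F-\d_F,\eta}(\c^{-1}o,o)$ with $-(\Per_F(\c)-\d_F t)$ up to an error controlled by the $\UR(\l)$-bounded distortion of Lemma \ref{Bowen}. The only real difference is bookkeeping: the paper first bounds $\b_F$ uniformly on $\bP_\L\times\bF_\L$ and on $\bP_\L\times\c\bF_\L$ and then treats the remaining cocycle through the explicit shadowing chain ($w_0$ on the axis, $w_1\in\tilde S_\L\cap g^{-t}\c\tilde B^\a_\L$, $w_2$, $w_3$) that $\c\in\C^*_\L(t,\a)$ supplies, whereas you fold everything into a single exponent $\Phi$ --- in a full write-up you would still need to make that two-sided comparison explicit (forward $\UR(\l)$-regularity at the $S_\L$-end and backward regularity at the $\c B^\a_\L$-end of the length-$t$ segment), since convexity alone bounds only the Hausdorff distance and does not by itself give a $t$-independent distortion constant for $\int F$.
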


\begin{proof}
We first estimate $e^{C_{F\circ \iota-\d_F,\xi}(o, \pi(v))+C_{F-\d_F,\eta}(o, \pi(v))}$ given $\xi\in \bP_\L, \eta\in \bF_\L$ and $v=\dot c_{\xi,\eta}(0)\in B^\a$.
Then there exists $w\in B^{3\e/2}\cap\L$ with $w^-=\xi$. Noticing that $\iota(w)\in \UR(\l)$, we have by Lemma \ref{Bowen}
\begin{equation*}
\begin{aligned}
|C_{F\circ \iota-\d_F,\xi}(o, \pi(v))|&\le |C_{F\circ \iota-\d_F,\xi}(o, \pi(w))+C_{F\circ \iota-\d_F,\xi}(\pi(w), \pi(v))|\\
&\le 2C_2(4\kappa\e)^\a e^{4\kappa\e\a/\l}/\l+8(\|F\|+\d_F)\e=:L_1(\e,\l)
\end{aligned}
\end{equation*}
where $\kappa>1$ is the constant in the definition of local product structure (\cite[Definition 4.2]{BCFT}). Similarly, $|C_{F-\d_F,\eta}(o, \pi(v))|\le L_1(\e,\l)$.

Then we estimate $e^{C_{F\circ \iota-\d_F,\xi}(o, \pi(v))+C_{F-\d_F,\eta}(o, \pi(v))}$ given $\xi\in \bP_\L, \eta\in \c\bF_\L$ and $v=\dot c_{\xi,\eta}(0)\in B^\a$. We have proved $|C_{F\circ \iota-\d_F,\xi}(o, \pi(v))|\le  L_1(\e,\l)$. 
 Denote $v':=\dot c_{o,\eta}(0)$. As $\c\in  \C^*_{\L}(t,\a)$, there exists $w_1\in \tilde S_\L\cap g^{-t}\c\tilde B^\a_\L$. On the other hand, since $\eta\in\c \bF_\L$, there exists $w_2\in  B^{3\e/2}\cap \L$ with $\c w_2^+=\eta$. Using an idea of shadowing, more precisely, setting $w_3\in W_\loc^u(g^tw_1)\cap W^{cs}_\loc(\c w_2)$, then the two orbit segments $(g^{-t}w_3,t)$ and $(w_1,t)$ are $4\kappa\e$-close, $w_3^+=\eta$ and $d(o, \pi(g^{-t}w_3))\le 4\kappa\e+4\e.$ 
Thus in fact, the four orbit segments $(g^{-t}w_3,t)$, $(w_1,t)$, $(v', t)$ and $(v,t)$ are $(4\kappa\e+12\e)$-close to each other, so a similar upper bound as above also holds for $|\int_o^{\pi (g^tv')}(F-\d_F)-\int_{\pi v}^{\pi(g^tv)}(F-\d_F)|$. Note that $\c w_2\in \L$, so a similar upper bound as above also holds for $|C_{F-\d_F,\eta}(\pi(g^tv'), \pi(g^tv))|$. Therefore, a similar upper bound, still denoted by $L_1(\e,\l)$ for simplicity, also holds for $|C_{F-\d_F,\eta}(o, \pi(v))|$.

Thus we have
\begin{equation}\label{ratio}
\begin{aligned}
&\frac{\mu_F(S_\L^\c)}{\mu_F(S_\L)}=\frac{\e^2\bar \mu_{F,\l_0}(\bP_\L \times \c\bF_\L)}{\e^2\bar \mu_{F,\l_0}(\bP_\L \times \bF_\L)}\\
=&\frac{e^{\pm 2L_1(\e,\l)}\mu_{F,o}(\bP_\L\cap \C\cdot(U\cap \BBB^{\l_0}))\mu_{F,o}(\c\bF_\L\cap \C\cdot(V\cap \BBB^{\l_0}))}{e^{\pm 2L_1(\e,\l)}\mu_{F,o}(\bP_\L\cap \C\cdot(U\cap \BBB^{\l_0}))\mu_{F,o}(\bF_\L\cap \C\cdot(V\cap \BBB^{\l_0}))}\\
=&e^{\pm 4L_1(\e,\l)}\frac{\mu_{F,\c^{-1}o}(\bF_\L\cap \C\cdot(V\cap \BBB^{\l_0}))}{\mu_{F,o}(\bF_\L\cap \C\cdot(V\cap \BBB^{\l_0}))}\\
=&e^{\pm 4L_1(\e,\l)}\frac{\int_{\bF_\L\cap \C\cdot(V\cap \BBB^{\l_0})}e^{-C_{F-\d_F,\eta}(\c^{-1}o,o)}d\mu_{F,o}(\eta)}{\mu_{F,o}(\bF_\L\cap \C\cdot(V\cap \BBB^{\l_0}))}.
\end{aligned}
\end{equation}
Let us estimate $C_{F-\d_F,\eta}(\c^{-1}o,o)=C_{F-\d_F,\c\eta}(o,\c o)$. As above, since $\c\eta\in\c \bF_\L$, there exists $w_2\in B^{3\e/2}\cap \L$ with $\c w_2^+=\c\eta$. Then for any $T>0$,
\begin{equation}\label{ratio1}
\begin{aligned}
&\left|\int_{\c o}^{c_{\c o, \c\eta}(T)}(F-\d_F)-\int_{c_{o,\c\eta}(t)}^{c_{o,\c\eta}(t+T)}(F-\d_F)\right|\\
\le&2C_2(4\kappa\e)^\a e^{4\kappa\e\a/\l}/\l+8(\|F\|+\d_F)\e=L_1(\e,\l).
\end{aligned}
\end{equation}

On the other hand, as $\c\in \C^*_{\L}(t,\a)$, there exists $w_0$ lying on the unique axis of $\c$ such that $w_0\in S$ and $g^{|\c|}w_0=\c w_0\in \c B^\a$. Note that as above there exists $w_1\in \tilde S_\L\cap g^{-t}\c(\tilde B^\a_\L)$. Clearly $|t-|\c||\le 8\e$. 
By a similar argument as above,
\begin{equation}\label{ratio2}
\begin{aligned}
|\Per_F(\c)-\int_{o}^{c_{o,\c\eta}(t)}F|\le &L_2(\e,\l)
\end{aligned}
\end{equation}
for some $L_2(\e,\l)>0$ independent of $t$. Combining \eqref{ratio}, \eqref{ratio1} and \eqref{ratio2}, we have 
\begin{equation*}
\begin{aligned}
\frac{\mu_F(S_\L^\c)}{\mu_F(S_\L)}=&e^{\pm 4L_1(\e,\l)}\frac{\int_{\bF_\L\cap \C\cdot(V\cap \BBB^{\l_0})}e^{-C_{F-\d_F,\eta}(\c^{-1}o,o)}d\mu_{F,o}(\eta)}{\mu_{F,o}(\bF_\L\cap \C\cdot(V\cap \BBB^{\l_0}))}\\
=&e^{\pm L(\e,\l)}e^{\Per_F(\c)-\d_Ft}
\end{aligned}
\end{equation*}
where $L(\e,\l):=5L_1(\e,\l)+L_2(\e,\l)$.
\end{proof}
\begin{remark}\label{constant}
From the above calculation, $L(\e,\l)\to 0$ as $\e\to 0$ and there exists a constant $Q'>0$ independent of $\e$ such that $L(\e,\l)\le Q'\e^\a$ if $\e$ is small enough.
\end{remark}

For clarity, we use an underline to denote objects in $M$ and $SM$, e.g. for $A\subset SX$, $\underline A:=dp(A)$, where $p: X\to M$ is the covering map.

\begin{proposition}\label{asymptotic}
We have
\begin{equation*}
\begin{aligned}
\frac{e^{-L(\e,\l)}}{1+\rho_0}& \lesssim \frac{\sum_{\c\in \C^*_{\theta,\L}(t,\a)}e^{\Per_F(\c)-\d_Ft}}{\mu_F(B_\theta^\a)} \lesssim e^{L(\e,\l)}(1+\rho_0)(1+\frac{4\e^2}{\a}),\\
\frac{e^{-L(\e,\l)}}{1+\rho_0}& \lesssim \frac{\sum_{\c\in \C_{\theta,\L}(t,\a)}e^{\Per_F(\c)-\d_Ft}}{\mu_F(B_\theta^\a)} \lesssim e^{L(\e,\l)}(1+\rho_0)(1+\frac{4\e^2}{\a}).
\end{aligned}
\end{equation*}
\end{proposition}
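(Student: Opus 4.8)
The plan is a mixing-based count in the spirit of Margulis, Ricks \cite{Ri} and Wu \cite{Wu2}, with the potential-dependent distortion absorbed once and for all into the scaling Lemma \ref{scaling1}. Write $S_\L=S_{\theta,\L}$, $B^\a_\L=B^\a_{\theta,\L}$, $S^\c_\L=S^\c_{\theta,\L}$. First, Lemma \ref{scaling1} lets us replace the sum to be estimated by a sum of measures,
\[\sum_{\c\in\C^*_{\theta,\L}(t,\a)}e^{\Per_F(\c)-\d_F t}=\frac{e^{\pm L(\e,\l)}}{\mu_F(S_\L)}\sum_{\c\in\C^*_{\theta,\L}(t,\a)}\mu_F(S^\c_\L),\]
so everything reduces to estimating $\sum_\c\mu_F(S^\c_\L)$ from above and below. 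Throughout I will use three elementary facts, all valid by the construction of $\mu_F$ and Lemma \ref{diameter}: (i) each of $S_\theta,B^\a_\theta,S_\L,B^\a_\L,S^\c_\L,\tilde S_{\theta,\L},\tilde B^\a_{\theta,\L}$ has footpoint projection of diameter $<4\e\ll\inj(M)$, hence injects into $SM$, so its $\mu_F$-measure agrees with that of its projection; (ii) for an injecting box $B$, the translates $\{g^{-t}\c B\}_{\c\in\C}$ meet any injecting set in pairwise disjoint subsets (a common point would project to a single point of $SM$ lying in two translates of $B$, forcing the group elements to coincide), and $\bigcup_{\c\in\C}\c B$ is the full preimage of $\underline B$; (iii) by the product structure of $\mu_F$ (Section 4) together with $s(g^sv)=s(v)+s$, the $\mu_F$-measure of a box $H^{-1}(A\times B\times[0,\beta])$ is proportional to its depth $\beta$, so $\mu_F(B^{\a+4\e^2}_\theta)=(1+\tfrac{4\e^2}{\a})\mu_F(B^\a_\theta)$, $\mu_F(B^\a_\theta)=\tfrac{\a}{\e^2}\mu_F(S_\theta)$, and likewise for the $\L$-boxes.

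For the upper bound, feed Lemma \ref{depth1} into the reduction: for $\c\in\C^*_{\theta,\L}(t,\a)$ one has $S^\c_\L\subset S_\theta\cap g^{-(t+2\e^2)}\c B^{\a+4\e^2}_\theta$, and by (ii) these right-hand sets are pairwise disjoint subsets of $S_\theta$, so
\[\sum_{\c\in\C^*_{\theta,\L}(t,\a)}\mu_F(S^\c_\L)\ \le\ \mu_F\Big(S_\theta\cap g^{-(t+2\e^2)}\!\!\bigcup_{\c\in\C}\c B^{\a+4\e^2}_\theta\Big)\ =\ \mu_F\big(\underline{S_\theta}\cap g^{-(t+2\e^2)}\,\underline{B^{\a+4\e^2}_\theta}\big).\]
Since $\mu_F$ is mixing (Kolmogorov, by \cite{CT}), the last expression tends to $\mu_F(S_\theta)\mu_F(B^{\a+4\e^2}_\theta)=(1+\tfrac{4\e^2}{\a})\mu_F(S_\theta)\mu_F(B^\a_\theta)$ as $t\to\infty$. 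Combining with the reduction, dividing by $\mu_F(B^\a_\theta)$, and using $\mu_F(S_\L)>\tfrac{1}{1+\rho_0}\mu_F(S_\theta)$ from \eqref{density2} gives the stated upper bound $e^{L(\e,\l)}(1+\rho_0)(1+\tfrac{4\e^2}{\a})$.

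For the lower bound, fix $0<\rho<\theta$. The Closing Lemma \ref{closing} gives $\C_{\rho,\L}(t,\a)\subset\C^*_{\theta,\L}(t,\a)$ once $t$ is large, while Lemma \ref{depth2} at aperture $\theta$ (using $\tilde S_{\rho,\L}\subset S_{\theta,\L}$ and $\tilde B^\a_{\rho,\L}\subset B^\a_{\theta,\L}$) gives $\tilde S_{\rho,\L}\cap g^{-t}\c\tilde B^\a_{\rho,\L}\subset S^\c_\L$; by (ii) and the definition of $\C_{\rho,\L}(t,\a)$ these pieces are disjoint and their union has measure $\mu_F(\underline{\tilde S_{\rho,\L}}\cap g^{-t}\underline{\tilde B^\a_{\rho,\L}})$. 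Hence for $t$ large
\[\sum_{\c\in\C^*_{\theta,\L}(t,\a)}\mu_F(S^\c_\L)\ \ge\ \mu_F\big(\underline{\tilde S_{\rho,\L}}\cap g^{-t}\,\underline{\tilde B^\a_{\rho,\L}}\big)\ \xrightarrow[t\to\infty]{}\ \mu_F(\tilde S_{\rho,\L})\,\mu_F(\tilde B^\a_{\rho,\L})\]
by mixing. Plugging this into the reduction, dividing by $\mu_F(B^\a_\theta)$, and then letting $\rho\uparrow\theta$ — using the continuity choices \eqref{e:choice0}, \eqref{e:choice} for $\theta\mapsto\mu_F(S_\theta)$, $\theta\mapsto\mu_F(B^\a_\theta)$, the depth-linearity (iii), and the density estimates \eqref{density}, \eqref{density2}, so that $\mu_F(\tilde S_{\rho,\L})\mu_F(\tilde B^\a_{\rho,\L})/\big(\mu_F(S_\L)\mu_F(B^\a_\theta)\big)\to(1+\rho_0)^{-1}$ — yields $\liminf_{t\to\infty}\sum_{\c\in\C^*_{\theta,\L}(t,\a)}e^{\Per_F(\c)-\d_Ft}/\mu_F(B^\a_\theta)\ge e^{-L(\e,\l)}/(1+\rho_0)$. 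The second displayed estimate, with $\C_{\theta,\L}(t,\a)$ in place of $\C^*_{\theta,\L}(t,\a)$, follows by the same sandwich: $\C^*_{\theta,\L}\subset\C_{\theta,\L}$ gives the lower bound for free, and for the upper bound one repeats the argument at a slightly larger aperture $\theta'>\theta$, so that $\C_{\theta,\L}(t,\a)\subset\C^*_{\theta',\L}(t,\a)$ for $t$ large by the Closing Lemma, and lets $\theta'\downarrow\theta$.

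The main obstacle is the bookkeeping that keeps the three tools mutually compatible: mixing must be applied to honest measurable sets, the scaling lemma only sees indices in $\C^*_{\theta,\L}(t,\a)$, and the disjointness arguments need boxes that inject into $SM$, so one is forced to work with two apertures $\rho<\theta$ (or $\theta<\theta'$) and with the flow-saturated pieces $\tilde S_{\rho,\L},\tilde B^\a_{\rho,\L}$ nested inside the clean product boxes. The delicate point is to show that as $\rho\uparrow\theta$ all these measures collapse to give exactly the factors $(1+\rho_0)^{\pm1}$ and $(1+\tfrac{4\e^2}{\a})$ — in particular that the Lebesgue-density hypothesis \eqref{density} on $v_0$ propagates, via the depth-linearity (iii) and the continuity choices \eqref{e:choice0}, \eqref{e:choice}, to the $\rho$-slices and their $\L$-saturations. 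A minor point is that mixing is invoked for $g^{t+2\e^2}$ rather than $g^t$, but only the limit $t\to\infty$ is used, so the time shift is harmless.
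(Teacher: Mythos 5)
Your architecture is the same as the paper's: reduce the weighted sum to $\sum_{\c}\mu_F(S^\c_{\theta,\L})$ via Lemma \ref{scaling1}, sandwich the disjoint union $\bigcup_{\c}S^\c_{\theta,\L}$ between an intersection taken at a smaller aperture $\rho<\theta$ and the enlarged intersection $S_\theta\cap g^{-(t+2\e^2)}B^{\a+4\e^2}_\theta$ using Lemmas \ref{closing}, \ref{depth2} and \ref{depth1}, apply mixing of $\mu_F$, and let $\rho\uparrow\theta$ using \eqref{e:choice0}, \eqref{e:choice}, \eqref{density}, \eqref{density2}; the second display is handled by the identical two-aperture sandwich. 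The upper bound and the overall reduction are fine.

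There is, however, a concrete problem with the constant in your lower bound. You run the mixing argument on the flow-saturated sets $\tilde S_{\rho,\L}$, $\tilde B^\a_{\rho,\L}$ and then assert that $\mu_F(\tilde S_{\rho,\L})\mu_F(\tilde B^\a_{\rho,\L})\big/\big(\mu_F(S_{\theta,\L})\mu_F(B^\a_\theta)\big)\to(1+\rho_0)^{-1}$ as $\rho\uparrow\theta$. Neither factor is controlled by the facts you invoke: \eqref{density} and \eqref{density2} bound the product rectangles $S_\L=H^{-1}(\bP_\L\times\bF_\L\times[0,\e^2])$ and $B^\a_\L$, not the saturations $\tilde S_\L\subset S_\L$ and $\tilde B^\a_\L\subset B^\a_\L$, which can be strictly smaller in measure (the saturation only keeps those Hopf fibers actually meeting $\L$ inside the box, whereas the rectangle keeps every fiber whose two endpoints are separately seen by $\L$); moreover the slice ratio $\mu_F(\tilde S_{\rho,\L})/\mu_F(S_{\theta,\L})$ need not tend to $1$. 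As written you would at best obtain a constant of the form $(1+\rho_0)^{-2}$, not $(1+\rho_0)^{-1}$. The paper avoids this by placing the full product rectangles at the left end of the sandwich, namely $S_{\rho,\L}\cap g^{-t}\c B^\a_{\rho,\L}\subset S^\c_{\theta,\L}$ (still a consequence of Lemma \ref{depth2} together with the closing lemma); after mixing and $\rho\uparrow\theta$ the slice factor $\mu_F(S_{\rho,\L})/\mu_F(S_{\theta,\L})$ tends to $1$ by the continuity choice of $\theta$, and only the box factor $\mu_F(B^\a_{\rho,\L})/\mu_F(B^\a_\theta)$ remains, which \eqref{density2} bounds below by $(1+\rho_0)^{-1}$ — exactly the stated constant. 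So you should replace $\tilde S_{\rho,\L},\tilde B^\a_{\rho,\L}$ by $S_{\rho,\L},B^\a_{\rho,\L}$ at that step (and note that the propagation of \eqref{density} to all depths $\a$ is a statement about the rectangles via the product structure, not about the saturations).
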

\begin{proof}
Recall that $\a\in (0, \frac{3\e}{2}]$. By Lemmas \ref{closing}, \ref{depth2} and \ref{depth1}, for any $0<\rho<\theta$ and $t$ large enough, we have
\begin{equation}\label{inclusion}
\begin{aligned}
\lS_{\rho,\L}\cap g^{-t}\lB_{\rho,\L}^{\a}\subset \bigcup_{\c\in \C^*_{\theta,\L}(t,\a)}\lS_{\theta,\L}^\c \subset \lS_\theta \cap g^{-(t+2\e^2)}\lB_\theta^{\a+4\e^2}.
\end{aligned}
\end{equation}
From the proof of Lemma \ref{scaling1}, we know for any $v\in S_{\theta,\L}^\c$, the geodesic $c_v$ passes through a small neighborhood of $\c B_\theta^\a$. Hence the union in \eqref{inclusion} is actually a disjoint union. By Lemma \ref{scaling1}, 
$$\mu_F(S_{\theta,\L}^\c)=e^{\pm L(\e,\l)}e^{\Per_F(\c)-\d_Ft}\mu_F(S_{\theta,\L}).$$ 
Thus we have
\begin{equation*}
\begin{aligned}
e^{-L(\e,\l)}\mu_F(\lS_{\rho,\L}\cap g^{-t}\lB_{\rho,\L}^{\a})&\le \mu_F(S_{\theta,\L})\sum_{\c\in \C^*_{\theta,\L}(t,\a)}e^{\Per_F(\c)-\d_Ft}\\
&\le e^{L(\e,\l)}\mu_F(\lS_\theta\cap g^{-(t+2\e^2)}\lB_\theta^{\a+4\e^2}).
\end{aligned}
\end{equation*}
Dividing by $\mu_F(\lS_\theta)\mu_F(\lB_\theta^\a)$ and using mixing of $\mu_F$, we get
\begin{equation}\label{e:sim}
\begin{aligned}
e^{-L(\e,\l)}\frac{\mu_F(S_{\rho,\L})\mu_F(B_{\rho,\L}^\a)}{\mu_F(S_\theta)\mu_F(B_\theta^\a)}& \lesssim \frac{\mu_F(S_{\theta,\L})\sum_{\c\in \C^*_{\theta,\L}(t,\a)}e^{\Per_F(\c)-\d_Ft}}{\mu_F(S_\theta)\mu_F(B_\theta^\a)}\\
 & \lesssim \ e^{L(\e,\l)}\frac{\mu_F(B_\theta^{\a+4\e^2})}{\mu_F(B_\theta^\a)}.
\end{aligned}
\end{equation}
By \eqref{e:choice}, \eqref{density} and \eqref{density2}, letting $\rho \nearrow \theta$, we obtain the first equation in the proposition.

To prove the second equation, we consider $\rho<\theta<\rho_1< \theta_0$. Then by Lemma \ref{closing}, 
$\C^*_{\theta,\L}(t,\a)\subset \C_{\theta,\L}(t,\a) \subset \C^*_{\rho_1,\L}(t,\a).$ Combining with \eqref{e:sim},
\begin{equation*}
\begin{aligned}
e^{-L(\e,\l)}\frac{\mu_F(S_{\rho,\L})\mu_F(B_{\rho,\L}^\a)}{\mu_F(S_\theta)\mu_F(B_\theta^\a)}& \lesssim \frac{\mu_F(S_{\theta,\L})\sum_{\c\in \C_{\theta,\L}(t,\a)}e^{\Per_F(\c)-\d_Ft}}{\mu_F(S_\theta)\mu_F(B_\theta^\a)},\\
 e^{L(\e,\l)}\frac{\mu_F(B_{\rho_1}^{\a+4\e^2})}{\mu_F(B_{\rho_1}^\a)}&\gtrsim \frac{\mu_F(S_{\rho_1,\L})\sum_{\c\in \C^*_{{\rho_1},\L}(t,\a)}e^{\Per_F(\c)-\d_Ft}}{\mu_F(S_{\rho_1})\mu_F(B_{\rho_1}^\a)}\\
 &\gtrsim \frac{\mu_F(S_{\rho_1,\L})\sum_{\c\in \C_{\theta,\L}(t,\a)}e^{\Per_F(\c)-\d_Ft}}{\mu_F(S_{\rho_1})\mu_F(B_{\rho_1}^\a)}.
\end{aligned}
\end{equation*}
Letting $\rho_1\searrow \theta$, $\rho\nearrow \theta$ and by \eqref{e:choice}, \eqref{density} and \eqref{density2}, we get the second equation in the proposition.
\end{proof}

\subsection{Measuring along periodic orbits}
Recall that
\begin{equation}\label{e:ct}
\begin{aligned}
\nu_{F,t}:=&\frac{1}{\sum_{c\in C(t)}e^{\Per_F(c)}}\sum_{c\in C(t)}e^{\Per_F(c)}\frac{Leb_c}{t}.
\end{aligned}
\end{equation}
Define
$$\Pi(t):=\{\dot{\lc}(s)\in dp( H^{-1}(\bP\times \bF\times \{0\})): \lc\in C(t), s\in \RR\}.$$

Now we define a map $\Theta: \Pi(t)\to \C(t,\e)$ as follows. Given $\lv\in \Pi(t)$, let $\ell=\ell(\lv)\in (t-\e,t]$ be such that $g^\ell\lv=\lv$. Let $v$ be the unique lift of $\lv$ such that $v\in H^{-1}(\bP\times \bF\times \{0\})\subset B_\theta^\a$.
Define $\Theta(\lv)$ to be the unique axial isometry of $X$ such that $g^\ell v=\Theta(\lv) v$. Then $|\Theta(\lv)|=\ell$. If $\c=\Theta(\lv)$, then $g^tv=g^{t-\ell}\c v\in \c B_\theta^\e$. So $v\in S_\theta\cap g^{-t}\c B_\theta^\e$, and we get
\begin{equation*}
\begin{aligned}
\Theta(\Pi(t))\subset \C(t,\e).
\end{aligned}
\end{equation*}

Now we define $\Pi_\L(t)\subset \Pi(t)$ such that for any $\lv\in \Pi_\L(t)$, $\Theta(\lv)\in \C_\L(t,\e)$, that is, $\tilde S_{\theta,\L} \cap g^{-t}\Theta(\lv) (\tilde B_{\theta,\L}^\e)\neq \emptyset$. Thus $\Theta_\L:=\Theta|_{\Pi_\L(t)}$ satisfies
\begin{equation}\label{e:theta}
\begin{aligned}
\Theta_\L(\Pi_\L(t))\subset \C_\L(t,\e).
\end{aligned}
\end{equation}

Recall that $\tilde B^\e_\L:=P(B^\e\cap \L)\times [0,\e]$. 
\begin{proposition}\label{upper}
We have 
\[\sum_{c\in C(t)}e^{\Per_F(c)}\le \frac{\e\sum_{\c\in \C_\L(t,\e)}e^{\Per_F(\c)}}{t\nu_{F,t}(\tilde\lB^\e_\L)}.\]
\end{proposition}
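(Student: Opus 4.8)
The plan is to rewrite the asserted inequality in the equivalent geometric form
\[
\sum_{c\in C(t)}e^{\Per_F(c)}\,Leb_c(\tilde\lB^\e_\L)\ \le\ \e\sum_{\c\in\C_\L(t,\e)}e^{\Per_F(\c)},
\]
which follows from the definition \eqref{e:ct} of $\nu_{F,t}$ once one multiplies through by $t\nu_{F,t}(\tilde\lB^\e_\L)\sum_{c}e^{\Per_F(c)}$, and then to prove this by a counting argument through the map $\Theta_\L$. The first observation is that $\tilde\lB^\e_\L$ is, by construction, a disjoint union of complete flow segments of length $\e$: the set $\tilde B^\e_{\theta,\L}=P(B^\e_\theta\cap\L)\times[0,\e]$ is saturated along the flow direction inside the box, and $\diam\pi B^\e_\theta<4\e<\inj(M)$ forces the $\C$-translates of $B^\e_\theta$, hence of $\tilde B^\e_{\theta,\L}$, to be pairwise disjoint. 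Consequently, for each $c\in C(t)$ the orbit $\dot{\lc}$ meets $\tilde\lB^\e_\L$, over one period, in finitely many flow segments of length exactly $\e$, so $Leb_c(\tilde\lB^\e_\L)=\e\, m_c$ with $m_c$ the number of such segments.

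Next I would associate to each such segment its initial vector, which has Hopf coordinate $s=0$, hence lies on the slice $dp(H^{-1}(\bP\times\bF\times\{0\}))$ and determines an element $\lv\in\Pi(t)$; distinct segments clearly give distinct $\lv$. I would then check that in fact $\lv\in\Pi_\L(t)$, i.e.\ $\Theta(\lv)\in\C_\L(t,\e)$. Lifting the segment to $X$ so that it lies in $\tilde B^\e_{\theta,\L}$ itself, its pair of endpoints $P(\dot{\tilde c}(a))$ is realized by some vector of $\L$ lying in $B^\e_\theta$; flowing that vector to Hopf coordinate $0$ costs at most time $\e$ and places the zero-slice lift of $\lv$ inside $\tilde S_{\theta,\L}$, while periodicity of $\lc$ moves it by $g^t$ into $\Theta(\lv)\tilde B^\e_{\theta,\L}$, so $\tilde S_{\theta,\L}\cap g^{-t}\Theta(\lv)\tilde B^\e_{\theta,\L}\neq\emptyset$. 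This gives $m_c\le\#\{\lv\in\Pi_\L(t):\lv\text{ lies on }c\}$, and since every $\lv\in\Pi_\L(t)$ lies on exactly one $c\in C(t)$,
\[
\sum_{c\in C(t)}e^{\Per_F(c)}\,Leb_c(\tilde\lB^\e_\L)\ \le\ \e\sum_{\lv\in\Pi_\L(t)}e^{\Per_F(c(\lv))},
\]
where $c(\lv)\in C(t)$ denotes the closed geodesic through $\lv$.

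Finally I would push the right-hand sum forward along $\Theta_\L:\Pi_\L(t)\to\C_\L(t,\e)$ from \eqref{e:theta}, using two facts. First, $\Per_F(c(\lv))=\Per_F(\Theta_\L(\lv))$: by construction $\Theta_\L(\lv)$ is the axial isometry with $g^{\ell(c(\lv))}v=\Theta_\L(\lv)v$, so its (unique) axis projects to $c(\lv)$ and $|\Theta_\L(\lv)|=\ell(c(\lv))$, whence both quantities equal the integral of $F$ once around $c(\lv)$. Second, $\Theta_\L$ is injective: if $\Theta(\lv)=\Theta(\lv')=:\c$, then the unique lifts of $\lv$ and $\lv'$ in $H^{-1}(\bP\times\bF\times\{0\})$ both lie on the common axis of $\c$; but $s(g^tw)=s(w)+t$ is strictly monotone along each orbit, so a single geodesic meets the zero slice at most once, forcing $\lv=\lv'$. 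Since all terms are nonnegative, these two facts give $\sum_{\lv\in\Pi_\L(t)}e^{\Per_F(c(\lv))}=\sum_{\lv\in\Pi_\L(t)}e^{\Per_F(\Theta_\L(\lv))}\le\sum_{\c\in\C_\L(t,\e)}e^{\Per_F(\c)}$, completing the proof.

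The step I expect to be the main obstacle is the verification in the middle paragraph that the zero-slice vector starting a $\tilde\lB^\e_\L$-visit really lands in $\Pi_\L(t)$, i.e.\ that the intersection $\tilde S_{\theta,\L}\cap g^{-t}\Theta(\lv)\tilde B^\e_{\theta,\L}$ is nonempty. This forces one to keep careful track of the interplay of slices, depths and $\C$-translates of the box, together with the fact that the sets $\L_{k,\l,N}$ are only \emph{almost} flow-invariant (flowing by a time $\le\e$ merely weakens the recurrence constants, which must have been anticipated when fixing $\l$ and $N$), and, when $C(t)$ contains non-primitive classes, correct matching of the multiplicity of $c$ with the number of conjugates it produces; this is the analogue in the weighted setting of the intersection estimates carried out in \cite{Wu2}.
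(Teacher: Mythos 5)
Your argument is correct and is essentially the paper's proof: the paper simply cites \cite[Lemma 5.1]{Wu2} for the injectivity of $\Theta_\L$ and then asserts that the inequality follows from the definition \eqref{e:ct} of $\nu_{F,t}$ together with the inclusion \eqref{e:theta}, which is exactly the counting argument you spell out. The details you fill in (each visit of a closed geodesic to $\tilde\lB^\e_\L$ contributes a flow segment of length at most $\e$ and produces a distinct element of $\Pi_\L(t)$ with matching $\Per_F$-weight) are the intended, implicit content of that two-line proof.
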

\begin{proof}
By \cite[Lemma 5.1]{Wu2}, we know that $\Theta_\L$ is also injective. Then the proposition follows from \eqref{e:ct} and \eqref{e:theta}.
\end{proof}

\subsection{Primitive closed geodesics}
In this subsection, we consider the multiplicity of $\c\in \C$. Given $\c\in \C$, let $d=d(\c)\in \NN$ be maximal such that $\c= \b^d$ for some $\b\in \C$. $\c\in \C$ is called \emph{primitive} if $d(\c)=1$, i.e., $\c\neq \b^d$ for any $\b\in \C$ and any $d\ge 2$. 
Recall that $$\C'_\L(t,\a):=\{\c\in \C^*_\L(t,\a): \c\neq \b^n \text{\ for any\ } \b\in \C, n\ge 2\}.$$

\begin{lemma}\label{lower}
Consider $\a=\e-4\e^2$. Then $\Theta_\L(\Pi_\L(t))\supset \C'_\L(t-2\e^2, \a)$. Moreover, 
\begin{equation*}
\begin{aligned}
\sum_{c\in C(t)}e^{\Per_F(c)}&\ge \frac{\a\sum_{\c\in \C'_\L(t-2\e^2, \a)}e^{\Per_F(\c)}}{t\nu_{F,t}(\lB_{\theta}^{\a})}.
\end{aligned}
\end{equation*}
\end{lemma}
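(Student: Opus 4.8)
The strategy is to reverse the inclusion established for the upper bound (Proposition~\ref{upper}). There, the map $\Theta_\L$ was shown to embed $\Pi_\L(t)$ into $\C_\L(t,\e)$; here I want to show that, after shrinking the depth from $\e$ to $\a=\e-4\e^2$ and shifting the time parameter from $t$ to $t-2\e^2$, \emph{every} primitive $\c\in \C'_\L(t-2\e^2,\a)$ arises as $\Theta_\L(\lv)$ for some $\lv\in \Pi_\L(t)$. First I would take $\c\in \C'_\L(t-2\e^2,\a)$. Since $\c\in \C^*_{\theta,\L}(t-2\e^2,\a)$, by Lemma~\ref{intersection2}(1) and (3) the isometry $\c$ has a unique axis $c$ with $|\c|\in [t-2\e^2-\a-\e^2,\, t-2\e^2+2\e^2] = [t-\e-3\e^2,\, t]$; in particular $\ell:=|\c|\in (t-\e, t]$, which is exactly the length window defining $C(t)$ and $\Pi(t)$. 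The axial unit vector $w_0$ of $\c$ lies on $c$, and I would translate it along $c$ so that $w_0\in H^{-1}(\bP\times\bF\times\{0\})$, which is possible because by Proposition~\ref{crucial} the axis $c$ (being regular and $\e/10$-close to $\dot c_{v_0}$) has both endpoints in $\bP_\theta\times\bF_\theta$ and the function $s$ along the flow is surjective onto $\RR$. Then $g^\ell w_0 = \c w_0$, so the projected vector $\lv := dp(w_0)$ is periodic of period $\ell\in(t-\e,t]$ and lies in $\Pi(t)$, with $\Theta(\lv)=\c$.

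The second point is that $\lv$ actually lands in $\Pi_\L(t)$, i.e.\ that $\tilde S_{\theta,\L}\cap g^{-t}\c(\tilde B_{\theta,\L}^\e)\neq\emptyset$. Here I would use that $\c\in \C_{\theta,\L}(t-2\e^2,\a)$ already gives a point $w_1\in \tilde S_{\theta,\L}\cap g^{-(t-2\e^2)}\c(\tilde B_{\theta,\L}^\a)$, and then adjust: by the definitions of $\tilde S_{\theta,\L}$ and $\tilde B_{\theta,\L}^\a$ as flow-saturations of $S_\theta\cap\L$ and $B_\theta^\a\cap\L$, and since $\a+4\e^2=\e$ while the time shift is $2\e^2$, a short computation with the Hopf coordinates $(w^-,w^+,s(w))$ shows that sliding $w_1$ forward by at most $2\e^2$ in the flow direction keeps its negative endpoint in $\bP_\L$ and its $s$-value in $[0,\e^2]$, while its image under $g^{t}\c^{-1}$ lands in $B_{\theta,\L}^\e$; this is the same bookkeeping as in Lemma~\ref{depth1} and Lemma~\ref{intersection2}(4), run in the opposite direction. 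This establishes $\C'_\L(t-2\e^2,\a)\subset \Theta_\L(\Pi_\L(t))$.

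Finally, for the asymptotic inequality I would note that $\Theta_\L$ is injective (by \cite[Lemma 5.1]{Wu2}, as in Proposition~\ref{upper}), so restricting the sum \eqref{e:ct} defining $\nu_{F,t}$ to the vectors in $\Pi_\L(t)$ whose image lies in $\C'_\L(t-2\e^2,\a)$, and comparing $\Per_F$ of a periodic vector with $\Per_F(\c)$ of the corresponding axial isometry (they agree up to the choice of basepoint on the axis, which contributes nothing since $\Per_F$ is the integral over the full period), gives
\begin{equation*}
\nu_{F,t}(\tilde\lB_{\theta,\L}^{\a}) \le \frac{1}{\sum_{c\in C(t)}e^{\Per_F(c)}}\cdot\frac{1}{t}\sum_{\lv}e^{\Per_F(\lv)}\cdot\frac{\Vol(\text{orbit of }\lv\text{ inside }\tilde B_{\theta,\L}^{\a})}{1},
\end{equation*}
and since each periodic orbit passing through $\tilde B_{\theta,\L}^\a$ spends a length-$\a$ arc there (the orbit crosses the box transversally to the slice direction of depth $\a$), the volume factor is exactly $\a$ and one rearranges to obtain
\begin{equation*}
\sum_{c\in C(t)}e^{\Per_F(c)} \ge \frac{\a\sum_{\c\in \C'_\L(t-2\e^2,\a)}e^{\Per_F(\c)}}{t\,\nu_{F,t}(\lB_\theta^{\a})}.
\end{equation*}
The main obstacle I anticipate is the second paragraph: making the "shrink depth, shift time" adjustment precise so that a point witnessing $\c\in\C_{\theta,\L}(t-2\e^2,\a)$ is genuinely promoted to a point of $\tilde S_{\theta,\L}\cap g^{-t}\c(\tilde B_{\theta,\L}^\e)$ while staying inside the $\L$-saturated rectangles — one has to track simultaneously that the negative endpoint stays in $\bP_\L$, the positive endpoint stays in $\c\bF_\L$, the Hopf height stays in $[0,\e^2]$, and that the extra $2\e^2$ of flow time is absorbed by the enlargement of the depth from $\a$ to $\a+4\e^2=\e$; the constants $\e^2$ versus $4\e^2$ versus $2\e^2$ must be chased carefully, exactly as in Lemmas~\ref{intersection2} and~\ref{depth1}.
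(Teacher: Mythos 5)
Your proposal follows essentially the same route as the paper: pull each primitive $\c\in\C'_\L(t-2\e^2,\a)$ back to an axial vector in $H^{-1}(\bP\times\bF\times\{0\})$, check that the period lands in $(t-\e,t]$, verify that the witness of the $\L$-intersection survives the passage from parameters $(t-2\e^2,\a)$ to $(t,\e)$, and convert the time-$\a$ crossing of the box into the stated inequality via the definition of $\nu_{F,t}$.

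Three points need repair. First, your interval for $|\c|$ is miscomputed: with $\a=\e-4\e^2$ one has $(t-2\e^2)-\a-\e^2=t-\e+\e^2$, not $t-\e-3\e^2$; as written, your interval does not imply the crucial bound $|\c|>t-\e$ (with the correct arithmetic it does, which is exactly why $\a$ is chosen as $\e-4\e^2$). Second, to conclude that $\underline{c}_{\lv}$ actually belongs to $C(t)$ you must rule out other closed geodesics in its free-homotopy class, since $C(t)$ selects only one representative per class; the paper does this by noting that a second bi-asymptotic lift would bound a flat strip, contradicting $v\in\text{Reg}$. Without this step the periodic orbit you produce need not contribute to the sum defining $\nu_{F,t}$. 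Third, the intermediate display in your last paragraph is an upper bound on $\nu_{F,t}(\tilde\lB_{\theta,\L}^{\a})$, whereas the rearrangement requires the lower bound
\begin{equation*}
t\,\nu_{F,t}(\lB_{\theta}^{\a})\sum_{c\in C(t)}e^{\Per_F(c)}=\sum_{c\in C(t)}e^{\Per_F(c)}Leb_c(\lB_\theta^\a)\ \ge\ \a\sum_{\c\in \C'_\L(t-2\e^2,\a)}e^{\Per_F(\c)},
\end{equation*}
which follows because each crossing produced by a distinct $\c$ (distinctness via the injectivity of $\Theta_\L$) contributes at least $\a$ to $Leb_c(\lB_\theta^\a)$. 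Your prose says the right thing here, but the displayed formula is reversed and taken over the wrong set.
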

\begin{proof}
Let $\c\in \C'_\L(t-2\e^2, \a)$.
Then there exists $v\in H^{-1}(\bP\times \bF\times \{0\})$ such that $g^{|\c|}v=\c v$. By Lemma \ref{intersection2}(3), we have
\begin{equation*}
\begin{aligned}
|\c|&\ge (t-2\e^2)-\a-\e^2=t-\e+\e^2>t-\e,\\
|\c|&\le (t-2\e^2)+2\e^2=t.
\end{aligned}
\end{equation*}
Since $\c$ is primitive, it follows that $\underline c_{\lv}$ is a closed geodesic with length $|\c|\in (t-\e,t]$.
Note that if $\underline c$ is another closed geodesic in the free-homotopic class of $\underline c_{\lv}$, then we can lift $\underline c$ to a geodesic $c$ such that $c$ and $c_v$ are bi-asymptotic. So $c$ and $c_v$ bound a flat strip, which is a contradiction since $v\in \text{Reg}$. It follows that $\underline c_{\lv}$ is the only geodesic in its free-homotopic class. Thus $\underline c_{\lv}\in C(t)$.

It is easy to check that $\tilde{S}_\L \cap g^{-(t-2\e^2)}\c\tilde B_{\L}^\a\subset \tilde{S}_\L \cap g^{-t}\c\tilde B_{\L}^\e$. Thus $\lv\in \Pi_\L(t)$ and $\c=\Theta(\lv)$. So $\Theta_\L(\Pi_\L(t))\supset \C'_\L(t-2\e^2, \a)$ and thus by \eqref{e:ct},
\[\sum_{c\in C(t)}e^{\Per_F(c)}\ge \frac{\a\sum_{\c\in \C'_\L(t-2\e^2, \a)}e^{\Per_F(\c)}}{t\nu_{F,t}(\lB_{\theta}^{\a})}.\]
We are done.
\end{proof}

Define $\C_d(\bP, \bF, t)$ to be the set of all $\c\in \C$ such that
\begin{enumerate}
  \item $\c$ has an axis $c$ with $c(-\infty)\in \bP, c(\infty)\in \bF$;
  \item $|\c|\in (t-\e,t]$;
  \item $d(\c)= d$.
\end{enumerate}
We define $\C_{d,\L}(\bP, \bF, t):=\C_d(\bP, \bF, t)\cap \C^*_\L(t,\e)$.

The following result is standard in ergodic theory, which follows from the classical proof of variational principle \cite[Theorem 9.10]{W}.
\begin{lemma}\label{equilemma}
Let $Y$ be a compact metric space, $\phi=\{\phi^t\}_{t\in \RR}$ a continuous flow on $Y$ and $F\in C(Y,\RR)$. Fix $\e > 0$ and suppose that $E_t\subset Y$ is a $(t,\e)$-separated set for all sufficiently large $t$. Define the measures $\mu_t$ by
$$\mu_t(A) :=\frac{\sum_{y\in E_t}e^{\int_0^tF(\phi^sy)ds}\frac{1}{t}\int_0^t\chi_A(\phi^sy)ds}{\sum_{y\in E_t}e^{\int_0^tF(\phi^sy)ds}}.$$
If $t_k\to \infty$ and the weak$^*$ limit $\mu=\lim_{k\to \infty}\mu_{t_k}$
exists, then
$$h_\mu(\phi^1)+\int_Y Fd\mu\ge \limsup_{k\to \infty}\frac{1}{t_k}\log \sum_{y\in E_{t_k}}e^{\int_0^{t_k}F(\phi^sy)ds}.$$
\end{lemma}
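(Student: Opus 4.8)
The plan is to follow the classical proof of the variational principle \cite[Theorem 9.10]{W} for the time-one map $\phi^1$, carrying along the weights $e^{\int_0^tF\circ\phi^s}$ and paying attention to the passage between discrete and continuous time. Write
\[
\sigma_t:=\Big(\sum_{y\in E_t}e^{\int_0^tF(\phi^sy)ds}\Big)^{-1}\sum_{y\in E_t}e^{\int_0^tF(\phi^sy)ds}\,\delta_y,
\]
so that $\mu_t=\frac1t\int_0^t(\phi^s)_*\sigma_t\,ds$, and for a finite Borel partition $\xi$ of $Y$ and $q\in\NN$ put $\xi_0^q:=\bigvee_{i=0}^{q-1}(\phi^1)^{-i}\xi$. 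First I would choose $\xi$ carefully: by uniform continuity of $(r,y)\mapsto\phi^r(y)$ on $[0,1]\times Y$ there is $\tilde\delta>0$ with $d(z,z')<\tilde\delta\Rightarrow d(\phi^rz,\phi^rz')<\e$ for all $r\in[0,1]$, and I take $\xi$ with $\operatorname{diam}\xi<\tilde\delta$ and $\mu(\partial\xi)=0$. Setting $m=m(t):=\lfloor t\rfloor+2$, the $(t,\e)$-separatedness of $E_t$ together with this choice of $\tilde\delta$ implies that for every $u\in[0,1]$ the atoms of $\xi_0^{m}$ separate the points of $\{\phi^uy:y\in E_t\}$; consequently, writing $p_y$ for the $\sigma_t$-mass of $y$, the entropy $H_{(\phi^u)_*\sigma_t}(\xi_0^{m})=-\sum_{y\in E_t}p_y\log p_y$ is the same for all $u$.

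Next comes the elementary Gibbs inequality: for the weights $p_y$ and the numbers $a_y:=\int_0^tF(\phi^sy)ds$ one has $-\sum_yp_y\log p_y=\log\sum_ye^{a_y}-\sum_yp_ya_y$, and since $\sum_yp_ya_y=\int\big(\int_0^tF(\phi^sy)ds\big)d\sigma_t(y)=t\int F\,d\mu_t$, this yields the identity
\[
H_{\sigma_t}(\xi_0^{m})=\log\sum_{y\in E_t}e^{\int_0^tF(\phi^sy)ds}-t\int F\,d\mu_t.
\]
Then I would run the standard block estimate: for fixed $q$ and any probability $\rho$, $H_\rho(\xi_0^{m})\le\frac1q\sum_{r=0}^{m-1}H_{(\phi^r)_*\rho}(\xi_0^q)+C_q$ with $C_q=2q\log\#\xi$ independent of $t$. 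Applying this with $\rho=(\phi^u)_*\sigma_t$ and integrating over $u\in[0,1]$, the left-hand side is $u$-independent while the right-hand side becomes $\frac1q\int_0^{m}H_{(\phi^v)_*\sigma_t}(\xi_0^q)\,dv+C_q$; bounding the portion of the integral beyond time $t$ crudely and using concavity of $\rho\mapsto H_\rho(\xi_0^q)$ with $\mu_t=\frac1t\int_0^t(\phi^s)_*\sigma_t\,ds$ gives $H_{\sigma_t}(\xi_0^{m})\le\frac tq H_{\mu_t}(\xi_0^q)+C_q'$ with $C_q'$ again independent of $t$.

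Combining the last two displays, dividing by $t$, and letting $t=t_k\to\infty$: since $\mu(\partial\xi)=0$ gives $\mu(\partial\xi_0^q)=0$, we have $H_{\mu_{t_k}}(\xi_0^q)\to H_\mu(\xi_0^q)$ and $\int F\,d\mu_{t_k}\to\int F\,d\mu$, so
\[
\limsup_{k\to\infty}\frac1{t_k}\log\sum_{y\in E_{t_k}}e^{\int_0^{t_k}F(\phi^sy)ds}\le\frac1qH_\mu(\xi_0^q)+\int F\,d\mu.
\]
Finally $\mu$ is flow-invariant, because $\|(\phi^\tau)_*\mu_t-\mu_t\|\le 2\tau/t\to0$ for each fixed $\tau$, hence $\phi^1$-invariant; letting $q\to\infty$ turns $\frac1qH_\mu(\xi_0^q)$ into $h_\mu(\phi^1,\xi)\le h_\mu(\phi^1)$, which is the assertion.

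The step I expect to be the main obstacle is reconciling the discrete-time orbit averages that arise naturally when one works with the time-one map $\phi^1$ with the continuous-time average defining $\mu_t$ and $\mu$: a naive reduction to $\phi^1$ produces a limiting measure built from integer-time samples of the orbits, which in general differs from $\mu$. This is exactly what the ``average over $u\in[0,1]$'' device repairs, and it works only because $\xi$ is chosen fine enough relative to $\e$ that $H_{(\phi^u)_*\sigma_t}(\xi_0^{m})$ does not depend on $u$. The remaining points—checking that $\xi_0^{m}$ separates $\{\phi^uy:y\in E_t\}$ for all $u\in[0,1]$ via the equicontinuity choice of $\tilde\delta$, the precise constants $C_q,C_q'$, and the tail/boundary estimates—are routine bookkeeping.
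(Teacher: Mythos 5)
Your proof is correct and is exactly the standard weighted variational-principle argument that the paper itself invokes without proof (it only cites Walters, Theorem 9.10), so there is nothing to compare beyond noting that you have written out the details the paper omits. The one small slip is that choosing $\tilde\delta$ only for $r\in[0,1]$ leaves the times $s\in[0,u)$ uncontrolled when you check that $\xi_0^{m}$ separates $\{\phi^u y: y\in E_t\}$; choosing $\tilde\delta$ so that $d(z,z')<\tilde\delta$ implies $d(\phi^r z,\phi^r z')<\e$ for all $r\in[-1,1]$ (or starting the join at $i=-1$) repairs this immediately.
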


\begin{lemma}\label{multi}
We have
$$\lim_{t\to +\infty}e^{-\d_Ft}\sum_{\c\in \cup_{d\ge 2}\C_{d,\L}(\bP, \bF, t)}e^{\Per_F(\c)}=0.$$
\end{lemma}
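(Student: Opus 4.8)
The plan is to bound the sum over imprimitive classes $\c = \b^d$ with $d \ge 2$ by a sum over the "base" classes $\b$, which have roughly length $t/d$, and to show that the exponential growth rate of such a sum is strictly less than $\d_F$. The key observation is that if $\c \in \C_{d,\L}(\bP,\bF,t)$ with $\c = \b^d$, then $\b$ has an axis with the same endpoints as $\c$ (hence in $\bP \times \bF$, so $\b$ is rank one), $|\b| = |\c|/d \in ((t-\e)/d, t/d]$, and $\Per_F(\c) = d\cdot\Per_F(\b)$. So
\begin{equation*}
\sum_{\c\in \cup_{d\ge 2}\C_{d,\L}(\bP, \bF, t)}e^{\Per_F(\c)} \le \sum_{d\ge 2}\ \sum_{\substack{\b\in \C,\ \b \text{ rank one axis}\\ |\b|\in ((t-\e)/d, t/d]}}e^{d\Per_F(\b)}.
\end{equation*}

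The heart of the matter is to control each inner sum. For each fixed $d$, the classes $\b$ with axis endpoints constrained to $\bP \times \bF$ and $|\b| \le t/d$ correspond to orbit segments along which bounded distortion holds (they live in a flow box around $v_0 \in \L$, and their axes pass near $c_{v_0}$ by Lemma \ref{crucial}-type arguments), so by the same reasoning used in the proof of Proposition \ref{pro9} one gets $\sum_{|\b|\in(s-\e,s]}e^{\Per_F(\b)} \le C' e^{\d_F s + o(s)}$ for a constant $C'$; more precisely, using Proposition \ref{pro9} ($P(F) = \d_F$) together with the separation argument there and Lemma \ref{criticalexponent}(1). Actually it is cleaner to invoke the already-established bound $\sum_{\c\in C(s)}e^{\Per_F(\c)} \le \b^{-1}e^{\d_F s}$ from \cite[Proposition 6.4]{BCFT} (quoted just after Theorem \ref{margulis}), applied at length scale $s = t/d$. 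Then $\sum_{|\b|\in((t-\e)/d, t/d]}e^{d\Per_F(\b)} \le (\b^{-1}e^{\d_F t/d})^d = \b^{-d}e^{\d_F t}$ — wait, this is not small. The point is that raising to the $d$-th power kills the gain, so one instead must exploit that $d \cdot \Per_F(\b)$ with $|\b| \approx t/d$ means $e^{d\Per_F(\b)} = e^{\Per_F(\c)}$ where $\c$ has length $\approx t$, and use that the \emph{number} of such $\b$ (for $d \ge 2$) grows only like $e^{\d_F t/d} \le e^{\d_F t/2}$, while the weight of each is at most $e^{\|F\| t}$; combined with summing the geometric-type series over $d \ge 2$ this is still not obviously $o(e^{\d_F t})$ unless $\d_F > 0$ and one is more careful.

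The correct route: fix $\e' > 0$ small. Split $d \in \{2, \dots, \lfloor t/K\rfloor\}$ for a large constant $K$, plus $d > t/K$. For $d > t/K$ one has $|\b| < K$, so there are only finitely many such $\b$ (boundedly many closed geodesics of length $< K$), each contributing $e^{d\Per_F(\b)} \le e^{d\|F\|K} \le e^{\|F\| t}$; but there are at most $\sim t$ values of $d$ and finitely many $\b$, giving a contribution $\le C t\, e^{\|F\| t}$ — still too big if $\|F\| \ge \d_F$. Hmm. So one genuinely needs that for a \emph{fixed} rank one $\b$, $\Per_F(\b) < \d_F |\b|$ strictly, equivalently $\Per_F(\b)/|\b| < \d_F$; this holds because the normalized invariant measure on the orbit of $\b$ satisfies $h + \int F\,d\mu_\b = 0 + \Per_F(\b)/|\b| < P(F) = \d_F$ by the variational principle (strict, since $\mu_\b$ is not the equilibrium state — the equilibrium state is fully supported). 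Let $\tau := \sup\{\Per_F(\b)/|\b| : \b \text{ rank one}\} $; one needs $\tau < \d_F$, which should follow from upper semicontinuity / the pressure gap $P(\mathrm{Sing}, F) < P(F)$ as in \cite{BCFT}. Then $e^{d\Per_F(\b)} \le e^{\tau |\c|} \le e^{\tau t}$ for $\c = \b^d$, and the total count of imprimitive $\c$ of length $\le t$ is at most $\sum_{d\ge 2}\#\{\b : |\b|\le t/d\} \le \sum_{d\ge 2}\b^{-1}e^{\d_F t/d} \cdot(\text{polynomial}) \le C\, e^{\d_F t/2}$. Hence the whole sum is $\le C e^{\d_F t/2} e^{\tau t}$, and provided $\tau < \d_F/2$... which need not hold.

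So the cleanest correct argument: write each imprimitive $\c$ as $\b^d$, and bound
\begin{equation*}
e^{-\d_F t}\sum_{\c\in\cup_{d\ge2}\C_{d,\L}(\bP,\bF,t)}e^{\Per_F(\c)} \le \sum_{d\ge 2} e^{-\d_F t}\sum_{|\b|\in((t-\e)/d,\,t/d]}e^{d\Per_F(\b)}.
\end{equation*}
For the $d = 2$ term, use that $\b$ ranges over closed geodesics of length $\approx t/2$ with endpoints in $\bP\times\bF$; by Lemma \ref{scaling1}-type estimates (or directly Proposition \ref{asymptotic} applied at time $t/2$, with the relevant flow box), $\sum e^{\Per_F(\b)} = e^{\pm L}\mu_F(B^\a)e^{\d_F t/2}/\text{const}$, so $\sum e^{2\Per_F(\b)} \le (\text{const})\,e^{\d_F t}$ — equality of rate, not strictly smaller. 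The genuinely necessary input is therefore the \textbf{strict} concavity/uniqueness: $\mu_F$ is the \emph{unique} equilibrium state and is fully supported (Theorem \ref{bcft}), so no atomic measure $\mu_\b$ achieves the pressure, and by the variational principle together with the pressure gap $P(\mathrm{Sing},F) < P(F)$ applied to the regular part, one gets a uniform gap: there is $\eta_0 > 0$ with $\Per_F(\b) \le (\d_F - \eta_0)|\b|$ for every rank one $\b$ with $|\b|$ large (and the finitely many short ones are harmless since $\b$ fixed has $\Per_F(\b)/|\b| < \d_F$, giving $e^{d\Per_F(\b)} = e^{\Per_F(\c)} \le e^{(\d_F-\eta_0)t}$ for $|\c|$ large).

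Granting this uniform gap $\Per_F(\b) \le (\d_F - \eta_0)|\b|$, we conclude: for $\c = \b^d$ with $|\c| \le t$, $e^{\Per_F(\c)} = e^{d\Per_F(\b)} \le e^{(\d_F-\eta_0)|\c|} \le e^{(\d_F - \eta_0)t}$, while the total number of imprimitive classes of length $\le t$ is at most $\#\{\b\in\C : |\b| \le t/2\} \le C e^{(\d_F + 1)t/2}$ (crude count via \cite[Proposition 6.4]{BCFT}, or Knieper's bound \cite{Kn1} in the unweighted case which dominates up to constants). Thus
\begin{equation*}
e^{-\d_F t}\sum_{\c\in\cup_{d\ge2}\C_{d,\L}(\bP,\bF,t)}e^{\Per_F(\c)} \le C\, e^{-\d_F t}\cdot e^{(\d_F-\eta_0)t}\cdot e^{(\d_F+1)t/2} \longrightarrow 0
\end{equation*}
as soon as $\eta_0 > (\d_F+1)/2$... which is false in general. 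The resolution is that one should \emph{not} bound the count crudely: instead, for each $d$, the sum $\sum_{|\b|\le t/d}e^{d\Per_F(\b)}$ is itself a weighted Poincaré-type sum for the potential $dF$ at scale $t/d$, whose exponential rate is $\delta_{dF}$; and one checks $\delta_{dF}/d < \d_F$ strictly (for $d \ge 2$) precisely because $F$ satisfies the pressure gap — this is a convexity property of $s \mapsto \delta_{sF}$ analogous to \cite[Lemma 3.17]{PPS}. Summing the resulting geometric series over $d \ge 2$ then gives a bound $C e^{\theta t}$ with $\theta = \sup_{d\ge 2}\delta_{dF} < \d_F$, yielding the claim after multiplying by $e^{-\d_F t}$.

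\textbf{Main obstacle.} The crux — and the step I expect to be the real work — is establishing the strict inequality $\sup_{d \ge 2}(\delta_{dF}/d) < \d_F$, equivalently a uniform exponential gap between the weighted counting of $d$-th powers and that of primitive classes. This is where the pressure gap hypothesis $P(\mathrm{Sing},F) < P(F)$ and the uniqueness/full support of $\mu_F$ (Theorem \ref{bcft}) must be used in an essential way: one argues that any invariant measure supported on rank one periodic orbits has $h_\mu + \int F\,d\mu < P(F)$ with a \emph{uniform} deficit once we pass to $dF$, $d \ge 2$, by relating $\delta_{dF}$ to $P(dF)$ (via Proposition \ref{pro9} applied to the potential $dF$, whose pressure gap one must also verify — note $P(\mathrm{Sing}, dF) = d\,P(\mathrm{Sing},F|_{\mathrm{Sing}})$ and one checks $P(\mathrm{Sing},dF) < P(dF)$ is preserved, or handles the singular contribution separately since it grows at rate $\le d\,P(\mathrm{Sing},F)$). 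The remaining steps — the bijection $\c = \b^d \mapsto \b$, the length and period bookkeeping $|\b| = |\c|/d$, $\Per_F(\c) = d\,\Per_F(\b)$, the rank-one-ness of $\b$, and summing the geometric series over $d$ — are routine.
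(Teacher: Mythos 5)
Your write-up is not a proof: after correctly setting up the reduction $\c=\b^d$, $|\b|=|\c|/d$, $\Per_F(\c)=d\,\Per_F(\b)$, you discard several attempts (rightly, since each fails for the reasons you give) and then reduce everything to the strict inequality $\sup_{d\ge 2}\delta_{dF}/d<\d_F$, which you explicitly leave unproved and label ``the real work.'' That is precisely the content of the lemma, so the proposal has a genuine gap. Moreover, even granting that inequality, two further points are glossed over: (i) to convert $\sum_{|\b|\le t/d}e^{d\Per_F(\b)}$ into a Poincar\'e-type sum with exponent $\delta_{dF}$ you need $\Per_F(\b)\approx\int_o^{\b o}F$, i.e.\ bounded distortion along the axis, which in this paper is only available through the $\L$-restriction and the \eqref{ratio2}-type estimates; and (ii) the critical exponent only controls a $\limsup$, so summing over $2\le d\le t/\inj(M)$ requires constants uniform in $d$, and relating $\delta_{dF}$ to $P(dF)$ via Proposition \ref{pro9} requires a pressure gap for $dF$ that is not assumed.

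The paper avoids the convexity question about $d\mapsto\delta_{dF}$ entirely. Its proof runs as follows: the weighted periodic-orbit measures $\mu_{\L,t}$ converge to $\mu_F$ (Lemma \ref{equilemma} plus uniqueness), and since $\mu_F$ is non-atomic, each sufficiently long $\b$ carries relative weight at most $\rho'$ in $\sum_{\c\in\C^*_\L(t,\e)}e^{\Per_F(\c)}$. For the part of the sum with long base ($|\b(\c)|>t_0$) this gives $\Sigma_2\le(\rho')^{d-1}\bigl(\sum_{\b}e^{\Per_F(\b)}\bigr)^{d}$, and the $d$-fold product $\b_1\cdots\b_d$ is then re-embedded (injectively, with controlled distortion $dL_5(\e,\l)$) into a class $\overline{\C}^*_\L(t,\e)$ whose weighted count is $O(e^{\d_F t})$ by Proposition \ref{asymptotic}; this yields $\Sigma_2\le c\,\rho_2^{d-1}e^{\d_F t}$ with $\rho_2$ arbitrarily small. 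For short bases ($|\b|\le t_0$) the same non-atomicity claim forces the per-element strict bound $e^{\Per_F(\a)}<e^{\d_F|\a|}$, and taking the maximum over the finitely many short $\a$ gives $\Sigma_1\le C\rho_1^{t}e^{\d_F t}$ with $\rho_1<1$. Summing over $d\le t/\inj(M)$ finishes the argument. So the key mechanism is the combination of equidistribution, non-atomicity, and the concatenation/product trick --- none of which appears in your proposal.
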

\begin{proof}
\textbf{Step 1.}
For every $\c\in \C^*(t,\e)$, let $v_\c\in H^{-1}(\bP\times \bF\times\{0\})$ tangent to the unique axis of $\c$. Define measures on $SM$ as
\[\mu_{\L,t}:=\frac{\sum_{\c\in \C^*_\L(t,\e)}e^{\Per_F(\c)}\frac{1}{t}\text{Leb}_{\lv_\c}}{\sum_{\c\in \C^*_\L(t,\e)}e^{\Per_F(\c)}}.\]
Clearly, $\{v_\c: \c\in \C^*_\L(t,\e)\}$ is a $(t,\e)$-separated set (see also the proof of \cite[Theorem B]{Wu2}).  By Proposition \ref{asymptotic} we have 
$$\liminf_{t\to \infty}\log\sum_{\c\in \C^*_\L(t,\e)}e^{\Per_F(\c)}\ge \d_F.$$
By Lemma \ref{equilemma} and the uniqueness of equilibrium states, $\lim_{t\to \infty}\mu_{\L,t}=\mu_F$.

We claim that for any $\rho'>0$ there exists $t_0=t_0(\rho')>0$ such that if $\b\in \C_\L^*(t,\e), |\b|>t_0$, then 
\[\frac{e^{\Per_F(\b)}}{\sum_{\c\in \C^*_\L(t,\e)}e^{\Per_F(\c)}}\le \rho'.\]
Indeed, assume the contrary. Then there are $\b_n\in \C^*_\L(t_n,\e), t_n\to \infty$ such that 
\[\frac{e^{\Per_F(\b_n)}}{\sum_{\c\in\C^*_\L(t_n,\e)}e^{\Per_F(\c)}}> \rho'.\]
Then $\mu_{\L,t_n}(B_{t_n}(\lv_{\b_n},\e))\ge \rho'$. By passing to a subsequence, assume that $\lv_{\b_n}\to v$. Then any open neighborhood of $v$ has $\mu_F$-measure no less than $\rho'$. A contradiction since $\mu_F$ has no atom. The claim follows.

Given $\rho'>0$ small enough and corresponding $t_0=t_0(\rho')$, we have 
\begin{equation*}
\begin{aligned}
&\sum_{\c \in \C_{d,\L}(\bP, \bF, t)}e^{\Per_F(\c)}=\sum_{\c \in \C_{d,\L}(\bP, \bF, t)}e^{d\Per_F(\b(\c))}\\
=&\sum_{\c \in \C_{d,\L}(\bP, \bF, t), |\b(\c)|\le t_0}e^{d\Per_F(\b(\c))}+\sum_{\c \in \C_{d,\L}(\bP, \bF, t),|\b(\c)|>t_0}e^{d\Per_F(\b(\c))}\\
=&:\Sigma_1+\Sigma_2.
\end{aligned}
\end{equation*}
We will estimate $\Sigma_1$ and $\Sigma_2$ separately in the next two steps.

\textbf{Step 2.} 
If $\c\in \C_{d,\L}(\bP, \bF, t)$, let $c$ be the unique axis of $\c$ with $v=\dot c(0)\in S_\theta$. Then $c$ is also an axis of $\b(\c)$. Since $\c\in \C^*_\L(t,\e)$, there exists $w\in \tilde S_\L\cap g^{-t}\c \tilde B_\L^\e$. We have $d(\pi w, \pi v)\le 4\e$ and $d(\pi g^tw, \pi g^{|\c|}v)\le 4\e$. So $|t-|\c||\le 8\e$. By convexity and triangle inequality, we have
\[d(\pi g^{s}v, \pi g^sw)\le 12\e, \quad \forall s\in [0,t].\]
Note that $g^{s_1}w\in \L, \iota(g^{t+s_2}w)\in \L$ for some $|s_1|\le \e^2, |s_2|<\e$, $w':=\b(\c)\c^{-1}\iota g^{t+s_2}w\in \L$, $v_{\b(\c)}\in B_{|\b(\c)|}(w,12\e)$ and $\iota\b(\c)v_{\b(\c)}\in B_{|\b(\c)|}(w',12\e)$. Then by a similar argument as in establishing \eqref{ratio2} or \eqref{e:Bowen1},
\begin{equation}\label{ratio4}
\begin{aligned}
|\Per_F(\b(\c))-\int_{o}^{\b(\c) o}F|\le &L_4(\e,\l).
\end{aligned}
\end{equation}
We emphasize that it is not clear whether $\b(\c)\in \C^*_\L(t/d,\e/d)$. Nevertheless, the above estimate still holds. For convenience, we introduce
\begin{equation*}
\begin{aligned}
\tilde\C^*_\L(t,\e)&:=\{\b(\c):\c\in \C_{d,\L}(\bP, \bF, dt), d\ge 1\},\\
\overline\C^*_\L(t,\e)&:=\{\c\in \C^*: \c=\b_1\b_2\cdots\b_d, \b_i\in \tilde\C^*_\L(t/d,\e/d), 1\le i\le d, d\ge 1\}.
\end{aligned}
\end{equation*}
Proposition \ref{asymptotic} and hence the above claim in Step 1 still hold if $\C^*_\L(t,\e)$ is replaced by $\tilde\C^*_\L(t,\e)$ or by $\overline\C^*_\L(t,\e)$. 

Let us estimate $\Sigma_2$ now. Since $|\b(\c)|>t_0$, by the above claim,
\[\frac{e^{\Per_F(\b(\c))}}{\sum_{\c\in \tilde\C^*_\L(t,\e)}e^{\Per_F(\c)}}<\rho'.\]
It follows that
\begin{equation*}
\begin{aligned}
\Sigma_2=&\sum_{\c \in \C_{d,\L}(\bP, \bF, t), |\b(\c)|\ge t_0}e^{d\Per_F(\b(\c))}\\
\le &(\rho')^{d-1}\sum_{\b \in  \tilde\C^*_\L(t/d,\e)}e^{\Per_F(\b)}(\sum_{\b \in  \tilde\C^*_\L(t/d,\e)}e^{\Per_F(\b)})^{d-1}.
\end{aligned}
\end{equation*}
Suppose that $\b_1,\cdots,\b_{d-1},\b_{d}\in \tilde\C^*_\L(t/d,\e/d)$. Denote $\a=\b_1\cdots\b_{d-1}\b_{d}.$  Clearly, $\a\in \C^*$. Moreover, $\a\in \overline\C^*_\L(t,\e)$. 
By a similar argument in establishing \eqref{ratio4}, we have
$$|\Per_F(\a)- \sum_{i=1}^d\Per_F(\b_i)|\le d L_5(\e,\l).$$
Then 
\begin{equation*}
\begin{aligned}
\Sigma_2\le (\rho')^{d-1}e^{dL_5(\e,\l)}\sum_{\a \in  \overline\C^*_\L(t,\e)}e^{\Per_F(\a)}.
\end{aligned}
\end{equation*}
Choose $\rho'$ small enough such that $ \rho_2:=\rho'e^{2L_5(\e,\l)}\ll1$. Then we have
\begin{equation}\label{sigma2}
\begin{aligned}
\Sigma_2\le \rho_2^{d-1}\sum_{\a \in  \overline\C^*_\L(t,\e)}e^{\Per_F(\a)} \le c\rho_2^{d-1}e^{\d_Ft}
\end{aligned}
\end{equation}
where $c$ is from (modified) Proposition \ref{asymptotic} only dependent of $\l$ and $\e$. 

\textbf{Step 3.} 
For every $\a\in \tilde\C^*(|\a|,\e)$, we have $e^{\Per_F(\a)}<e^{\d_F|\a|}$. Indeed, otherwise  $e^{\Per_F(\a^n)}\ge e^{n\d_F|\a|}$ where $n\ge 0$ is such that $\a^n\in \C_\L^*(n|\a|,\e)$.
 By the above claim and Proposition \ref{asymptotic}, for every $\rho'>0$, 
\[e^{n\d_F|\a|}\le e^{\Per_F(\a^n)}<\rho'\sum_{\c\in \C^*_\L(n|\a|,\e)}e^{\Per_F(\c)}\le \rho'Ce^{n\d_F|\a|}\]
where $C$ is a constant independent of $n$. Take $\rho'>0$ small enough such that $\rho'C<1$, we get a contradiction. 

Define $\rho_1:=\max_{\a\in \tilde\C^*(|\a|,\e), |\a|\le t_0}e^{\frac{\Per_F(\a)}{|\a|}-\d_F}$. From the above discussion, $0<\rho_1<1$. 
\begin{equation}\label{sigma1}
\begin{aligned}
\Sigma_1\le &\sum_{\c \in \C_{d,\L}(\bP, \bF, t), |\b(\c)|\le t_0}\rho_1^{d|\b(\c)|}e^{d\d_F|\b(\c)|}\\
\le &\sum_{\a\in \C^*, |\a|\le t_0}\rho_1^{t-\e}e^{\d_Ft}e^{|\d_F|\e}= C\rho_1^te^{\d_Ft}
\end{aligned}
\end{equation}
where $C=\rho_1^{-\e}e^{|\d_F|\e}\#\{\a\in \C^*, |\a|\le t_0\}$.

\textbf{Step 4.} 
In summary, by \eqref{sigma2}, \eqref{sigma1}, and noticing that $d\le \frac{t}{\inj(M)}$,
\begin{equation*}
\begin{aligned}
\sum_{d=2}^{\lfloor\frac{t}{\inj(M)}\rfloor}\sum_{\c\in \C_{d,\L}(\bP, \bF, t)}e^{\Per_F(\c)}\le  &\frac{Ct}{\inj(M)}\rho_1^te^{\d_Ft}+ \sum_{d=2}^\infty c\rho_2^{d-1}e^{\d_Ft}\\
= &\Big(\frac{Ct}{\inj(M)}\rho_1^t+\frac{c\rho_2}{1-\rho_2}\Big)e^{\d_Ft}.
\end{aligned}
\end{equation*}
Thus
$$\lim_{t\to +\infty}e^{-\d_Ft}\sum_{\c\in \cup_{d\ge 2}\C_{d,\L}(\bP, \bF, t)}e^{\Per_F(\c)}\le \frac{c\rho_2}{1-\rho_2}.$$
As $\rho_2$ could be arbitrarily small, we are done.
\end{proof}

\begin{proposition}\label{lower1}
Consider $\a=\e-4\e^2$. We have
\begin{equation*}
\begin{aligned}
\sum_{c\in C(t)}e^{\Per_F(c)}&\gtrsim \frac{\a}{t\nu_{F,t}(\lB_{\theta}^{\a})}\sum_{\c\in \C^*_\L(t-2\e^2, \a)}e^{\Per_F(\c)}.
\end{aligned}
\end{equation*}
\end{proposition}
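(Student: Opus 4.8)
The strategy is to deduce Proposition~\ref{lower1} from Lemma~\ref{lower} by replacing the sum over the primitive elements $\C'_\L(t-2\e^2,\a)$ with the sum over all of $\C^*_\L(t-2\e^2,\a)$. Since $\C^*_\L(t-2\e^2,\a)$ is the disjoint union of $\C'_\L(t-2\e^2,\a)$ and the set of its non-primitive members, it suffices to show that the weighted contribution of the non-primitive members is $o(e^{\d_F t})$, while the total sum grows like $e^{\d_F t}$.

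For the first point I would check that every non-primitive $\c\in\C^*_\L(t-2\e^2,\a)$, say with $d(\c)=d\ge 2$, belongs to $\C_{d,\L}(\bP,\bF,t)$ once $t$ is large. By the same elementary computation as in the proof of Lemma~\ref{lower} (recall $\a=\e-4\e^2$) one has $\tilde S_{\theta,\L}\cap g^{-(t-2\e^2)}\c(\tilde B_{\theta,\L}^\a)\subset\tilde S_{\theta,\L}\cap g^{-t}\c(\tilde B_{\theta,\L}^\e)$, hence $\C^*_\L(t-2\e^2,\a)\subset\C^*_\L(t,\e)$. Applying Lemma~\ref{intersection2}(3) with parameters $(t-2\e^2,\a)$ gives $|\c|\in[t-\a-3\e^2,t]=[t-\e+\e^2,t]\subset(t-\e,t]$, so condition~(2) in the definition of $\C_d(\bP,\bF,t)$ holds, and condition~(3) is immediate. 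For condition~(1), since $\bP_\theta$ and $\bF_\theta$ are compact and not singletons, and $\c\in\C^*$ satisfies $\c\bF_\theta\subset\bF_\theta$, $\c^{-1}\bP_\theta\subset\bP_\theta$, iterating shows that the attracting and repelling fixed points of $\c$ lie in $\bF_\theta$ and $\bP_\theta$ respectively; as $\c$ has a unique regular axis $c$ — the one already invoked in Lemma~\ref{scaling1} — it satisfies $c(-\infty)\in\bP_\theta$, $c(+\infty)\in\bF_\theta$. Therefore the non-primitive part of $\C^*_\L(t-2\e^2,\a)$ is contained in $\bigcup_{d\ge 2}\C_{d,\L}(\bP,\bF,t)$, and Lemma~\ref{multi} yields
$$\sum_{\c\in\C^*_\L(t-2\e^2,\a),\ d(\c)\ge 2}e^{\Per_F(\c)}\ \le\ \sum_{\c\in\cup_{d\ge 2}\C_{d,\L}(\bP,\bF,t)}e^{\Per_F(\c)}\ =\ o(e^{\d_F t}).$$

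Next, since $P(F)>0$ by hypothesis and $\d_F=P(F)$ by Proposition~\ref{pro9}, Proposition~\ref{asymptotic} (first chain of estimates, with $t$ replaced by $t-2\e^2$) gives a constant $c_0>0$ with $\sum_{\c\in\C^*_\L(t-2\e^2,\a)}e^{\Per_F(\c)}\ge c_0\,e^{\d_F t}$ for all large $t$. Combined with the displayed bound,
$$\frac{\sum_{\c\in\C'_\L(t-2\e^2,\a)}e^{\Per_F(\c)}}{\sum_{\c\in\C^*_\L(t-2\e^2,\a)}e^{\Per_F(\c)}}=1-\frac{\sum_{\c\in\C^*_\L(t-2\e^2,\a),\,d(\c)\ge 2}e^{\Per_F(\c)}}{\sum_{\c\in\C^*_\L(t-2\e^2,\a)}e^{\Per_F(\c)}}\ \longrightarrow\ 1.$$
Finally, dividing the inequality of Lemma~\ref{lower} by the positive quantity $\frac{\a}{t\nu_{F,t}(\lB_\theta^\a)}\sum_{\c\in\C^*_\L(t-2\e^2,\a)}e^{\Per_F(\c)}$ makes the common factor $\frac{\a}{t\nu_{F,t}(\lB_\theta^\a)}$ cancel and leaves exactly the ratio above on the right, so $\liminf_{t\to\infty}$ of the left-hand ratio is $\ge 1$; this is the claimed asymptotic inequality.

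The main obstacle is the first step: one must carefully match the two sets of parameters, $(t-2\e^2,\a)$ and $(t,\e)$, and verify that a non-primitive element of $\C^*_\L(t-2\e^2,\a)$ really meets all three conditions defining $\C_{d,\L}(\bP,\bF,t)$, in particular the fixed-point/axis condition. Once this inclusion is in place, the remainder is a soft consequence of the genericity of primitivity (Lemma~\ref{multi}) together with the exponential lower bound from Proposition~\ref{asymptotic}.
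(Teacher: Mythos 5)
Your proof is correct and follows essentially the same route as the paper: decompose $\C^*_\L(t-2\e^2,\a)$ into primitive and non-primitive parts, observe that the non-primitive part lies in $\cup_{d\ge2}\C_{d,\L}(\bP,\bF,t)$ (via the length estimate $|\c|\in(t-\e,t]$ and the inclusion into $\C^*_\L(t,\e)$), and then combine Lemma \ref{lower}, Lemma \ref{multi} and the lower bound from Proposition \ref{asymptotic}. You supply more detail than the paper (notably the verification of the axis condition and the parameter matching), and the aside invoking $P(F)>0$ is superfluous since Proposition \ref{asymptotic} yields the exponential lower bound regardless, but nothing is amiss.
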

\begin{proof}
From the proof of Lemma \ref{lower}, we see that $|\c|\in (t-\e,t]$ if $\c\in \C^*(t-2\e^2,\a)$. Thus
$$\C^*(t-2\e^2,\a)\setminus \C'(t-2\e^2,\a)\subset \cup_{d\ge 2}\C_d(\bP, \bF, t).$$
By Lemmas \ref{multi}, \ref{lower} and Proposition \ref{asymptotic},
\begin{equation*}
\begin{aligned}
\sum_{c\in C(t)}e^{\Per_F(c)}&\ge \frac{\a}{t\nu_{F,t}(\lB_{\theta}^{\a})}\left(\sum_{\c\in \C^*_\L(t-2\e^2, \a)}e^{\Per_F(\c)}-\sum_{\c\in  \cup_{d\ge 2}\C_{d,\L}(\bP, \bF, t)}e^{\Per_F(\c)}\right)\\
&\gtrsim \frac{\a}{t\nu_{F,t}(\lB_{\theta}^{\a}))}\sum_{\c\in \C^*_\L(t-2\e^2, \a)}e^{\Per_F(\c)}.
\end{aligned}
\end{equation*}
We are done.
\end{proof}

\subsection{Proof of Theorems \ref{equi0} and \ref{margulis}}
\begin{proof}[Proof of Theorem \ref{equi0}]
For any $0<\e<\inj(M)/2$, the set $\{\underline{\dot  c}(0): c\in C(t)\}$ is $(t,\e)$-separated by the proof of \cite[Theorem B]{Wu2}.
Now by Propositions \ref{lower1} and \ref{asymptotic}, we know
\begin{equation*}
\begin{aligned}
\sum_{c\in C(t)}e^{\Per_F(c)}\gtrsim &\frac{\a}{t\nu_{F,t}(\lB_{\theta}^{\a})}\sum_{\c\in \C^*_\L(t-2\e^2, \a)}e^{\Per_F(\c)}\\
\gtrsim  &\frac{\a}{t\nu_{F,t}(\lB_{\theta}^{\a})}\left(\frac{e^{-L(\e,\l)}}{1+\rho_0}e^{\d_Ft}\mu_F(B_\theta^\a)\right).
\end{aligned}
\end{equation*}
So $\liminf_{t\to \infty}\frac{1}{t}\log \sum_{c\in C(t)}e^{\Per_F(c)}\ge \d_F.$ Applying Lemma \ref{equilemma}, we know any limit measure of $\nu_{F,t}$ must be $\mu_F$, the unique equilibrium state for $F$. This proves Theorem \ref{equi0}.
\end{proof}

\begin{proposition}\label{sumup}
We have
\begin{equation*}
\begin{aligned}
\sum_{c\in C(t)}e^{\Per_F(c)}&\lesssim C_B(1+\rho_0)^{2}e^{Q\e^\a}\frac{\e}{t}e^{\d_Ft},\\
\sum_{c\in C(t)}e^{\Per_F(c)}&\gtrsim (1+\rho_0)^{-2}e^{- Q\e^\a}\frac{\e}{t}e^{\d_Ft}
\end{aligned}
\end{equation*}
where $$C_B:=\limsup_{t\to\infty}\frac{\mu_F(\tilde \lB_\L^\e)}{\nu_{F,t}(\tilde \lB_\L^\e)},$$ 
$Q>0$ is a  constant independent of $\e$, and $\a$ is the H\"{o}lder constant of $F$.
\end{proposition}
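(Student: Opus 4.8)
The plan is to chain the counting identities of Propositions~\ref{upper} and~\ref{lower1} with the asymptotic comparison of Proposition~\ref{asymptotic}, and then to absorb every remaining multiplicative error into a single factor $e^{\pm Q\e^\a}$ by means of Remark~\ref{constant} and the inequality $\a\le 1$ (which gives $\e\le\e^\a$ and $\e^2\le\e^\a$ for $\e\le1$).

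For the upper bound I would start from Proposition~\ref{upper},
\[\sum_{c\in C(t)}e^{\Per_F(c)}\le \frac{\e}{t\,\nu_{F,t}(\tilde\lB^\e_\L)}\sum_{\c\in\C_{\theta,\L}(t,\e)}e^{\Per_F(\c)},\]
and apply the upper estimate of Proposition~\ref{asymptotic} to $\C_{\theta,\L}(t,\e)$ (so $\a=\e$ and $1+\tfrac{4\e^2}{\a}=1+4\e$), obtaining $\sum_{\c\in\C_{\theta,\L}(t,\e)}e^{\Per_F(\c)}\lesssim e^{L(\e,\l)}(1+\rho_0)(1+4\e)\mu_F(B^\e_\theta)e^{\d_Ft}$. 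By the very definition $C_B=\limsup_{t\to\infty}\mu_F(\tilde\lB^\e_\L)/\nu_{F,t}(\tilde\lB^\e_\L)$ one has $\limsup_{t\to\infty}1/\nu_{F,t}(\tilde\lB^\e_\L)=C_B/\mu_F(\tilde\lB^\e_\L)$; combining this with the previous bound and with $\mu_F(B^\e_\theta)/\mu_F(\tilde\lB^\e_\L)\le\mu_F(B^\e_\theta)/\mu_F(B^\e_\theta\cap\L)<1+\rho_0$ (which uses \eqref{density} and $B^\e\cap\L\subset\tilde B^\e_\L$) yields $\sum_{c\in C(t)}e^{\Per_F(c)}\lesssim C_B(1+\rho_0)^2 e^{L(\e,\l)}(1+4\e)\tfrac{\e}{t}e^{\d_Ft}$. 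Since $L(\e,\l)\le Q'\e^\a$ by Remark~\ref{constant} and $1+4\e\le e^{4\e}\le e^{4\e^\a}$, the first assertion follows for any $Q\ge Q'+4$.

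For the lower bound I would start from Proposition~\ref{lower1} with $\a=\e-4\e^2$,
\[\sum_{c\in C(t)}e^{\Per_F(c)}\gtrsim \frac{\a}{t\,\nu_{F,t}(\lB^\a_\theta)}\sum_{\c\in\C^*_\L(t-2\e^2,\a)}e^{\Per_F(\c)},\]
bound the last sum from below by $\tfrac{e^{-L(\e,\l)}}{1+\rho_0}\mu_F(B^\a_\theta)e^{\d_F(t-2\e^2)}$ using the lower estimate of Proposition~\ref{asymptotic} applied to $\C^*_\L(t-2\e^2,\a)$, and use that $\nu_{F,t}\to\mu_F$ weak$^*$ (Theorem~\ref{equi0}) together with $\mu_F(\partial B^\a_\theta)=0$ (from \eqref{e:choice}, which applies since $0<\a<\frac{3\e}{2}$) to replace $\nu_{F,t}(\lB^\a_\theta)$ by $\mu_F(B^\a_\theta)$ in the limit. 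The factors $\mu_F(B^\a_\theta)$ then cancel, leaving $\sum_{c\in C(t)}e^{\Per_F(c)}\gtrsim \tfrac{\a\,e^{-L(\e,\l)}e^{-2\e^2\d_F}}{(1+\rho_0)\,t}e^{\d_Ft}$. Writing $\a=\e(1-4\e)$ and noting that, for $\e$ small, $(1-4\e)e^{-L(\e,\l)}e^{-2\e^2\d_F}\ge e^{-Q\e^\a}$ (using $-\log(1-4\e)\le 8\e\le 8\e^\a$, $2\e^2\d_F\le 2\d_F\e^\a$ and $L(\e,\l)\le Q'\e^\a$), together with $(1+\rho_0)^{-1}\ge(1+\rho_0)^{-2}$, gives the second assertion after enlarging $Q$ once more so that a single constant $Q$, independent of $\e$, serves both inequalities.

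I do not expect a genuine analytic obstacle: both bounds are obtained by composing results already established in the excerpt. The only delicate point is the bookkeeping --- tracking the $(1+\rho_0)$ factors through the density comparisons \eqref{density} and \eqref{density2}, and verifying that the transcendental error $e^{\pm L(\e,\l)}$, the geometric correction $1\pm 4\e$, and the drift $e^{\mp 2\e^2\d_F}$ all fit inside a single $e^{\pm Q\e^\a}$ with $Q$ independent of $\e$, which is exactly what Remark~\ref{constant} and $\a\in(0,1]$ provide.
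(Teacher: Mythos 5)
Your proposal is correct and follows essentially the same route as the paper: chain Propositions \ref{upper} and \ref{lower1} with Proposition \ref{asymptotic}, use the definition of $C_B$ (resp.\ Theorem \ref{equi0} with $\mu_F(\partial B^\a_\theta)=0$) to trade $\nu_{F,t}$ for $\mu_F$ in the denominators, invoke \eqref{density} for the extra $(1+\rho_0)$ factor, and absorb $e^{\pm L(\e,\l)}$, $1\pm 4\e$ and the $e^{\mp 2|\d_F|\e^2}$ drift into $e^{\pm Q\e^\a}$ via Remark \ref{constant}. The only cosmetic difference is that you obtain $(1+\rho_0)^{-1}$ in the lower bound and then weaken it to the stated $(1+\rho_0)^{-2}$, which is exactly what the paper's displayed estimate amounts to.
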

\begin{proof}
By Propositions \ref{upper} and \ref{asymptotic}, \eqref{density}, 
\begin{equation*}
\begin{aligned}
\sum_{c\in C(t)}e^{\Per_F(c)}&\le \frac{\e\sum_{\c\in \C_\L(t,\e)}e^{\Per_F(\c)}}{t\nu_{F,t}(\tilde\lB^\e_\L)}\lesssim C_B\cdot\frac{\e\sum_{\c\in \C_\L(t,\e)}e^{\Per_F(\c)}}{t\mu_F(\tilde B^\e_\L)}\\
&\lesssim C_B e^{L(\e,\l)}(1+\rho_0)^2(1+4\e)\frac{\e}{t}e^{\d_Ft}.
\end{aligned}
\end{equation*}

On the other hand, consider $\a=\e-4\e^2$. By Theorem \ref{equi0} and \eqref{e:choice}, we have $\nu_{F,t}(\lB^\a)\to \mu_F(\lB^\a)=\mu_F(B^\a)$. By Propositions \ref{lower1} and \ref{asymptotic}, and \eqref{density},
\begin{equation*}
\begin{aligned}
&\sum_{c\in C(t)}e^{\Per_F(c)}\gtrsim \frac{\a}{t\nu_{F,t}(\lB_{\theta}^{\a})}\sum_{\c\in \C^*_\L(t-2\e^2, \a)}e^{\Per_F(\c)}\\
\gtrsim &\frac{\a}{t\mu_{F}(B_{\theta}^{\a})}\sum_{\c\in \C^*_\L(t-2\e^2, \a)}e^{\Per_F(\c)}
\gtrsim \frac{e^{-L(\e,\l)}}{(1+\rho_0)^2} (1-4\e)\frac{\e}{t}e^{-2|\d_F|\e^2}e^{\d_Ft}.
\end{aligned}
\end{equation*}

By Remark \ref{constant}, if $\e$ is small enough, there exists a constant $Q>0$ independent of $\e$ such that the inequalities in the proposition hold.
\end{proof}

To finish the proof of Theorem \ref{margulis}, we have to revisit assumption \eqref{density}. It is crucial that for a fixed $\l>0$, we can find a sequence of flow boxes $B_{\theta_n}^{\e_n}$ satisfying \eqref{density} with sizes $\e_n\to  0$. Recall that in the definition of flow box $B_\t^\e$, the choice of $\theta\in (0, \min\{\t_1,\t_2\})$ is dependent of $\e$ (see for example Corollary \ref{equicon}). Thus if $\e_n\to 0$, $\t_n\to 0$ as well. The conditions in the definition of flow boxes can be relaxed as follows. Let $\e>0$ be sufficiently small, $\t=\t(\e)$, and $v\in \L$. Let $\bP$ and $\bF$ be small compact neighborhoods of $v^-$ and $v^+$ respectively satisfying $\bP\subset \bP_{\theta}$ and $\bF\subset \bF_{\theta}$. We may call $H(\bP\times \bF\times [0,\e])$ a \emph{generalized flow box} with size $\e$.

 Without loss of generality, we may suppose that $\L$ is compact. Otherwise, we consider a compact subset of $\L$ whose complement in $\L$ has sufficient small $\mu_F$-measure.  The following lemma says that there exists a finite partition of $\L$ by generalized flow boxes. This is in the same spirit of \cite[Lemma 9.5.7]{BP}, with rectangles replaced by generalized flow boxes.
 
\begin{lemma}\label{partition}
For any sufficiently small $\e>0$, there exist disjoint (upto a set of zero $\mu_F$-measure)  generalized flow boxes $B_i=H(\bP_i\times \bF_i\times [0,\e_i])$ with $0<\e_i<\e$ for $i=1,\cdots,m$, such that $\L=\cup_{i=1}^m(B_i\cap \L)$.
\end{lemma}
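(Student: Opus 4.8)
The plan is to mimic the proof of the Pesin-set partition in \cite[Lemma 9.5.7]{BP}, but replacing the rectangles there by generalized flow boxes, which is possible because for each point of $\L$ we have a geometric flow box around it via the Hopf map $H$. First I would fix a sufficiently small $\e>0$ and use the compactness of $\L$ (assumed without loss of generality). For each $v\in \L$, Proposition \ref{crucial} and Lemma \ref{diameter} give us a flow box $B^{\e'}_{\theta(\e',v)}(v)$ around $v$ for any $\e'\le \e$; by choosing small compact neighborhoods $\bP$ of $v^-$ and $\bF$ of $v^+$ inside $\bP_{\theta}$, $\bF_{\theta}$ respectively we obtain a generalized flow box $G(v)=H(\bP\times\bF\times[0,\e_v])$ containing $v$ in its interior (relative to $SX$ or $SM$; we work downstairs on $SM$). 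Moreover, by \eqref{e:choice0}–\eqref{e:choice} we may shrink the neighborhoods slightly so that the boundary $\partial G(v)$ has zero $\mu_F$-measure; this is the analogue of choosing continuity radii. The interiors $\{\mathrm{int}\,G(v)\}_{v\in\L}$ form an open cover of the compact set $\L$, so we extract a finite subcover $G_1,\dots,G_n$ with $G_j=H(\bP_j\times\bF_j\times[0,\e_j])$, $0<\e_j<\e$, and $\mu_F(\partial G_j)=0$ for all $j$, such that $\L\subset\bigcup_{j=1}^n G_j$.

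Next I would disjointify. Set $B_1:=G_1\cap\L$-saturated flow box and inductively $\tilde G_j := G_j\setminus\bigcup_{i<j}G_i$. The sets $\tilde G_j$ are pairwise disjoint, cover $\L$, and their pairwise overlaps removed lie in $\bigcup_j\partial G_j$, a $\mu_F$-null set, so $\L=\bigsqcup_{j}(\tilde G_j\cap\L)$ up to $\mu_F$-measure zero. The one subtlety is that $\tilde G_j$ need not itself be a generalized flow box — it is a flow box minus finitely many flow boxes. Here I would exploit the product structure of a flow box via $H$: a flow box $H(\bP\times\bF\times[0,\e])$ is the Hopf image of a product $\bP\times\bF$ of a ``past'' and ``future'' piece times an interval, and the sub-flow-boxes $G_i$, $i<j$, that meet $G_j$ correspond to products $\bP_i'\times\bF_i'$ of subsets of $\bP_j\times\bF_j$ (after intersecting with $\bP_j,\bF_j$ and reparametrizing the flow coordinate). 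So $\tilde G_j$ is, in $(\bP,\bF,t)$-coordinates, a product set minus finitely many product sets; one can then further subdivide $\tilde G_j$ into finitely many honest product cells $\bP_{j,k}\times\bF_{j,k}\times I_{j,k}$ (a standard grid refinement of a Boolean combination of rectangles in $\bP_j\times\bF_j$, together with subdividing the flow interval $[0,\e_j]$ if needed). Each such cell is again a generalized flow box of size $\le\e$. Relabeling these finitely many cells over all $j$ gives the desired family $B_1,\dots,B_m$ of pairwise disjoint generalized flow boxes with $\L=\bigcup_{i=1}^m(B_i\cap\L)$ up to a $\mu_F$-null set.

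The main obstacle, and the only place requiring genuine care, is the disjointification step: verifying that the difference of a flow box and finitely many flow boxes really decomposes (upto $\mu_F$-measure zero, and modulo the flow direction) into finitely many generalized flow boxes. This hinges on the fact that the Hopf map $H$ conjugates the geodesic flow on $S_\theta$ to a translation flow on $\bP\times\bF\times[0,\e^2]$, so that within a single flow box all the geometry is a product and set-theoretic operations are performed coordinatewise on $\bP\times\bF$. Once this is granted, the Boolean-combination-of-rectangles-is-a-finite-union-of-rectangles fact is elementary. A minor technical point to address is ensuring that when we intersect $G_i$ with $G_j$ the resulting ``past'' and ``future'' factors are again compact neighborhoods of the relevant boundary points so that they qualify as generalized flow boxes in the sense defined above; this follows from $\bP_i,\bF_i$ being compact and the continuity of $H$. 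I would also remark that the $\mu_F(\partial B_i)=0$ property, while not explicitly demanded in the statement, is what makes the boxes ``disjoint up to a set of zero $\mu_F$-measure'' and is inherited from the $\mu_F(\partial G_j)=0$ choices and the fact that boundaries of the grid cells are contained in finitely many $\mu_F$-null hypersurfaces coming from $\partial\bP_j$, $\partial\bF_j$.
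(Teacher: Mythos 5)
Your proposal is correct and follows essentially the same route as the paper: a finite cover of the compact set $\L$ by flow boxes, followed by a Boolean decomposition carried out coordinatewise in the $(\bP,\bF,t)$-product structure (normalized by a common reference point), with a final grid refinement to restore genuine generalized flow boxes. The one detail worth making explicit, which the paper flags, is that after cutting in the flow direction a piece may have depth $\e'$ too small for its transverse extent (since $\t=\t(\e')$ shrinks with $\e'$), and the fix is to further subdivide $\bP'$ and $\bF'$ until their sizes are below $\t(\e')$ --- your grid refinement of the transverse factors accomplishes exactly this.
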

\begin{proof}
Fix $\e>0$. Since $\L$ is compact, there exist finitely many flow boxes $B_\t^\e(v_i)$ centered at $v_i\in \L, i=1,\cdots,k$ such that $\L\subset \cup_{i=1}^k B^\e_\t(v_i)$. We can assume that these flow boxes satisfy all the properties in Subsection 5.1, but not \eqref{density} as a priori.

In spirit of  \cite[Lemma 9.5.6]{BP}, we claim that for any two flow boxes, say $B^\e_\t(v_1)=H(\bP_1\times \bF_1\times [0,\e])$ and  $B^\e_\t(v_2)=H(\bP_2\times \bF_2\times [0,\e])$, one can find finitely many disjoint generalized flow boxes $B_1,B_2,\cdots,B_l$ such that $\L\cap (\cup_{i=1}^lB_i)=\L\cap (B^\e_\t(v_1)\cup B^\e_\t(v_2))$.

To prove the claim, it is sufficient to consider case $B^\e_\t(v_1)\cap B^\e_\t(v_2)\neq \emptyset$. We divide $\bP_1\cup \bP_2$ into three parts: $\bP_1\cap \bP_2, \bP_1\setminus \bP_2, \bP_2\setminus \bP_1$. Similarly, divide $\bF_1\cup \bF_2$ into three parts: $\bF_1\cap \bF_2, \bF_1\setminus \bF_2, \bF_2\setminus \bF_1$. These are connected relatively compact neighborhoods in $\pX$. Choose a pair of them, denoted by $\bP', \bF'$. It is possible that in the flow direction there are nontrivial intersections, i.e., $P^{-1}(\bP'\times  \bF')\cap B^\e_\t(v_1)\cap B^\e_\t(v_2)\neq \emptyset$. If so, we then cut $P^{-1}(\bP'\times  \bF')\cap (B^\e_\t(v_1)\cup B^\e_\t(v_2))$ into three parts: $P^{-1}(\bP'\times  \bF')\cap (B^\e_\t(v_1)\cap B^\e_\t(v_2))$, $P^{-1}(\bP'\times  \bF')\cap(B^\e_\t(v_1)\setminus B^\e_\t(v_2))$ and $P^{-1}(\bP'\times  \bF')\cap (B^\e_\t(v_2)\setminus B^\e_\t(v_1))$. We then get at most $27$ parts, each of them is a generalized flow box (recall that the flow direction is defined by Busemann functions normalized at a common reference point $o\in \F$).

There is still a minor issue. It is possible that these are not genuine generalized flow boxes, since the length $\e'$ in the flow direction could be small relative to the size $\t'$ of $\bP'$ and $\bF'$ directions. If this happens, we continue to divide $\bP'$ and $\bF'$ into finitely many small ones such that their sizes are smaller than $\t(\e')$. Then we get genuine generalized flow boxes satisfying the claim, by dropping those with no intersection with $\L$.

Applying the claim to each pair of $\{B^\e_\t(v_i)\}_{i=1}^k$, we obtain the generalized flow boxes $B_i$ as required in the lemma.
\end{proof}
For any $\rho_0>0$, pick $\l>0$ such that $\mu_F(\L)>\frac{1}{1+\rho_0}$. As an immediate consequence of Lemma $\ref{partition}$, for any $\e>0$, there exists a generalized flow box $H(\bP\times \bF\times [0,\e'])$ with size $0<\e'\le \e$ satisfying \eqref{density}, as well as \eqref{e:choice0}, \eqref{e:choice}. More importantly, all the proofs in Section 5 so far go through with a flow box replaced by a generalized flow box. A fundamental difference is that $\bP$ and $\bF$ could be compact neighborhoods in $\pX$ with arbitrary shape. For example, the crucial closing lemma \ref{closing} for any compact neighborhood is guaranteed by \cite[Corollary 3.5]{Ri}.

\begin{proof}[Proof of Theorem \ref{margulis}]
The last step of the proof of Theorem \ref{margulis} is to estimate $\sum_{c\in P(t)}e^{\Per_F(c)}$ via $\sum_{c\in C(t)}e^{\Per_F(c)}$ using a Riemannian sum argument.
Indeed, a verbatim repetition of the proof in \cite[Section 6.2]{CKW2} gives
\begin{equation*}
\begin{aligned}
\sum_{c\in P(t)}e^{\Per_F(c)}&\lesssim C_B(1+\rho_0)^{2}e^{2(Q\e^\a+\d_F\e)}\frac{e^{\d_Ft}}{\d_Ft},\\
\sum_{c\in P(t)}e^{\Per_F(c)}&\gtrsim (1+\rho_0)^{-2}e^{-2(Q\e^\a+\d_F\e)}\frac{e^{\d_Ft}}{\d_Ft}.
\end{aligned}
\end{equation*}
We emphasize that $\d_F>0$ is crucial here. From the discussion above, we are allowed to set $\e\to 0$ first, and then $\l\to 0$. Note that we can choose $\rho_0\to 0$ as $\l\to 0$. Therefore,
\begin{equation*}
\begin{aligned}
\sum_{c\in P(t)}e^{\Per_F(c)}&\lesssim C\frac{e^{\d_Ft}}{\d_Ft},\\
\sum_{c\in P(t)}e^{\Per_F(c)}&\gtrsim \frac{e^{\d_Ft}}{\d_Ft}.
\end{aligned}
\end{equation*}
In the above, $C=\sup_BC_B$ where the supremum is taken over any sequence of generalized flow boxes described above.
The proof of Theorem \ref{margulis} is complete.
\end{proof}
\begin{remark}\label{CB1}
Since we can take $\L$ as a compact subset of $SM$, by Theorem \ref{equi0}, 
$$\limsup_{t\to \infty}\nu_{F,t}(\tilde \lB_\L^\e)\le \mu_F(\tilde \lB_\L^\e).$$
Thus $C\ge C_B\ge 1$. It seems possible that $C=+\infty$.
\end{remark}
\begin{proof}[Proof of Corollary E.1]
The ``if'' part follows from Theorem \ref{ps}. For the ``only if'' part, recall from Proposition \ref{asymptotic} that
$$\frac{e^{-L(\e,\l)}}{1+\rho_0} \lesssim \frac{\sum_{\c\in \C^*_{\theta,\L}(t,\a)}e^{\Per_F(\c)-\d_Ft}}{\mu_F(B_\theta^\a)}.$$
For any $\c\in \C^*_{\theta,\L}(t,\a)$, we can show as in  \eqref{ratio2} that
\begin{equation*}
\begin{aligned}
|\Per_F(\c)-\int_{o}^{\c o}F|\le &L_6(\e,\l)
\end{aligned}
\end{equation*}
for some $L_6(\e,\l)>0$ independent of $t$. Moreover, $|d(o,\c o)-t|\le 8\e$. Then we have
\begin{equation*}
\begin{aligned}
&\sum_{\c\in \C, n-1< d(o, \c o)\le n}e^{\int_o^{\c o}F}\ge \sum_{\c\in \C^*_{\theta,\L}(n-8\e,\a)}e^{\int_{o}^{\c o}F}\\
\ge &e^{-L_6(\e,\l)}\sum_{\c\in \C^*_{\theta,\L}(n-8\e,\a)}e^{\Per_F(\c)} \gtrsim C'e^{n\d_F}
\end{aligned}
\end{equation*}
where $C':=e^{-L_6(\e,\l)-8\e \d_F-L(\e,\l)}\cdot\frac{\mu_F(B_\theta^\a)}{1+\rho_0}$. The proof of Corollary E.1 is complete.
\end{proof}

\ \
\\[-2mm]
\textbf{Acknowledgement.}
This work is supported by National Key R\&D Program of China No. 2022YFA1007800 and NSFC No. 12071474.

\section{Appendix: Proofs of some technical lemmas}
\begin{proof}[Proof of Lemma \ref{uniformrec}]
It is clear from definition that $\UR$ is $g^t$-invariant and flip invariant. Now we consider a $g^t$-invariant measure $\nu$ on $SM$.
Let $\{U_n\}_{n\in\mathbb{N}}$ be a countable base consisting of open sets on $SM$. By Birkhoff ergodic theorem, there exists a set $X \subset SM$ of full $\nu$-measure such that for all $x \in X$ and all $n\in\mathbb{N}$, the limit
$$f_n(x):=\lim_{T \to \pm\infty} \frac{1}{T}\int_0^T \chi_{U_n}(g^tx)dt$$
exists, and
$$\int_{SM}f_n(x)d\nu(x)= \nu(U_n).$$

Assume the contrary that $\nu(\UR^c)>0$ where $\UR^c$ is the complement of $\UR$ in $SM$. Then $\UR^c \cap X$ is non-empty. For each $y \in\UR^c \cap X$, which is not uniformly recurrent, there exists a neighborhood $U$ of $y$ in $SM$ such that
\begin{equation*}\label{e:not}
 \liminf_{T \to +\infty} \frac{1}{T}\int_0^T \chi_U(g^ty)dt =0 \text{\ \ or\ \ } \liminf_{T \to -\infty} \frac{1}{T}\int_0^T \chi_U(g^ty)dt =0.
\end{equation*}
Then there exists an $n(y)$ such that $y\in U_{n(y)} \subset U$, and
\begin{equation}\label{e:f}
\begin{aligned}
 &f_{n(y)}(y)=\lim_{T \to \pm\infty} \frac{1}{T}\int_0^T \chi_{U_n}(g^ty)dt\\
 \leq &\min\Big\{\liminf_{T \to +\infty} \frac{1}{T}\int_0^T \chi_U(g^ty)dt,\  \liminf_{T \to -\infty} \frac{1}{T}\int_0^T \chi_U(g^ty)dt\Big\} =0.
\end{aligned}
\end{equation}
Denote $\UR^c(N):=\{y\in \UR^c: n(y)=N\}$ which is a subset of $U_N$.
Then we can find some $N$ such that $\nu (U_N \cap \UR^c(N) \cap X)>0$ and moreover, $f_N(y)=0$ for any $y \in U_N \cap \UR^c(N)\cap X$ by \eqref{e:f}.

On the other hand, Birkhoff ergodic theorem implies that for $\nu \ae y \in X$, one has
$$g(y):=\lim_{T \to \infty} \frac{1}{T}\int_0^T \chi_{(U_N \cap \UR^c(N)\cap X)}(g^ty)dt$$
exists with
\begin{equation}\label{e:gamma}
\int_{X}g(y)d\nu(y)=\nu(U_N \cap \UR^c(N)\cap X) >0.
\end{equation}
However, by \eqref{e:f}, we have $g(y) \leq f_N(y)=0$ for all $y \in U_N \cap \UR^c(N)\cap X$. Taking into account that $g$ is $g^t$-invariant, we have $g(y)=0$ for $\nu \ae y$, a contradiction to \eqref{e:gamma}. This proves the lemma.
\end{proof}

\begin{proof}[Proof of Lemma \ref{recrate}]
If $v\in \UR$, then for any neighborhood $U$ of $v$, there exists $\tau>0$ and $T_0>0$ such that,
\begin{equation}\label{e:birk}
\frac{1}{T}\int_0^T\chi_U(g^tv)dt>\tau, \quad \forall T>T_0.
\end{equation}
Now we lift $v$ and $U$ to the universal cover $X$. Given $T>0$, pick $\sigma>0$ small enough such that $\tau/\sigma>2T$ and $1/\sigma>T_0$.
We prove the lemma by induction. Suppose the conclusion holds for $n$, i.e., there exist $t_1<t_2 \cdots<t_n<\frac{n}{\sigma}$ and $\phi_i\in I(X)$ such that $d \phi_i g^{t_i}v\in U, i=1,\cdots n$. By \eqref{e:birk}, on $SM$ we have
$$\int_0^{(n+1)/\sigma}\chi_U(g^tv)dt>\tau(n+1)/\sigma.$$
Recall that $\tau(n+1)/\sigma>2nT$ by the choice of $\sigma$. Thus, we can find $t_{n+1}\in [0, (n+1)/\sigma)$ such that $t_{n+1}\notin [t_i-T,t_i+T], i=1, \cdots, n$, and such that
$d \phi_{n+1} g^{t_{n+1}}v\in U$
for some $\phi_{n+1}\in I(X)$. It is possible that $t_{n+1}<t_n$. But then just by reordering the points  $t_1,t_2, \cdots t_n, t_{n+1}$ we prove the conclusion for $n+1$. The lemma follows.
\end{proof}

\begin{proof}[Proof of Proposition \ref{rate}]
At first, we claim that if $v\in \text{Reg}$ is recurrent, then $J^s(v)$ contains no Jacobi field with constant length on $[0, \infty)$.
Otherwise, assume that there is a Jacobi field $Y\in J^s(v)$ along $c_v$ with $\|Y(t)\|\equiv C$ for any $t\ge 0$.
For any $t>0$, since $g^{-t}v$ is recurrent, there exists $t_n>t$ and $\gamma_n\in \Gamma$ such that $w_n:=d\c_ng^{t_n}g^{-t}v\to g^{-t}v$ as $n\to \infty$. Then along the geodesic $c_{w_n}$ there is a Jacobi field $Y_n$ with $(Y_n(0), Y'_n(0))=d\c_ndg^{t_n-t}(Y(0), Y'(0))$. Hence $\|Y_n(s)\|=C$ for any $s\ge 0$. Taking $n\to \infty$, $Y_n$ converges to a Jacobi field $\bar Y_{-t}$ along $c_{g^{-t}v}$ with $\|\bar Y_{-t}(s)\|=C$ for any $s\ge 0$. Then a subsequence of $\bar Y_{-t}$ converges to a Jacobi field $\bar Y$ along $c_v$ with $\|\bar{Y}(s)\|\equiv C$ for any $s\in \RR$. By \cite[Proposition 2.4]{Wat}, we have $\bar Y$ is a parallel Jacobi field, a contradiction since $v\in \text{Reg}$.

Now we claim that there exist $T>0$ and $\d>0$ such that
$$\log (\|Y(T)\|/\|Y(0)\|)<-\d$$
for any $Y\in J^s(v)$. Assume the contrary, then there exist $T_n\to \infty$, $Y_n\in J^s(v)$ with $Y_n(0)=1$ such that $\log \|Y_n(T_n)\|\ge -1/n$. Then a subsequence of $Y_n$ converges to a Jacobi field $Y\in J^s(v)$ such that $\|Y(0)\|=1$ and
$$\|Y(T)\|=\lim \|Y_{n_k}(T)\|\ge \lim \|Y_{n_k}(T_{n_k})\|\ge 1$$
for any $T\ge 0$. Since $t\mapsto \|Y(t)\|$ is nonincreasing, we have $\|Y(t)\|=1$ for any $t\ge 0$, a contradiction to the previous claim.

The remaining proof is a slight modification of the one of \cite[Lemma 3.4]{BBE}. Choose $\d>0$ as in the above claim. By continuity, there exists an open neighborhood $O$ of $v^*=dp(v)$ in $SM$ such that
$$\log (\|Y(T)\|/\|Y(0)\|)<-\d$$
for all $w'\in O$ and all $Y\in J^s(w')$. Choose a compact neighborhood $V\subset O$ of $v^*$ and let $t_n\to \infty$ satisfy $g^{t_n}v^*\in V, t_{n+1}-t_n> T$ and $t_n < n/\sigma$ for all $n$ and some $\sigma> 0$ by Lemma \ref{recrate}. Choose a neighborhood $U$ of $v$ in $W^s(v)$ so small that $g^{t_n}dp(w)\in O$ for all $n$ if $w\in U$. By the choice of $O$ we obtain $\|Y(t_n)\|< e^{-\d n}\|Y(0)\|$, where $Y\in J^s(w), w\in U$. Note that $\|Y(t)\|$ is a nonincreasing function of $t$. Given $t>0$ choose $n$ such that $t_n \le t < t_{n+1}$. Then
$$\|Y(t)\| \le \|Y(t_n)\|< e^{-\d n}\|Y(0)\| < e^\d e^{-\d\sigma t}\|Y(0)\|$$
since $t < t_{n+1} <(n+1)/\sigma$. Then find a constant $C>e^\d$ and set $\lambda:=\d\sigma$. This finishes the proof of the proposition.
\end{proof}

\begin{proof}[Proof of Proposition \ref{contracting}]
The proof is given in \cite[Proposition 3.10]{BBE}. We emphasize that in that proof, all the estimates are explicit. Indeed, let $w\in W^s(v)$, and $\b:[0,L]\to W^s(v)$ be a $C^1$ curve parameterized by arclength such that $\b(0)=v$, $\b(L)=w$, $L\le 2d^s(v,w)$, and $\a_t(s)=c_{\b(s)}(t)$. Based on Proposition \ref{rate}, the proof of \cite[Proposition 3.10]{BBE} gives
$$\text{length}(\a_t)\le \frac{2^{m+1}\cdot L}{2^{\sigma t}}$$
for all $t\ge 0$. Here $m=\lfloor \frac{2L}{\e}\rfloor$ where $\e$ is a small positive number such that the ball of $2\e\sqrt{1+a^2}$ centered at $v$ is contained in $U$ obtained in Proposition \ref{rate}, and $\sigma$ is from Lemma \ref{recrate}.
Setting
$$\lambda:=\min\left\{\frac{\e}{4\log 2}, \sigma \log 2\right\}$$
and $C=8$, we get \eqref{e:imp}.
\end{proof}

\begin{proof}[Proof of Lemma \ref{contracting1}]
Let $w\in W^s(v)$ be such that $\pi w=\pi w_T$. We have by \eqref{e:imp}
\begin{equation*}
\begin{aligned}
&d(\pi g^Tw_T, \pi g^Tv)=|S-T|\\
=&|d(\pi w_T, \pi g^Tv)- d(\pi w, \pi g^Tw)|\le d(\pi g^Tw, \pi g^Tv)\\
\le &Cd^s(v,w)e^{d^s(v,w)/\lambda}e^{-\lambda T}.
\end{aligned}
\end{equation*}
Then we have
\begin{equation*}
\begin{aligned}
&d_K(g^{T-t}v,g^{S-t}w_T) \le d_K( g^{T-t}v,g^{T-t}w)+d_K( g^{T-t}w,g^{S-t}w_T)\\
 \le &d_K( g^{T-t}v, g^{T-t}w)+d_K( g^{T-t}w, g^{T-t}w_T)+|S-T|\\
 \le &d_K(g^{T-t}v, g^{T-t}w)+d_K( g^{T}w, g^{T}v)+|S-T|\\
 \le &3Cd^s(v,w)e^{d^s(v,w)/\lambda}e^{-\lambda (T-t)}.
\end{aligned}
\end{equation*}
Setting $C_0=3C$, the proof of Lemma \ref{contracting1} is complete.
\end{proof}

\end{document}